\tikzset{cd/.style=matrix of math nodes,row sep=2em,column sep=2em, text height=1.5ex, text depth=0.5ex}
\tikzset{cdar/.style=->,auto}
\setlist[enumerate,1]{label=\textup{(\arabic*)}}
\setlist[enumerate,2]{label=\textup{(\alph*)}}
\newcommand*{\MRref}[2]{ \href{http://www.ams.org/mathscinet-getitem?mr=#1}{MR #1}}
\newcommand*{\arxiv}[1]{ \href{http://www.arxiv.org/abs/#1}{arXiv:#1}}
\renewcommand{\PrintDOI}[1]{\href{http://dx.doi.org/#1}{DOI #1}%
  \IfEmptyBibField{volume}{, (to appear in print)}{}}
\renewcommand{\PrintDOI}[1]{\href{http://dx.doi.org/\detokenize{#1}}{doi: \detokenize{#1}}%
  \IfEmptyBibField{pages}{, (to appear in print)}{}}
\numberwithin{equation}{section}
\theoremstyle{plain}
\newtheorem{theorem}[equation]{Theorem}
\newtheorem{lemma}[equation]{Lemma}
\newtheorem{proposition}[equation]{Proposition}
\newtheorem{corollary}[equation]{Corollary}
\theoremstyle{definition}
\newtheorem{definition}[equation]{Definition}
\newtheorem{notation}[equation]{Notation}
\theoremstyle{remark}
\newtheorem{remark}[equation]{Remark}
\newtheorem{example}[equation]{Example}
\newcommand*{\nb}{\nobreakdash}
\newcommand*{\Star}{\(^*\)\nb-}
\newcommand*{\C}{\mathbb C}
\newcommand*{\Z}{\mathbb Z}
\newcommand*{\R}{\mathbb R}
\newcommand*{\T}{\mathbb T}
\newcommand*{\Bialg}[1]{(#1,\Comult[#1])}
\newcommand*{\DuBialg}[1]{(\hat{#1},\DuComult[#1])}
\newcommand*{\G}[1][G]{\mathbb #1}
\newcommand*{\DuG}[1][G]{\widehat{\mathbb{#1}}}
\newcommand*{\Qgrp}[2]{\mathbb{#1}=\Bialg{#2}}
\newcommand*{\DuQgrp}[2]{\widehat{\mathbb{#1}}=\DuBialg{#2}}
\newcommand*{\Comult}[1][]{\Delta_{#1}}
\newcommand*{\DuComult}[1][]{\hat{\Delta}_{#1}}
\newcommand*{\Coinv}{\textup R}
\newcommand*{\Ker}{\mathrm{Ker}}
\newcommand*{\dom}{\mathrm{Dom}}
\newcommand*{\Bound}{\mathbb B}
\newcommand*{\Comp}{\mathbb K}
\newcommand*{\transpose}{\mathsf T}
\newcommand*{\Contvin}{\textup C_0}
\newcommand*{\Cont}{\textup C}
\newcommand*{\Mor}{\textup{Mor}}
\newcommand*{\Id}{\textup{id}}
\newcommand*{\Multunit}[1][]{\mathbb{W}^{#1}}
\newcommand*{\multunit}[1][]{\textup{W}^{#1}}
\newcommand*{\DuMultunit}[1][]{\widehat{\mathbb W}^{#1}}
\newcommand*{\Dumultunit}[1][]{\widehat{\textup W}^{#1}}
\newcommand*{\Bichar}{\mathbb{V}}
\newcommand*{\bichar}{\textup{V}}
\newcommand*{\Dubichar}{\hat{\bichar}}
\newcommand*{\DuBichar}{\widehat{\Bichar}}
\newcommand*{\HeisPair}[1]{(#1,\hat{#1})}
\newcommand*{\VHeisPair}{(\alpha,\beta)}
\newcommand*{\corep}[1]{\textup{#1}}                
\newcommand*{\Drinfdouble}[1]{\mathfrak{D}(#1)}
\newcommand*{\Codouble}[1]{\mathfrak{D}({#1}){ }\sphat\text{\space}}
\newcommand*{\GenDrinfdouble}[3]{\mathfrak{D}_{#3}(#1,#2)}
\newcommand*{\GenCodouble}[3]{\mathfrak{D}_{#3}(#1,#2){ }\sphat\text{\space}}
\newcommand*{\DrinfDoubAlg}{\mathcal{D}}
\newcommand*{\CodoubAlg}{\widehat{\mathcal{D}}}
\newcommand*{\Rmattxt}{\emph{R}}
\newcommand*{\Rmat}{\mathcal R}
\newcommand*{\Flip}{\Sigma}
\newcommand*{\flip}{\sigma}
\newcommand*{\Cst}{\textup C^*}
\newcommand*{\Cred}{\textup C^*_\textup r}
\newcommand*{\CLS}{\mathrm{CLS}}
\newcommand*{\Cstcat}{\mathfrak{C^*alg}}
\newcommand*{\Hils}[1][H]{\mathcal #1}
\newcommand*{\Mult}{\mathcal M}
\newcommand*{\U}{\mathcal U}
\newcommand*{\defeq}{\mathrel{\vcentcolon=}}
\newcommand*{\conj}[1]{\overline{#1}}
\begin{document}
\title{The Drinfeld double for \(\Cst\)\nb-algebraic quantum groups}

\author{Sutanu Roy}
\email{sr26@uOttawa.ca}
\address{Department of Mathematics and Statistics\\
              585 King Edward\\ 
              K1N 6N5 Ottawa\\
              Canada}

\begin{abstract}
 In this article, we establish the duality between the generalised 
 Drinfeld double and generalised quantum codouble within the framework 
 of modular or manageable (not necessarily regular) multiplicative unitaries, 
 and discuss several properties. 
 \end{abstract}

\subjclass[2010]{Primary 81R50, Secondary 22D05, 46L65, 46L89}
\keywords{C*-algebra, Drinfeld pair, Drinfeld double, quantum codouble, Yetter--Drinfeld C*-algebras}
\maketitle

\section{Introduction}
  \label{sec:intro}      
  One of the milestones in the theory of Hopf algebras 
  is the \emph{quantum double} or \emph{Drinfeld double} construction  
  \cite{Drinfeld:Quantum_groups}. This has subsequently been generalised 
  in several algebraic frameworks~\cites{Majid:More_ex_bicross, Delvaux-vanDaele:Drinf_doub_mult_hopf}. 
  
  The quantum double construction for analytic quantum groups was developed 
  in many different frameworks, along with the development of 
  a general theory of compact and locally compact quantum groups.
  In fact, the terms ``quantum double'' and ``double crossed product'' in 
  the context of locally compact quantum groups actually refers to the 
  (analytically generalised) dual of the respective constructions in the algebraic
  framework. 

  In~\cites{Podles-Woronowicz:Quantum_deform_Lorentz},  
  Podle\'s and Woronowicz generalised the quantum double construction 
  for compact quantum groups~\cite{Woronowicz:Compact_pseudogroups}, under the 
  name \emph{double group} construction. They constructed \(q\)\nb-deformations of 
  \(\textup{SL}(2,\C)\) as the double group of~\(\textup{SU}_{q}(2)\) groups 
  \cite{Woronowicz:Twisted_SU2} for \(q\in [-1,1]\setminus\{0\}\). 
  
  The first step towards the general theory of (topological) locally compact quantum groups, 
  in the \(\Cst\)\nb-algebraic framework, goes back to the work of Baaj and Skandalis 
  \cite{Baaj-Skandalis:Unitaires}. As basic axioms they used a unitary operator,
  called the \emph{multiplicative unitary}, having the \emph{regularity} property. Also their 
  \emph{\(Z\)\nb-produit tensoriel} for regular multiplicative unitaries (see~\cite{Baaj-Skandalis:Unitaires}*{Section 8}) 
  generalises the quantum double construction for compact quantum groups. Unfortunately, multiplicative unitaries related to 
  locally compact quantum groups are not always regular (see~\cite{Baaj:Regular-Rep-Woronowicz-shift}).
  The notion \emph{manageability} of multiplicative unitaries, introduced by Woronowicz 
  in~\cite{Woronowicz:Multiplicative_Unitaries_to_Quantum_grp}, provides a more general 
  approach to the \(\Cst\)\nb-algebraic theory for locally compact quantum groups or, in short, 
  \emph{\(\Cst\)\nb-quantum groups} (see Definition~\ref{def:Qnt_grp}).

  A general theory of (measure theoretic) locally compact quantum groups  was proposed 
  by Kustermans and Vaes \cites{Kustermans-Vaes:LCQG, Kustermans-Vaes:LCQGvN} and Masuda, 
  Nakagami and Woronowicz~\cite{Masuda-Nakagami-Woronowicz:C_star_alg_qgrp}, 
  assuming existence of Haar weights. Also, \cite{Kustermans-Vaes:LCQG}*{Proposition 6.10} 
  shows that the left (respectively right) regular representation associated to the left (respectively right) 
  Haar weight is a manageable multiplicative unitary. According to 
  \cite{Baaj-Skandalis-Vaes:Non-semi-regular}*{Terminology 5.4}, 
  a locally compact quantum is regular if its regular representation is a regular multiplicative unitary. 
  
  In~\cite{Baaj-Vaes:Double_cros_prod}, Baaj and Vaes developed the general theory of the double crossed product of 
  a matched pair of locally compact quantum groups. Tomita\nb--Takesaki operators of the respective quantum groups 
  play the key role in their construction. A matching of two von Neumann algebraic quantum groups \(\Bialg{M_{1}}\) 
  and~\(\Bialg{M_{2}}\) is a normal faithful \Star{}homomorphism \(m\colon M_{1}\bar{\otimes} 
  M_{2}\to M_{1}\bar{\otimes} M_{2}\) with some additional property. Here \(\bar{\otimes}\) 
  denotes the von Neumann algebraic tensor product. The underlying von Neumann algebra~\(M_{m}\) 
  of the associated double crossed product is~\(M_{1}\bar{\otimes}M_{2}\). In particular, every bicharacter 
  \(\bichar\in M_{1}\bar{\otimes} M_{2}\) defines an inner matching defined by 
  \(m(x)= \bichar (x)\bichar^*\) for all~\(x\in M_{1}\bar{\otimes} M_{2}\). 
  The double crossed product associated to an inner matching, is called 
  \emph{generalised quantum double} (see~\cite{Baaj-Vaes:Double_cros_prod}*{Section 8}). The word 
  ``generalised'' refers to the generalisation of the quantum double construction for quasi Woronowicz algebras 
  \cite{Masuda-Nakagami:quasi-Woronowicz-alg} (previously regarded as locally compact quantum groups in the von 
  Neumann algebra framework) by Yamanouchi~\cite{Yamanouchi:Double_grp_const_vN}. 
  
  In~\cite{Woronowicz-Zakrzewski:Quantum_Lorentz_Gauss}, Woronowicz and Zakrzewski 
  constructed another \(q\)\nb-deformation (for \(q\in(0,1)\)) of \(\textup{SL}(2,\C)\), 
  in the \(\Cst\)\nb-algebraic framework, as the quantum double (under 
  the name double group) of the quantum \(E(2)\) group, which is 
  not regular (see~\cite{Baaj:Regular-Rep-Woronowicz-shift}). 
  Therefore, the \(\Cst\)\nb-algebraic description for~\(M_{m}\), 
  given by Baaj and Vaes \cite{Baaj-Vaes:Double_cros_prod}*{Proposition 9.5} 
  does not cover this example because it assumes regularity on both 
  the locally compact quantum groups \(\Bialg{M_{1}}\) and \(\Bialg{M_{2}}\). 
      
 In this article, we construct and establish the duality between 
 \emph{generalised Drinfeld doubles} and 
 \emph{generalised quantum codoubles} 
 (called as generalised quantum doubles in 
 \cite{Baaj-Vaes:Double_cros_prod}*{Section 8}) 
 in the general framework of manageable multiplicative unitaries. 
 Therefore, our work generalises the~\(\Cst\)\nb-algebraic picture of the 
 generalised quantum doubles in~\cite{Baaj-Vaes:Double_cros_prod}*{Section 9}, 
 as we do not need to assume neither Haar measures nor regularity on the factor quantum groups. 
 In particular, our work also generalises the \emph{quantum codouble} 
 (sometimes called double group \cites{Podles-Woronowicz:Quantum_deform_Lorentz, Woronowicz-Zakrzewski:Quantum_Lorentz_Gauss}, 
 quantum double~\cite{Yamanouchi:Double_grp_const_vN}, and Drinfeld 
 double~\cite{Nest-Voigt:Poincare}*{Section 3}) construction for locally compact quantum groups with 
 Haar weights \cite{Masuda-Nakagami-Woronowicz:C_star_alg_qgrp}*{Section 8}, and  
 for manageable multiplicative unitaries (an unpublished work of 
 S.L. Woronowicz presented at RIMS in 2011). 
   
Let us briefly outline the structure of this article. In Section~\ref{sec:prelim}, 
we recall basic necessary preliminaries. In particular, the main results 
on modular and manageable multiplicative unitaries, that give rise to 
\(\Cst\)\nb-quantum groups~\cite{Woronowicz:Multiplicative_Unitaries_to_Quantum_grp}, 
coactions and corepresentations of \(\Cst\)\nb-quantum groups, and several equivalent notions 
of homomorphisms of \(\Cst\)\nb-quantum groups~\cite{Meyer-Roy-Woronowicz:Homomorphisms} 
are stated.

Let~\(\Qgrp{G}{A}\) and~\(\Qgrp{H}{B}\) be \(\Cst\)\nb-quantum groups 
(in the sense of Definition~\ref{def:Qnt_grp}), and \(\bichar\in\U(\hat{A}\otimes 
\hat{B})\) be a bicharacter (in the sense of Definition~\ref{def:bicharacter}).

In Section~\ref{sec:Heisenberg}, we recall the concept of \(\bichar\)\nb-Heisenberg pairs 
from~\cite{Meyer-Roy-Woronowicz:Twisted_tensor}. Then we introduce the 
notion of \(\bichar\)\nb-\emph{Drinfeld pair} in Section~\ref{sec:Drinf}, which plays the 
fundamental role in this article. Roughly, it is a pair of representations 
\((\rho,\theta)\) of~\(A\) and~\(B\) on some Hilbert space \(\Hils\) 
satisfying certain commutation relations governed by~\(\bichar\). 
We systematically construct a \(\bichar\)\nb-Drinfeld pair 
and a modular multiplicative unitary, denoted by \(\Multunit[\DrinfDoubAlg]\) 
(see Theorem~\ref{the:multunit_drinf}). Section~\ref{sec:gen_qnt_codoub} is 
devoted to the construction of the generalised Drinfeld double, as a \(\Cst\)\nb-quantum 
group from~\(\Multunit[\DrinfDoubAlg]\), and generalised quantum codouble as its 
dual. 

In particular, the generalised Drinfeld double construction for a trivial bicharacter 
yields the usual product of the respective \(\Cst\)\nb-quantum 
groups (see~Example~\ref{ex:triv_bichar}). In Section~\ref{sec:prop_Gen_Drinf}, 
we extend certain known results for the product of groups and 
\(\Cst\)\nb-quantum groups to generalised Drinfeld 
doubles. It is well known that the Drinfeld double of a finite dimensionsal 
Hopf algebra has an~\(\Rmattxt\)\nb-matrix~\cite{Drinfeld:Quantum_groups}. 
This was generalised in several analytic contexts~\cites{Baaj-Skandalis:Unitaires, 
Yamanouchi:Double_grp_const_vN, Delvaux-vanDaele:Drinf_doub_mult_hopf}. 
We extend this result in the context of modular or manageable multiplicative 
unitaries. Finally, in Section~\ref{sub:Prop_Gen_qnt_codoub}, we 
discuss the coaction and corepresentation of generalised quantum 
codoubles.
\section{Preliminaries} 
  \label{sec:prelim}     
   Throughout we use the symbol ``:='' to abbreviate the phrase ``defined by''.
   
     All Hilbert spaces and \(\Cst\)\nb-algebras are 
     assumed to be separable. 
      
    For two norm\nb-closed subsets~\(X\) and~\(Y\) of a~\(\Cst\)\nb-algebra, 
    let 
    \[
       X\cdot Y\defeq\{xy : x\in X, y\in Y\}^{\textup{CLS}},
    \]
  where CLS stands for the~\emph{closed linear span}. 
   
  For a~\(\Cst\)\nb-algebra~\(A\), let~\(\Mult(A)\) be its multiplier algebra and 
  \(\U(A)\) be the group of unitary multipliers of~\(A\). The unit of 
  \(\Mult(A)\) is denoted by~\(1_{A}\). 
   Next recall some standard facts about multipliers and morphisms of 
   \(\Cst\)\nb-algebras from~\cite{Masuda-Nakagami-Woronowicz:C_star_alg_qgrp}*{Appendix A}.
   Let~\(A\) and~\(B\) be~\(\Cst\)\nb-algebras. A \Star{}homomorphism 
  \(\varphi\colon A\to\Mult(B)\) is called \emph{nondegenerate} if 
  \(\varphi(A)\cdot B=B\). Each nondegenerate \Star{}homomorphism 
  \(\varphi\colon A\to\Mult(B)\) extends uniquely to a unital 
  \Star{}homomorphism \(\widetilde{\varphi}\) from~\(\Mult(A)\) to 
  \(\Mult(B)\). Let \(\Cstcat\) be the category of \(\Cst\)\nb-algebras with nondegenerate 
  \Star{}homomorphisms \(A\to\Mult(B)\) as morphisms \(A\to B\); let Mor(A,B) denote this 
  set of morphisms. We use the same symbol for an element of~\(\Mor(A,B)\) and its 
  unique extenstion from~\(\Mult(A)\) to~\(\Mult(B)\).
  
  Let~\(\conj{\Hils}\) be the conjugate Hilbert space to the Hilbert
  space~\(\Hils\). The \emph{transpose} of an operator
  \(x\in\Bound(\Hils)\) is the operator
  \(x^\transpose\in\Bound(\conj{\Hils})\) defined by
 \(x^\transpose(\conj{\xi}) \defeq \conj{x^*\xi}\) for all
 \(\xi\in\Hils\).  The transposition is a linear, involutive
 anti-automorphism \(\Bound(\Hils)\to\Bound(\conj{\Hils})\).  

  A \emph{representation} of a
  \(\Cst\)\nb-algebra~\(A\) on a Hilbert space~\(\Hils\) is a 
  nondegenerate \Star{}homomorphism \(\pi\colon A\to\Bound(\Hils)\). 
  Since \(\Bound(\Hils)=\Mult(\Comp(\Hils))\), the nondegeneracy
  conditions \(\pi(A)\cdot\Comp(\Hils)=\Comp(\Hils)\) is equivalent 
  to begin \(\pi(A)(\Hils)\) is norm dense in~\(\Hils\),  
  and hence this is same as having a morphism 
  from~\(A\) to~\(\Comp(\Hils)\). The identity representation 
  of~\(\Comp(\Hils)\) on \(\Hils\) is denoted by~\(\Id_{\Hils}\).
  The group of unitary operators on a Hilbert space~\(\Hils\) 
  is denoted by \(\U(\Hils)\). The identity element in 
  \(\U(\Hils)\) is denoted by~\(1_{\Hils}\).

  We use~\(\otimes\) both for the tensor product of Hilbert spaces  
  and minimal tensor product of \(\Cst\)\nb-algebras, which is 
  well understood from the context.
  We write~\(\Flip\) for the tensor flip \(\Hils\otimes\Hils[K]\to
  \Hils[K]\otimes\Hils\), \(x\otimes y\mapsto y\otimes x\), for two 
  Hilbert spaces \(\Hils\) and~\(\Hils[K]\).  We write~\(\flip\) for the
  tensor flip isomorphism \(A\otimes B\to B\otimes A\) for two
  \(\Cst\)\nb-algebras \(A\) and~\(B\). 
  
 Let~\(A_{1}\), \(A_{2}\), \(A_{3}\) be \(\Cst\)\nb-algebras. 
 For any~\(t\in\Mult(A_{1}\otimes A_{2})\) we denote the 
 leg numberings on the level of~\(\Cst\)\nb-algebras as 
 \(t_{12}\defeq t\otimes 1_{A_{3}} \in\Mult(A_{1}\otimes A_{2}\otimes A_{3})\), 
 \(t_{23}\defeq1_{A_{3}}\otimes t_{12}\in\Mult(A_{3}\otimes A_{1}\otimes A_{2})\)
 and~\(t_{13}\defeq\flip_{12}(t_{23})=\flip_{23}(t_{12})\in\Mult(A_{1}\otimes A_{3}\otimes A_{2})\). 
 In particular,  let \(A_{i}=\Bound(\Hils_{i})\) for some Hilbert spaces~\(\Hils_{i}\), where 
 \(i=1,2,3\). Then for any \(t\in\Bound(\Hils_{1}\otimes\Hils_{2})\) 
 the leg numberings are obtained by replacing~\(\flip\) with the 
 conjugation by~\(\Flip\) operator.

 \subsection{Multiplicative unitaries and quantum groups}
  \label{sec:multunit_quantum_groups}
   \begin{definition}[\cite{Baaj-Skandalis:Unitaires}*{D\'{e}finition 1.1}]
    \label{def:multunit}
   Let~\(\Hils\) be a Hilbert space.  A unitary
  \(\Multunit\in\U(\Hils\otimes\Hils)\) is \emph{multiplicative} if it
   satisfies the \emph{pentagon equation}
   \begin{equation}
    \label{eq:pentagon}
     \Multunit_{23}\Multunit_{12}
     = \Multunit_{12}\Multunit_{13}\Multunit_{23}
     \qquad
     \text{in \(\U(\Hils\otimes\Hils\otimes\Hils).\)}
   \end{equation}
 \end{definition}
 Technical assumptions such as manageability
 (\cite{Woronowicz:Multiplicative_Unitaries_to_Quantum_grp}) or, more
 generally, modularity (\cite{Soltan-Woronowicz:Remark_manageable}) are
 needed in order to construct a \(\Cst\)\nb-algebras out of a multiplicative unitary.

 \begin{definition}[\cite{Soltan-Woronowicz:Remark_manageable}*{Definition 2.1}]
   \label{def:modularity}
   A multiplicative unitary~\(\Multunit\in\U(\Hils\otimes\Hils)\) is 
   \emph{modular} if there are positive self-adjoint operators~\(Q\) and~\(\hat{Q}\)
    acting on~\(\Hils\) and~\(\widetilde{\Multunit}\in\U(\conj{\Hils}\otimes\Hils)\) such that: 
    \begin{enumerate}[label=(\roman*)]
     \item\label{eq:modular_Q_hat_Q} 
           \(\Ker(Q)=\Ker(\hat{Q})=\{0\}\) 
           and~\(\Multunit(\hat{Q}\otimes Q)\Multunit[*]=(\hat{Q}\otimes Q)\),
     \item\label{eq:modular_cond} 
           \(\big(x\otimes u\mid\Multunit\mid z\otimes y\big)
          =\big(\conj{z}\otimes Qu\mid\widetilde{\Multunit}\mid\conj{x}\otimes Q^{-1}y\big)\)
           for all~\(x,z\in\Hils\), \(u\in\dom(Q)\) and~\(y\in\dom(Q^{-1})\),
    \end{enumerate} 
    where~\(\conj{\Hils}\) is the complex-conjugate Hilbert space associated to~\(\Hils\).
        
    If~\(\hat{Q}=Q\) then~\(\Multunit\) is called~\emph{manageable}.
   \end{definition} 

\begin{theorem}[\cites{Soltan-Woronowicz:Remark_manageable,
    Soltan-Woronowicz:Multiplicative_unitaries}]
  \label{the:Cst_quantum_grp_and_mult_unit}
  Let~\(\Hils\) be a Hilbert space and
  \(\Multunit\in\U(\Hils\otimes\Hils)\) a modular
  multiplicative unitary.  Let
  \begin{alignat}{2}
    \label{eq:first_leg_slice}
    A &\defeq \{(\omega\otimes\Id_{\Hils})\Multunit :
    \omega\in\Bound(\Hils)_*\}^\CLS,\\
    \label{eq:second_leg_slice}
    \hat{A} &\defeq \{(\Id_{\Hils}\otimes\omega)\Multunit :
    \omega\in\Bound(\Hils)_*\}^\CLS.
  \end{alignat}
  \begin{enumerate}
  \item\label{eq:A_and_A-hat} \(A\) and \(\hat{A}\) are separable, nondegenerate
    \(\Cst\)\nb-subalgebras of~\(\Bound(\Hils)\).
  \item\label{eq:Red_bichar} \(\Multunit\in\U(\hat{A}\otimes
    A)\subseteq\U(\Hils\otimes\Hils)\).  We write~\(\multunit[A]\)
    for~\(\Multunit\) viewed as a unitary multiplier of
    \(\hat{A}\otimes A\). 
  \item\label{eq:Comul} There is a unique \(\Comult[A]\in\Mor(A,A\otimes A)\) such
    that
    \begin{equation}
      \label{eq:W_char_in_second_leg}
      (\Id_{\hat{A}}\otimes \Comult[A])\multunit[A]
      = \multunit[A]_{12}\multunit[A]_{13}
      \qquad \text{in \(\U(\hat{A}\otimes A\otimes A)\);}
    \end{equation}
    it is \emph{coassociative}:
    \begin{equation}
      \label{eq:coassociative}
      (\Comult[A]\otimes\Id_A)\Comult[A]
      = (\Id_A\otimes\Comult[A])\Comult[A],
    \end{equation}
    and satisfies the \emph{cancellation laws}
    \begin{equation}
      \label{eq:Podles}
      \Comult[A](A)\cdot(1_A\otimes A)
      = A\otimes A
      = (A\otimes 1_A)\cdot\Comult[A](A).
    \end{equation}
      \item\label{eq:Coinv} There is a unique closed linear operator~\(\kappa_{A}\) on the Banach space~\(A\) such that
            \(\{(\omega\otimes\Id_{A})\multunit :\omega\in\hat{A}'\}\) is a core for~\(\kappa_{A}\) 
            and
            \[
              \kappa_{A}((\omega\otimes\Id_{A})\multunit)=(\omega\otimes\Id_{A})\multunit[*]
            \]
            for any~\(\omega\in\hat{A}'\). Moreover, for all~\(a,b\in\dom(\kappa_{A})\) the product 
            \(ab\in\dom(\kappa_{A})\) and~\(\kappa_{A}(ab)=\kappa_{A}(b)\kappa_{A}(a)\),  
            the image~\(\kappa_{A}(\dom(\kappa_{A}))\) coincides with~\(\dom(\kappa_{A})^{*}\),
            and~\(\kappa_{A}(\kappa_{A}(a)^{*})^{*}=a\) for all~\(a\in\dom(\kappa_{A})\).
      \item\label{eq:sc_grp} there is a unique one\nb-parameter group, called the ~\emph{scaling group}),
               \(\{\tau^{A}_{t}\}_{t\in\R}\) of \Star{}automorphisms of \(A\) and a unique ultraweakly 
               continuous, involutive, \Star{}anti-automorphism, called the~\emph{unitary antipode}, 
               \(\Coinv_A\) 
               of~\(A\) such that 
               \begin{enumerate}[label=\textup{(\roman*)}]
                 \item\label{eq:pol_dec_coinv} \(\kappa_{A}=\Coinv_{A}\tau^{A}_{\mathrm{i}/2}\);
                 \item\label{eq:bdd_coinv_sc_grp} \(\Coinv_{A}\) commutes with~\(\tau^{A}_{t}\) for all~\(t\in\R\) and 
                          \(\dom(\kappa_{A})=\dom(\tau^{A}_{\mathrm{i}/2})\),
                 \item\label{eq:comul_sc_grp}  \(\Comult[A]\tau^{A}_{t}=(\tau^{A}_{t}\otimes\tau^{A}_{t})\Comult[A]\) 
                       for all~\(t\in\R\),
                 \item\label{eq:bdd_coinv_comul} \(\Comult[A] \Coinv_{A} =\flip(\Coinv_{A}\otimes \Coinv_{A})\Comult[A]\), 
                        where~\(\flip\) denotes the flip map.
           \end{enumerate}
    \item\label{eq:mang_rels} Let~\(Q\) and~\(\widetilde{\Multunit}\) be the operators associated to~\(\Multunit\) in
         Definition~\textup{\ref{def:modularity}}. Then,
         \begin{enumerate}[label=\textup{(\roman*)}]
          \item\label{eq:sc_grp_Q} 
          for any~\(t\in\R\) and~\(a\in A\) we have~\(\tau^{A}_{t}(a)=Q^{2\mathrm{i}t}aQ^{-2\mathrm{i}t}\),
          \item\label{eq:conj_mult} writing~\(a^{\Coinv_{A}}\) instead of~\(\Coinv_{A}(a)\), we have
                      \(\multunit[\transpose\otimes\Coinv_{A}]=\widetilde{\Multunit}^{*}\), where the left hand side 
                      is viewed as a unitary on~\(\conj{\Hils}\otimes\Hils\).
         \end{enumerate}
  \end{enumerate}
\end{theorem}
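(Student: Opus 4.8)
The plan is to follow the cited works of~\cites{Soltan-Woronowicz:Remark_manageable, Soltan-Woronowicz:Multiplicative_unitaries, Woronowicz:Multiplicative_Unitaries_to_Quantum_grp}, separating the purely algebraic output, which uses only the pentagon equation, from the finer analytic structure, for which the modular data \(Q\), \(\hat{Q}\) and~\(\widetilde{\Multunit}\) are indispensable.

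First I would treat~\ref{eq:A_and_A-hat}--\ref{eq:Comul} from the pentagon equation~\eqref{eq:pentagon}. Slicing the first leg of \(\Multunit_{23}\Multunit_{12}=\Multunit_{12}\Multunit_{13}\Multunit_{23}\) shows that \(\{(\omega\otimes\Id_{\Hils})\Multunit:\omega\in\Bound(\Hils)_*\}\) is closed under multiplication, and the symmetric slice in the last leg does the same for~\(\hat{A}\); separability is inherited from~\(\Hils\) and nondegeneracy from the standard density properties of a multiplicative unitary. That \(\Multunit\in\U(\hat{A}\otimes A)\) is then read off from the two leg characterisations~\eqref{eq:first_leg_slice}--\eqref{eq:second_leg_slice}. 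For the coproduct I would set \(\Comult[A](a)\defeq\Multunit(a\otimes 1_{\Hils})\Multunit[*]\); the pentagon rewrites this as a slice of \(\Multunit_{12}\Multunit_{13}\), which both shows \(\Comult[A](A)\subseteq\Mult(A\otimes A)\) and yields the characterisation~\eqref{eq:W_char_in_second_leg}. Coassociativity~\eqref{eq:coassociative} then follows from~\eqref{eq:pentagon} applied in one extra tensor leg, and the cancellation laws~\eqref{eq:Podles} encode the density of the legs of~\(\Multunit\) in \(A\) and~\(\hat{A}\). The one point I would flag here is that the \Star{}closure required to upgrade these subalgebras to \(\Cst\)\nb-subalgebras is not a consequence of the pentagon alone: it is tied to the antipode and is settled only together with~\ref{eq:Coinv}.

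The substantive work is in~\ref{eq:Coinv}--\ref{eq:mang_rels}, where the pentagon is not enough. The densely defined map \((\omega\otimes\Id_A)\multunit\mapsto(\omega\otimes\Id_A)\multunit[*]\) is what we wish to call~\(\kappa_A\), and its closability is exactly where the modular condition~\ref{eq:modular_cond} enters: it re-expresses the matrix coefficients of~\(\Multunit\) through \(\widetilde{\Multunit}\) and~\(Q\), which is what forces \(A\) to be \Star{}closed and makes \(\{(\omega\otimes\Id_A)\multunit:\omega\in\hat{A}'\}\) a core for a closed operator obeying \(\kappa_A(ab)=\kappa_A(b)\kappa_A(a)\). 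I would then define the scaling group by \(\tau^A_t(a)\defeq Q^{2\mathrm{i}t}aQ^{-2\mathrm{i}t}\), using~\ref{eq:modular_Q_hat_Q} to check that conjugation by \(Q^{2\mathrm{i}t}\) preserves~\(A\) and gives a one\nb-parameter group of \Star{}automorphisms, and construct the unitary antipode so that \(\multunit[\transpose\otimes\Coinv_{A}]=\widetilde{\Multunit}^{*}\); this relation of~\ref{eq:conj_mult} simultaneously pins down \(\Coinv_A\) on slices and certifies that it is an involutive \Star{}anti-automorphism. The polar decomposition~\ref{eq:pol_dec_coinv} and the compatibilities~\ref{eq:bdd_coinv_sc_grp}--\ref{eq:bdd_coinv_comul} would then follow by analytically continuing \(t\mapsto\tau^A_t\) to \(t=\mathrm{i}/2\) and by transporting~\eqref{eq:pentagon} through \(\transpose\) and~\(\Coinv_A\).

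The hard part, and the reason this is a theorem rather than a computation, is the analytic control of the unbounded operators \(Q\) and~\(\kappa_A\): proving that the slice set is genuinely a core for a closed~\(\kappa_A\), that \(\tau^A_{\mathrm{i}/2}\) carries the domain forced by \(\dom(\kappa_A)=\dom(\tau^A_{\mathrm{i}/2})\), and that antimultiplicativity survives the passage from bounded slices to unbounded generators. This is precisely what modularity is engineered to supply, and the complete argument is carried out in detail in those references; I would invoke it rather than attempt an independent route.
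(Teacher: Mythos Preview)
The paper does not supply its own proof of this theorem: it is stated as a background result with attribution to~\cites{Soltan-Woronowicz:Remark_manageable, Soltan-Woronowicz:Multiplicative_unitaries} and no argument is given beyond the citation. Your proposal, which sketches the line of reasoning from those cited works and from~\cite{Woronowicz:Multiplicative_Unitaries_to_Quantum_grp}, is therefore already more than the paper provides, and your closing remark that you would ``invoke it rather than attempt an independent route'' is exactly what the paper does.
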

In general, a pair~\(\Bialg{A}\) consisting of a~\(\Cst\)\nb-algebra~\(A\) and 
a morphism~\(\Comult[A]\in\Mor(A,A\otimes A)\) satisfying coassociativity 
condition~\eqref{eq:coassociative} and~\eqref{eq:Podles} is called 
a \emph{bisimplifiable \(\Cst\)\nb-bialgebra} (see~\cite{Baaj-Skandalis:Unitaires}*{Definition 0.1}). 
Two such pairs~\(\Bialg{A}\) and~\(\Bialg{B}\) are isomorphic if there is an isomorphism 
\(\varphi\in\Mor(A,B)\) intertwining the comultiplications: \((\varphi\otimes\varphi)\Comult[A]
=\Comult[B]\varphi\).  
\begin{definition}[\cite{Soltan-Woronowicz:Multiplicative_unitaries}*{Definition 3}]
  \label{def:Qnt_grp}
   Let A be a~\(\Cst\)\nb-algebra and \(\Comult[A]\in\Mor(A,A\otimes A)\). 
    Then the pair~\(\Qgrp{G}{A}\) is a \emph{\(\Cst\)\nb-quantum group} if there is a modular
    multiplicative unitary~\(\Multunit\in\U(\Hils\otimes\Hils)\) such that~\(\Bialg{A}\)
     is isomorphic to the~\(\Cst\)\nb-algebra with comultiplication associated to 
     \(\Multunit\) as described in Theorem~\ref{the:Cst_quantum_grp_and_mult_unit}. 
     Then we say~\(\Qgrp{G}{A}\) is \emph{generated} by~\(\Multunit\).
\end{definition}

 The notions of modularity and manageability are not very far from each other:
 starting from a modular multiplicative unitary one can construct a manageable 
 multiplicative unitary on a different Hilbert space (see~\cite{Soltan-Woronowicz:Remark_manageable}) 
 giving rise to the same~\(\Cst\)\nb-quantum group. Therefore, we shall consider 
 only manageable multiplicative unitaries from now on.

The \emph{dual} multiplicative unitary is \(\DuMultunit\defeq
\Flip\Multunit[*]\Flip\in\U(\Hils\otimes\Hils)\). 
It is modular or manageable if~\(\Multunit\) is.  The \(\Cst\)\nb-quantum group
\(\DuQgrp{G}{A}\) generated by~\(\DuMultunit\) is the \emph{dual}
of~\(\G\). Define~\(\Dumultunit[A]\in\U(A\otimes\hat{A})\) by 
\(\Dumultunit[A]\defeq\flip((\multunit[A])^{*})\in\U(A\otimes\hat{A})\), where 
\(\flip(\hat{a}\otimes a)=a\otimes\hat{a}\). It satisfies
\begin{equation}
 \label{eq:aux_W_char_in_first_leg}
    (\Id_{A}\otimes\DuComult[A])\Dumultunit[A]=\Dumultunit[A]_{12}\Dumultunit[A]_{13}
  \qquad\text{in~\(\U(A\otimes\hat{A}\otimes\hat{A})\).}
\end{equation}
Equivalently, we get the character condition on the first leg 
of~\(\multunit[A]\):
\begin{equation}
  \label{eq:W_char_in_first_leg}
  (\DuComult[A]\otimes\Id_{A})\multunit[A]=\multunit[A]_{23}\multunit[A]_{13}
  \qquad\text{in~\(\U(\hat{A}\otimes\hat{A}\otimes A)\).}
\end{equation}   
\begin{definition}[\cite{Soltan-Woronowicz:Multiplicative_unitaries}*{page 53}]
 \label{def:red_bichar}
 The unitary \(\multunit[A]\in\Mult(\hat{A}\otimes A)\) is called the \emph{reduced bicharacter} 
 for~\((\G,\DuG)\). Equivalently, \(\Dumultunit[A]\in\U(A\otimes\hat{A})\) is the reduced bicharacter 
 for~\((\DuG,\G)\).  
\end{definition}
\begin{theorem}[\cite{Soltan-Woronowicz:Multiplicative_unitaries}*{Theorem 5}]
 \label{the:ind_multunit}
 The \(\Cst\)\nb-quantum group \(\Qgrp{G}{A}\) is independent of the choice of the modular 
 multiplicative unitary that generates~\(\G\). Furthermore, the dual \(\Cst\)\nb-quantum group 
 \(\DuQgrp{G}{A}\) and the reduced bicharacter~\(\multunit[A]\in\U(\hat{A}\otimes A)\) are 
  determined uniquely (up to isomorphism) by~\(\G\).
\end{theorem}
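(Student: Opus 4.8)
The plan is to separate the two assertions. The first is essentially definitional: by Definition~\ref{def:Qnt_grp} the object~\(\G\) \emph{is} the pair~\(\Bialg{A}\), and the clause ``\(\Multunit\) generates~\(\G\)'' merely records that \(\Bialg{A}\) is isomorphic to the bisimplifiable \(\Cst\)\nb-bialgebra extracted from~\(\Multunit\) through Theorem~\ref{the:Cst_quantum_grp_and_mult_unit}; replacing~\(\Multunit\) by another generating unitary does not touch this pair. The substance lies in the second assertion. Suppose \(\Multunit\in\U(\Hils\otimes\Hils)\) and \(\Multunit'\in\U(\Hils'\otimes\Hils')\) are two modular multiplicative unitaries generating~\(\G\) (in view of the reduction to manageable unitaries discussed above, I may take both manageable), yielding duals~\(\DuBialg{A}\), \((\hat{A}',\hat{\Delta}')\) and reduced bicharacters \(\multunit[A]\in\U(\hat{A}\otimes A)\), \(V\in\U(\hat{A}'\otimes A)\). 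Pulling everything back along the two generating isomorphisms, I may assume both constructions realise the \emph{same} \(\Cst\)\nb-bialgebra~\(\Bialg{A}\) via two faithful representations on~\(\Hils\) and~\(\Hils'\). The goal then reduces to producing a \(\Cst\)\nb-bialgebra isomorphism \(\Theta\colon\hat{A}\to\hat{A}'\) with \((\Theta\otimes\Id_{A})\multunit[A]=V\).

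First I would characterise the pair \((\hat{A},\multunit[A])\) intrinsically in terms of~\(\Bialg{A}\). The structural input is that \(\multunit[A]\) is itself a corepresentation of~\(\G\), which is exactly the character condition~\eqref{eq:W_char_in_second_leg}, while \(\hat{A}\) is the closed linear span of its second\nb-leg slices \(\{(\Id\otimes\omega)\multunit[A]:\omega\in A^{*}\}\) and \(\DuComult[A]\) is pinned down by the first\nb-leg character condition~\eqref{eq:W_char_in_first_leg}. The natural candidate for~\(\Theta\) is the slice\nb-matching assignment \((\Id\otimes\omega)\multunit[A]\mapsto(\Id\otimes\omega)V\) for \(\omega\in A^{*}\), indexed by the \emph{same} functionals on the common algebra~\(A\). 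Granting that this is a well\nb-defined isometry, it is then routine, using that the first\nb-leg slices of both \(\multunit[A]\) and~\(V\) recover~\(A\) and that~\eqref{eq:W_char_in_first_leg} expresses \(\DuComult[A]\) and~\(\hat{\Delta}'\) through the same data, to verify that \(\Theta\) is a \Star{}isomorphism intertwining the comultiplications and carrying one bicharacter to the other. Conceptually, \(\multunit[A]\) is the regular corepresentation and \(\hat{A}\) the reduced dual it generates, so the triple is singled out by~\(\Bialg{A}\) up to the isomorphism~\(\Theta\).

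The main obstacle is precisely the well\nb-definedness and isometry of~\(\Theta\). The Hilbert space~\(\Hils\) and the modular operators \(Q,\widetilde{\Multunit}\) attached to~\(\Multunit\) in Definition~\ref{def:modularity} are \emph{not} intrinsic to~\(\G\); only their algebraic shadows are, so one must show that the abstract \(\Cst\)\nb-algebra~\(\hat{A}\) and the abstract unitary \(\multunit[A]\in\U(\hat{A}\otimes A)\) are blind to these choices. Here I would lean on the objects of Theorem~\ref{the:Cst_quantum_grp_and_mult_unit} that \emph{are} uniquely determined by~\(\Bialg{A}\): the scaling group \(\{\tau^{A}_{t}\}\), the unitary antipode~\(\Coinv_{A}\), and the antipode \(\kappa_{A}=\Coinv_{A}\tau^{A}_{\mathrm i/2}\) of parts~(4)--(5), together with the manageability relations of part~(6), notably \(\multunit[\transpose\otimes\Coinv_{A}]=\widetilde{\Multunit}^{*}\) and \(\tau^{A}_{t}=\Ad_{Q^{2\mathrm it}}\), which bind each~\(\Multunit\) to this common intrinsic data. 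Since Haar weights are not assumed, the GNS comparison available in the measured setting is out of reach, and instead the pentagon equation~\eqref{eq:pentagon} together with the uniqueness of the antipode must be exploited to show that the passage from corepresentations of~\(\G\) to representations of~\(\hat{A}\) is an equivalence independent of the generating unitary; this forces the norm and \Star{}structure of~\(\hat{A}\), and hence~\(\Theta\), to be canonical. Establishing this faithfulness is the crux and is where the full strength of manageability enters, the remaining steps being formal manipulations with the pentagon and character relations.
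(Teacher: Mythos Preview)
This theorem is quoted from \cite{Soltan-Woronowicz:Multiplicative_unitaries}*{Theorem 5} and the present paper supplies no proof of its own; it is used as background input. Consequently there is nothing here to compare your argument against.

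That said, your outline is broadly in the spirit of the proof in the cited reference: one fixes the abstract pair \(\Bialg{A}\), observes that the antipode, unitary antipode and scaling group are determined by it (via the characterisation in Theorem~\ref{the:Cst_quantum_grp_and_mult_unit}), and then argues that the corepresentation theory of~\(\G\)---and with it the dual---is intrinsic. Where your sketch becomes vague is exactly the point you flag as the obstacle: showing that the slice map \((\Id\otimes\omega)\multunit[A]\mapsto(\Id\otimes\omega)V\) is well defined and isometric. In \cite{Soltan-Woronowicz:Multiplicative_unitaries} this is not attacked head-on by comparing norms of slices; instead one passes through the \emph{universal} dual, constructed purely from the corepresentation category of \(\Bialg{A}\) (which is the same for any generating modular multiplicative unitary), and then identifies both candidate reduced duals as the image of this universal object under the canonical reducing morphism. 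Your direct slice-matching map~\(\Theta\) is then a consequence rather than the starting point. If you want to make your sketch into a proof, that universal-dual detour is the missing structural ingredient; the ``pentagon plus antipode'' manipulations you allude to do not by themselves pin down the \(\Cst\)\nb-norm on~\(\hat{A}\).
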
 

\begin{definition}
  \label{def:corepresentation}
  A (unitary) \emph{corepresentation} of~\(\G\) on a \(\Cst\)\nb-algebra 
  \(C\) is an element \(\corep{U}\in\U(C\otimes A)\)
  with
  \begin{equation}
    \label{eq:corep_cond}
    (\Id_{C}\otimes\Comult[A])\corep{U} =\corep{U}_{12}\corep{U}_{13}
    \qquad\text{in }\U(C\otimes A\otimes A).
  \end{equation}
  In particular, \(\corep{U}\) is said to be a corepresentation of~\(\G\) on a 
  Hilbert space~\(\Hils\) whenever \(C=\Comp(\Hils)\).
\end{definition}
\begin{example}
  \label{ex:unitary_representations}
  The \emph{trivial} corepresentation of \(\G\) on a Hilbert space~\(\Hils\) is 
  \(\corep{U}=1_{\Hils}\otimes1_{A}\in\U(\Comp(\Hils)\otimes A)\). Equation~\ref{eq:W_char_in_second_leg} 
  shows that the reduced bicharacter~\(\multunit[A]\in\U(\hat{A}\otimes A)\) is 
  a corepresentation of~\(\G\) on~\(\hat{A}\).
\end{example}

\begin{definition}
  \label{def:cont_coaction}
  A \emph{\textup(right\textup) coaction} of~\(\G\)  or \emph{\(\G\)\nb-coaction} 
 on a \(\Cst\)\nb-algebra~\(C\) is a morphism \(\gamma\colon C\to C\otimes
  A\) with the following properties:
  \begin{enumerate}
  \item \(\gamma\) is injective;
  \item \(\gamma\) is a comodule structure, that is,
    \begin{equation}
      \label{eq:right_coaction}
       (\Id_{C}\otimes\Comult[A])\gamma=(\gamma\otimes\Id_{A})\gamma ;
    \end{equation}
  \item \(\gamma\) satisfies the \emph{Podleś condition}:
  \begin{equation}
   \label{eq:Podles_cond}
    \gamma(C)\cdot(1_C\otimes A)=C\otimes A .
   \end{equation}
  \end{enumerate}
 \end{definition}  
 
\begin{example}
  \label{ex:continuous_coactions}
  The \emph{trivial} coaction of \(\G\) on a \(\Cst\)\nb-algebra~\(C\), is 
  defined by \(\tau\colon C\to C\otimes A\), \(c\mapsto c\otimes 1_A\). 
  The cancellation law~\eqref{eq:Podles} implies that 
  \(\Comult[A]\colon A\to A\otimes A\) is a \(\G\)\nb-coaction 
  on~\(A\).  More generally, \(\Id_C\otimes\Comult[A]\colon C\otimes A\to C
  \otimes A\otimes A\) is a \(\G\)\nb-coaction on \(C\otimes A\) for any
  \(\Cst\)\nb-algebra~\(C\).  Lemma~\(2.9\) in~\cite{Meyer-Roy-Woronowicz:
  Twisted_tensor} says that any coaction may be embedded into one of 
  this form.
\end{example}
  A pair \((C,\gamma)\) consisting of a~\(\Cst\)\nb-algebra \(C\) 
  and a \(\G\)\nb-coaction~\(\gamma\) on \(C\) 
  is called a \emph{\(\G\)\nb-\(\Cst\)\nb-algebra} . 
  A morphism \(f\colon C\to D\) between two
  \(\G\)\nb-\(\Cst\)\nb-algebras \((C,\gamma)\) and \((D,\delta)\) is
  \emph{\(\G\)\nb-\hspace{0pt}equivariant} if \(\delta f =
  (f\otimes\Id_A)\gamma\). Let \(\Cstcat(\G)\) denote the category 
  with \(\G\)\nb-\(\Cst\)-algebras as objects and \(\G\)\nb-equivariant
 morphisms as arrows. 
\subsection{Quantum group homomorphisms}
\label{sec:bicharacters_morphisms}
Let \(\Qgrp{G}{A}\) and \(\Qgrp{H}{B}\) be \(\Cst\)\nb-quantum groups.
Let \(\DuQgrp{G}{A}\) and \(\DuQgrp{H}{B}\) be their duals.
\begin{definition}[\cite{Meyer-Roy-Woronowicz:Homomorphisms}*{Definition 16}]
  \label{def:bicharacter}
  A \emph{bicharacter from \(\G\) to~\(\DuG[H]\)} is a unitary
  \(\bichar\in\U(\hat{A}\otimes \hat{B})\) with
  \begin{alignat}{2}
    \label{eq:bichar_char_in_first_leg}
    (\DuComult[A]\otimes\Id_{\hat{B}})\bichar
    &=\bichar_{23}\bichar_{13}
    &\qquad &\text{in }
    \U(\hat{A}\otimes\hat{A}\otimes \hat{B}),\\
    \label{eq:bichar_char_in_second_leg}
    (\Id_{\hat{A}}\otimes\DuComult[B])\bichar
    &=\bichar_{12}\bichar_{13}
    &\qquad &\text{in }
    \U(\hat{A}\otimes \hat{B}\otimes \hat{B}).
  \end{alignat}
\end{definition}
  A \emph{Hopf~\(^*\)\nb-homomorphism} from~\(\G\) to 
  \(\DuG[H]\) is an element~\(f\in\Mor(A,\hat{B})\) 
  that intertwines the comultiplications: 
  \begin{equation} 
    \label{eq:hopf_star_hom}
      (f\otimes f)\Comult[A](a)= \DuComult[B] f(a) 
      \quad\text{for~\(a\in A\).}
 \end{equation}      
  Then~\(\bichar_{f}\defeq(\Id_{\hat{A}}\otimes f)\multunit[A]\in\U(\hat{A}\otimes 
  \hat{B})\) is a bicharacter from~\(\G\) to~\(\G[H]\). We say that~\(\bichar_{f}\) is 
  \emph{induced} by~\(f\). 

Bicharacters in \(\U(\hat{A}\otimes B)\) are interpreted as quantum
group morphisms from~\(\G\) to~\(\G[H]\)
in~\cite{Meyer-Roy-Woronowicz:Homomorphisms}.  We shall use
bicharacters in \(\U(\hat{A}\otimes \hat{B})\) throughout. Let us recall 
some definitions from~\cite{Meyer-Roy-Woronowicz:Homomorphisms} in this
setting.

\begin{definition}
  \label{def:right_quantum_morphism}
  A \emph{right quantum group homomorphism} from~\(\G\) to~\(\DuG[H]\) is
  a morphism \(\Delta_R\colon A\to A\otimes\hat{B}\) with the following 
  properties:
  \begin{equation}
    \label{eq:right_homomorphism}
     (\Comult[A]\otimes\Id_{\hat{B}})\Delta_{R} 
   =(\Id_{A}\otimes\Delta_{R})\Comult[A]
     \quad\text{and}\quad 
     (\Id_{A}\otimes\DuComult[B])\Delta_{R}
  =(\Delta_{R}\otimes\Id_{\hat{B}})\Delta_{R}.
 \end{equation}           
 Similarly, a \emph{left quantum group homomorphism} from~\(\G\) to~\(\DuG[H]\) is a 
 morphism \(\Delta_L\colon A\to\hat{B}\otimes A\) satisfying the following properties:
 \begin{equation}
  \label{eq:left_homomorphism}
   (\Id_{\hat{B}}\otimes\Comult[A])\Delta_{L}
= (\Delta_{L}\otimes\Id_{A})\Comult[A]
   \quad\text{and}\quad
   (\DuComult[B]\otimes\Id_{A})\Delta_{L}
=(\Id_{\hat{B}}\otimes\Delta_{L})\Delta_{L}.     
 \end{equation} 
 \end{definition}
The following theorem summarises some of the main results
of~\cite{Meyer-Roy-Woronowicz:Homomorphisms}.
\begin{theorem}
  \label{the:equivalent_notion_of_homomorphisms}
  There are natural bijections between the following sets:
  \begin{enumerate}
  \item bicharacters \(\bichar\in\U(\hat{A}\otimes\hat{B})\)
    from~\(\G\) to~\(\DuG[H]\);
  \item bicharacters \(\Dubichar\in\U(\hat{B}\otimes\hat{A})\)
    from~\(\G[H]\) to~\(\DuG\);
  \item right quantum group homomorphisms \(\Delta_R\colon A\to
    A\otimes \hat{B}\);
  \item left quantum group homomorphisms \(\Delta_L\colon A\to \hat{B}\otimes A\);
  \item  the functor~\(F\) associated to~\(\Delta_R\) is the unique one that
  maps \((A,\Comult[A])\) to \((A,\Delta_R)\).  In general, \(F\) maps
  a continuous \(\G\)\nb-coaction \(\gamma\colon C\to C\otimes A\) to
  the unique \(\DuG[H]\)\nb-coaction \(\delta\colon C\to C\otimes
  \hat{B}\) for which the following diagram commutes:
  \begin{equation}
    \label{eq:right_quantum_group_homomorphism_as_fucntor}
    \begin{tikzpicture}[baseline=(current bounding box.west)]
      \matrix(m)[cd,column sep=4.5em]{
        C&C\otimes A\\
        C\otimes\hat{B}& C\otimes A\otimes \hat{B}\\
      };
      \draw[cdar] (m-1-1) -- node {\(\gamma\)} (m-1-2);
      \draw[cdar] (m-1-1) -- node[swap] {\(\delta\)} (m-2-1);
      \draw[cdar] (m-1-2) -- node {\(\Id_C\otimes\Delta_R\)} (m-2-2);
      \draw[cdar] (m-2-1) -- node[swap] {\(\gamma\otimes\Id_{\hat{B}}\)} (m-2-2);
    \end{tikzpicture}
    \qquad .
  \end{equation}  
  \end{enumerate}
  The first bijection maps a bicharacter~\(\bichar\) to its dual 
  \(\Dubichar\in\U(\hat{B}\otimes\hat{A})\) defined by
  \begin{equation}
    \label{eq:dual_bicharacter}
    \Dubichar\defeq\flip(\bichar^*).
  \end{equation}
  A bicharacter~\(\bichar\) and a right quantum group
  homomorphism~\(\Delta_R\) determine each other uniquely via
  \begin{equation}
    \label{eq:def_V_via_right_homomorphism}
    (\Id_{\hat{A}} \otimes \Delta_R)(\multunit[A])
    = \multunit[A]_{12}\bichar_{13}.
  \end{equation}
  Similarly, a bicharacter~\(\bichar\in\U(\hat{A}\otimes\hat{B})\)
  and a left quantum group homomorphisms~\(\Delta_L\) 
  determine each other uniquely by 
  \begin{equation}
    \label{eq:def_V_via_left_homomorphism}
    (\Id_{\hat{A}} \otimes \Delta_L)(\multunit[A])
    = \bichar_{12}\multunit[A]_{13}.
  \end{equation} 
 \end{theorem}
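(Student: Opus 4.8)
The plan is to realise each claimed bijection by the three explicit formulas \eqref{eq:dual_bicharacter}, \eqref{eq:def_V_via_right_homomorphism} and \eqref{eq:def_V_via_left_homomorphism}, and to reduce every verification to leg-numbering on the reduced bicharacter~\(\multunit[A]\). The one structural fact I would lean on throughout is that, by~\eqref{eq:first_leg_slice}, the first-leg slices \((\omega\otimes\Id)\multunit[A]\) are linearly dense in~\(A\); hence any map out of~\(A\) is completely determined by its action on~\(\multunit[A]\), i.e.\ by the unitary \((\Id_{\hat{A}}\otimes\Delta_R)\multunit[A]\). This yields uniqueness in \eqref{eq:def_V_via_right_homomorphism}--\eqref{eq:def_V_via_left_homomorphism} for free and turns each axiom into an identity between unitaries that I can check leg by leg.

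For (1)\(\leftrightarrow\)(2) I would transport the two identities of Definition~\ref{def:bicharacter} through \(\bichar\mapsto\flip(\bichar^*)\). Taking adjoints reverses products, while \(\flip\) interchanges the two legs and thereby swaps the roles of \(\DuComult[A]\) and \(\DuComult[B]\); so \eqref{eq:bichar_char_in_first_leg} for~\(\bichar\) becomes \eqref{eq:bichar_char_in_second_leg} for~\(\Dubichar\), and vice versa. That the assignment is an involution, hence a bijection, is immediate from \((\flip(\bichar^*))^{*}=\flip(\bichar)\) together with \(\flip^{-1}\flip=\Id\).

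The core is (1)\(\leftrightarrow\)(3) (and, symmetrically, (1)\(\leftrightarrow\)(4)). Starting from a bicharacter~\(\bichar\), I set \(X\defeq\multunit[A]_{12}\bichar_{13}\in\U(\hat{A}\otimes A\otimes\hat{B})\) and first check the first-leg corepresentation identity \((\DuComult[A]\otimes\Id_{A\otimes\hat{B}})X=X_{234}X_{134}\): expanding the left-hand side by \eqref{eq:W_char_in_first_leg} on the \(\multunit[A]\)-leg and by \eqref{eq:bichar_char_in_first_leg} on the \(\bichar\)-leg, the two inner factors \(\multunit[A]_{13}\) and \(\bichar_{24}\) commute because they act on disjoint legs, and the identity drops out. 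By the dictionary between such \(\hat{A}\)-leg characters and morphisms out of~\(A\) used in~\cite{Meyer-Roy-Woronowicz:Homomorphisms}, there is then a unique \(\Delta_R\in\Mor(A,A\otimes\hat{B})\) with \((\Id_{\hat{A}}\otimes\Delta_R)\multunit[A]=X\), which is precisely \eqref{eq:def_V_via_right_homomorphism}. The two conditions in \eqref{eq:right_homomorphism} follow by the same bookkeeping: applying \(\Comult[A]\) to the middle leg of~\(X\) and using \eqref{eq:W_char_in_second_leg} gives the first identity (which in fact needs no hypothesis on~\(\bichar\)), while applying \(\DuComult[B]\) to the last leg and using \eqref{eq:bichar_char_in_second_leg} gives the second. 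For the reverse direction I would recover \(\bichar\) from a given~\(\Delta_R\) via \(\bichar_{13}=(\multunit[A]_{12})^{*}(\Id_{\hat{A}}\otimes\Delta_R)\multunit[A]\); the first identity in \eqref{eq:right_homomorphism} is precisely what forces this operator to be independent of the middle leg, so that \(\bichar\) is genuinely of the form~\(\bichar_{13}\), and reversing the two computations above shows it satisfies \eqref{eq:bichar_char_in_first_leg} and \eqref{eq:bichar_char_in_second_leg}. The case (1)\(\leftrightarrow\)(4) is identical with \(X=\bichar_{12}\multunit[A]_{13}\) and the legs relabelled.

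Finally, for (5) I would define the functor on \(\Cstcat(\G)\) by sending a coaction \(\gamma\colon C\to C\otimes A\) to the unique \(\delta\colon C\to C\otimes\hat{B}\) solving \((\gamma\otimes\Id_{\hat{B}})\delta=(\Id_{C}\otimes\Delta_R)\gamma\). Uniqueness of~\(\delta\) is immediate from injectivity of~\(\gamma\); existence (that the right-hand side lands in the image of \(\gamma\otimes\Id_{\hat{B}}\)) and the Podleś condition \eqref{eq:Podles_cond} for~\(\delta\) follow from the Podleś condition for~\(\gamma\) together with \eqref{eq:right_homomorphism}, and the comodule identity for~\(\delta\) and functoriality on equivariant morphisms are then routine diagram chases. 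Taking \((C,\gamma)=(A,\Comult[A])\) returns \(\delta=\Delta_R\) exactly by the first identity of \eqref{eq:right_homomorphism}, which pins down~\(F\) as asserted. I expect the main obstacle to be the analytic step hidden in (3): promoting the algebraic prescription \eqref{eq:def_V_via_right_homomorphism} to a genuine nondegenerate \Star{}homomorphism, which is where manageability of~\(\multunit[A]\) and the corepresentation--morphism correspondence of~\cite{Meyer-Roy-Woronowicz:Homomorphisms} are essential; the corresponding sticking point in (5) is verifying the Podleś condition for the induced coaction~\(\delta\).
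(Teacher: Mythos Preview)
The paper does not give its own proof of this theorem: it is stated in the preliminaries as a summary of the main results of \cite{Meyer-Roy-Woronowicz:Homomorphisms}, with no argument supplied. So there is nothing in the present paper to compare your attempt against; the correct ``proof'' here is simply the citation.

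That said, your sketch is a faithful outline of how the cited reference proceeds. The bijection (1)\(\leftrightarrow\)(2) via \(\Dubichar=\flip(\bichar^{*})\) is exactly \cite{Meyer-Roy-Woronowicz:Homomorphisms}*{Section~3}; your construction of~\(\Delta_R\) from~\(\bichar\) by forming \(X=\multunit[A]_{12}\bichar_{13}\), checking that it is a character in the first leg, and invoking the corepresentation--morphism correspondence is precisely the argument of \cite{Meyer-Roy-Woronowicz:Homomorphisms}*{Theorem~4.8 and Lemma~4.13}; and the functor in (5) is \cite{Meyer-Roy-Woronowicz:Homomorphisms}*{Theorem~6.1}. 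You have also correctly located the nontrivial analytic input: the passage from the formula \eqref{eq:def_V_via_right_homomorphism} to an honest element of \(\Mor(A,A\otimes\hat{B})\) is the step that genuinely uses the universal quantum group and manageability (this is \cite{Meyer-Roy-Woronowicz:Homomorphisms}*{Section~4}), and the Podle\'s condition for the induced coaction~\(\delta\) is handled there via the universal lift. One small refinement: in your reverse step (3)\(\Rightarrow\)(1), the independence of \((\multunit[A]_{12})^{*}(\Id_{\hat{A}}\otimes\Delta_R)\multunit[A]\) from the middle leg is not literally the first identity in~\eqref{eq:right_homomorphism} but is obtained from it after one application of~\eqref{eq:W_char_in_second_leg}; the conclusion is the same.
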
 
 The dual bicharacter~\(\Dubichar\in\U(\hat{B}\otimes\hat{A})\)  
 describes the dual quantum group homomorphism 
 \(\hat{\Delta}_{R}\colon B\to B\otimes\hat{A}\). Thus~\(\Delta_R\) 
 and~\(\hat{\Delta}_{R}\) are in bijection as are~\(\bichar\) and~\(\Dubichar\). 
 A similar statement holds for~\(\Delta_{L}\) and~\(\hat{\Delta}_{L}\). 

\section{Heisenberg pairs revisited} 
 \label{sec:Heisenberg}
 Let \(\Qgrp{G}{A}\) and \(\Qgrp{H}{B}\) be \(\Cst\)\nb-quantum groups. 
 Let~\(\bichar\in\U(\hat{A}\otimes\hat{B})\) be a bicharacter.
 \begin{definition}[\cite{Meyer-Roy-Woronowicz:Twisted_tensor}*{Definition 3.1}]
  \label{def:V-Heisenberg_pair}
  A pair of representations \(\alpha\colon A\to\Bound(\Hils)\),
  \(\beta\colon B\to\Bound(\Hils)\) is called a
  \emph{\(\bichar\)\nb-Heisenberg pair}, or briefly
  \emph{Heisenberg pair}, if
  \begin{equation}
    \label{eq:V-Heisenberg_pair}
    \multunit[A]_{1\alpha}\multunit[B]_{2\beta}
    =\multunit[B]_{2\beta}\multunit[A]_{1\alpha} \bichar_{12}
    \qquad\text{in }\U(\hat{A}\otimes\hat{B}\otimes\Comp(\Hils));
  \end{equation}
  here \(\multunit[A]_{1\alpha} \defeq
  ((\Id_{\hat{A}}\otimes\alpha)\multunit[A])_{13}\) and
  \(\multunit[B]_{2\beta} \defeq
  ((\Id_{\hat{B}}\otimes\beta)\multunit[B])_{23}\).  It is
  called a \emph{\(\bichar\)\nb-anti-Heisenberg pair}, or
  briefly \emph{anti-Heisenberg pair}, if
  \begin{equation}
    \label{eq:V-anti-Heisenberg_pair}
    \multunit[B]_{2\beta}\multunit[A]_{1\alpha}
    =\bichar_{12}\multunit[A]_{1\alpha}\multunit[B]_{2\beta}
    \qquad\text{in \(\U(\hat{A}\otimes\hat{B}\otimes\Comp(\Hils))\),}
  \end{equation}
  with similar conventions as above.
  
  A~\(\bichar\)\nb-Heisenberg or~\(\bichar\)\nb-anti-Heisenberg pair~\((\alpha,\beta)\) is called 
  \emph{faithful} if the associated representations~\(\alpha\) and~\(\beta\) are faithful.
\end{definition} 

Recall that the unitary antipode \(\Coinv_A\colon A\to A\), is a linear,
involutive anti-automorphism (see Theorem~\ref{the:Cst_quantum_grp_and_mult_unit}). 
Given a pair of representations~\(\VHeisPair\) of~\(A\) and~\(B\) on~\(\Hils\) 
define the representations~\(\bar\alpha\colon A\to\Bound(\conj{\Hils})\) and 
\(\bar\beta\colon B\to\Bound(\conj{\Hils})\) by
\begin{align}
  \label{ex:exist_V-anti-Heisenberg_pair}   
  \bar\alpha(a)\defeq(\alpha(\Coinv_A(a)))^\transpose 
  &\quad&\text{and}&\quad &
  \bar\beta(b)&\defeq(\beta(\Coinv_B(b)))^\transpose.
\end{align}
Then~\cite{Meyer-Roy-Woronowicz:Twisted_tensor}*{Lemma 3.6} shows that 
\(\VHeisPair\) is a~\(\bichar\)\nb-Heisenberg pair on~\(\Hils\) if and only if  
\((\bar\alpha,\bar\beta)\)  as a~\(\bichar\)\nb-anti-Heisenberg pair 
on~\(\conj{\Hils}\).

In particular, assume that~\(\G\) and~\(\G[H]\) have bounded counits~\(e^{A}\colon A\to\C\) and 
\(e^{B}\colon B\to\C\), respectively. Then~\cite{Soltan-Woronowicz:Multiplicative_unitaries}*{Proposition 31} 
gives~\((\Id_{\hat{A}}\otimes e^{A})\multunit[A]= 1_{\hat{A}}\) and~\((\Id_{\hat{B}}\otimes e^{B})\multunit[B]
=1_{\hat{B}}\). Therefore \((e^{A},e^{B})\) is a \(\bichar\)\nb-Heisenberg and~\(\bichar\)\nb-anti-Heisenberg 
pair for~\(\bichar=1_{\hat{A}}\otimes 1_{\hat{B}}\in\U(\hat{A}\otimes\hat{B})\). Hence, in general,
a \(\bichar\)\nb-Heisenberg or \(\bichar\)\nb-anti-Heisenberg pair need not to be faithful. 
 
 When~\(\G=\DuG[H]\) and~\(\bichar=\multunit[A]\in\U(\hat{A}\otimes A)\),
 \(\multunit[A]\)\nb-Heisenberg pairs or \(\multunit[A]\)\nb-anti -Heisenberg pairs
 are also called~\(\G\)\nb-Heisenberg pairs or~\(\G\)\nb-anti-Heisenberg pairs, 
 respectively. Lemma~\textup{3.4} in~\cite{Meyer-Roy-Woronowicz:Twisted_tensor} 
 shows that a pair of representations~\((\pi,\hat{\pi})\) of \(A\)
  and~\(\hat{A}\) on~\(\Hils_{\pi}\) is a \(\G\)\nb-Heisenberg pair
  if and only if
  \begin{equation}
    \label{eq:Heisenberg_pair}
    \multunit[A]_{\hat{\pi}3}\multunit[A]_{1\pi}
    = \multunit[A]_{1\pi}\multunit[A]_{13}\multunit[A]_{\hat{\pi}3}
    \qquad\text{in }\U(\hat{A}\otimes\Comp(\Hils_{\pi})\otimes A).
  \end{equation}
    Here~\(\multunit[A]_{1\pi}\defeq((\Id_{\hat{A}}\otimes\pi)\multunit[A])_{12}\) and 
  \(\multunit[A]_{\hat{\pi}3}\defeq((\hat{\pi}\otimes\Id_{A})\multunit[A])_{23}\).

  Similarly, \((\rho,\hat{\rho})\) is a \(\G\)\nb-anti-Heisenberg pair on~\(\Hils_{\rho}\) 
  if and only if
  \begin{equation}
    \label{eq:anti-Heisenberg_pair}
    \multunit[A]_{1\rho}\multunit[A]_{\hat{\rho}3}
    = \multunit[A]_{\hat{\rho}3}\multunit[A]_{13}\multunit[A]_{1\rho}
    \qquad\text{in }\U(\hat{A}\otimes\Comp(\Hils_{\rho})\otimes A).
  \end{equation}
 Furthermore, Theorem~\ref{the:Cst_quantum_grp_and_mult_unit} and 
 \cite{Meyer-Roy-Woronowicz:Twisted_tensor}*{Lemma 3.6}  
 ensure that faithful \(\G\)\nb-Heisenberg and \(\G\)\nb-anti-Heisenberg pairs 
 exist. The following result is due to S.L.~Woronowicz by a private communication.
  \begin{proposition}
  \label{prop:faithful_property_of_Heisenberg_pairs}
  Every~\(\G\)\nb-Heisenberg pair or \(\G\)\nb-anti-Heisenberg pair is faithful.
 \end{proposition}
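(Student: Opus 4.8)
The plan is to show that \(\pi\) is faithful for an arbitrary \(\G\)\nb-Heisenberg pair \((\pi,\hat\pi)\); the faithfulness of \(\hat\pi\) then follows by running the same argument for the dual quantum group \(\DuG\), under which \((\hat\pi,\pi)\) becomes a \(\DuG\)\nb-Heisenberg (respectively anti\nb-Heisenberg) pair. The anti\nb-Heisenberg case reduces to the Heisenberg case: by the bijection \eqref{ex:exist_V-anti-Heisenberg_pair} a pair \((\rho,\hat\rho)\) is \(\G\)\nb-anti-Heisenberg if and only if \((\bar\rho,\bar{\hat\rho})\) is \(\G\)\nb-Heisenberg, and since \(\Coinv_A\) is a bijective anti\nb-automorphism and the transpose is a bijection, \(\rho\) is faithful exactly when \(\bar\rho\) is.

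Put \(Y\defeq(\hat\pi\otimes\Id)\multunit[A]\in\U(\Bound(\Hils_\pi)\otimes A)\). Since \(\hat\pi\) is a homomorphism, \eqref{eq:W_char_in_second_leg} shows that \(Y\) is a corepresentation of \(\G\) on \(\Hils_\pi\). Now I slice the first (the \(\hat{A}\)) leg of the Heisenberg relation \eqref{eq:Heisenberg_pair} against a functional \(\omega\). Using \(\multunit[A]_{1\pi}\multunit[A]_{13}=\bigl(\Id_{\hat{A}}\otimes(\pi\otimes\Id)\Comult[A]\bigr)\multunit[A]\), which is \eqref{eq:W_char_in_second_leg} with \(\pi\) applied to the second leg, the two sides of \eqref{eq:Heisenberg_pair} become \(Y(\pi(a_\omega)\otimes 1)\) and \((\pi\otimes\Id)\Comult[A](a_\omega)\,Y\) with \(a_\omega\defeq(\omega\otimes\Id)\multunit[A]\). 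As \(\{a_\omega\}\) is dense in \(A\), this yields the covariance identity
\[
(\pi\otimes\Id)\Comult[A](a)=Y(\pi(a)\otimes 1)Y^*\qquad(a\in A),
\]
i.e.\ \(\pi\) is \(\G\)\nb-equivariant from \((A,\Comult[A])\) to \((\Bound(\Hils_\pi),\Ad_Y)\).

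Let \(J\defeq\Ker\pi\), a closed two\nb-sided ideal that is proper because \(\pi\) is nondegenerate. Applying \(\pi\otimes\Id\) to \(\Comult[A](a)(1\otimes b)\) for \(a\in J\), \(b\in A\) and using the covariance identity gives \(0\); hence \(\Comult[A](J)\cdot(1\otimes A)\subseteq J\otimes A\), equivalently \((\Id\otimes\omega)\Comult[A](J)\subseteq J\) for every functional \(\omega\). The proposition thus reduces to the following ergodicity statement, which I regard as the heart of the matter: \emph{a closed ideal \(J\subsetneq A\) with \(\Comult[A](J)\cdot(1\otimes A)\subseteq J\otimes A\) is \(\{0\}\).} I would derive this from the cancellation laws \eqref{eq:Podles}. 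The classical picture explains both the statement and the mechanism: for \(A=\Contvin(G)\) and \(J=\Contvin(U)\) the hypothesis reads \(gh\in U\Rightarrow g\in U\), and taking, for a fixed \(u\in U\) and an arbitrary \(g\), the point \(h=g^{-1}u\) forces \(g\in U\), so \(U=G\). The quantum counterpart of this transitivity of right translation is exactly \eqref{eq:Podles}.

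The main obstacle is this ergodicity lemma, and precisely the ``\(h=g^{-1}u\)'' step, which must be carried out without a counit or Haar weight. I would realise the left translations as the first\nb-leg slices \((\mu\otimes\Id)\Comult[A](j)\) and control their closed span by the left cancellation law in \eqref{eq:Podles}, while the unitary antipode relation \eqref{eq:bdd_coinv_comul} interchanges first\nb- and second\nb-leg invariance and thereby supplies the ``inverses'' that the cancellation laws encode; this is the point at which manageability genuinely enters. It is also the step that collapses for a general bicharacter \(\bichar\neq\multunit[A]\), where the covariance identity no longer feeds \(\Ker\pi\) into the comultiplication of \(\G\) — in agreement with the counit example preceding the proposition, which shows that arbitrary \(\bichar\)\nb-Heisenberg pairs need not be faithful.
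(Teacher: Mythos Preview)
Your covariance derivation is correct and matches \eqref{eq:unit_imp_comult}, and the observation that \(\Ker\pi\) is then a right-invariant ideal is valid. But the proof stops at the hardest step: you explicitly flag the ergodicity lemma---that a proper closed ideal \(J\) with \(\Comult[A](J)\cdot(1\otimes A)\subseteq J\otimes A\) must vanish---as ``the main obstacle,'' and the sketch you offer (cancellation laws plus the unitary antipode supplying ``inverses'') is not a proof. In the absence of Haar weights this simplicity statement is not routine; the cancellation laws \eqref{eq:Podles} give density of \(\Comult[A](A)\cdot(1\otimes A)\) in \(A\otimes A\), but extracting from this that a one-sided invariant proper ideal must vanish requires an honest argument that you have not supplied. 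The classical picture you invoke relies on inverting group elements pointwise, and the unitary antipode relation in Theorem~\ref{the:Cst_quantum_grp_and_mult_unit}\ref{eq:sc_grp}\ref{eq:bdd_coinv_comul} flips the tensor factors of \(\Comult[A]\), so it converts your right-invariance into a left-invariance condition on the ideal \(\Coinv_A(J)\) rather than directly producing the transitivity you need.

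The paper avoids this entirely. It proves a comparison lemma (Lemma~\ref{lemm:Heisenberg_vs_anti_Heisenberg_1}): for any \(\G\)-Heisenberg pair \((\pi,\hat\pi)\) and any \(\G\)-anti-Heisenberg pair \((\rho,\hat\rho)\), the representations \(\pi\otimes\hat\rho\) and \(\rho\otimes\hat\pi\) of \(A\otimes\hat A\) are unitarily equivalent, via the explicit intertwiner \(\multunit_{\hat\rho\pi}\Flip\multunit_{\hat\pi\rho}\). By a standard fact about tensor products of representations (Dixmier, Proposition~5.3), this forces \(\pi\) and \(\rho\) to be quasi-equivalent, so the first legs of all Heisenberg and anti-Heisenberg pairs lie in a single quasi-equivalence class. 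Since that class contains the faithful representation of \(A\) furnished by Theorem~\ref{the:Cst_quantum_grp_and_mult_unit}, every such \(\pi\) is faithful; the same mechanism handles \(\hat\pi\). This approach trades your unproved ergodicity lemma for a concrete unitary computation and a representation-theoretic fact, sidestepping invariant ideals altogether.
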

 
 To prove this, we first establish the following lemma.
 
 \begin{lemma}
  \label{lemm:Heisenberg_vs_anti_Heisenberg_1}
  Let \(\HeisPair{\pi}\) and \(\HeisPair{\rho}\) be a~\(\G\)\nb-Heisenberg pair and a 
  \(\G\)\nb-anti-Heisenberg pair on Hilbert spaces \(\Hils_{\pi}\) and \(\Hils_{\rho}\), respectively. 
  Then~\(\pi\otimes\hat{\rho}\colon A\otimes\hat{A}\to\Bound(\Hils_{\pi}\otimes\Hils_{\rho})\) and
  \(\rho\otimes\hat{\pi}\colon A\otimes\hat{A}\to\Bound(\Hils_{\rho}\otimes\Hils_{\pi})\) are unitarily equivalent. 
 \end{lemma}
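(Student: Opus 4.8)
The plan is to write down an explicit unitary intertwiner. Let \(\Flip\colon\Hils_{\pi}\otimes\Hils_{\rho}\to\Hils_{\rho}\otimes\Hils_{\pi}\) be the tensor flip. Every unitary \(\Hils_{\pi}\otimes\Hils_{\rho}\to\Hils_{\rho}\otimes\Hils_{\pi}\) is of the form \(\Flip X\) with \(X\in\U(\Hils_{\pi}\otimes\Hils_{\rho})\), and \(A\otimes\hat{A}\) is generated by the elements \(a\otimes1\) and \(1\otimes\hat{a}\); so it suffices to test the intertwining property on these generators. A one-line computation pushing \(\Flip\) past the generators shows that \(\Flip X\) intertwines \(\pi\otimes\hat\rho\) and \(\rho\otimes\hat\pi\) if and only if
\begin{align*}
  X(\pi(a)\otimes1_{\Hils_{\rho}})X^{*} &= 1_{\Hils_{\pi}}\otimes\rho(a), &
  X(1_{\Hils_{\pi}}\otimes\hat\rho(\hat{a}))X^{*} &= \hat\pi(\hat{a})\otimes1_{\Hils_{\rho}},
\end{align*}
for all \(a\in A\) and \(\hat{a}\in\hat{A}\). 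Thus the whole problem reduces to producing a unitary \(X\) on \(\Hils_{\pi}\otimes\Hils_{\rho}\) satisfying these two twisted commutation relations.

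My candidate is the product \(X\defeq X_{1}^{*}X_{0}\), where
\[
  X_{0}\defeq(\hat\pi\otimes\rho)(\multunit[A])\in\U(\Hils_{\pi}\otimes\Hils_{\rho}),
  \qquad
  X_{1}\defeq(\pi\otimes\hat\rho)(\Dumultunit[A])\in\U(\Hils_{\pi}\otimes\Hils_{\rho}).
\]
These make sense and are unitary because \(\multunit[A]\in\U(\hat{A}\otimes A)\) and \(\Dumultunit[A]\in\U(A\otimes\hat{A})\), so the algebra slots match the representations. The decisive point is that each factor sends a ``flat'' generator to the \emph{same} comultiplication-type element:
\begin{align*}
  X_{0}(\pi(a)\otimes1)X_{0}^{*} &= (\pi\otimes\rho)\Comult[A](a), &
  X_{1}(1\otimes\rho(a))X_{1}^{*} &= (\pi\otimes\rho)\Comult[A](a), \\
  X_{0}(1\otimes\hat\rho(\hat{a}))X_{0}^{*} &= (\hat\pi\otimes\hat\rho)\DuComult[A](\hat{a}), &
  X_{1}(\hat\pi(\hat{a})\otimes1)X_{1}^{*} &= (\hat\pi\otimes\hat\rho)\DuComult[A](\hat{a}).
\end{align*}
Granting these four \emph{pivot relations}, the two relations required of \(X=X_{1}^{*}X_{0}\) are immediate: conjugating \(\pi(a)\otimes1\) by \(X_{0}\) lands on \((\pi\otimes\rho)\Comult[A](a)\), and conjugating this back by \(X_{1}^{*}\) lands on \(1\otimes\rho(a)\); the computation for \(1\otimes\hat\rho(\hat{a})\) is symmetric, passing through \((\hat\pi\otimes\hat\rho)\DuComult[A](\hat{a})\). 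This shared pivot through the comultiplication is exactly what makes the superfluous legs cancel.

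To establish the pivot relations I would slice the Heisenberg and anti-Heisenberg relations. For the first one, read~\eqref{eq:Heisenberg_pair} in \(\U(\hat{A}\otimes\Comp(\Hils_{\pi})\otimes A)\), apply \(\rho\) to the third (\(A\)) leg so that \(\multunit[A]_{\hat\pi3}\) becomes \(X_{0}\) in legs \(2,3\), and then slice the first (\(\hat{A}\)) leg against \(\omega\in\Bound(\Hils)_{*}\); the two surviving \(\multunit[A]\)-factors recombine via \((\Id_{\hat{A}}\otimes\Comult[A])\multunit[A]=\multunit[A]_{12}\multunit[A]_{13}\) from~\eqref{eq:W_char_in_second_leg} into \((\pi\otimes\rho)\Comult[A](a)\) with \(a=(\omega\otimes\Id_{A})\multunit[A]\). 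The relation for \(X_{0}\) involving \(\hat\rho\) is obtained the same way from~\eqref{eq:anti-Heisenberg_pair}, slicing the third leg and invoking the first-leg character~\eqref{eq:W_char_in_first_leg}. The two relations for \(X_{1}\) are the mirror images: they come from~\eqref{eq:anti-Heisenberg_pair} and~\eqref{eq:Heisenberg_pair} respectively, now using the character identities for \(\Dumultunit[A]\), namely~\eqref{eq:aux_W_char_in_first_leg} together with its first-leg companion \((\Comult[A]\otimes\Id_{\hat{A}})\Dumultunit[A]=\Dumultunit[A]_{23}\Dumultunit[A]_{13}\), to produce \(\Comult[A]\) and \(\DuComult[A]\). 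I expect the only real obstacle to be bookkeeping: keeping the three legs and the adjoints in order so that the leftover \(\multunit[A]\)- and \(\Dumultunit[A]\)-factors assemble into precisely the claimed comultiplications with no stray terms. Setting \(U\defeq\Flip\,X_{1}^{*}X_{0}\) then gives the desired unitary equivalence \(\pi\otimes\hat\rho\cong\rho\otimes\hat\pi\).
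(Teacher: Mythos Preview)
Your proposal is correct, and the intertwining unitary you construct is \emph{literally} the same as the paper's. Indeed, since \(\Dumultunit[A]=\flip((\multunit[A])^{*})\), one checks that \(\Flip\,X_{1}^{*}=\multunit_{\hat\rho\pi}\,\Flip\), whence \(U=\Flip\,X_{1}^{*}X_{0}=\multunit_{\hat\rho\pi}\,\Flip\,\multunit_{\hat\pi\rho}\), which is exactly the paper's~\(\Psi\).

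The difference is only in the organisation of the verification. The paper checks the single intertwining relation \(\Psi_{23}\multunit_{1\pi}\multunit_{\hat\rho4}\Psi_{23}^{*}=\multunit_{1\rho}\multunit_{\hat\pi4}\) by one four-step chain that alternates \eqref{eq:Heisenberg_pair} and~\eqref{eq:anti-Heisenberg_pair}. You instead split the work into four ``pivot'' identities, each of which is an instance of the implementation of (co)multiplication by the reduced bicharacter as in~\eqref{eq:unit_imp_comult}: pivot~1 is precisely~\eqref{eq:unit_imp_comult} with \(\rho\) in the outer leg, pivot~4 is its dual (using that \((\hat\pi,\pi)\) is a \(\DuG\)-Heisenberg pair, i.e.\ the \(\flip_{13}\)-adjoint of~\eqref{eq:Heisenberg_pair}), and pivots~2 and~3 are the anti-Heisenberg analogues coming from~\eqref{eq:anti-Heisenberg_pair}. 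Your approach is a bit more conceptual—it makes transparent why the factor \(X_{1}^{*}X_{0}\) works, namely because both conjugations land on the \emph{same} element \((\pi\otimes\rho)\Comult[A](a)\) resp.\ \((\hat\pi\otimes\hat\rho)\DuComult[A](\hat a)\)—at the cost of four separate (short) slicing arguments rather than one longer one. The underlying inputs are identical.
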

 \begin{proof}
  Define \(\Psi\defeq\multunit_{\hat{\rho}\pi}\Flip\multunit_{\hat{\pi}\rho}\in\U(\Hils_{\pi}\otimes\Hils_{\rho},
  \Hils_{\rho}\otimes\Hils_{\pi})\), where~\(\multunit_{\hat{\pi}\rho}\defeq(\hat{\pi}\otimes\rho)
  \multunit\in\U(\Hils_{\pi}\otimes\Hils_{\rho})\), \(\multunit_{\hat{\rho}\pi}\defeq(\hat{\rho}\otimes\pi)
  \in\U(\Hils_{\rho}\otimes\Hils_{\pi})\), and~\(\Flip\colon\Hils_{\pi}\otimes\Hils_{\rho}\to\Hils_{\rho}\otimes\Hils_{\pi}\) 
  is the flip operator. We claim that~\(\Psi\) intertwines~\(\pi\otimes\hat{\rho}\) and
  \(\rho\otimes\hat{\pi}\). Using~\eqref{eq:first_leg_slice} and \eqref{eq:second_leg_slice}, 
  it suffices to show that
  \[
     \Psi_{23}\multunit_{1\pi}\multunit_{\hat{\rho}4}\Psi_{23}^{*}
   = \multunit_{1\rho}\multunit_{\hat{\pi}4} 
   \qquad\text{ in \(\U(\hat{A}\otimes\Comp(\Hils_{\rho})\otimes\Comp(\Hils_{\pi})\otimes A)\),}
  \]
  or, equivalently,
  \begin{equation}
   \label{eq:equiv_cond_Heisenberg_anti_Heisenberg_1}
     \Flip_{23}(\multunit_{\hat{\pi}\rho}\multunit_{1\pi}\multunit_{\hat{\rho}4}
     (\multunit_{\hat{\pi}\rho})^{*})\Flip_{23}
   = (\multunit_{\hat{\rho}\pi})^{*}\multunit_{1\rho}\multunit_{\hat{\pi}4}\multunit_{\hat{\rho}\pi}
  \end{equation}
  in~\(\U(\hat{A}\otimes\Comp(\Hils_{\rho})\otimes\Comp(\Hils_{\pi})\otimes A)\).

  The following computation yields~\eqref{eq:equiv_cond_Heisenberg_anti_Heisenberg_1}:
  \begin{align*}
    \Flip_{23}(\multunit_{\hat{\pi}\rho}\multunit_{1\pi}\multunit_{\hat{\rho}4}
     (\multunit_{\hat{\pi}\rho})^{*})\Flip_{23}
   &= \Flip_{23}(\multunit_{1\pi}\multunit_{1\rho}\multunit_{\hat{\pi}\rho}
      \multunit_{\hat{\rho}4}(\multunit_{\hat{\pi}\rho})^{*})\Flip_{23}\\
    &= \multunit_{1\pi}\multunit_{1\rho}
      \multunit_{\hat{\rho} 4}\multunit_{\hat{\pi}4}\\
    &= (\multunit_{\hat{\rho}\pi})^{*}\multunit_{1\rho}\multunit_{\hat{\rho}\pi}
      \multunit_{\hat{\rho} 4}\multunit_{\hat{\pi}4}
     = (\multunit_{\hat{\rho}\pi})^{*}\multunit_{1\rho}
      \multunit_{\hat{\pi}4}\multunit_{\hat{\rho}\pi};
  \end{align*}
  the first equality uses~\eqref{eq:Heisenberg_pair}, the second equality uses~\eqref{eq:anti-Heisenberg_pair}
  and an application of \(\Flip_{23}\), the third equality again uses~\eqref{eq:anti-Heisenberg_pair}, and the fourth
  equality uses~\eqref{eq:Heisenberg_pair}.
 \end{proof}
 
 \begin{proof}[Proof of Proposition~\textup{\ref{prop:faithful_property_of_Heisenberg_pairs}}]
  Let \(\HeisPair{\pi}\) and~\(\HeisPair{\rho}\) be~\(\G\)\nb-Heisenberg and anti-Heisenberg
  pairs on~\(\Hils_{\pi}\) and~\(\Hils_{\rho}\), respectively. Lemma~\ref{lemm:Heisenberg_vs_anti_Heisenberg_1}
  forces~\(\pi\otimes\hat{\rho}\) and~\(\rho\otimes\hat{\pi}\) to be unitarily equivalent. 
  By~\cite{Dixmier:Cstar-algebras}*{Proposition 5.3}, the representations~\(\pi\) and
  \(\rho\) of~\(A\) on~\(\Hils_{\pi}\) and~\(\Hils_{\rho}\) are quasi-equivalent.
  Therefore there is a unique quasi-equivalence class of representations of~\(A\) that 
  contains the first element of all \(\G\)\nb-Heisenberg and \(\G\)\nb-anti-Heisenberg
  pairs. Therefore, \(\pi\) and~\(\rho\) are quasi equivalent to the faithful representation 
  of~\(A\) in Theorem~\ref{the:Cst_quantum_grp_and_mult_unit}, hence they are 
  faithful.  Similarly, \(\hat{\rho}\) and~\(\hat{\pi}\) are quasi-equivalent representations 
  of~\(\hat{A}\) on~\(\Hils_{\pi}\) and \(\Hils_{\rho}\), respectively. A similar 
  argument shows gives~\(\hat{\pi}\) and~\(\hat{\rho}\) are also faithful.
\end{proof}

 The character condition~\eqref{eq:W_char_in_second_leg} and the pentagon equation~\eqref{eq:pentagon} 
 yield \((\Id_{\hat{A}}\otimes(\pi\otimes\Id_{A})\Comult[A])\multunit[A]=\multunit[A]_{1\pi}\multunit[A]_{13}=
 \multunit[A]_{\hat{\pi}3}\multunit[A]_{1\pi}(\multunit[A]_{\hat{\pi}3})^{*}\) in 
 \(\U(\hat{A}\otimes\Comp(\Hils)\otimes A)\), where~\((\pi,\hat{\pi})\) is a 
 \(\G\)\nb-Heisenberg pair on a Hilbert space~\(\Hils\). 
 Slicing the first leg by~\(\omega\in\hat{A}'\) and using 
 \eqref{eq:first_leg_slice} we get 
 \begin{equation}
  \label{eq:unit_imp_comult}
  (\pi\otimes\Id_{A})\Comult[A](a)
  =(\multunit[A]_{\hat{\pi}2})(\pi(a)\otimes 1)(\multunit[A]_{\hat{\pi}2})^{*} 
  \qquad\text{for all~\(a\in A\).}
 \end{equation}
  Since~\(\pi\) is faithful, this says that~\(\Comult[A]\) is 
 \emph{implemented} by~\(\multunit[A]\). Indeed, this is a 
 well known fact in the theory of locally compact quantum 
 groups (e.g. see~\cite{Soltan-Woronowicz:Multiplicative_unitaries}).

 Lemma~\textup{3.8} in~\cite{Meyer-Roy-Woronowicz:Twisted_tensor} provides 
 one way to construct faithful~\(\bichar\)\nb-Heisenberg pairs. 
 A similar argument gives the following corollary
\begin{corollary}
  \label{cor:existence_of_canonical_Heisenberg_pair}
  Let~\(\HeisPair{\pi}\) be a \(\G\)\nb-Heisenberg pair on a Hilbert space
  \(\Hils\) and let~\(\eta\colon B\to\Bound(\Hils[K])\) be a faithful representation 
  of  \(B\) on~\(\Hils[K]\).  Then the pair of representations
  \((\alpha,\beta)\) of \(A\) and~\(B\) on
  \(\Hils[K]\otimes\Hils\) defined by \(\alpha(a) \defeq
  1_{\Hils[K]}\otimes\pi(a)\) and \(\beta(b) \defeq
  (\eta\otimes\hat{\pi})\hat{\Delta}_{R}(b)\) is a faithful \(\bichar\)\nb-Heisenberg pair.
\end{corollary}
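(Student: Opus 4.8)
The plan is to verify the two requirements of Definition~\ref{def:V-Heisenberg_pair} separately: faithfulness of the pair, and the commutation relation~\eqref{eq:V-Heisenberg_pair}. Faithfulness is the easy half. Because \(\HeisPair{\pi}\) is a \(\G\)\nb-Heisenberg pair, Proposition~\ref{prop:faithful_property_of_Heisenberg_pairs} makes both \(\pi\) and \(\hat{\pi}\) faithful, so \(\alpha=1_{\Hils[K]}\otimes\pi\) is faithful. For \(\beta=(\eta\otimes\hat{\pi})\hat{\Delta}_{R}\), recall from Theorem~\ref{the:equivalent_notion_of_homomorphisms} that \(\hat{\Delta}_{R}\colon B\to B\otimes\hat{A}\) is an injective \(\DuG\)\nb-coaction, while \(\eta\otimes\hat{\pi}\) is faithful on \(B\otimes\hat{A}\) since \(\eta\) and \(\hat{\pi}\) are faithful. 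A standard slicing argument through the Podleś condition~\eqref{eq:Podles_cond} for \(\hat{\Delta}_{R}\) then promotes these two facts to faithfulness of \(\beta\).

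For the Heisenberg relation I would first expand the \(\beta\)\nb-leg. The defining relation~\eqref{eq:def_V_via_right_homomorphism} for \(\hat{\Delta}_{R}\) reads \((\Id_{\hat{B}}\otimes\hat{\Delta}_{R})\multunit[B]=\multunit[B]_{12}\Dubichar_{13}\), where \(\Dubichar=\flip(\bichar^{*})\); applying \(\Id_{\hat{B}}\otimes\eta\otimes\hat{\pi}\) and distributing the four tensor legs as \(\hat{A}\otimes\hat{B}\otimes\Comp(\Hils[K])\otimes\Comp(\Hils)\) yields \(\multunit[B]_{2\beta}=\multunit[B]_{2\eta}\,\Dubichar_{2\hat{\pi}}\), where \(\multunit[B]_{2\eta}\defeq((\Id_{\hat{B}}\otimes\eta)\multunit[B])\) occupies the legs \(\hat{B}\) and \(\Comp(\Hils[K])\) and \(\Dubichar_{2\hat{\pi}}\defeq((\Id_{\hat{B}}\otimes\hat{\pi})\Dubichar)\) occupies the legs \(\hat{B}\) and \(\Comp(\Hils)\). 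Since \(\multunit[A]_{1\alpha}=((\Id_{\hat{A}}\otimes\pi)\multunit[A])\) lives on the legs \(\hat{A}\) and \(\Comp(\Hils)\), it acts on legs disjoint from those of \(\multunit[B]_{2\eta}\) and so commutes with it. Substituting \(\multunit[B]_{2\beta}=\multunit[B]_{2\eta}\Dubichar_{2\hat{\pi}}\) into~\eqref{eq:V-Heisenberg_pair} and cancelling the common left factor \(\multunit[B]_{2\eta}\) reduces~\eqref{eq:V-Heisenberg_pair} to the three\nb-leg identity
\[
   ((\Id_{\hat{A}}\otimes\pi)\multunit[A])_{13}\,\Dubichar_{2\hat{\pi}}
 = \Dubichar_{2\hat{\pi}}\,((\Id_{\hat{A}}\otimes\pi)\multunit[A])_{13}\,\bichar_{12}
   \qquad\text{in }\U(\hat{A}\otimes\hat{B}\otimes\Comp(\Hils)),
\]
in which \(\Hils[K]\) no longer appears and \(\eta\) has dropped out.

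The heart of the argument is this last identity, and I expect it to be the main obstacle. Rewritten as \(\Dubichar_{2\hat{\pi}}^{*}\,((\Id_{\hat{A}}\otimes\pi)\multunit[A])_{13}\,\Dubichar_{2\hat{\pi}}=((\Id_{\hat{A}}\otimes\pi)\multunit[A])_{13}\,\bichar_{12}\), it says that conjugation by \(\Dubichar_{2\hat{\pi}}=(\Id_{\hat{B}}\otimes\hat{\pi})\Dubichar\) implements, in the representation \(\pi\), the right quantum group homomorphism \(\Delta_R\) attached to \(\bichar\); it is exactly the analogue of the implementation formula~\eqref{eq:unit_imp_comult} for \(\Comult[A]\), to which it specialises when \(\G[H]=\DuG\) and \(\bichar=\multunit[A]\). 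I would prove it by the same leg\nb-juggling that produces~\eqref{eq:unit_imp_comult}: use \((\Id_{\hat{A}}\otimes\Delta_R)\multunit[A]=\multunit[A]_{12}\bichar_{13}\) to replace \(((\Id_{\hat{A}}\otimes\pi)\multunit[A])_{13}\,\bichar_{12}\) by \((\Id_{\hat{A}}\otimes\flip(\pi\otimes\Id_{\hat{B}})\Delta_R)\multunit[A]\), rewrite \(\Dubichar_{2\hat{\pi}}\) through \(\Dubichar=\flip(\bichar^{*})\) so that only \(\bichar\) and \((\pi,\hat{\pi})\) remain, and then carry the \(\hat{\pi}(\hat{A})\) and \(\pi(A)\) factors past one another on \(\Hils\) by means of the \(\G\)\nb-Heisenberg relation~\eqref{eq:Heisenberg_pair}, the bicharacter condition~\eqref{eq:bichar_char_in_first_leg}, and the pentagon equation~\eqref{eq:pentagon}. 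Finally, slicing the first leg as in~\eqref{eq:first_leg_slice} collapses the operator identity to the pointwise covariance statement \(\Dubichar_{2\hat{\pi}}^{*}(1_{\hat{B}}\otimes\pi(a))\Dubichar_{2\hat{\pi}}=\flip((\pi\otimes\Id_{\hat{B}})\Delta_R(a))\) for \(a\in A\); verifying this last equation, where the careful bookkeeping of legs—entirely parallel to the proof of~\cite{Meyer-Roy-Woronowicz:Twisted_tensor}*{Lemma 3.8}—must be carried out, is the real content of the corollary.
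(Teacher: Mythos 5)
Your proposal is correct and is essentially the argument the paper intends: the paper offers no proof beyond ``a similar argument'' to \cite{Meyer-Roy-Woronowicz:Twisted_tensor}*{Lemma 3.8}, and that argument is exactly your reduction --- expand \(\multunit[B]_{2\beta}\) via \((\Id_{\hat{B}}\otimes\hat{\Delta}_{R})\multunit[B]=\multunit[B]_{12}\Dubichar_{13}\), commute the disjoint legs, cancel \(\multunit[B]_{2\eta}\), and reduce to a three\nb-leg identity on \(\hat{A}\otimes\hat{B}\otimes\Comp(\Hils)\). The identity you isolate does follow from precisely the ingredients you name: applying \(\hat{\pi}\) to the first leg of~\eqref{eq:bichar_char_in_first_leg} and using that \(\DuComult[A]\) is implemented by \(\Dumultunit[A]_{\pi 2}\) (the dual form of~\eqref{eq:unit_imp_comult}) yields \(\bichar_{\hat{\pi}3}\multunit[A]_{1\pi}\bichar_{\hat{\pi}3}^{*}=\multunit[A]_{1\pi}\bichar_{13}\), which is your covariance statement after permuting the last two legs, so the remaining bookkeeping is routine.
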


\section{Drinfeld pairs}
  \label{sec:Drinf}
 Let \(\Qgrp{G}{A}\) and~\(\Qgrp{H}{B}\) be \(\Cst\)\nb-quantum
 groups.  Let \(\multunit[A]\in\U(\hat{A}\otimes A)\) and
 \(\multunit[B]\in\U(\hat{B}\otimes B)\) be their reduced
 bicharacters.  Let \(\bichar\in\U(\hat{A}\otimes\hat{B})\)
 be a bicharacter from \(\G\) to~\(\DuG[H]\).    
 \begin{definition}
 \label{def:V-Drinfeld_comm}
 A pair~\((\rho,\theta)\) of representations of~\(A\) and 
 \(B\) on a Hilbert space \(\Hils\) is a 
 \emph{\(\bichar\)\nb-Drinfeld pair} if 
 \begin{equation}
  \label{eq:V-Drinfeld}
    \bichar_{12}\multunit[A]_{1\rho}\multunit[B]_{2\theta}
  = \multunit[B]_{2\theta}\multunit[A]_{1\rho}\bichar_{12}
  \qquad\text{ in~\(\U(\hat{A}\otimes\hat{B}\otimes\Comp(\Hils))\).}
 \end{equation}
 A~\(\bichar\)\nb-Drinfeld pair~\((\rho,\theta)\) is~\emph{faithful} if the associated 
 representations~\(\rho\) and~\(\theta\) are faithful.
\end{definition}

\begin{example}
 \label{ex:V-Drinfeld_group_case} 
 Let~\(G\) and~\(H\) be locally compact groups and let 
 \(A=\Cred(G)\) and~\(B=\Cred(H)\) be the associated reduced quantum 
 groups. Then every bicharacter~\(\bichar\in\U(\hat{A}\otimes\hat{B})\) 
 is indeed a continuous bicharacter on the group \(G\times H\). Hence, 
 any pair of commuting representations 
 \(\rho\colon\Cred(G)\to\Bound(\Hils)\) and 
 \(\theta\colon\Cred(H)\to\Bound(\Hils)\) satisfy 
 \eqref{eq:V-Drinfeld} independent of the choice of bicharacters.
\end{example}  

\begin{example}
 \label{ex:V-Drinf_qnt_grp_case}
 Let~\(\hat{B}=A\), \(\DuComult[B]=\Comult[A]\) and~\(\bichar=\multunit[A]
 \in\U(\hat{A}\otimes A)\). We call~\(\multunit[A]\)\nb-Drinfeld pairs \emph{\(\G\)\nb-Drinfeld pairs}. 
 A pair of representations \(\rho\colon A\to\Bound(\Hils)\) and~\(\theta\colon\hat{A}\to\Bound(\Hils)\) 
 is a~\(\G\)\nb-Drinfeld pair if and only if it satisfies the 
 \(\G\)\nb-\emph{Drinfeld commutation relation}:
 \begin{equation}
  \label{eq:G-Drinfeld}
   \multunit[A]_{1\rho}\multunit[A]_{13}\multunit[A]_{\theta3}
 = \multunit[A]_{\theta3}\multunit[A]_{13}\multunit[A]_{1\rho}
 \qquad\text{in~\(\U(\hat{A}\otimes\Comp(\Hils)\otimes A)\).}
 \end{equation}
 
 Define~\(\Rmat\defeq(\theta\otimes\rho)\multunit[A]\in\U(\Hils\otimes\Hils)\). 
 Equation~\eqref{eq:G-Drinfeld} says that~\(\Rmat\) is a solution to the 
 \emph{Yang--Baxter Equation}: 
 \begin{equation}
  \label{eq:Yang_baxter}
     \Rmat_{12}\Rmat_{13}\Rmat_{23}
  = \Rmat_{23}\Rmat_{13}\Rmat_{12} 
  \qquad\text{in~\(\U(\Hils\otimes\Hils\otimes\Hils)\).}
 \end{equation}    
\end{example}

 Theorem~\ref{the:equivalent_notion_of_homomorphisms} shows that 
 a bicharacter~\(\bichar\in\U(\hat{A}\otimes\hat{B})\) naturally gives rise to a 
 dual bicharacter~\(\Dubichar\in\U(\hat{B}\otimes\hat{A})\), a right quantum 
 group homomorphism~\(\Delta_R\colon A\to A\otimes\hat{B}\), and a 
 left quantum group homomorphism~\(\Delta_L\colon A\to\hat{B}\otimes A\). 
 This leads us to reformulate the condition of being a \(\bichar\)\nb-Drinfeld pair 
 in the following way: 
\begin{lemma}
  \label{lemm:equiv_cond_V_Drinfeld_pair}
  Let \(\rho\) and~\(\theta\) be representations of \(A\) and \(B\)
  on a Hilbert space~\(\Hils\).  
  Then the following are equivalent:
  \begin{enumerate}
  \item \((\rho,\theta)\) is a \(\bichar\)\nb-Drinfeld pair;
  \item \((\theta,\rho)\) is a \(\Dubichar\)\nb-Drinfeld pair;
  \item \((\Id_{\hat{B}}\otimes\rho)\Delta_L(a)
         =(\multunit[B]_{1\theta})\big((\Id_{\hat{B}}\otimes\rho)\flip\Delta_R(a)\big) 
            (\multunit[B]_{1\theta})^{*}\) for all~\(a\in A\).
  \item \((\Id_{\hat{A}}\otimes\theta)\hat{\Delta}_{L}(b)
         =(\multunit[A]_{1\rho})\big((\Id_{\hat{A}}\otimes\theta)\flip\hat{\Delta}_R(b)\big) 
            (\multunit[A]_{1\rho})^{*}\) for all~\(b\in B\).
  \end{enumerate}
\end{lemma}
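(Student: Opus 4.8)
The plan is to prove the cycle $(1)\Leftrightarrow(3)$, $(1)\Leftrightarrow(2)$, and $(2)\Leftrightarrow(4)$. The heart of the matter is $(1)\Leftrightarrow(3)$; the equivalence $(1)\Leftrightarrow(2)$ is a one-line flip computation, and $(2)\Leftrightarrow(4)$ is then exactly $(1)\Leftrightarrow(3)$ read off under the symmetry $A\leftrightarrow B$, $\bichar\leftrightarrow\Dubichar$, $\rho\leftrightarrow\theta$, $\Delta_{L/R}\leftrightarrow\hat{\Delta}_{L/R}$ supplied by Theorem~\ref{the:equivalent_notion_of_homomorphisms}.

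First I would record two slice identities coming from the defining relations~\eqref{eq:def_V_via_left_homomorphism} and~\eqref{eq:def_V_via_right_homomorphism} of $\Delta_L$ and $\Delta_R$. Set $D_L(a)\defeq(\Id_{\hat{B}}\otimes\rho)\Delta_L(a)$ and $D_R(a)\defeq(\rho\otimes\Id_{\hat{B}})\Delta_R(a)$. Applying $\Id_{\hat{A}}\otimes\Id_{\hat{B}}\otimes\rho$ to~\eqref{eq:def_V_via_left_homomorphism} yields $(\Id_{\hat{A}}\otimes D_L)(\multunit[A])=\bichar_{12}\multunit[A]_{1\rho}$, while applying the flip $\flip_{23}$ to~\eqref{eq:def_V_via_right_homomorphism} and then $\Id_{\hat{A}}\otimes\Id_{\hat{B}}\otimes\rho$ yields $(\Id_{\hat{A}}\otimes\flip D_R)(\multunit[A])=\multunit[A]_{1\rho}\bichar_{12}$, both living in $\U(\hat{A}\otimes\hat{B}\otimes\Comp(\Hils))$; here $\flip D_R(a)=(\Id_{\hat{B}}\otimes\rho)\flip\Delta_R(a)$ is precisely the operator appearing in~(3).

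For $(1)\Leftrightarrow(3)$ I would substitute these two identities into the Drinfeld relation~\eqref{eq:V-Drinfeld}, which becomes $(\Id_{\hat{A}}\otimes D_L)(\multunit[A])\,\multunit[B]_{2\theta}=\multunit[B]_{2\theta}\,(\Id_{\hat{A}}\otimes\flip D_R)(\multunit[A])$, equivalently $(\Id_{\hat{A}}\otimes D_L)(\multunit[A])=\multunit[B]_{2\theta}\,(\Id_{\hat{A}}\otimes\flip D_R)(\multunit[A])\,(\multunit[B]_{2\theta})^{*}$. Since $\multunit[B]_{2\theta}$ involves only the second and third legs, both sides are the image of $\multunit[A]$ under a map acting on its second leg alone; slicing the first leg by $\omega\in\Bound(\Hils)_{*}$, using $a=(\omega\otimes\Id_{A})\multunit[A]$ and the density from~\eqref{eq:first_leg_slice}, recovers~(3) (with $\multunit[B]_{2\theta}$ read as $\multunit[B]_{1\theta}$ on the two remaining legs) for a dense set of $a$, hence for all $a\in A$ by continuity; feeding~(3) back through the second leg of $\multunit[A]$ reverses the argument.

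The equivalence $(1)\Leftrightarrow(2)$ I would obtain by conjugating~\eqref{eq:V-Drinfeld} with the flip $\Flip_{12}$: this sends $\multunit[A]_{1\rho}\mapsto\multunit[A]_{2\rho}$, $\multunit[B]_{2\theta}\mapsto\multunit[B]_{1\theta}$, and, by~\eqref{eq:dual_bicharacter}, $\bichar_{12}\mapsto\Dubichar^{*}_{12}$; multiplying the resulting identity by $\Dubichar_{12}$ on both sides turns it into the $\Dubichar$-Drinfeld relation for $(\theta,\rho)$, which is~(2). Finally $(2)\Leftrightarrow(4)$ is the $(1)\Leftrightarrow(3)$ argument applied to the dual data $(\Dubichar,\hat{\Delta}_L,\hat{\Delta}_R,\theta)$ in place of $(\bichar,\Delta_L,\Delta_R,\rho)$. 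The main obstacle throughout is the leg-numbering bookkeeping together with the density step that promotes the sliced identity to an equation valid for all $a\in A$; the flip convention in $(1)\Leftrightarrow(2)$ must also be matched carefully against~\eqref{eq:dual_bicharacter}.
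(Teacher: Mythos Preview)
Your proposal is correct and follows essentially the same approach as the paper's own proof: the same two identities $(\Id_{\hat{A}}\otimes D_L)(\multunit[A])=\bichar_{12}\multunit[A]_{1\rho}$ and $(\Id_{\hat{A}}\otimes\flip D_R)(\multunit[A])=\multunit[A]_{1\rho}\bichar_{12}$ are used to rewrite the Drinfeld relation, the same slicing over the first leg (the paper uses $\omega\in\hat{A}'$ rather than $\Bound(\Hils)_*$, but this is immaterial) establishes $(1)\Leftrightarrow(3)$, the same $\flip_{12}$ manipulation gives $(1)\Leftrightarrow(2)$, and $(2)\Leftrightarrow(4)$ is handled by the same symmetry argument. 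The only organizational difference is that you isolate the two slice identities before the main computation, whereas the paper embeds them in a single chain of equalities.
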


\begin{proof}
  (1)\(\iff\)(2): (1) is equivalent to
  \[
  \multunit[A]_{1\rho}\multunit[B]_{2\theta}\bichar^*_{12}
  =\bichar_{12}^*\multunit[B]_{2\theta}\multunit[A]_{1\rho}
  \qquad \text{in }\U(\hat{A}\otimes\hat{B}\otimes\Comp(\Hils))
  \]
  by~\eqref{eq:V-Drinfeld}.  Applying~\(\flip_{12}\) gives
  \begin{align}
    \label{eq:hat-V-Drinfeld_pair}
    \multunit[A]_{2\rho}\multunit[B]_{1\theta}\Dubichar_{12}
    &= \Dubichar_{12}\multunit[B]_{1\theta}\multunit[A]_{2\rho}
    \qquad \text{in }\U(\hat{B}\otimes\hat{A}\otimes\Comp(\Hils)),
  \end{align}
  which is equivalent to \((\theta,\rho)\) being a
  \(\Dubichar\)\nb-Drinfeld pair.  Thus (1)\(\iff\)(2).

  (1)\(\iff\)(3): Let~\((\rho,\theta)\) be a~\(\bichar\)\nb-Drinfeld 
  pair.  The following computation takes place in
  \(\U(\hat{A}\otimes\hat{B}\otimes\Comp(\Hils))\):
  \begin{align*}
    (\Id_{\hat{A}}\otimes\Id_{\hat{B}}\otimes\rho)
    (\Id_{\hat{A}}\otimes\Delta_L)\multunit[A]
    &= \bichar_{12}\multunit[A]_{1\rho}
    = (\multunit[B]_{2\theta})\multunit[A]_{1\rho}\bichar_{12}(\multunit[B]_{2\theta})^{*} \\
    &= (\multunit[B]_{2\theta})
       \big(\Id_{\hat{A}}\otimes((\Id_{\hat{B}}\otimes\rho)\flip\Delta_R)\big)
       (\multunit[B]_{2\theta})^{*}.
  \end{align*}
  The first equality uses~\eqref{eq:def_V_via_left_homomorphism}; the
  second equality uses~\eqref{eq:V-Drinfeld}; and the third equality
  uses~\eqref{eq:def_V_via_right_homomorphism}. Since
  \(\{(\omega\otimes\Id_A)\multunit[A]:\omega\in\hat{A}'\}\) is
  linearly dense in~\(A\), slicing the first leg of the first and the
  last expression in the above equation shows that
  (1)\(\Longrightarrow\)(3).

  Conversely, applying~\(\Id_{\hat{A}}\otimes\Id_{\hat{A}}\otimes\rho\) on both sides
  of~\eqref{eq:def_V_via_left_homomorphism} and using~(4), we get
  \[
  \bichar_{12}\multunit[A]_{1\rho}
  = (\Id_{\hat{A}}\otimes(\Id_{\hat{B}}\otimes\rho)\Delta_L)\multunit[A]
  = (\multunit[B]_{2\theta})\multunit[A]_{1\rho}\bichar_{12}(\multunit[B]_{2\theta})^*
  \quad \text{in }\U(\hat{A}\otimes\hat{B}\otimes\Comp(\Hils)),
  \]
  which is equivalent to~\eqref{eq:V-Drinfeld}. Thus (3)\(\Longrightarrow\)(1).

  To prove (2)\(\iff\)(4), argue as in the proof that (1)\(\iff\)(3).
\end{proof}

\subsection{Heisenberg pair versus Drinfeld pair}
 Certain ways of putting Heisenberg and anti-Heisenberg pairs together give 
 ordinary commutation (see~\cite{Meyer-Roy-Woronowicz:Twisted_tensor}*{Proposition 3.9}). 
 This played a crucial role for the construction of 
 twisted tensor products of~\(\Cst\)\nb-algebras 
 in~\cite{Meyer-Roy-Woronowicz:Twisted_tensor}. 
 Changing their order yields the  
 next proposition, which ensures the existence 
 of~\(\bichar\)\nb-Drinfeld pairs.
  
\begin{proposition}
 \label{prop:V-drinfeld_comm}
 Let~\(\VHeisPair\) and~\((\bar\alpha,\bar\beta)\) be a 
 \(\bichar\)\nb-Heisenberg and \(\bichar\)\nb-anti-Heisenberg pair 
 on~\(\Hils\) and~\(\Hils[K]\), respectively. Define the representations
 \(\rho\defeq (\bar\alpha\otimes\alpha)\Comult[A]\) and
 \(\theta\defeq (\bar\beta\otimes\beta)\Comult[B]\) of 
 \(A\) and~\(B\) on~\(\Hils[K]\otimes\Hils\). Then \((\rho,\theta)\) 
 is a~\(\bichar\)\nb-Drinfeld pair on~\(\Hils[K]\otimes\Hils\).
\end{proposition}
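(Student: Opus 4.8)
The plan is to expand both $\multunit[A]_{1\rho}$ and $\multunit[B]_{2\theta}$ by means of the character condition~\eqref{eq:W_char_in_second_leg}, thereby reducing the Drinfeld relation~\eqref{eq:V-Drinfeld} to a formal identity among the four ``one-legged'' unitaries built from $\alpha,\beta,\bar\alpha,\bar\beta$, and then to settle that identity using the Heisenberg relation~\eqref{eq:V-Heisenberg_pair}, the anti-Heisenberg relation~\eqref{eq:V-anti-Heisenberg_pair}, and two trivial leg-disjointness commutations. Faithfulness of $\rho,\theta$ plays no role in establishing~\eqref{eq:V-Drinfeld} itself.

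Concretely, I would work on $\hat{A}\otimes\hat{B}\otimes\Comp(\Hils[K])\otimes\Comp(\Hils)$ with legs labelled $1,2,3,4$, where leg~$3$ carries $\Hils[K]$ (the space of $\bar\alpha,\bar\beta$) and leg~$4$ carries $\Hils$ (the space of $\alpha,\beta$); this is just the splitting of the third leg of~\eqref{eq:V-Drinfeld} under $\Comp(\Hils[K]\otimes\Hils)=\Comp(\Hils[K])\otimes\Comp(\Hils)$, with $\hat{B}$ inserted as leg~$2$. Since $\rho=(\bar\alpha\otimes\alpha)\Comult[A]$, applying $\Id_{\hat{A}}\otimes\rho$ to~\eqref{eq:W_char_in_second_leg} gives $\multunit[A]_{1\rho}=\multunit[A]_{1\bar\alpha}\multunit[A]_{1\alpha}$, where $\multunit[A]_{1\bar\alpha}\defeq((\Id_{\hat{A}}\otimes\bar\alpha)\multunit[A])_{13}$ and $\multunit[A]_{1\alpha}\defeq((\Id_{\hat{A}}\otimes\alpha)\multunit[A])_{14}$; likewise $\multunit[B]_{2\theta}=\multunit[B]_{2\bar\beta}\multunit[B]_{2\beta}$ with $\multunit[B]_{2\bar\beta}$ on legs $\{2,3\}$ and $\multunit[B]_{2\beta}$ on legs $\{2,4\}$. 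In this labelling the two hypotheses read $\multunit[A]_{1\alpha}\multunit[B]_{2\beta}=\multunit[B]_{2\beta}\multunit[A]_{1\alpha}\bichar_{12}$ (legs $1,2,4$, from~\eqref{eq:V-Heisenberg_pair}, leg~$3$ a spectator) and $\multunit[B]_{2\bar\beta}\multunit[A]_{1\bar\alpha}=\bichar_{12}\multunit[A]_{1\bar\alpha}\multunit[B]_{2\bar\beta}$ (legs $1,2,3$, from~\eqref{eq:V-anti-Heisenberg_pair}, leg~$4$ a spectator).

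With these factorisations inserted, I would transform the left-hand side of~\eqref{eq:V-Drinfeld}, namely $\bichar_{12}\multunit[A]_{1\bar\alpha}\multunit[A]_{1\alpha}\multunit[B]_{2\bar\beta}\multunit[B]_{2\beta}$. The leg-disjoint commutation $\multunit[A]_{1\alpha}\multunit[B]_{2\bar\beta}=\multunit[B]_{2\bar\beta}\multunit[A]_{1\alpha}$ (legs $\{1,4\}$ versus $\{2,3\}$) rewrites it as $\bichar_{12}\multunit[A]_{1\bar\alpha}\multunit[B]_{2\bar\beta}(\multunit[A]_{1\alpha}\multunit[B]_{2\beta})$; applying the Heisenberg relation to the final bracket, then the anti-Heisenberg relation in the form $\bichar_{12}\multunit[A]_{1\bar\alpha}\multunit[B]_{2\bar\beta}=\multunit[B]_{2\bar\beta}\multunit[A]_{1\bar\alpha}$ to the front, and finally the second leg-disjoint commutation $\multunit[A]_{1\bar\alpha}\multunit[B]_{2\beta}=\multunit[B]_{2\beta}\multunit[A]_{1\bar\alpha}$ (legs $\{1,3\}$ versus $\{2,4\}$), collapses the expression to $\multunit[B]_{2\bar\beta}\multunit[B]_{2\beta}\multunit[A]_{1\bar\alpha}\multunit[A]_{1\alpha}\bichar_{12}=\multunit[B]_{2\theta}\multunit[A]_{1\rho}\bichar_{12}$, which is exactly the right-hand side of~\eqref{eq:V-Drinfeld}.

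I do not anticipate any analytic obstacle: every step is an equality of unitary multipliers and no approximation or slicing is needed beyond the character condition. The only point demanding care is the leg bookkeeping---tracking which of the four legs each factor occupies so that the Heisenberg relation is applied on $\{1,2,4\}$, the anti-Heisenberg relation on $\{1,2,3\}$, and the two commutations use genuinely disjoint leg sets. Writing out the four-leg expansions explicitly at the very start removes this as a source of error, after which the computation is a short, forced chain of substitutions.
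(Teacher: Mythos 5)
Your proposal is correct and follows essentially the same route as the paper's own proof: decompose \(\multunit[A]_{1\rho}\) and \(\multunit[B]_{2\theta}\) via the character condition~\eqref{eq:W_char_in_second_leg}, commute the leg-disjoint factors, and apply the Heisenberg and anti-Heisenberg relations to the two resulting blocks. The explicit four-leg bookkeeping you add is only a more careful rendering of the same computation.
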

\begin{proof}
 We must check~\eqref{eq:V-Drinfeld} for~\((\rho,\theta)\).
 The character condition~\eqref{eq:W_char_in_second_leg} for~\(\multunit[A]\) and 
 \(\multunit[B]\) gives:
\[
   \multunit[A]_{1\rho}=\multunit[A]_{1\bar\alpha}\multunit[A]_{1\alpha}
   \quad\text{and}\quad
   \multunit[B]_{2\theta}=\multunit[B]_{2\bar\beta}\multunit[B]_{2\beta}
   \quad\text{in~\(\U(\hat{A}\otimes\hat{B}\otimes\Comp(\Hils[K]\otimes\Hils))\)}.
 \]  
 Clearly,~\(\multunit[A]_{1\alpha}\) commutes with \(\multunit[B]_{2\bar\beta}\) 
 and \(\multunit[A]_{1\bar\alpha}\) commutes with \(\multunit[B]_{2\beta}\) 
 inside~\(\U(\hat{A}\otimes\hat{B}\otimes\Comp(\Hils[K]\otimes\Hils))\).
 The defining conditions~\eqref{eq:V-Heisenberg_pair} and~\eqref{eq:V-anti-Heisenberg_pair} 
 of \(\bichar\)\nb-Heisenberg and \(\bichar\)\nb-anti-Heisenberg pairs give
 \begin{align*}
    \bichar_{12}\multunit[A]_{1\rho}\multunit[B]_{2\theta}
  = \bichar_{12}\multunit[A]_{1\bar\alpha}\multunit[A]_{1\alpha}
    \multunit[B]_{2\bar\beta}\multunit[B]_{2\beta}
  &= \bichar_{12}\multunit[A]_{1\bar\alpha}\multunit[B]_{2\bar\beta}
    \multunit[A]_{1\alpha}\multunit[B]_{2\beta}\\
  &= \multunit[B]_{2\bar\beta}\multunit[A]_{1\bar\alpha}
    \multunit[B]_{2\beta}\multunit[A]_{1\alpha}\bichar_{12}\\
  &= \multunit[B]_{2\bar\beta}\multunit[B]_{2\beta}
        \multunit[A]_{1\bar\alpha}\multunit[A]_{1\alpha}\bichar_{12}\\
   &= \multunit[B]_{2\theta}\multunit[A]_{1\rho}\bichar_{12}.\qedhere 
 \end{align*}
\end{proof}
%
%
 
\subsection{From Drinfeld pairs to multiplicative unitaries}
 \label{subsec:Multunit} 
 The goal of this subsection is to systematically construct 
 a modular or manageable multiplicative unitary associated 
 to certain \(\bichar\)\nb-Drinfeld pairs. 
   
  Let~\(\Hils\) be a Hilbert space, and let 
  \(\Multunit[A]\in\U(\Hils\otimes\Hils)\) 
  be a manageable multiplicative unitary that generates~\(\G\). 
  By Theorem~\ref{the:Cst_quantum_grp_and_mult_unit}, there is a 
  \(\G\)\nb-Heisenberg pair \(\HeisPair{\pi}\) on \(\Hils\) 
  such that \(\Multunit[A]=(\hat{\pi}\otimes\pi)\multunit[A]\).
  
 Similarly, let~\(\Hils[K]\) be a Hilbert space, 
 and let \(\Multunit[B]\in\U(\Hils[K]\otimes\Hils[K])\) 
  be a manageable multiplicative unitary generating~\(\G[H]\). 
  Let \(\HeisPair{\eta}\) be the corresponding~\(\G[H]\)\nb-Heisenberg 
  pair \(\Hils[K]\) such that~\(\Multunit[B]=(\hat{\eta}\otimes\eta)\multunit[B]
  \in\U(\Hils[K]\otimes\Hils[K])\).
  
  Proposition~\ref{prop:faithful_property_of_Heisenberg_pairs} shows that 
  the representations~\(\pi\), \(\hat{\pi}\), \(\eta\) and~\(\hat{\eta}\) are 
  faithful. Hence, the \(\bichar\)\nb-Heisenberg pair~\(\VHeisPair\) on 
  \(\Hils[K]\otimes\Hils\) in Corollary~\ref{cor:existence_of_canonical_Heisenberg_pair} 
  is also faithful. Using~\eqref{ex:exist_V-anti-Heisenberg_pair} we construct the 
  associated faithful \(\bichar\)\nb-anti-Heisenberg pair \((\bar\alpha,\bar\beta)\) 
  on~\(\conj{\Hils[K]}\otimes\conj{\Hils}\).
  
  Define the representations of~\(A\), \(B\), \(\hat{A}\), \(\hat{B}\) on the 
  Hilbert space 
  \(\Hils_{\mathcal{D}}\defeq\conj{\Hils[K]}\otimes\conj{\Hils}\otimes\Hils[K]\otimes\Hils\) by:
   \begin{equation}
    \label{eq:gen_codob_reps}
     \begin{alignedat}{2}
       \rho(a) &\defeq (\bar{\alpha}\otimes\alpha)\Comult[A](a) 
      &\qquad&\text{for all~\(a\in A\),}\\
       \theta(b) &\defeq (\bar{\beta}\otimes\beta)\Comult[B](b) 
      &\qquad&\text{for all~\(b\in B\),}\\
       \xi(\hat{a}) &\defeq 1_{\conj{\Hils[K]}\otimes\conj{\Hils}}\otimes 1_{\Hils[K]}\otimes\hat{\pi}(\hat{a})
      &\qquad&\text{for all~\(\hat{a}\in\hat{A}\),}\\
       \zeta(\hat{b}) &\defeq 1_{\conj{\Hils[K]}\otimes\conj{\Hils}}\otimes\hat{\eta}(\hat{b})\otimes 1_{\Hils}
      &\qquad&\text{for all~\(\hat{b}\in\hat{B}\).}  
    \end{alignedat}
   \end{equation} 
   Let us denote~\((\xi\otimes\rho)\multunit[A]\in\U(\Hils_{\mathcal{D}}\otimes\Hils_{\mathcal{D}})\) 
   and~\((\zeta\otimes\theta)\multunit[B]\in\U(\Hils_{\mathcal{D}}\otimes\Hils_{\mathcal{D}})\) by 
   \(\multunit[A]_{\xi\rho}\) and~\(\multunit[B]_{\zeta\theta}\), respectively.
   \begin{theorem}
    \label{the:multunit_drinf}
    The unitary \(\Multunit[\mathcal{D}]\defeq\multunit[A]_{\xi\rho}\multunit[B]_{\zeta\theta} 
    \in\U(\Hils_{\mathcal{D}}\otimes\Hils_{\mathcal{D}})\) is a modular multiplicative 
    unitary.
   \end{theorem}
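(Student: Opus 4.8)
The plan is to treat \(\Multunit[\DrinfDoubAlg]=\multunit[A]_{\xi\rho}\multunit[B]_{\zeta\theta}\) as a product of two unitaries and to reduce the pentagon equation~\eqref{eq:pentagon} to the interaction of these two factors, and then to produce the modular data of Definition~\ref{def:modularity} from the factor data. First I would record that, by Proposition~\ref{prop:V-drinfeld_comm} applied to the faithful \(\bichar\)\nb-Heisenberg pair \(\VHeisPair\) of Corollary~\ref{cor:existence_of_canonical_Heisenberg_pair} and its associated anti-Heisenberg pair \((\bar\alpha,\bar\beta)\), the pair \((\rho,\theta)\) is a faithful \(\bichar\)\nb-Drinfeld pair, so~\eqref{eq:V-Drinfeld} holds on \(\Hils_{\DrinfDoubAlg}\). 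Next I would show that the first factor \(\multunit[A]_{\xi\rho}\) is already a multiplicative unitary, i.e. that \((\rho,\xi)\) is a \(\G\)\nb-Heisenberg pair in the sense of~\eqref{eq:Heisenberg_pair}. Since \(\alpha=1_{\Hils[K]}\otimes\pi\), the character condition~\eqref{eq:W_char_in_second_leg} factors \(\multunit[A]_{1\rho}=\multunit[A]_{1\bar\alpha}\multunit[A]_{1\alpha}\); the leg carrying \(\bar\alpha\) sits on the conjugate factor \(\conj{\Hils}\), disjoint from the factor \(\Hils\) carrying both \(\xi=\hat\pi\) and \(\pi\), so it commutes through and the claim collapses to the \(\G\)\nb-Heisenberg relation for \((\pi,\hat\pi)\).

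The second factor, by contrast, is \emph{not} multiplicative on its own: because \(\beta=(\eta\otimes\hat\pi)\hat\Delta_R\), formula~\eqref{eq:def_V_via_right_homomorphism} shows that \(\multunit[B]_{1\beta}\) splits as an \(\eta\)\nb-part on \(\Hils[K]\) times a \(\Dubichar\)\nb-twist on \(\Hils\), and this twist obstructs the \(\G[H]\)\nb-Heisenberg relation for \((\theta,\zeta)\). The point of the construction is that precisely this twist is reconciled by the \(\bichar\)\nb-Drinfeld relation~\eqref{eq:V-Drinfeld}. Concretely, I would catalogue all commutation relations among the legs of \(\multunit[A]_{\xi\rho}\) and \(\multunit[B]_{\zeta\theta}\) in \(\U(\Hils_{\DrinfDoubAlg}^{\otimes 3})\): legs supported on disjoint tensor factors of \(\Hils_{\DrinfDoubAlg}\) commute outright, while the crossings in which \(\xi\) or \(\rho\) meets \(\theta\) are governed by~\eqref{eq:V-Drinfeld} together with the bicharacter conditions~\eqref{eq:bichar_char_in_first_leg} and~\eqref{eq:bichar_char_in_second_leg}.

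With these relations in hand I would expand both sides of~\eqref{eq:pentagon} for \(\Multunit[\DrinfDoubAlg]=\multunit[A]_{\xi\rho}\multunit[B]_{\zeta\theta}\), insert the pentagon identity for the multiplicative unitary \(\multunit[A]_{\xi\rho}\), and move the \(\multunit[B]_{\zeta\theta}\)\nb-legs into place, checking that the \(\bichar\)\nb- and \(\Dubichar\)\nb-twists produced by the crossings telescope away. This bicharacter bookkeeping is the step I expect to be the main obstacle: the non-multiplicativity of the second factor means the pentagon is a genuinely coupled identity, and the cancellation of the twists is exactly where the \(\bichar\)\nb-Drinfeld condition is used. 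As a sanity check, for the trivial bicharacter the twists disappear and the computation degenerates to two independent pentagons, consistent with Example~\ref{ex:triv_bichar}.

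Finally, for modularity I would exhibit the data of Definition~\ref{def:modularity}. A natural candidate is \(Q_{\DrinfDoubAlg}\defeq\conj{Q_B}\otimes\conj{Q_A}\otimes Q_B\otimes Q_A\) on \(\Hils_{\DrinfDoubAlg}=\conj{\Hils[K]}\otimes\conj{\Hils}\otimes\Hils[K]\otimes\Hils\), where \(Q_A,Q_B\) are the operators attached to the manageable unitaries \(\Multunit[A],\Multunit[B]\) by Theorem~\ref{the:Cst_quantum_grp_and_mult_unit}. Using that the scaling groups of \(\G\) and \(\G[H]\) are implemented by \(Q_A,Q_B\) (Theorem~\ref{the:Cst_quantum_grp_and_mult_unit}\ref{eq:sc_grp_Q}) and commute with the comultiplications, each of \(\multunit[A]_{\xi\rho}\) and \(\multunit[B]_{\zeta\theta}\) conjugates \(Q_{\DrinfDoubAlg}\otimes Q_{\DrinfDoubAlg}\) to itself; hence so does their product, giving condition~\ref{eq:modular_Q_hat_Q} with \(\hat Q=Q_{\DrinfDoubAlg}\). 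For condition~\ref{eq:modular_cond} I would build \(\widetilde{\Multunit[\DrinfDoubAlg]}\) on \(\conj{\Hils_{\DrinfDoubAlg}}\otimes\Hils_{\DrinfDoubAlg}\) from \(\widetilde{\Multunit[A]},\widetilde{\Multunit[B]}\) and the conjugate representations, and verify the matrix\nb-element identity factorwise, reducing it to the manageability of \(\Multunit[A]\) and \(\Multunit[B]\); since \(\hat Q=Q_{\DrinfDoubAlg}\), this in fact yields a manageable unitary.
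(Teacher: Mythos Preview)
Your plan for the pentagon equation is essentially the paper's: catalogue the commutation relations among \(\rho,\theta,\xi,\zeta\) (the paper does this in a dedicated lemma, recording in particular that \((\rho,\xi)\) is a \(\G\)\nb-Heisenberg pair and that \((\rho,\theta)\) is a \(\bichar\)\nb-Drinfeld pair, and writing down the two nontrivial \(\bichar\)\nb-twisted commutations between \(\theta\) and \(\xi,\zeta\)), then feed these into the pentagon computation. The paper in fact carries this out for the dual \(\DuMultunit[\mathcal{D}]=\Dumultunit[B]_{\theta\zeta}\Dumultunit[A]_{\rho\xi}\) rather than for \(\Multunit[\mathcal{D}]\) itself, which is immaterial for the pentagon but matters very much for the modularity step, as I now explain.

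There is a genuine gap in your modularity argument. First, you claim manageability (\(\hat Q=Q_{\DrinfDoubAlg}\)), whereas the theorem only asserts modularity; the paper obtains \emph{distinct} operators \(\hat Q_{\DrinfDoubAlg}=(Q_B^{-1})^{\transpose}\otimes(Q_A^{-1})^{\transpose}\otimes Q_B\otimes Q_A\) and \(Q_{\DrinfDoubAlg}=1\otimes 1\otimes Q_B\otimes Q_A\), and there is no reason to expect these to coincide. Second, and more seriously, your proposal to verify condition~\ref{eq:modular_cond} ``factorwise'' for \(\Multunit[\mathcal{D}]\) does not work as stated: in \(\Multunit[\mathcal{D}]=\multunit[A]_{\xi\rho}\multunit[B]_{\zeta\theta}\) the second leg is carried by \(\rho=(\bar\alpha\otimes\alpha)\Comult[A]\) and \(\theta=(\bar\beta\otimes\beta)\Comult[B]\), which are spread across all four tensor factors of \(\Hils_{\DrinfDoubAlg}\) via comultiplications and the transpose--antipode construction \(\bar\alpha,\bar\beta\). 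The matrix-element identity in Definition~\ref{def:modularity}\ref{eq:modular_cond} is a statement about the \emph{second} leg, and it does not reduce to the manageability of \(\Multunit[A]\) and \(\Multunit[B]\) in any direct way when that leg carries \(\rho,\theta\). The paper's passage to the dual \(\DuMultunit[\mathcal{D}]\) is exactly the device that fixes this: for \(\DuMultunit[\mathcal{D}]\) the second leg is carried by \(\xi,\zeta\), which act only on the \(\Hils[K]\otimes\Hils\) factor and trivially on \(\conj{\Hils[K]}\otimes\conj{\Hils}\). One can then view \(\Dumultunit[A]_{\rho\hat\pi}\) and \(\Dumultunit[B]_{\theta\hat\eta}\) as unitaries adapted to \(\DuMultunit[A]\) and \(\DuMultunit[B]\), invoke \cite{Woronowicz:Multiplicative_Unitaries_to_Quantum_grp}*{Theorem~1.6} to obtain the individual manageability relations, and splice them together (this is also why \(Q_{\DrinfDoubAlg}\) can be taken trivial on the conjugate factors). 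Without passing to the dual, you would have to produce an explicit \(\widetilde{\Multunit[\mathcal{D}]}\) and a \(Q\) compatible with the complicated second leg, and your sketch does not indicate how to do that.
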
 
 The dual of a modular multiplicative unitary is again modular 
 (see~\cite{Soltan-Woronowicz:Remark_manageable}*{Proposition 2.2}).
 Hence, it is equivalent to show that 
 \(\DuMultunit[\mathcal{D}]\defeq\Dumultunit[B]_{\theta\zeta}\Dumultunit[A]_{\rho\xi}
 \in\U(\Hils_{\mathcal{D}}\otimes\Hils_{\mathcal{D}})\) is a modular multiplicative unitary. 
 The next result is the first step towards the proof of this fact.
 \begin{proposition}
  \label{prop:multunit_gen_codob} 
  \(\DuMultunit[\mathcal{D}]\in\U(\Hils_{\mathcal{D}}\otimes\Hils_{\mathcal{D}})\) 
  is a multiplicative unitary.
 \end{proposition}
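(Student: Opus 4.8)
Since $\DuMultunit[\mathcal{D}]$ is a product of two unitaries it is unitary, so the whole point is the pentagon equation~\eqref{eq:pentagon}. Write $X\defeq\Dumultunit[A]_{\rho\xi}$ and $Y\defeq\Dumultunit[B]_{\theta\zeta}$, so that $\DuMultunit[\mathcal{D}]=YX$; the identity to be proved is
\[
  Y_{23}X_{23}Y_{12}X_{12}=Y_{12}X_{12}Y_{13}X_{13}Y_{23}X_{23}
  \quad\text{in }\U(\Hils_{\mathcal{D}}\otimes\Hils_{\mathcal{D}}\otimes\Hils_{\mathcal{D}}).
\]
The plan is to bring both sides into a common form using three ingredients: the self-pentagons for $X$ and $Y$, the commutation of operators supported on disjoint tensor legs of $\Hils_{\mathcal{D}}=\conj{\Hils[K]}\otimes\conj{\Hils}\otimes\Hils[K]\otimes\Hils$, and the $\bichar$\nb-Drinfeld relation~\eqref{eq:V-Drinfeld}.

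I would begin by establishing the self-pentagons $X_{23}X_{12}=X_{12}X_{13}X_{23}$ and $Y_{23}Y_{12}=Y_{12}Y_{13}Y_{23}$, i.e. that $X$ and $Y$ are each multiplicative unitaries. The key is that $\rho$ and $\theta$ decompose through the character condition~\eqref{eq:W_char_in_second_leg}, exactly as in the proof of Proposition~\ref{prop:V-drinfeld_comm}: $\multunit[A]_{1\rho}=\multunit[A]_{1\bar\alpha}\multunit[A]_{1\alpha}$ and $\multunit[B]_{2\theta}=\multunit[B]_{2\bar\beta}\multunit[B]_{2\beta}$. On the factor $\Hils$ the pair $(\alpha,\xi)$ reduces to the $\G$\nb-Heisenberg pair $(\pi,\hat\pi)$, so the corresponding piece of $X$ is a leg embedding of $(\pi\otimes\hat\pi)\Dumultunit[A]$, the dual of the multiplicative unitary $\Multunit[A]=(\hat\pi\otimes\pi)\multunit[A]$; the $\bar\alpha$\nb-piece is controlled by the associated anti-Heisenberg relation together with~\eqref{eq:unit_imp_comult}. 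Assembling these, together with the compatibility of the $\alpha$- and $\bar\alpha$-pieces, yields the pentagon for $X$, and the same reasoning with $(\eta,\hat\eta)$ yields it for $Y$.

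With the self-pentagons available, inspecting the supports of the representations on $\Hils_{\mathcal{D}}$ gives two commutations for free: $X_{23}$ commutes with $Y_{12}$ (on the shared copy $2$, $\rho$ is supported on $\conj{\Hils}\otimes\Hils$ whereas $\zeta$ is supported on $\Hils[K]$), and $X_{13}$ commutes with $Y_{23}$ (on the shared copy $3$, $\xi$ and $\zeta$ are supported on $\Hils$ and $\Hils[K]$). Substituting the two self-pentagons into the left-hand side, commuting $X_{23}$ past $Y_{12}$, cancelling the common outer factors $Y_{12}$ and $X_{23}$, and finally using $X_{13}Y_{23}=Y_{23}X_{13}$, the whole pentagon collapses to the single identity
\[
  X_{12}\,Y_{13}Y_{23}=Y_{13}Y_{23}\,X_{12}.
\]

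It remains to prove this commutation, and this is where~\eqref{eq:V-Drinfeld} enters. By a character condition for the reduced bicharacter $\Dumultunit[B]$, the factor $Y_{13}Y_{23}$ is the image under $(\theta\otimes\theta\otimes\zeta)$ of a comultiplied $\Dumultunit[B]$, so the identity asserts that $\rho$ on copy $1$ commutes with the coproduct of $\theta$ on copies $1,2$. The main obstacle is that~\eqref{eq:V-Drinfeld} only gives commutativity of $\rho$ and $\theta$ \emph{up to the bicharacter} $\bichar$; the clean commutation above can hold only because the twist generated by $\rho$ against $\theta$ on copy $1$ is cancelled by a compensating twist coming from the interaction of $\xi=\hat\pi$ with the $\hat\pi$\nb-component of $\beta=(\eta\otimes\hat\pi)\hat{\Delta}_{R}$ on copy $2$ --- recall that $\hat{\Delta}_{R}$ carries the dual bicharacter $\Dubichar=\flip(\bichar^*)$. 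Checking that these two twists cancel exactly, while tracking an identity that lives on three copies of a four-fold tensor product, is the delicate part; I would organize it by decomposing $X$ and $Y$ into their $\alpha,\bar\alpha$ and $\beta,\bar\beta$ constituents, so that every elementary move is a pentagon for $\multunit[A]$ or $\multunit[B]$, an (anti-)Heisenberg relation~\eqref{eq:V-Heisenberg_pair}/\eqref{eq:V-anti-Heisenberg_pair}, the Drinfeld relation~\eqref{eq:V-Drinfeld}, or a commutation of operators on disjoint legs, and the verification then parallels, on a larger index set, the computation in Proposition~\ref{prop:V-drinfeld_comm}.
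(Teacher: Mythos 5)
Your overall strategy -- peel off the pentagon using disjoint-leg commutations and self-pentagons for the two factors -- is close in spirit to the paper's proof, and several of your ingredients are correct: \((\rho,\xi)\) is indeed a \(\G\)\nb-Heisenberg pair, so \(X=\Dumultunit[A]_{\rho\xi}\) does satisfy the pentagon equation, and the commutations \([\rho,\zeta]=0\) and \([\xi,\zeta]=0\) are exactly parts \ref{eq:r_v_comm} and \ref{eq:u_v_comm} of Lemma~\ref{lemm:commutation_bet_diff_reps}. The gap is your claim that ``the same reasoning with \((\eta,\hat\eta)\) yields'' the self-pentagon for \(Y=\Dumultunit[B]_{\theta\zeta}\). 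It does not: \((\zeta,\theta)\) is \emph{not} a \(\DuG[H]\)\nb-Heisenberg pair, because \(\theta\) is built from \(\beta=(\eta\otimes\hat\pi)\hat\Delta_R\), whose \(\hat\pi\)\nb-component lives on the same tensor factor \(\Hils\) that \(\xi\) acts on, while \(\zeta=\hat\eta\) sits on \(\Hils[K]\). The actual relation is \eqref{eq:s_and_v_comm}, which carries a conjugation by \(\bichar_{\xi 3}\); applying \(\theta\) and \(\zeta\) to its outer legs gives only the \emph{twisted} pentagon \(Y_{23}Y_{12}=Y_{12}\,\bichar_{\xi_2\zeta_3}Y_{13}\bichar_{\xi_2\zeta_3}^{*}\,Y_{23}\), and the conjugating unitary does not commute with \(Y_{13}\) (both have \(\zeta(\hat B)\)\nb-components on copy \(3\)). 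For the same reason the residual identity you reduce to, \(X_{12}Y_{13}Y_{23}=Y_{13}Y_{23}X_{12}\), is false in general: \(X_{12}\) and \(Y_{23}\) overlap on copy \(2\) through \(\xi\) and \(\theta\), which by \eqref{eq:s_and_u_comm} commute only up to another \(\bichar_{\xi 3}\)\nb-conjugation. Your closing paragraph does flag that ``the twist generated by \(\rho\) against \(\theta\) \ldots{} is cancelled by a compensating twist,'' which is the right intuition, but as organized the proof asserts two intermediate identities that each fail, so the cancellation is never actually exhibited.

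The paper avoids this trap by never claiming a self-pentagon for the \(\G[H]\)\nb-factor. It conjugates \(\DuMultunit[\mathcal{D}]_{12}=\Dumultunit[B]_{\theta_1\zeta_2}\Dumultunit[A]_{\rho_1\xi_2}\) by \(\Dumultunit[B]_{\theta_2 3}\Dumultunit[A]_{\rho_2 4}\), keeping the \(\hat B\)\nb- and \(\hat A\)\nb-legs abstract, and feeds in the twisted relations \eqref{eq:s_and_v_comm} and \eqref{eq:s_and_u_comm} together with the Drinfeld relation in the form \eqref{eq:aux_gen_co_dob_mult}; the three occurrences of \(\bichar_{\xi_2 3}\) then cancel inside a single computation. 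If you want to salvage your factorised approach, replace the honest pentagon for \(Y\) by the twisted one above and the plain commutation \(X_{12}Y_{23}=Y_{23}X_{12}\) by \(Y_{23}X_{12}=\bichar_{\xi_2\zeta_3}X_{12}\bichar_{\xi_2\zeta_3}^{*}Y_{23}\); the residual identity then becomes \(\bichar_{\xi_2\zeta_3}Y_{13}X_{12}\bichar_{\xi_2\zeta_3}^{*}=X_{12}Y_{13}\), which is precisely the Drinfeld relation \eqref{eq:rho_theta_equiv_Drinfeld_cond} with \(\xi\) and \(\zeta\) applied to its \(\hat A\)\nb- and \(\hat B\)\nb-legs, and the argument closes.
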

 To prove this, we need to understand the commutation relations between 
 the representations in \eqref{eq:gen_codob_reps}. 
 \begin{lemma}
   \label{lemm:commutation_bet_diff_reps}
   Consider the faithful representations on~\(\Hils_{\mathcal{D}}\) defined in~\eqref{eq:gen_codob_reps}. 
   Then 
   \begin{enumerate}
    \item\label{eq:r_u_comm} \((\rho,\xi)\) is a~\(\G\)\nb-Heisenberg pair;
    \item\label{eq:r_s_comm} \((\rho,\theta)\) is a~\(\bichar\)\nb-Drinfeld pair;
    \item\label{eq:s_v_comm} \(\theta\) and~\(\zeta\) commute in the following way:
         \begin{equation}
           \label{eq:s_and_v_comm}
             \Dumultunit[B]_{\theta3}\Dumultunit[B]_{1\zeta} 
           = \Dumultunit[B]_{1\zeta}\bichar_{\xi3}\Dumultunit[B]_{13}
              \bichar_{\xi3}^{*}\Dumultunit[B]_{\theta3}
           \qquad\text{in~\(\U(B\otimes\Comp(\Hils_{\mathcal{D}})\otimes\hat{B})\);}
         \end{equation}
    \item\label{eq:s_u_comm} \(\theta\) and~\(\xi\) commute in the following way:
          \begin{equation}
           \label{eq:s_and_u_comm}
             \Dumultunit[B]_{\theta3}\Dumultunit[A]_{1\xi}
           = \bichar_{\xi3}\Dumultunit[A]_{1\xi}\bichar_{\xi3}^{*}\Dumultunit[B]_{\theta3}
           \quad\text{in~\(\U(A\otimes\Comp(\Hils_{\mathcal{D}})\otimes\hat{B})\);} 
          \end{equation}
   \item\label{eq:r_v_comm} \(\rho\) and~\(\zeta\) commute;
   \item\label{eq:u_v_comm} \(\xi\) and~\(\zeta\) commute.
   \end{enumerate}
  \end{lemma}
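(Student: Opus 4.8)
The plan is to begin by recording, from the explicit shapes in Corollary~\ref{cor:existence_of_canonical_Heisenberg_pair}, exactly which tensor legs of \(\Hils_{\mathcal D}=\conj{\Hils[K]}\otimes\conj{\Hils}\otimes\Hils[K]\otimes\Hils\) each of \(\rho,\theta,\xi,\zeta\) acts on. Since \(\alpha(a)=1_{\Hils[K]}\otimes\pi(a)\) and \(\beta(b)=(\eta\otimes\hat\pi)\hat{\Delta}_{R}(b)\), formula~\eqref{ex:exist_V-anti-Heisenberg_pair} shows that \(\bar\alpha\) is supported on the second leg alone and \(\bar\beta\) on the first two legs. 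Hence \(\rho=(\bar\alpha\otimes\alpha)\Comult[A]\) is supported on legs two and four, \(\theta=(\bar\beta\otimes\beta)\Comult[B]\) on all four legs, \(\xi=\hat\pi\) on leg four, and \(\zeta=\hat\eta\) on leg three. With this in hand, items~\ref{eq:r_v_comm} and~\ref{eq:u_v_comm} are immediate: \(\rho\) and \(\xi\) meet only legs \(\{2,4\}\) and \(\{4\}\), both disjoint from leg three, so they act on tensor factors orthogonal to those of \(\zeta\) and commute. Item~\ref{eq:r_s_comm} is nothing but Proposition~\ref{prop:V-drinfeld_comm} applied to the \(\bichar\)\nb-Heisenberg pair \(\VHeisPair\) on \(\Hils[K]\otimes\Hils\) together with its associated \(\bichar\)\nb-anti-Heisenberg pair \((\bar\alpha,\bar\beta)\) on \(\conj{\Hils[K]}\otimes\conj{\Hils}\); Lemma~\ref{lemm:equiv_cond_V_Drinfeld_pair} gives an equivalent restatement should it be more convenient.

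For item~\ref{eq:r_u_comm} I would split the first leg of \(\multunit[A]_{1\rho}\) by the character condition~\eqref{eq:W_char_in_second_leg}: applying \((\Id_{\hat A}\otimes\bar\alpha\otimes\alpha)\) to \((\Id_{\hat A}\otimes\Comult[A])\multunit[A]=\multunit[A]_{12}\multunit[A]_{13}\) yields \(\multunit[A]_{1\rho}=\multunit[A]_{1\bar\alpha}\multunit[A]_{1\alpha}\), the two factors being carried by legs two and four. Because \(\bar\alpha\) (leg two) is disjoint from \(\xi=\hat\pi\) (leg four), the factor \(\multunit[A]_{1\bar\alpha}\) commutes with \(\multunit[A]_{\xi3}\); after moving it aside the surviving identity \(\multunit[A]_{\xi3}\multunit[A]_{1\alpha}=\multunit[A]_{1\alpha}\multunit[A]_{13}\multunit[A]_{\xi3}\) is exactly the \(\G\)\nb-Heisenberg relation~\eqref{eq:Heisenberg_pair} for \((\pi,\hat\pi)\), since \(\alpha=1_{\Hils[K]}\otimes\pi\) makes \(\multunit[A]_{1\alpha}\) the leg-four copy of \(\multunit[A]_{1\pi}\). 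Reassembling shows \((\rho,\xi)\) satisfies~\eqref{eq:Heisenberg_pair}, i.e.\ it is a \(\G\)\nb-Heisenberg pair.

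The substance of the lemma lies in items~\ref{eq:s_v_comm} and~\ref{eq:s_u_comm}. My approach mirrors item~\ref{eq:r_u_comm}: the first-leg character condition for \(\Dumultunit[B]\), namely \((\Comult[B]\otimes\Id_{\hat B})\Dumultunit[B]=\Dumultunit[B]_{23}\Dumultunit[B]_{13}\), gives \(\Dumultunit[B]_{\theta3}=\Dumultunit[B]_{\beta3}\Dumultunit[B]_{\bar\beta3}\), separating the leg-\(\{3,4\}\) part \(\beta\) from the leg-\(\{1,2\}\) part \(\bar\beta\). The decisive point is that \(\beta=(\eta\otimes\hat\pi)\hat{\Delta}_{R}\) is governed by the bicharacter: feeding the homomorphism relation for \(\hat{\Delta}_{R}\) (the analogue of~\eqref{eq:def_V_via_right_homomorphism}, producing \(\Dubichar=\flip(\bichar^*)\)) into \(\beta\)'s corepresentation rewrites the leg-four \(\hat\pi\)\nb-contribution in terms of a \(\bichar\)\nb-factor carried by \(\xi=\hat\pi\), which is precisely the conjugating \(\bichar_{\xi3}\). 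The leftover \(\eta\)\nb-contribution on leg three then interacts with \(\zeta=\hat\eta\) through the \(\G[H]\)\nb-Heisenberg relation~\eqref{eq:Heisenberg_pair} for \((\eta,\hat\eta)\), yielding item~\ref{eq:s_v_comm}; for item~\ref{eq:s_u_comm} the leg-four \(\hat\pi\)\nb-contribution interacts with \(\xi=\hat\pi\) through the \(\G\)\nb-Heisenberg relation for \((\pi,\hat\pi)\). In both, the \(\bar\beta\)\nb-factor on legs one and two is transported by the \(\bichar\)\nb-anti-Heisenberg relation~\eqref{eq:V-anti-Heisenberg_pair} for \((\bar\alpha,\bar\beta)\) and supplies the compensating twist, so that all but the single stated \(\bichar_{\xi3}\) conjugation cancels.

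The hard part will be exactly this last cancellation in items~\ref{eq:s_v_comm} and~\ref{eq:s_u_comm}: one must manipulate operators on four legs at once, fix the correct order of the factors emerging from the two successive character decompositions (of \(\Comult[B]\) inside \(\theta\) and of \(\hat{\Delta}_{R}\) inside \(\beta\)), and check that the \(\bichar\)\nb-twists produced on legs one and two by \(\bar\beta\) annihilate those produced on legs three and four by \(\beta\), leaving only the asserted \(\bichar_{\xi3}\). I expect the bookkeeping, rather than any single conceptual step, to be the genuine difficulty; the underlying inputs are only the character conditions, the two Heisenberg relations for \((\pi,\hat\pi)\) and \((\eta,\hat\eta)\), the anti-Heisenberg relation for \((\bar\alpha,\bar\beta)\), and the homomorphism–bicharacter correspondence of Theorem~\ref{the:equivalent_notion_of_homomorphisms}.
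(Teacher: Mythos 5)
Your reading of the supports of \(\rho,\theta,\xi,\zeta\) on the four legs of \(\Hils_{\mathcal D}\) is correct, and items (1), (2), (5), (6) are handled correctly: (5) and (6) are indeed pure leg-disjointness, (2) is Proposition~\ref{prop:V-drinfeld_comm} exactly as in the paper, and your direct verification of (1) via \(\multunit[A]_{1\rho}=\multunit[A]_{1\bar\alpha}\multunit[A]_{1\alpha}\) is a sound substitute for the paper's citation of \cite{Meyer-Roy-Woronowicz:Twisted_tensor}*{Lemma 3.8}. For (3) and (4) your skeleton --- decompose \(\Dumultunit[B]_{\theta\cdot}=\Dumultunit[B]_{\beta\cdot}\Dumultunit[B]_{\bar\beta\cdot}\) by the first-leg character condition, then rewrite \(\Dumultunit[B]_{\beta\cdot}=\bichar_{\hat{\pi}\cdot}\Dumultunit[B]_{\eta\cdot}\) using \((\hat{\Delta}_R\otimes\Id_{\hat{B}})\Dumultunit[B]=\bichar_{23}\Dumultunit[B]_{13}\), so that the \(\hat{\pi}\)-part of \(\beta\) produces the conjugating \(\bichar_{\xi3}\) --- is exactly the paper's, as is your use of the \((\eta,\hat{\eta})\) Heisenberg relation in item (3).

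The gap is in the mechanism you propose for finishing (3) and (4), the part you yourself flag as the hard one. First, the \(\bar\beta\)-factor is not ``transported by the anti-Heisenberg relation \eqref{eq:V-anti-Heisenberg_pair}'' and supplies no compensating twist: that relation governs the interaction of \(\bar\beta\) with \(\bar\alpha\), and \(\bar\alpha\) does not occur anywhere in (3) or (4). The factor \(\Dumultunit[B]_{\bar\beta\cdot}\) lives on legs one and two and simply commutes past \(\Dumultunit[B]_{1\zeta}\) (leg three) and \(\Dumultunit[A]_{1\xi}\) (leg four) by disjointness of legs, contributing nothing. Second, item (4) does not use the \(\G\)\nb-Heisenberg relation for \((\pi,\hat{\pi})\): the two leg-four factors are \(\bichar_{\hat{\pi}3}\) (\(\hat{\pi}\) applied to the first leg of \(\bichar\)) and \(\Dumultunit[A]_{1\hat{\pi}}\) (\(\hat{\pi}\) applied to the second leg of \(\Dumultunit[A]\)); both lie in multipliers of algebras built from \(\hat{\pi}(\hat{A})\), so \eqref{eq:Heisenberg_pair} is not applicable --- one simply leaves them in the order \(\bichar_{\hat{\pi}3}\Dumultunit[A]_{1\hat{\pi}}\) and inserts \(\bichar_{\xi3}^{*}\bichar_{\xi3}\) to read off the conjugation. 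There is no cancellation between twists on legs one--two and legs three--four to orchestrate: the single factor \(\bichar_{\hat{\pi}\cdot}\) coming from \(\hat{\Delta}_R\) is the entire source of \(\bichar_{\xi3}(\cdot)\bichar_{\xi3}^{*}\), and the computation is shorter than your plan anticipates. If you carry out the bookkeeping as described you will find no \(\bar\alpha\) to pair \(\bar\beta\) against and no \(\pi(A)\) to pair \(\hat{\pi}\) against; the fix is to drop both invocations and rely on leg-disjointness instead.
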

  \begin{proof}
   Corollary~\ref{cor:existence_of_canonical_Heisenberg_pair} gives 
   \(\rho(a)=\big((\bar\alpha\otimes\pi)\Comult[A](a)\big)_{14}\in\Bound(\Comp(\Hils_{\mathcal{D}})\otimes
   \conj{\Hils[K]}\otimes\conj{\Hils}\otimes\Hils[K]\otimes\Hils)\); hence 
   \cite{Meyer-Roy-Woronowicz:Twisted_tensor}*{Lemma 3.8} gives 
   \ref{eq:r_u_comm}. Proposition~\ref{prop:V-drinfeld_comm}  yields 
   \ref{eq:r_s_comm}. Also, \ref{eq:r_v_comm} and~\ref{eq:u_v_comm} follow 
   from~\eqref{eq:gen_codob_reps}.
   
   We express \(\hat{\Delta}_{R}\) in terms of~\(\Dubichar\),  following 
   \eqref{eq:def_V_via_right_homomorphism}:
   \[
    (\Id_{\hat{B}}\otimes\hat{\Delta}_R)\multunit[B]=\multunit[B]_{12}\Dubichar_{13}
    \qquad\text{in~\(\U(\hat{B}\otimes B\otimes\hat{A})\).}
   \]
   Applying~\(\flip_{23}\flip_{12}\) to the both sides of the last 
   expression and taking adjoints yields
   \begin{equation}
    \label{eq:dual_right_morph_WB}
     (\hat{\Delta}_{R}\otimes\Id_{\hat{B}})\Dumultunit[B] =\bichar_{23}\Dumultunit[B]_{13}
     \qquad\text{in~\(\U(B\otimes\hat{A}\otimes\hat{B})\).}
   \end{equation}
   By definition,~\(\theta(b)=(\bar{\beta}\otimes ((\eta\otimes\hat{\pi})\hat{\Delta}_R)\otimes\Id_{\hat{B}})\Comult[B]\). 
   The character condition~\eqref{eq:W_char_in_first_leg} for ~\(\Dumultunit[B]\) gives
   \begin{equation}
    \label{eq:du_mult_H_s}
        \Dumultunit[B]_{\theta4}
      = (\bar{\beta}\otimes ((\eta\otimes\hat{\pi})\hat{\Delta}_R)\otimes\Id_{\hat{B}})
        \bigl(\Dumultunit[B]_{23}\Dumultunit[B]_{13}\bigr) 
      = \bichar_{\hat{\pi}4}\cdot\Dumultunit[B]_{\eta 4}\Dumultunit[B]_{\bar{\beta}4}
   \end{equation}
   in~\(\U(\Comp(\conj{\Hils[K]}\otimes\conj{\Hils}\otimes\Hils[K]\otimes\Hils)\otimes\hat{B})\).
   Using~\eqref{eq:du_mult_H_s},  we get
   \[
     \Dumultunit[B]_{\theta5}\Dumultunit[B]_{1\zeta} 
   = \bichar_{\hat{\pi}5}\Dumultunit[B]_{\eta 5}\Dumultunit[B]_{\bar{\beta}5}\Dumultunit[B]_{1\hat{\eta}}
  \qquad\text{in~\(\U(B\otimes\Comp(\conj{\Hils[K]}\otimes\conj{\Hils}\otimes\Hils[K]\otimes\Hils)\otimes\hat{B})\).}
   \] 
   Since~\(\Dumultunit[B]_{\bar{\beta}5}\) and~\(\Dumultunit[B]_{1\hat{\eta}}\) commute,   
   \(\HeisPair{\eta}\) is an \(\G[H]\)\nb-Heisenberg pair, and~\(\bichar_{\hat{\pi}5}\) and 
   \(\Dumultunit[B]_{1\hat{\eta}}\Dumultunit[B]_{15}\) commute, we get
   \[
    \Dumultunit[B]_{\theta5}\Dumultunit[B]_{1\zeta}
     = \bichar_{\hat{\pi}5}\Dumultunit[B]_{\eta 5}\Dumultunit[B]_{1\hat{\eta}}\Dumultunit[B]_{\bar{\beta}5}
    = \bichar_{\hat{\pi}5}\Dumultunit[B]_{1\hat{\eta}}\Dumultunit[B]_{15}\Dumultunit[B]_{\eta 5} 
    \Dumultunit[B]_{\bar{\beta}5}
    = \Dumultunit[B]_{1\hat{\eta}}\bichar_{\hat{\pi}5}\Dumultunit[B]_{15}\Dumultunit[B]_{\eta 5} 
    \Dumultunit[B]_{\bar{\beta}5} 
   \]
   in~\(\U(B\otimes\Comp(\conj{\Hils[K]}\otimes\conj{\Hils}\otimes\Hils[K]\otimes\Hils)\otimes\hat{B})\). 
   Hence~\eqref{eq:s_and_v_comm} follows from~\eqref{eq:du_mult_H_s} 
   and~\eqref{eq:gen_codob_reps}, after collapsing the respective legs under the identification 
   \(\Hils_{\mathcal{D}}=\conj{\Hils[K]}\otimes\conj{\Hils}\otimes\Hils[K]\otimes\Hils\).
   
   Similarly,~\eqref{eq:s_and_u_comm} follows from~\eqref{eq:du_mult_H_s} and~\eqref{eq:gen_codob_reps} 
   after collapsing the leg numbers:
   \[
       \Dumultunit[B]_{\theta3}\Dumultunit[A]_{1\xi}
     = \bichar_{\hat{\pi}5}\Dumultunit[B]_{\eta 5}\Dumultunit[B]_{\bar{\beta}5}\Dumultunit[A]_{1\hat{\pi}}
     = \bichar_{\hat{\pi}5}\Dumultunit[A]_{1\hat{\pi}}\Dumultunit[B]_{\eta 5}\Dumultunit[B]_{\bar{\beta}5}
     = \bichar_{\xi3}\Dumultunit[A]_{1\xi}\bichar^{*}_{\xi3}\Dumultunit[B]_{\theta3}.\qedhere
   \] 
  \end{proof}
 \begin{notation}
  \label{not:leg_numbering}
    We write~\(\pi_i\) when a representation \(\pi\) is acting
    on the \(i\)th leg of a unitary.
 \end{notation}    
  
\begin{proof}[Proof of Proposition~\textup{\ref{prop:multunit_gen_codob}}]
   Rewrite~\eqref{eq:V-Drinfeld} for~\((\rho,\theta)\) involving~\(\Dumultunit[A]\) 
   and~\(\Dumultunit[B]\) in the following way: 
   \begin{equation}
    \label{eq:rho_theta_equiv_Drinfeld_cond}
     \Dumultunit[A]_{\rho2}\Dumultunit[B]_{\theta3}\bichar_{23}
    =\bichar_{23}\Dumultunit[B]_{\theta3}\Dumultunit[A]_{\rho2}
    \qquad\text{in~\(\U(\Comp(\Hils_{\mathcal{D}})\otimes\hat{A}\otimes\hat{B})\).}
   \end{equation}

   Equations~\eqref{eq:rho_theta_equiv_Drinfeld_cond} and~\eqref{eq:s_and_u_comm} give:
   \begin{equation}
    \label{eq:aux_gen_co_dob_mult}
      \Dumultunit[A]_{\rho_1 \xi_2}\Dumultunit[B]_{\theta_1 3}\Dumultunit[B]_{\theta_2 3} 
    = \bichar_{\xi_2 3}\Dumultunit[B]_{\theta_1 3}\Dumultunit[A]_{\rho_1 \xi_2}
       \bichar_{\xi_2 3}^{*}\Dumultunit[B]_{\theta_2 3}
    = \bichar_{\xi_2 3}\Dumultunit[B]_{\theta_1 3}\bichar_{\xi_2 3}^{*}
      \Dumultunit[B]_{\theta_2 3}\Dumultunit[A]_{\rho_1 \xi_2}
   \end{equation}
   in~\(\U(\Comp(\Hils_{\mathcal{D}}\otimes\Hils_{\mathcal{D}})\otimes\hat{B})\).
   
  The following computation takes place in \(\U(\Comp(\Hils_{\mathcal{D}}\otimes\Hils_{\mathcal{D}})\otimes\hat{B}\otimes\hat{A})\):   
  \begin{align*}
  &   \Dumultunit[B]_{\theta_2 3}\Dumultunit[A]_{\rho_2 4}\Dumultunit[B]_{\theta_1 \zeta_2}
     \Dumultunit[A]_{\rho_1 \xi_2}(\Dumultunit[A]_{\rho_2 4})^{*}(\Dumultunit[B]_{\theta_2 3})^{*}\\
  &= \Dumultunit[B]_{\theta_2 3}\Dumultunit[B]_{\theta_1 \zeta_2}\Dumultunit[A]_{\rho_2 4}\Dumultunit[A]_{\rho_1 \xi_2}
     (\Dumultunit[A])^{*}_{\rho_2 4}(\Dumultunit[B])^{*}_{\theta_2 3}\\
  &= \Dumultunit[B]_{\theta_1 \zeta_2}\bichar_{\xi_2 3}\Dumultunit[B]_{\theta_1 3}
     \bichar_{\xi_2 3}^{*}\Dumultunit[B]_{\theta_2 3}\Dumultunit[A]_{\rho_1 \xi_2}
     \Dumultunit[A]_{\rho_1 4}(\Dumultunit[B])^{*}_{\theta_2 3}\\
  &= \Dumultunit[B]_{\rho_1 \zeta_2}\Dumultunit[A]_{\rho_1 \xi_2}\Dumultunit[B]_{\theta_1 3}\Dumultunit[B]_{\theta_2 3}
     (\Dumultunit[B])^{*}_{\theta_2 3}\Dumultunit[A]_{\rho_1 4}\\
  &=  \Dumultunit[B]_{\theta_1 \zeta_2}\Dumultunit[A]_{\rho_1 \xi_2}\Dumultunit[B]_{\theta_1 3}\Dumultunit[A]_{\rho_1 4}.   
  \end{align*}
  The first equality follows from Lemma~\ref{lemm:commutation_bet_diff_reps} \ref{eq:r_v_comm},
  the second equality uses~\eqref{eq:s_and_v_comm} and Lemma~\ref{lemm:commutation_bet_diff_reps} 
  \ref{eq:r_u_comm}, the third equality uses~\eqref{eq:aux_gen_co_dob_mult} and 
  that~\(\Dumultunit[B]_{\theta_2 3}\), \(\Dumultunit[A]_{\rho_1 4}\) commute, and the last equality is trivial.
  
  Finally, applying \(\xi\) and~\(\zeta\) on the third and fourth leg on both sides of the last 
  expression gives the pentagon equation~\eqref{eq:pentagon} for \(\DuMultunit[\mathcal{D}]\in\U(\Hils_{\mathcal{D}}\otimes\Hils_{\mathcal{D}})\).
\end{proof}

 Next we need to know what it means for~\(\Bichar\defeq(\hat{\pi}\otimes \hat{\eta})
 \bichar\in\U(\Hils\otimes\Hils[K])\) to be manageable. 
 By \cite{Meyer-Roy-Woronowicz:Homomorphisms}*{Lemma 3.2},  
 \(\Bichar\in\U(\Hils\otimes\Hils[K])\) is adapted to~\(\DuMultunit[B]\in\U(\Hils[K]\otimes\Hils[K])\) in 
 the sense of \cite{Woronowicz:Multiplicative_Unitaries_to_Quantum_grp}*{Definition 1.3}. 

 \cite{Woronowicz:Multiplicative_Unitaries_to_Quantum_grp}*{Theorem 1.6} 
 gives the manageability of \(\Bichar\in\U(\Hils\otimes\Hils[K])\): there is a unitary 
 \(\widetilde{\Bichar}\in\U(\conj{\Hils}\otimes\Hils[K])\) with the following 
 condition:
  \begin{equation}
   \label{eq:second_leg_bichar_adapted}
    \big(x\otimes u\mid \Bichar\mid z\otimes y\big)
    =\big(\conj{z}\otimes Q_{B} u\mid \widetilde{\Bichar}\mid\conj{x}\otimes Q_{B}^{-1}y\big), 
  \end{equation}
  for all~\(x,z\in\Hils\), \(u\in\dom(Q_B)\) and~\(y\in\dom(Q_{B}^{-1})\). Here~\(Q_{B}\) is 
  the self\nb-adjoint operator defining manageability of~\(\Multunit[B]\in\U(\Hils[K]\otimes\Hils[K])\)
  in Definition~\ref{def:modularity}. Moreover, 
  \begin{equation}
    \label{eq:mang_bichar}
      \widetilde{\Bichar}\defeq \bichar^{\transpose\hat{\pi}\otimes\hat{\eta} 
      \Coinv_{\hat{B}}} \qquad\text{in \(\U(\conj{\Hils}\otimes\Hils[K])\).} 
  \end{equation} 
  Similarly, by duality,
   \(\DuBichar\defeq(\hat{\eta}\otimes\hat{\pi})\Dubichar\in\U(\Hils_B\otimes\Hils_A)\)
  is also manageable. 
  
 \begin{lemma}
  \label{lemm:V_tilde_commute_Q}
   \(\widetilde{\Bichar}\) and~\(Q_{A}^{\transpose}\otimes Q_{B}^{-1}\) commute.
 \end{lemma}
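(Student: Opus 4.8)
The plan is to show that $\widetilde{\Bichar}$ commutes with the one\nb-parameter unitary group $t\mapsto(Q_{A}^{\transpose})^{-2\mathrm{i}t}\otimes Q_{B}^{2\mathrm{i}t}=\big((Q_{A}^{\transpose})^{-1}\otimes Q_{B}\big)^{2\mathrm{i}t}$. Since $(Q_{A}^{\transpose})^{-1}\otimes Q_{B}$ is a (possibly unbounded) positive self\nb-adjoint operator, a bounded operator commuting with all of its imaginary powers commutes with its spectral projections, hence with the operator itself and with its inverse $Q_{A}^{\transpose}\otimes Q_{B}^{-1}$. I start from the explicit description $\widetilde{\Bichar}=(\transpose\hat{\pi}\otimes\hat{\eta}\Coinv_{\hat{B}})\bichar$ from~\eqref{eq:mang_bichar} and transport the scaling groups through the map $(\transpose\hat{\pi}\otimes\hat{\eta}\Coinv_{\hat{B}})$.

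There are three inputs. First, the bicharacter is scaling\nb-invariant, $(\tau^{\hat{A}}_{t}\otimes\tau^{\hat{B}}_{t})\bichar=\bichar$ for all $t\in\R$; this is the standard $\tau$\nb-invariance of bicharacters (it also follows by applying $\tau^{\hat{A}}_{t}\otimes\tau^{A}_{t}\otimes\tau^{\hat{B}}_{t}$ to~\eqref{eq:def_V_via_right_homomorphism}, using that $\multunit[A]$ is scaling invariant and that $\Delta_{R}$ intertwines the scaling groups). Second, since $\Multunit[A]$ and $\Multunit[B]$ are manageable, the same operators $Q_{A}$ and $Q_{B}$ implement the scaling groups of the duals in the representations $\hat{\pi}$ and $\hat{\eta}$: by Theorem~\ref{the:Cst_quantum_grp_and_mult_unit}\,\ref{eq:sc_grp_Q} applied to $\DuMultunit[A]$ and $\DuMultunit[B]$ one has $\hat{\pi}\circ\tau^{\hat{A}}_{t}=\Ad_{Q_{A}^{2\mathrm{i}t}}\circ\hat{\pi}$ and $\hat{\eta}\circ\tau^{\hat{B}}_{t}=\Ad_{Q_{B}^{2\mathrm{i}t}}\circ\hat{\eta}$. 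Third, $\Coinv_{\hat{B}}$ commutes with $\tau^{\hat{B}}_{t}$ by Theorem~\ref{the:Cst_quantum_grp_and_mult_unit}\,\ref{eq:bdd_coinv_sc_grp} applied to $\hat{B}$.

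The key structural point is how the transpose interacts with these implementations. Since transposition is an anti\nb-automorphism and $(Q_{A}^{2\mathrm{i}t})^{\transpose}=(Q_{A}^{\transpose})^{2\mathrm{i}t}$, one gets $\transpose\circ\Ad_{Q_{A}^{2\mathrm{i}t}}=\Ad_{(Q_{A}^{\transpose})^{-2\mathrm{i}t}}\circ\transpose$, so the first leg acquires a sign flip, whereas on the second leg $\Coinv_{\hat{B}}$ commutes with $\tau^{\hat{B}}_{t}$ and no flip occurs. Combining the three inputs gives $(\transpose\hat{\pi}\otimes\hat{\eta}\Coinv_{\hat{B}})\circ(\tau^{\hat{A}}_{t}\otimes\tau^{\hat{B}}_{t})=\Ad_{(Q_{A}^{\transpose})^{-2\mathrm{i}t}\otimes Q_{B}^{2\mathrm{i}t}}\circ(\transpose\hat{\pi}\otimes\hat{\eta}\Coinv_{\hat{B}})$. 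Applying this to the scaling\nb-invariant $\bichar$ yields
\[
  \widetilde{\Bichar}=\big((Q_{A}^{\transpose})^{-2\mathrm{i}t}\otimes Q_{B}^{2\mathrm{i}t}\big)\,\widetilde{\Bichar}\,\big((Q_{A}^{\transpose})^{2\mathrm{i}t}\otimes Q_{B}^{-2\mathrm{i}t}\big)\qquad(t\in\R),
\]
which is exactly the commutation with $\big((Q_{A}^{\transpose})^{-1}\otimes Q_{B}\big)^{2\mathrm{i}t}$ needed to conclude as in the first paragraph.

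I expect the transpose bookkeeping to be the only delicate step: it is the order reversal together with the identity $(Q_{A}^{2\mathrm{i}t})^{\transpose}=(Q_{A}^{\transpose})^{2\mathrm{i}t}$ that produces opposite signs on the two legs, and hence pairs $Q_{A}^{\transpose}$ with $Q_{B}^{-1}$ rather than with $Q_{B}$. Reassuringly, the conclusion is insensitive to the overall sign convention used to implement the dual scaling groups by $Q_{A}^{2\mathrm{i}t}$ and $Q_{B}^{2\mathrm{i}t}$: flipping both signs simultaneously only replaces $(Q_{A}^{\transpose})^{-1}\otimes Q_{B}$ by its inverse, which commutes with $\widetilde{\Bichar}$ if and only if the original does.
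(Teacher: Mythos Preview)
Your proof is correct. Both your argument and the paper's start from the scaling invariance of the bicharacter (equivalently, that \(\Bichar\) commutes with \(Q_{A}\otimes Q_{B}\), which the paper records as~\eqref{eq:bichar_comm_QA_and_QB}) and transfer this to \(\widetilde{\Bichar}\); the difference lies in the transfer mechanism. The paper works directly with the manageability inner-product identity~\eqref{eq:second_leg_bichar_adapted}: it substitutes \(Q_{A}^{\mathrm{i}t}x\), \(Q_{B}^{\mathrm{i}t}u\), \(Q_{A}^{\mathrm{i}t}z\), \(Q_{B}^{\mathrm{i}t}y\) for \(x,u,z,y\), observes that the left-hand side is unchanged by the commutation of \(\Bichar\) with \(Q_{A}\otimes Q_{B}\), and reads off on the right-hand side that \(\widetilde{\Bichar}\) commutes with \((Q_{A}^{\transpose})^{\mathrm{i}t}\otimes Q_{B}^{-\mathrm{i}t}\). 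You instead use the explicit description~\eqref{eq:mang_bichar} of \(\widetilde{\Bichar}\) as the image of \(\bichar\) under the anti-homomorphism \(\transpose\hat{\pi}\otimes\hat{\eta}\Coinv_{\hat{B}}\), and push the scaling automorphisms through this map using the intertwining relations for \(\tau^{\hat{A}}_{t}\), \(\tau^{\hat{B}}_{t}\), \(\Coinv_{\hat{B}}\), and the transpose. Your route makes it structurally transparent why the two legs carry opposite powers (it is exactly the anti-multiplicativity of the transpose), whereas the paper's route is more self-contained, needing only the defining inner-product relation rather than the formula~\eqref{eq:mang_bichar} and the commutation of \(\Coinv_{\hat{B}}\) with the scaling group.
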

 \begin{proof}
  Combining Theorem~\ref{the:Cst_quantum_grp_and_mult_unit} \ref{eq:mang_rels} \ref{eq:sc_grp_Q} 
  and \cite{Meyer-Roy-Woronowicz:Homomorphisms}*{Proposition 3.10} we obtain
  \begin{equation}
   \label{eq:bichar_comm_QA_and_QB}
    (Q_{A}\otimes Q_{B})\Bichar (Q_{A}\otimes Q_{B})=\Bichar . 
  \end{equation}  
  Hence, in~\eqref{eq:second_leg_bichar_adapted}, we can replace \(x\), \(u\), \(z\) and~\(y\) by 
  \(Q^{\mathrm{i}t}_{A}(x)\), \(Q^{\mathrm{i}t}_{B}(u)\), \(Q^{\mathrm{i}t}_{A}(z)\) and~\(Q^{\mathrm{i}t}_{B}(y)\), 
  respectively, for all~\(t\in\R\). Thus we obtain 
  \[
     \big(\conj{z}\otimes Q_{B}u\mid\widetilde{\Bichar}\mid\conj{x}\otimes Q_{B}^{-1}y\big)
   = \Big(\big[Q_{A}^{\transpose}\big]^{-\mathrm{i} t}\conj{z}\otimes Q_{B}^{\mathrm{i}t}Q_{B}u\mid
     \widetilde{\Bichar}\mid \big[Q_{A}^{\transpose}\big]^{-\mathrm{i}t}\conj{x}\otimes
     Q_{B}^{\mathrm{i}t}Q^{-1}_{B}u\Big)  
  \]
  and therefore~\(\widetilde{\Bichar}=\Big(\big[Q_{A}^{\transpose}\big]^{\mathrm{i} t}\otimes Q^{-\mathrm{i} t}_{B}\Big)
 \widetilde{\Bichar}\Big(\big[Q_{A}^{\transpose}\big]^{\mathrm{i} t}\otimes Q^{-\mathrm{i} t}_{B}\Big)\) for all~\(t\in\R\).
 \end{proof}  
  By duality, \cite{Soltan-Woronowicz:Multiplicative_unitaries}*{Lemma 40} gives 
  \((\Coinv_{B}\otimes\Coinv_{\hat{B}})\Dumultunit[B]=\Dumultunit[B]\). 
  Using~\eqref{ex:exist_V-anti-Heisenberg_pair}, and 
  \eqref{eq:dual_right_morph_WB}  we compute 
  \begin{align*}
       \Dumultunit[B]_{\bar\beta 2}    
    =  \big((\hat{\Delta}_{R}\otimes\Id_{\hat{B}})\Dumultunit[B]\big)^{\transpose\hat{\eta}\otimes\transpose\hat{\pi}
        \otimes\Coinv_{\hat{B}}}
    &= \big(\bichar_{23}\Dumultunit[B]_{13}\big)^{\transpose\hat{\eta}\otimes\transpose\hat{\pi}
        \otimes\Coinv_{\hat{B}}}\\   
    &= (\Dumultunit[B])_{13}^{\transpose\hat{\eta}\otimes\Coinv_{\hat{B}}}
       \bichar_{23}^{\transpose\hat{\pi}\otimes\Coinv_{\hat{B}}}.  
  \end{align*}
  The third equality uses antimultiplicativity of~\(\transpose\hat{\eta}\otimes\transpose\hat{\pi}\otimes
  \Coinv_{\hat{B}}\). 
  
  Combining Theorem~\ref{the:Cst_quantum_grp_and_mult_unit}~\ref{eq:mang_rels}~\ref{eq:conj_mult},  
  \eqref{eq:gen_codob_reps}, and \eqref{eq:du_mult_H_s} we get
  \begin{equation}
   \label{eq:H_part_in_D_multunit}
     \Dumultunit[B]_{\theta\zeta}=\Bichar_{47}\DuMultunit[B]_{37}\widetilde{\DuMultunit[B]}^{*}_{17}\widetilde{\Bichar}
     ^{*}_{27} \quad\text{ in~\(\U(\conj{\Hils[K]}\otimes\conj{\Hils}\otimes\Hils[K]\otimes\Hils\otimes\conj{\Hils[K]}\otimes
     \conj{\Hils}\otimes\Hils[K]\otimes\Hils)\).}
  \end{equation}
  
  A similar computation yields
  \begin{equation}
   \label{eq:G_part_in_D_multunit}
   \Dumultunit[A]_{\rho\xi}=\DuMultunit[A]_{48}\widetilde{\DuMultunit[A]}^{*}_{28}
   \quad\text{ in~\(\U(\conj{\Hils[K]}\otimes\conj{\Hils}\otimes\Hils[K]\otimes\Hils\otimes\conj{\Hils[K]}\otimes
     \conj{\Hils}\otimes\Hils[K]\otimes\Hils)\).}
 \end{equation}
 
 \begin{theorem}
  \label{the:modularity_multunit_gen_drinfeld_double}
   The multiplicative unitary \(\DuMultunit[\mathcal{D}]\defeq\Dumultunit[B]_{\theta\zeta}\Dumultunit[A]_{\rho\xi}
   \in\U(\Hils_{\mathcal{D}}\otimes\Hils_{\mathcal{D}})\) is modular.
 \end{theorem}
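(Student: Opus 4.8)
The plan is to verify Definition~\ref{def:modularity} directly for $\DuMultunit[\mathcal{D}]$, taking $\hat{Q}_{\mathcal{D}}=Q_{\mathcal{D}}$ so that $\DuMultunit[\mathcal{D}]$ is in fact manageable. The two factorisations~\eqref{eq:H_part_in_D_multunit} and~\eqref{eq:G_part_in_D_multunit} already express $\Dumultunit[B]_{\theta\zeta}$ and $\Dumultunit[A]_{\rho\xi}$ entirely through the manageable building blocks $\DuMultunit[A]$, $\DuMultunit[B]$, $\Bichar$ and their partners $\widetilde{\DuMultunit[A]}$, $\widetilde{\DuMultunit[B]}$, $\widetilde{\Bichar}$ on the eight sublegs of $\Hils_{\mathcal{D}}\otimes\Hils_{\mathcal{D}}$, where $\Hils_{\mathcal{D}}=\conj{\Hils[K]}\otimes\conj{\Hils}\otimes\Hils[K]\otimes\Hils$. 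Since $\Multunit[A]$ and $\Multunit[B]$ are manageable, so are the duals $\DuMultunit[A]$ and $\DuMultunit[B]$, with associated operators $Q_{A}$ and $Q_{B}$, and $\Bichar$ is manageable as recalled above. I would therefore set
\[
  Q_{\mathcal{D}}\defeq (Q_{B}^{\transpose})^{-1}\otimes (Q_{A}^{\transpose})^{-1}\otimes Q_{B}\otimes Q_{A}
\]
on $\Hils_{\mathcal{D}}$; it is positive, self-adjoint and has trivial kernel since each tensor factor does. The transposed inverses on the two conjugate legs are forced by the conjugation used to build the anti-Heisenberg pair in~\eqref{ex:exist_V-anti-Heisenberg_pair} together with Theorem~\ref{the:Cst_quantum_grp_and_mult_unit}~\ref{eq:mang_rels}~\ref{eq:conj_mult}.

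For condition~\ref{eq:modular_Q_hat_Q} it then remains to check that $\DuMultunit[\mathcal{D}]$ commutes with $Q_{\mathcal{D}}\otimes Q_{\mathcal{D}}$. Feeding~\eqref{eq:H_part_in_D_multunit} and~\eqref{eq:G_part_in_D_multunit} into this, it reduces to a leg-by-leg check on the individual factors: $\DuMultunit[A]$ commutes with $Q_{A}\otimes Q_{A}$ and $\DuMultunit[B]$ with $Q_{B}\otimes Q_{B}$ by manageability of the duals; $\Bichar$ commutes with $Q_{A}\otimes Q_{B}$ by~\eqref{eq:bichar_comm_QA_and_QB}; $\widetilde{\Bichar}$ commutes with $Q_{A}^{\transpose}\otimes Q_{B}^{-1}$ by Lemma~\ref{lemm:V_tilde_commute_Q}; and $\widetilde{\DuMultunit[A]}$, $\widetilde{\DuMultunit[B]}$ commute with $Q_{A}^{\transpose}\otimes Q_{A}^{-1}$ and $Q_{B}^{\transpose}\otimes Q_{B}^{-1}$ by the same argument applied to $\DuMultunit[A]$ and $\DuMultunit[B]$. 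As commuting with an operator is equivalent to commuting with its inverse, every factor in~\eqref{eq:H_part_in_D_multunit}--\eqref{eq:G_part_in_D_multunit} commutes with the relevant legs of $Q_{\mathcal{D}}\otimes Q_{\mathcal{D}}$ (the $\conj{\Hils[K]}$- and $\conj{\Hils}$-sublegs of the second copy are not touched at all), and condition~\ref{eq:modular_Q_hat_Q} follows.

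The substantive step is condition~\ref{eq:modular_cond}, and this is where I expect the real work to lie. I would build the candidate $\widetilde{\DuMultunit[\mathcal{D}]}\in\U(\conj{\Hils_{\mathcal{D}}}\otimes\Hils_{\mathcal{D}})$ from the factorisation of $\DuMultunit[\mathcal{D}]$ by replacing each building block with its manageability partner ($\DuMultunit[A]$ with $\widetilde{\DuMultunit[A]}$, $\DuMultunit[B]$ with $\widetilde{\DuMultunit[B]}$, $\Bichar$ with $\widetilde{\Bichar}$, and conversely), repositioned on the conjugated legs of $\conj{\Hils_{\mathcal{D}}}=\Hils[K]\otimes\Hils\otimes\conj{\Hils[K]}\otimes\conj{\Hils}$, using the identity $\multunit[\transpose\otimes\Coinv]=\widetilde{\Multunit}^{*}$ of Theorem~\ref{the:Cst_quantum_grp_and_mult_unit}~\ref{eq:mang_rels}~\ref{eq:conj_mult} to convert each $\transpose\otimes\Coinv$ twist into the appropriate tilde. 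Verifying~\ref{eq:modular_cond} then amounts to establishing the matrix-coefficient identity $\big(X\otimes U\mid\DuMultunit[\mathcal{D}]\mid Z\otimes Y\big)=\big(\conj{Z}\otimes Q_{\mathcal{D}}U\mid\widetilde{\DuMultunit[\mathcal{D}]}\mid\conj{X}\otimes Q_{\mathcal{D}}^{-1}Y\big)$ by inserting the modularity relation of each factor one at a time. The main obstacle is bookkeeping, but delicate: the pairing reverses the order of the factors and conjugates the first-copy legs, so one must control how the transposes, the operators $Q_{A},Q_{B}$, and the commutation relations of Lemma~\ref{lemm:V_tilde_commute_Q} propagate across the eight sublegs. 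Precisely the compatibilities~\eqref{eq:bichar_comm_QA_and_QB} and Lemma~\ref{lemm:V_tilde_commute_Q} are what allow the $Q_{\mathcal{D}}$-factors to be moved past $\Bichar$ and $\widetilde{\Bichar}$, so that the relation~\eqref{eq:second_leg_bichar_adapted} and its analogues for $\DuMultunit[A]$ and $\DuMultunit[B]$ assemble into the single functional relation for $\DuMultunit[\mathcal{D}]$.
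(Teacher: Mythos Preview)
Your observation that one may take $\hat{Q}_{\mathcal{D}}=Q_{\mathcal{D}}$, hence that $\DuMultunit[\mathcal{D}]$ is actually manageable, is correct and slightly sharper than the paper's statement: the sublegs $5$ and~$6$ of the second copy are untouched by $\DuMultunit[\mathcal{D}]$, so placing $(Q_B^{\transpose})^{-1}\otimes(Q_A^{\transpose})^{-1}$ there instead of the identity (as the paper does for its $Q_{\mathcal{D}}$) is harmless for both conditions in Definition~\ref{def:modularity}. Your verification of condition~\ref{eq:modular_Q_hat_Q} is then essentially the paper's.

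For condition~\ref{eq:modular_cond} the paper takes a much simpler route, and your factor-by-factor plan has a real difficulty you have not addressed. The four factors $\Bichar_{47}$, $\DuMultunit[B]_{37}$, $\widetilde{\DuMultunit[B]}^{*}_{17}$, $\widetilde{\Bichar}^{*}_{27}$ all share the second-copy leg~$7$; inserting resolutions of the identity there and applying the manageability relation to each factor leaves mismatched vectors $Q_B^{-1}f_k$ and $Q_Bf_k$ at the seams, which do not recombine into an identity. Each partner commutes only with the coupled operator $P_i\otimes Q_B^{-1}$ on its own first-copy leg (Lemma~\ref{lemm:V_tilde_commute_Q} and its analogues), not with $1\otimes Q_B$ alone, so the stray $Q_B^{\pm1}$ cannot simply be slid through. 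The paper avoids this entirely by staying with the two-factor form $\DuMultunit[\mathcal{D}]=\Dumultunit[B]_{\theta\zeta}\cdot\Dumultunit[A]_{\rho\xi}$. Since $\zeta$ and $\xi$ act trivially on $\conj{\Hils[K]}\otimes\conj{\Hils}$, these two factors live on \emph{disjoint} second-copy sublegs $7$ and~$8$; each is a corepresentation adapted to $\DuMultunit[B]$ (respectively $\DuMultunit[A]$), and \cite{Woronowicz:Multiplicative_Unitaries_to_Quantum_grp}*{Theorem~1.6} furnishes its manageability partner $\mathbb{X}$ (respectively $\mathbb{Y}$) in one stroke. A single orthonormal-basis insertion on the first copy $\Hils_{\mathcal{D}}$ then combines them --- with no $Q$-mismatch, because the second-copy legs do not interact --- into $\widetilde{\DuMultunit[\mathcal{D}]}=\mathbb{Y}_{15}\mathbb{X}_{14}$.
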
   
 \begin{proof}
  Recall~\(\Hils_{\mathcal{D}}=\conj{\Hils[K]}\otimes\conj{\Hils}\otimes\Hils[K]\otimes\Hils\). 
  Equations~\eqref{eq:H_part_in_D_multunit} and \eqref{eq:G_part_in_D_multunit} give 
  \begin{equation}
    \label{eq:Multunit_gen_codoub}
     \DuMultunit[\mathcal{D}]=\Bichar_{47}\DuMultunit[B]_{37}\widetilde{\Dumultunit[B]}^{*}_{17}\widetilde{\Bichar}
     ^{*}_{27}\DuMultunit[A]_{48}\widetilde{\DuMultunit[A]}^{*}_{28} 
     \in\U(\conj{\Hils[K]}\otimes\conj{\Hils}\otimes\Hils[K]\otimes\Hils\otimes\conj{\Hils[K]}\otimes
     \conj{\Hils}\otimes\Hils[K]\otimes\Hils).
   \end{equation}
  Define 
  \begin{equation}
   \label{eq:self_adj_op_mod_gen_drinfdoub}
     \hat{Q}_{\mathcal{D}}\defeq(Q^{-1}_{B})^{\transpose}\otimes (Q_{A}^{-1})^{\transpose}\otimes Q_{B}\otimes 
  Q_{A}\quad\text{ and }\quad Q_{\mathcal{D}}\defeq 1_{\conj{\Hils[K]}}\otimes 1_{\conj{\Hils}}\otimes Q_{B}\otimes Q_{A}. 
 \end{equation} 
  Clearly, \(\hat{Q}_{\mathcal{D}}\) and~\(Q_{\mathcal{D}}\) are positive, self-adjoint operators 
  on~\(\Hils_{\mathcal{D}}\) with trivial kernel.
 
 The commutation relations in Definition~\ref{def:modularity}~\ref{eq:modular_Q_hat_Q},  
 \cite{Woronowicz:Multiplicative_Unitaries_to_Quantum_grp}*{Proposition 1.4(1)} and 
 Lemma~\ref{lemm:V_tilde_commute_Q} show that~\(\hat{Q}_{\mathcal{D}}\otimes Q_{\mathcal{D}}\) 
 commutes with~\(\DuMultunit[B]_{\theta\zeta}\). A similar argument shows that 
 \(\hat{Q}_{\mathcal{D}}\otimes Q_{\mathcal{D}}\) commutes with \(\DuMultunit[A]_{\rho\xi}\). 
 Therefore, \(\hat{Q}_{\mathcal{D}}\otimes Q_{\mathcal{D}}\) commutes with 
 \(\DuMultunit[\mathcal{D}]\).
  
  Now the character condition~\eqref{eq:aux_W_char_in_first_leg} for~\(\Dumultunit[B]\) 
  and \cite{Woronowicz:Multiplicative_Unitaries_to_Quantum_grp}*{Theorem 1.7} show that 
  \(\Dumultunit[B]_{\theta\hat{\eta}}\in\U(\Hils_{\mathcal{D}}\otimes\Hils[K])\) is adapted 
  to~\(\DuMultunit[B]\in\U(\Hils[K]\otimes\Hils[K])\). Furthermore, 
  \cite{Woronowicz:Multiplicative_Unitaries_to_Quantum_grp}*{Theorem 1.6} shows that 
  \(\mathbb{X}\defeq((\Dumultunit[B])^{*}_{\theta2})
  ^{\transpose\otimes\hat{\eta}\Coinv_{\hat{B}}}\in\U(\conj{\Hils_{\mathcal{D}}}\otimes\Hils[K])\)
  satisfies a variant of~\eqref{eq:second_leg_bichar_adapted}. 
  Similarly, \(\mathbb{Y}\defeq((\Dumultunit[A])^{*}_{\rho2})^{\transpose\otimes\hat{\pi} 
  R_{\hat{A}}}\in\U(\conj{\Hils_{\mathcal{D}}}\otimes\Hils)\) is the unitary associated to the manageability of
  \(\Dumultunit[A]_{\rho\hat{\pi}}\in\U(\Comp(\Hils_{\mathcal{D}})\otimes\Hils)\).

  Clearly,~\(\xi\) and~\(\zeta\) act trivially on the factor~\(\conj{\Hils[K]}\otimes\conj{\Hils}\) of~\(\Hils_{\mathcal{D}}\).
  Therefore, following Notation~\ref{not:leg_numbering}, 
  we can write~\(\DuMultunit[\mathcal{D}]=
  \Dumultunit[B]_{\theta\hat{\eta}_{2}}\Dumultunit[A]_{\rho\hat{\pi}_{3}}\in\U(\Hils_{\mathcal{D}}\otimes\Hils[K]\otimes\Hils)\).

  Let~\(\{e_{i}\}_{i=1,2,\cdots}\) be an orthonormal basis of~\(\Hils_{\mathcal{D}}\).   
  The manageability condition~\eqref{eq:second_leg_bichar_adapted} 
  for~\(\mathbb{X}\) and~\(\mathbb{Y}\) gives
  \begin{align*}
  &   \big(x\otimes k\otimes h\mid\Dumultunit[B]_{\theta\hat{\eta}_{2}}\Dumultunit[A]_{\rho\hat{\pi}_{3}}\mid z\otimes k' \otimes h'\big)\\
  &= \Big(x\otimes k\otimes h\mid\Dumultunit[B]_{\theta\hat{\eta}_{2}}\sum_{i}\big(|e_{i})(e_{i}|\otimes 1_{\Hils[K]}\otimes 1_{\Hils}\big)
     \Dumultunit[A]_{\rho\hat{\pi}_{3}}\mid z\otimes k' \otimes h'\Big)\\
  &=\sum_{i}\Big(x\otimes k\mid\Dumultunit[B]_{\theta\hat{\eta}_{2}}\mid e_{i}\otimes k'\Big)
            \Big(e_{i}\otimes h\mid\Dumultunit[A]_{\rho\hat{\pi}_{3}}\mid z\otimes h'\Big)\\
  &=\sum_{i} \Big(\conj{z}\otimes Q_{A}(h)\mid\mathbb{Y}\mid\conj{e_{i}}\otimes Q^{-1}_{A}(h')\Big)
       \Big(\conj{e_{i}}\otimes Q_{B}(k)\mid \mathbb{X}\mid\conj{x}\otimes Q^{-1}_{B}(k')\Big)\\
  &= \Big(\conj{z}\otimes Q_{B}(k)\otimes Q_{A}(h)\mid \mathbb{Y}_{13} 
           \mathbb{X}_{12}\mid\conj{x}\otimes Q^{-1}_{B}(k')\otimes Q^{-1}_{A}(h')\Big), 
  \end{align*}
  where~\(x,z\in\Hils_{\mathcal{D}}\), \(k\in\dom(Q_B)\), \(h\in\dom(Q_{A})\), \(k'\in\dom(Q^{-1}_{B})\) 
  and~\(h'\in\dom(Q^{-1}_{A})\), respectively. 
  
  Hence \(\widetilde{\DuMultunit[\mathcal{D}]}\defeq \mathbb{Y}_{15}\mathbb{X}_{14}\in
  \U(\conj{\Hils_{\mathcal{D}}}\otimes\conj{\Hils[K]}\otimes\conj{\Hils}\otimes\Hils[K]\otimes\Hils)\) 
  satisfies condition~\ref{eq:modular_cond} in the Definition~\ref{def:modularity} 
  for~\(\DuMultunit[\mathcal{D}]\).
\end{proof} 
\section{Generalised quantum codoubles and generalised Drinfeld doubles}
  \label{sec:gen_qnt_codoub} 
 Let~\(\Qgrp{G}{A}\) and~\(\Qgrp{H}{B}\) be~\(\Cst\)\nb-quantum 
 groups and let~\(\bichar\in\U(\hat{A}\otimes\hat{B})\) be a bicharacter. 

 The map \(\flip^{\bichar}\colon\hat{B}\otimes\hat{A}\to\hat{A}\otimes\hat{B}\) defined by 
 \(\flip^{\bichar}(\hat{b}\otimes\hat{a})\defeq\bichar(\hat{a}\otimes\hat{b})\bichar^{*}\) 
 for~\(\hat{a}\in\hat{A}\), \(\hat{b}\in\hat{B}\) is an isomorphism of~\(\Cst\)\nb-algebras. 
 
 Define~\(\CodoubAlg_{\bichar}\defeq\hat{B}\otimes\hat{A}\) and 
 \(\DuComult[\DrinfDoubAlg_{\bichar}]\colon\CodoubAlg_{\bichar}
 \to\CodoubAlg_{\bichar}\otimes\CodoubAlg_{\bichar}\) by 
  \begin{equation}
    \label{eq:Gen_Co_doub_def} 
    \DuComult[\DrinfDoubAlg_{\bichar}](\hat{b}\otimes\hat{a})\defeq 
     (\Id_{\hat{B}}\otimes\flip^{\bichar}\otimes\Id_{\hat{A}})
     \big(\DuComult[B](\hat{b})\otimes\DuComult[A](\hat{a})\big).
 \end{equation} 
 
 Let~\((\rho,\theta)\) be the~\(\bichar\)\nb-Drinfeld pair on the 
 Hilbert space~\(\Hils_{\mathcal{D}}\) defined in~\eqref{eq:gen_codob_reps}. 
 Commutation relation~\eqref{eq:V-Drinfeld} gives a~\(\Cst\)\nb-algebra 
 \(\DrinfDoubAlg_{\bichar}\defeq\rho(A)\cdot\theta(B)\subset\Bound(\Hils_{\mathcal{D}})\). 
  
  Define~\(\Comult[\DrinfDoubAlg_{\bichar}]\colon
  \DrinfDoubAlg_{\bichar}\to\Mult(\DrinfDoubAlg_{\bichar}
  \otimes\DrinfDoubAlg_{\bichar})\) by 
  \begin{equation}
   \label{eq:Gen_Drinf_doub_def} 
     \Comult[\DrinfDoubAlg_{\bichar}](\rho(a)\theta(b))\defeq 
    (\rho\otimes\rho)\Comult[A](a)(\theta\otimes\theta)\Comult[B](b)
    \quad\text{  for all~\(a\in A\), \(b\in B\).}  
 \end{equation} 
 
This section is devoted to proving the following main result: 
\begin{theorem} 
 \label{the:Drinf_doub_codoub_dual} 
 Let~\(\DuMultunit[\mathcal{D}]\in\U(\Hils_{\mathcal{D}}\otimes\Hils_{\mathcal{D}})\) be 
 the modular multiplicative unitary in Theorem~\textup{\ref{the:modularity_multunit_gen_drinfeld_double}}. 
 Then
 \begin{enumerate} 
  \item\label{eq:V_codoub} \(\GenCodouble{\G}{\G[H]}{\bichar}\defeq(\CodoubAlg_{\bichar},
  \DuComult[\DrinfDoubAlg_{\bichar}])\) 
    is a~\(\Cst\)\nb-quantum group generated by~\(\DuMultunit[\mathcal{D}]\).
  \item\label{eq:V_Drinfdoub} \(\GenDrinfdouble{\G}{\G[H]}{\bichar}\defeq\Bialg{\DrinfDoubAlg_{\bichar}}\) 
  is the dual~\(\Cst\)\nb-quantum group of~\(\GenCodouble{\G}{\G[H]}{\bichar}\). 
  \item\label{eq:V-red_bichar} \(\DuMultunit[\mathcal{D}]\defeq\Dumultunit[B]_{\theta 2}\Dumultunit[A]_{\rho 3}
  \in\U(\DrinfDoubAlg_{\bichar}\otimes\hat{B}\otimes\hat{A})\) is the reduced bicharacter for \\
  \(\big(\GenCodouble{\G}{\G[H]}{\bichar},\GenDrinfdouble{\G}{\G[H]}{\bichar}\big)\). 
 \end{enumerate}
\end{theorem}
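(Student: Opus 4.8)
The plan is to apply Theorem~\ref{the:Cst_quantum_grp_and_mult_unit} directly to the modular multiplicative unitary $\DuMultunit[\mathcal{D}]$ furnished by Theorem~\ref{the:modularity_multunit_gen_drinfeld_double}. This at once produces a $\Cst$\nb-quantum group whose algebra $E$ is the first\nb-leg slice of $\DuMultunit[\mathcal{D}]$, whose dual $\hat{E}$ is the second\nb-leg slice, and for which $\DuMultunit[\mathcal{D}]$ is automatically the reduced bicharacter in $\U(\hat{E}\otimes E)$ by Theorem~\ref{the:Cst_quantum_grp_and_mult_unit}\,\ref{eq:Red_bichar}. Everything then reduces to identifying these two slice algebras together with their comultiplications. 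Writing $\DuMultunit[\mathcal{D}]=\Dumultunit[B]_{\theta\zeta}\Dumultunit[A]_{\rho\xi}$ and recalling from~\eqref{eq:gen_codob_reps} that $\xi$ and $\zeta$ are copies of $\hat{\pi}$ and $\hat{\eta}$ acting on two disjoint tensor factors of $\Hils_{\mathcal{D}}$, the second\nb-leg content lies in $\zeta(\hat{B})\cdot\xi(\hat{A})\cong\hat{B}\otimes\hat{A}=\CodoubAlg_{\bichar}$ while the first\nb-leg content lies in $\rho(A)\cdot\theta(B)=\DrinfDoubAlg_{\bichar}$; the reverse inclusions are routine slicing arguments based on Theorem~\ref{the:Cst_quantum_grp_and_mult_unit}\,\ref{eq:A_and_A-hat} applied to the factor quantum groups. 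This identifies $E=\CodoubAlg_{\bichar}$ and $\hat{E}=\DrinfDoubAlg_{\bichar}$ and, after rewriting $\DuMultunit[\mathcal{D}]=\Dumultunit[B]_{\theta 2}\Dumultunit[A]_{\rho 3}$ over $\DrinfDoubAlg_{\bichar}\otimes\hat{B}\otimes\hat{A}$, gives part~\ref{eq:V-red_bichar}.

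For part~\ref{eq:V_codoub} I would show that the comultiplication on $E=\CodoubAlg_{\bichar}$ singled out by the character condition~\eqref{eq:W_char_in_second_leg} coincides with $\DuComult[\DrinfDoubAlg_{\bichar}]$ of~\eqref{eq:Gen_Co_doub_def}. By the uniqueness in Theorem~\ref{the:Cst_quantum_grp_and_mult_unit}\,\ref{eq:Comul} it is enough to verify $(\Id\otimes\DuComult[\DrinfDoubAlg_{\bichar}])\DuMultunit[\mathcal{D}]=(\DuMultunit[\mathcal{D}])_{12}(\DuMultunit[\mathcal{D}])_{13}$. Applying $\DuComult[B]\otimes\DuComult[A]$ leg\nb-by\nb-leg to $\Dumultunit[B]_{\theta 2}\Dumultunit[A]_{\rho 3}$ and invoking the character condition~\eqref{eq:aux_W_char_in_first_leg} for $\Dumultunit[A]$ and its analogue for $\Dumultunit[B]$ doubles each factor; the twist $\Id_{\hat{B}}\otimes\flip^{\bichar}\otimes\Id_{\hat{A}}$ then reorders the inner $\hat{B}$\nb- and $\hat{A}$\nb-legs while conjugating them by $\bichar$. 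After this the whole identity collapses to $\bichar_{34}\Dumultunit[B]_{\theta 4}\Dumultunit[A]_{\rho 3}\bichar_{34}^{*}=\Dumultunit[A]_{\rho 3}\Dumultunit[B]_{\theta 4}$, which is exactly the Drinfeld commutation relation~\eqref{eq:rho_theta_equiv_Drinfeld_cond} for $(\rho,\theta)$ after relabelling the legs. I expect this to be the crux of the whole proof: one must keep track of five tensor legs and recognise that the conjugation by $\bichar$ hidden inside $\flip^{\bichar}$ is precisely what turns the two apparently commuting middle factors into the genuine $\bichar$\nb-twisted commutation.

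For part~\ref{eq:V_Drinfdoub} the comultiplication of $\hat{E}=\DrinfDoubAlg_{\bichar}$ is the dual one, characterised by the first\nb-leg condition~\eqref{eq:W_char_in_first_leg}, namely $(\Comult[\DrinfDoubAlg_{\bichar}]\otimes\Id)\DuMultunit[\mathcal{D}]=(\DuMultunit[\mathcal{D}])_{23}(\DuMultunit[\mathcal{D}])_{13}$, and I would check that $\Comult[\DrinfDoubAlg_{\bichar}]$ from~\eqref{eq:Gen_Drinf_doub_def} satisfies it. Pushing $\Comult[\DrinfDoubAlg_{\bichar}]$ through $\Dumultunit[B]_{\theta 2}\Dumultunit[A]_{\rho 3}$ replaces $\theta$ by $(\theta\otimes\theta)\Comult[B]$ and $\rho$ by $(\rho\otimes\rho)\Comult[A]$, and the analogue of~\eqref{eq:W_char_in_first_leg} for $\Dumultunit[B]$ and $\Dumultunit[A]$ splits each factor across the two doubled copies of $\DrinfDoubAlg_{\bichar}$. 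The only step left is to commute $\Dumultunit[B]_{\theta_1 3}$ past $\Dumultunit[A]_{\rho_2 4}$; since $\theta_1$ and $\rho_2$ sit on the two different copies of $\Hils_{\mathcal{D}}$ and the $\hat{B}$\nb- and $\hat{A}$\nb-legs are distinct, these act on disjoint legs and commute for free, so the condition holds. With both comultiplications matched, parts~\ref{eq:V_codoub} and~\ref{eq:V_Drinfdoub} are proved, and part~\ref{eq:V-red_bichar} is then nothing but Theorem~\ref{the:Cst_quantum_grp_and_mult_unit}\,\ref{eq:Red_bichar} expressed in the coordinates $\DrinfDoubAlg_{\bichar}\otimes\hat{B}\otimes\hat{A}$. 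One bookkeeping point worth isolating is that~\eqref{eq:Gen_Drinf_doub_def} is a priori only a recipe on the generators $\rho(a)\theta(b)$; its well\nb-definedness as a morphism need not be argued separately, since the character\nb-condition computation combined with the uniqueness in Theorem~\ref{the:Cst_quantum_grp_and_mult_unit} forces the dual comultiplication to agree with it.
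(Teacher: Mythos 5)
Your overall strategy is the same as the paper's: identify the first\nb- and second\nb-leg slice algebras of \(\DuMultunit[\mathcal{D}]\), verify the two character conditions \eqref{eq:W_char_in_second_leg} and \eqref{eq:W_char_in_first_leg} for the candidate comultiplications, and let the uniqueness in Theorem~\ref{the:Cst_quantum_grp_and_mult_unit}\,\ref{eq:Comul} do the rest. Your two character\nb-condition computations are correct and essentially identical to the paper's: the codouble one does collapse to \(\bichar_{34}\Dumultunit[B]_{\theta 4}\Dumultunit[A]_{\rho 3}\bichar_{34}^{*}=\Dumultunit[A]_{\rho 3}\Dumultunit[B]_{\theta 4}\), which is \eqref{eq:rho_theta_equiv_Drinfeld_cond} relabelled, and the Drinfeld\nb-double one does reduce to commuting unitaries supported on disjoint legs. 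Your closing remark about not needing a separate well\nb-definedness argument for \eqref{eq:Gen_Drinf_doub_def} also matches how the paper handles it.

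The genuine gap is the sentence claiming that the identification of the slice algebras is ``routine.'' One half of it is: for \(\Multunit[\mathcal{D}]=\multunit[A]_{2\rho}\multunit[B]_{1\theta}\) the legs being sliced carry \(\xi\) and \(\zeta\), which act on \emph{disjoint} tensor factors of \(\Hils_{\mathcal{D}}\) and are faithful, so functionals of the form \(\eta\otimes\psi\) suffice and one gets \(\rho(A)\cdot\theta(B)\) immediately (this is Proposition~\ref{prop:codoub_alg}\,\ref{eq:Drinf_alg}). But the symmetric claim for \(\DuMultunit[\mathcal{D}]=\Dumultunit[B]_{\theta\zeta}\Dumultunit[A]_{\rho\xi}\) is not symmetric in difficulty: here the leg being sliced carries \(\theta=(\bar\beta\otimes\beta)\Comult[B]\) and \(\rho=(\bar\alpha\otimes\alpha)\Comult[A]\), both of which spread over all four tensor factors \(\conj{\Hils[K]}\otimes\conj{\Hils}\otimes\Hils[K]\otimes\Hils\), so the functional \(\omega\) cannot be factored so as to slice the two unitaries independently, and density of the slices in \(\zeta(\hat{B})\cdot\xi(\hat{A})\) does not follow from Theorem~\ref{the:Cst_quantum_grp_and_mult_unit}\,\ref{eq:A_and_A-hat} applied factorwise. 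This is exactly the content of Proposition~\ref{prop:codoub_alg}\,\ref{eq:Codoub_alg}, whose proof occupies a page: it needs the explicit eight\nb-leg decomposition \eqref{eq:Multunit_gen_codoub} involving the manageability unitaries \(\widetilde{\DuMultunit[A]}\), \(\widetilde{\DuMultunit[B]}\), \(\widetilde{\Bichar}\), followed by a chain of functional substitutions that repeatedly absorb factors using unitarity of \(\Dumultunit[A]\), \(\Dumultunit[B]\) and \(\bichar\). Without this step part~\ref{eq:V_codoub} (that \(\CodoubAlg_{\bichar}=\hat{B}\otimes\hat{A}\) really is the algebra generated by \(\DuMultunit[\mathcal{D}]\)) is not established, and parts \ref{eq:V_Drinfdoub} and \ref{eq:V-red_bichar} inherit the gap.
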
  
\begin{definition}
  \label{def:Gen_codoub}
    The~\(\Cst\)\nb-quantum groups \(\GenCodouble{\G}{\G[H]}{\bichar}\) and 
    \(\GenDrinfdouble{\G}{\G[H]}{\bichar}\) are called the 
    \emph{generalised quantum codouble} and \emph{generalised Drinfeld double} 
    for the triple~\((\G,\G[H],\bichar)\), respectively.
 \end{definition}   
   \begin{remark}
  \label{rem:Rel_Baaj_Vaes}  
  Let~\(\DuG[H]^{\text{cop}}=(\hat{B},\flip\DuComult[B])\) be the coopposite 
  \(\Cst\)\nb-quantum group of~\(\DuG[H]\). According to the convention used 
  in~\cite{Baaj-Vaes:Double_cros_prod}*{Section 8}, the 
 map~\(m\colon\hat{B}\otimes\hat{A}\to\hat{B}\otimes\hat{A}\) 
 defined by~\(m(\hat{b}\otimes\hat{a})\defeq\Dubichar^{*} (\hat{b}\otimes\hat{a})\Dubichar\) 
 is an inner matching of~\(\DuG[H]^{\text{cop}}\) and \(\DuG\) 
 in sense of \cite{Baaj-Vaes:Double_cros_prod}*{Definition 3.1}. 
 In the presence of the Haar weights and the regularity assumption on~\(\G\) and~\(\G[H]\),  
 the \(\Cst\)\nb-algebraic version of the generalised quantum double of~\(\DuG[H]^{\text{cop}}\) 
 and~\(\DuG\) with respect~\(m\) in~\cite{Baaj-Vaes:Double_cros_prod} and  
 \(\GenCodouble{\G}{\G[H]}{\bichar}\) in Definition~\ref{def:Gen_codoub} are same.
\end{remark} 
 
 \begin{proposition}
  \label{prop:codoub_alg}
   Let~\(\DuMultunit[\mathcal{D}]\in\U(\Hils_{\mathcal{D}}\otimes\Hils_{\mathcal{D}})\) be 
   the modular multiplicative unitary in Theorem~\textup{\ref{the:modularity_multunit_gen_drinfeld_double}} 
   and \(\Multunit[\mathcal{D}]\in\U(\Hils_{\mathcal{D}}\otimes\Hils_{\mathcal{D}})\) be its dual. 
   Then 
   \begin{enumerate}
   \item\label{eq:Drinf_alg} \(\DrinfDoubAlg_{\bichar}=\{(\omega\otimes\Id_{\Hils_{\mathcal{D}}})
                     \Multunit[\mathcal{D}]:\omega
                     \in\Bound(\Hils_{\mathcal{D}})_{*}
                     \}^{\textup{CLS}}\). 
   \item\label{eq:Codoub_alg} \(\CodoubAlg_{\bichar}=\{(\omega\otimes\Id_{\Hils_{\mathcal{D}}})\DuMultunit[\mathcal{D}]:
                \omega\in\Bound(\Hils_{\mathcal{D}})_{*}\}^{\textup{CLS}}\).
  \end{enumerate}                             
 \end{proposition}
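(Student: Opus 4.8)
The plan is to read both identities as computations of the two leg-slice algebras of the modular multiplicative unitary $\DuMultunit[\mathcal{D}]$ and of its dual $\Multunit[\mathcal{D}]=\Flip(\DuMultunit[\mathcal{D}])^{*}\Flip$, using Theorem~\ref{the:Cst_quantum_grp_and_mult_unit} together with the product factorisations $\Multunit[\mathcal{D}]=\multunit[A]_{\xi\rho}\multunit[B]_{\zeta\theta}$ (Theorem~\ref{the:multunit_drinf}) and $\DuMultunit[\mathcal{D}]=\Dumultunit[B]_{\theta\zeta}\Dumultunit[A]_{\rho\xi}$. Since $(\omega\otimes\Id_{\Hils_{\mathcal{D}}})\DuMultunit[\mathcal{D}]=\big((\Id_{\Hils_{\mathcal{D}}}\otimes\bar\omega)\Multunit[\mathcal{D}]\big)^{*}$, statement~\ref{eq:Codoub_alg} is equivalent to computing the second-leg slice of $\Multunit[\mathcal{D}]$, so I would in effect extract both algebras from the two factorisations above.

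For~\ref{eq:Drinf_alg} I would slice the first leg of $\Multunit[\mathcal{D}]=\multunit[A]_{\xi\rho}\multunit[B]_{\zeta\theta}$. The decisive point is that in~\eqref{eq:gen_codob_reps} the representation $\xi$ acts only on the last tensor factor $\Hils$ of $\Hils_{\mathcal{D}}$ and $\zeta$ only on the factor $\Hils[K]$, so the first legs of the two factors occupy disjoint tensor legs. Testing against product functionals $\omega=\omega_{\Hils}\otimes\omega_{\Hils[K]}$ on these legs then factorises the slice,
\[
  (\omega_{\Hils}\otimes\omega_{\Hils[K]}\otimes\Id)(\multunit[A]_{\xi\rho}\multunit[B]_{\zeta\theta})
  =\big[(\omega_{\Hils}\otimes\Id)\multunit[A]_{\xi\rho}\big]\,\big[(\omega_{\Hils[K]}\otimes\Id)\multunit[B]_{\zeta\theta}\big],
\]
and the two factors lie in $\rho(A)$ and $\theta(B)$ respectively (the first-leg slices of $\multunit[A]_{\xi\rho}$ and $\multunit[B]_{\zeta\theta}$, using faithfulness of $\hat\pi$ and $\hat\eta$). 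As such product functionals are weak\nb-$*$ dense, the closed linear span is $\rho(A)\cdot\theta(B)=\DrinfDoubAlg_{\bichar}$, which is already a $\Cst$\nb-algebra by~\eqref{eq:V-Drinfeld}; this yields~\ref{eq:Drinf_alg}.

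For~\ref{eq:Codoub_alg} I would slice the first leg of $\DuMultunit[\mathcal{D}]=\Dumultunit[B]_{\theta\zeta}\Dumultunit[A]_{\rho\xi}$. The inclusion $\subseteq$ is immediate: the second-leg content of $\Dumultunit[B]_{\theta\zeta}$ is carried by $\zeta(\hat{B})=\hat\eta(\hat{B})$ on $\Hils[K]$ and that of $\Dumultunit[A]_{\rho\xi}$ by $\xi(\hat{A})=\hat\pi(\hat{A})$ on $\Hils$, and since $\zeta$ and $\xi$ commute on disjoint factors (Lemma~\ref{lemm:commutation_bet_diff_reps}\ref{eq:u_v_comm}) every slice lies in $\zeta(\hat{B})\otimes\xi(\hat{A})=\hat{B}\otimes\hat{A}=\CodoubAlg_{\bichar}$. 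Writing a generic slice as $\sum_{l,k}\omega(\theta(b_{l})\rho(a_{k}))\,\hat\eta(\hat b_{l})\otimes\hat\pi(\hat a_{k})$, where $\{\hat a_{k}\},\{a_{k}\}$ and $\{\hat b_{l}\},\{b_{l}\}$ are the matrix-coefficient systems of $\multunit[A],\multunit[B]$, the reverse inclusion amounts to realising an arbitrary coefficient pattern, the vectors $\hat\eta(\hat b_{l})\otimes\hat\pi(\hat a_{k})$ already spanning $\CodoubAlg_{\bichar}$ densely.

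Here lies the main obstacle. Unlike in~\ref{eq:Drinf_alg}, the first legs of $\Dumultunit[B]_{\theta\zeta}$ and $\Dumultunit[A]_{\rho\xi}$ are $\theta(B)$ and $\rho(A)$ acting on all of $\Hils_{\mathcal{D}}$, so no product-functional factorisation is available and a naïve slice a priori only fills a subalgebra. To obtain surjectivity I must know that the functionals $\omega\mapsto\big(\omega(\theta(b_{l})\rho(a_{k}))\big)_{l,k}$ exhaust all finitely supported patterns, that is, that the multiplication map $A\otimes B\to\DrinfDoubAlg_{\bichar}$, $a\otimes b\mapsto\rho(a)\theta(b)$, is injective, so that $\DrinfDoubAlg_{\bichar}$ has the expected $A\otimes B$ size. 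I would establish this fullness from the explicit modular form~\eqref{eq:Multunit_gen_codoub} and the factorised manageability operator $\widetilde{\DuMultunit[\mathcal{D}]}=\mathbb{Y}_{15}\mathbb{X}_{14}$ produced in Theorem~\ref{the:modularity_multunit_gen_drinfeld_double}, whose two active legs $\Hils[K]$ and $\Hils$ are disjoint and separately recover $\hat\eta(\hat{B})$ and $\hat\pi(\hat{A})$; transporting this through $(\omega\otimes\Id)\DuMultunit[\mathcal{D}]=\big((\Id\otimes\bar\omega)\Multunit[\mathcal{D}]\big)^{*}$ identifies the first-leg slice with the whole of $\hat{B}\otimes\hat{A}$ and gives~\ref{eq:Codoub_alg}. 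Once both slices are computed, $\DuMultunit[\mathcal{D}]\in\U(\DrinfDoubAlg_{\bichar}\otimes\CodoubAlg_{\bichar})$ also reproduces the reduced-bicharacter claim of Theorem~\ref{the:Drinf_doub_codoub_dual}\ref{eq:V-red_bichar}.
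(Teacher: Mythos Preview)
Your argument for~\ref{eq:Drinf_alg} is correct and is exactly what the paper does: since \(\xi\) and \(\zeta\) act on disjoint tensor factors of the first copy of \(\Hils_{\mathcal D}\), product functionals factorise the slice and one obtains \(\rho(A)\cdot\theta(B)=\DrinfDoubAlg_{\bichar}\).

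For~\ref{eq:Codoub_alg} there is a genuine gap. You correctly isolate the difficulty: in \(\DuMultunit[\mathcal D]=\Dumultunit[B]_{\theta\zeta}\Dumultunit[A]_{\rho\xi}\) the first legs carry \(\theta(B)\) and \(\rho(A)\), which live on all of \(\Hils_{\mathcal D}\) and do not decouple. But none of your proposed remedies resolves this. The identity \((\omega\otimes\Id)\DuMultunit[\mathcal D]=\big((\Id\otimes\omega^{*})\Multunit[\mathcal D]\big)^{*}\) merely transfers the problem to the second leg of \(\Multunit[\mathcal D]=\multunit[A]_{\xi\rho}\multunit[B]_{\zeta\theta}\), which is again governed by the non-factorising pair \((\rho,\theta)\). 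The factorisation \(\widetilde{\DuMultunit[\mathcal D]}=\mathbb Y_{15}\mathbb X_{14}\) helps on the \emph{second} leg, but the first legs of \(\mathbb X\) and \(\mathbb Y\) are \(\theta(B)^{\transpose}\) and \(\rho(A)^{\transpose}\) on \(\conj{\Hils_{\mathcal D}}\), so first-leg slicing of \(\widetilde{\DuMultunit[\mathcal D]}\) faces the identical obstruction. And injectivity of \(a\otimes b\mapsto\rho(a)\theta(b)\) (even if available at this point) does not by itself produce enough functionals to separate the coefficients \(\hat b_{l}\otimes\hat a_{k}\); what is needed is that the functionals \(\omega\mapsto\omega(\theta(b_{l})\rho(a_{k}))\) realise arbitrary finite patterns, which is a surjectivity-type statement on the predual, not an injectivity statement on the algebra.

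The paper takes a different, computational route that you should adopt. It works directly with the explicit eight-leg expression~\eqref{eq:Multunit_gen_codoub},
\[
  \DuMultunit[\mathcal D]
  =\Bichar_{47}\,\DuMultunit[B]_{37}\,\widetilde{\DuMultunit[B]}{}^{*}_{17}\,
   \widetilde{\Bichar}{}^{*}_{27}\,\DuMultunit[A]_{48}\,\widetilde{\DuMultunit[A]}{}^{*}_{28},
\]
slices the first four legs by product functionals \(\mu\otimes\epsilon\otimes\nu\otimes\upsilon\), and then repeatedly absorbs the unitary factors by shifting the functionals, e.g.\ replacing \(\mu\) by \(b^{\Coinv_{B}\transpose}\!\cdot\mu\), \(\epsilon\) by \(\hat a^{\transpose\Coinv_{\hat A}}\!\cdot\epsilon\), \(\upsilon\) by \(\upsilon\cdot a\), and using that each of \(\Dumultunit[A]\), \(\Dumultunit[B]\), \(\bichar\) is a unitary multiplier so that \((X\otimes Y)\,U\) can be replaced by \(X\otimes Y\). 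After a finite sequence of such absorptions the closed linear span collapses to \(1_{\conj{\Hils[K]}\otimes\conj{\Hils}}\otimes\hat{B}\otimes\hat{A}\). This ``peeling'' technique is the missing idea; no appeal to manageability or to injectivity of the multiplication map is needed.
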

 \begin{proof}
  The representations~\(\xi\) and~\(\zeta\) in~\eqref{eq:gen_codob_reps} are faithful and 
  commute, hence~\(\Multunit[\mathcal{D}]=\multunit[A]_{2\rho}
  \multunit[B]_{1\theta}\in\U(\Comp(\Hils_{\mathcal{D}}) \otimes\hat{B}\otimes\hat{A})
  \).
   
  The  set of continuous linear functionals of the form~\(\eta\otimes\psi\) for~\(\eta\in 
  \hat{B}'\),  \(\psi\in\hat{A}'\) is linearly weak\(^*\) dense in~\((\hat{B}\otimes\hat{A})'\).
  Therefore, 
  \begin{align*}
  & \{(\omega\otimes\Id_{\Hils_{\mathcal{D}}})
      \Multunit[\mathcal{D}]:
  \omega\in\Bound(\Hils_{\mathcal{D}})_{*}\}^{\textup{CLS}}\\
 &= \{((\eta\otimes\psi\otimes\Id_{\Hils_{\mathcal{D}}})\multunit[A]_{2\rho} 
         \multunit[B]_{1\theta}) : 
  \psi\in\hat{A}',  \eta\in\hat{B}'\}^{\textup{CLS}} \\
 &= \{((\psi\otimes\Id_{\Hils_{\mathcal{D}}})\multunit[A]_{1\rho}) 
       ((\eta\otimes\Id_{\Hils_{\mathcal{D}}})\multunit[B]_{1\theta}) : 
  \psi\in\hat{A}',  \eta\in\hat{B}'\}^{\textup{CLS}}
  =\rho(A)\cdot\theta(B).
\end{align*} 

  Let~\(L=\{(\omega\otimes\Id_{\Hils_{\mathcal{D}}})\DuMultunit[\mathcal{D}]:\omega\in
  \Bound(\Hils_{\mathcal{D}})_{*}\}^{\text{CLS}}\).
  
 We identify~\(A\), \(\hat{A}\), \(B\), \(\hat{B}\) 
 with their images under the faithful representations~\(\pi\), \(\hat{\pi}\), \(\eta\), 
 \(\hat{\eta}\) to avoid complicated notation.
 
  Recall~\(\widetilde{\Bichar}^{*}=\bichar^{\transpose\Coinv_{\hat{A}}\otimes\Id_{\Hils[K]}}\), 
  \(\widetilde{\DuMultunit[A]}^{*}=(\Dumultunit[A])^{\transpose\Coinv_{A}\otimes\Id_{\Hils}}\),   
   and~\(\widetilde{\DuMultunit[B]}^{*}=(\Dumultunit[B])^{\transpose\Coinv_{B}\otimes\Id_{\Hils}}\).
    
  We rewrite~\eqref{eq:Multunit_gen_codoub} as
   \begin{equation*}
    \DuMultunit[\mathcal{D}]
    =\bichar_{47}\Dumultunit[B]_{37}
     (\Dumultunit[B])^{\transpose\Coinv_{B}\otimes\Id_{\Hils[K]}}_{17}
     \bichar^{\transpose\Coinv_{\hat{A}}\otimes\Id_{\Hils[K]}}_{27}
     \Dumultunit[A]_{48}
     (\Dumultunit[A])^{\transpose\Coinv_{A}\otimes\Id_{\Hils}}_{28}
   \end{equation*}   
   in~\(\U(\conj{\Hils[K]}\otimes\conj{\Hils}\otimes\Hils[K]\otimes\Hils\otimes
   \conj{\Hils[K]}\otimes\conj{\Hils}\otimes\Hils[K]\otimes\Hils)\).
   
 We replace \(\omega\in\Bound(\Hils_{\mathcal{D}})_{*}\)
 by \(\mu\otimes\epsilon\otimes\nu\otimes\upsilon\), where 
 \(\mu\in\Bound(\conj{\Hils[K]})_{*}\), \(\epsilon\in\Bound(\conj{\Hils})_{*}\), 
 \(\nu\in\Bound(\Hils[K])_{*}\), and~\(\upsilon\in\Bound(\Hils)_{*}\). 
 Next we use the leg numbering notation for functionals to denote 
 \(\mu\otimes\epsilon\otimes\nu\otimes\upsilon\otimes
 \Id_{\Hils_{\mathcal{D}}}\) by \(\mu_{1}\epsilon_{2}\nu_{3}\upsilon_{4}\). 
 Hence we have
 \[
  L=\left\{\mu_{1}\epsilon_{2}\nu_{3}\upsilon_{4}
           \Big(\bichar_{47}\Dumultunit[B]_{37}
     (\Dumultunit[B])^{\transpose\Coinv_{B}\otimes\Id_{\Hils[K]}}_{17}
     \bichar^{\transpose\Coinv_{\hat{A}}\otimes\Id_{\Hils[K]}}_{27}
     \Dumultunit[A]_{48}
     (\Dumultunit[A])^{\transpose\Coinv_{A}\otimes\Id_{\Hils}}_{28}\Big)
     \right\}^\text{CLS}.
 \]
 The slices of~\(\DuMultunit[B]\in\U(\Hils[K]\otimes\Hils[K])\) by functionals 
 \(\nu\in\Bound(\Hils[K])_{*}\) on the first leg generate a dense subspace of~\(B\).
 Therefore, we can replace~\((\nu\otimes\Id_{\Hils[K]})\Dumultunit[B]\) by~\(\hat{b}\in\hat{B}\)
 in the above expression and rewrite as follows:
  \[
  L=\left\{\mu_{1}\epsilon_{2}\upsilon_{3}
           \Big(\bichar_{36}\hat{b}_{6}
     (\Dumultunit[B])^{\transpose\Coinv_{B}\otimes\Id_{\Hils[K]}}_{16}
     \bichar^{\transpose\Coinv_{\hat{A}}\otimes\Id_{\Hils[K]}}_{26}
     \Dumultunit[A]_{37}
     (\Dumultunit[A])^{\transpose\Coinv_{A}\otimes\Id_{\Hils}}_{27}\Big)
     \right\}^{\text{CLS}}.
 \]
 Given~\(\mu\in\Bound(\conj{\Hils[K]})_{*}\) and~\(x\in\Bound(\conj{\Hils[K]})\), 
 define~\(x\cdot\mu(y)\defeq\mu(xy)\) for~\(y\in\Bound(\conj{\Hils[K]})\).

 Replacing~\(\mu\in\Bound(\conj{\Hils[K]})_{*}\) by~\(b^{\Coinv_{B}\transpose}\cdot\mu\), 
 for~\(b\in B\), \(L\)~becomes
   \[
     \left\{\mu_{1}\epsilon_{2}\upsilon_{3}
     \Big(\bichar_{36}\big((b\otimes\hat{b})
     (\Dumultunit[B])\big)^{\transpose\Coinv_{B}\otimes\Id_{\Hils[K]}}_{16}
     \bichar^{\transpose\Coinv_{\hat{A}}\otimes\Id_{\Hils[K]}}_{26}
     \Dumultunit[A]_{37}
     (\Dumultunit[A])^{\transpose\Coinv_{A}\otimes\Id_{\Hils}}_{27}\Big)
     \right\}^{\text{CLS}}.
 \]
 Since~\(\Dumultunit[B]\in\U(B\otimes\hat{B})\), we may replace~\((B\otimes\hat{B})\Dumultunit[B]\) 
 by~\(B\otimes\hat{B}\), and then applying~\(\mu\) on the first leg gives
 \[
    L=\left\{\epsilon_{1}\upsilon_{2}
     \Big(\bichar_{25}\hat{b}_{5}
     \bichar^{\transpose\Coinv_{\hat{A}}\otimes\Id_{\Hils[K]}}_{15}
     \Dumultunit[A]_{26}
     (\Dumultunit[A])^{\transpose\Coinv_{A}\otimes\Id_{\Hils}}_{16}\Big)
     \right\}^{\text{CLS}}.
 \]
 Replacing~\(\epsilon\in\Bound(\conj{\Hils})_{*}\) by~\(\hat{a}^{\transpose\Coinv_{\hat{A}}}\cdot\epsilon\) 
 for~\(\hat{a}\in\hat{A}\) yields
 \[
     L=\left\{\epsilon_{1}\upsilon_{2}
     \Big(\bichar_{25}\big((\hat{a}\otimes\hat{b})
     \bichar\big)^{\transpose\Coinv_{\hat{A}}\otimes\Id_{\Hils[K]}}_{15}
     \Dumultunit[A]_{26}
     (\Dumultunit[A])^{\transpose\Coinv_{A}\otimes\Id_{\Hils}}_{16}\Big)
     \right\}^{\text{CLS}}.
 \]
 Since~\(\bichar\in\U(\hat{A}\otimes\hat{B})\), we may replace~\((\hat{A}\otimes\hat{B})\bichar\) 
 by~\(\hat{A}\otimes\hat{B}\) in the last expression. Then we substitute   
 \(\hat{a}^{\transpose\Coinv_{\hat{A}}}\cdot\epsilon\) by~\(\epsilon\) and 
 the resulting expression becomes
  \[
     L=\left\{\epsilon_{1}\upsilon_{2}
     \Big(\bichar_{25}\hat{b}_{5}
     \Dumultunit[A]_{26}
     (\Dumultunit[A])^{\transpose\Coinv_{A}\otimes\Id_{\Hils}}_{16}\Big)
     \right\}^{\text{CLS}}.
 \]
 After replacing~\((\epsilon\otimes\Id_{\Hils})\Dumultunit[A]\) by~\(\hat{a}\in\hat{A}\)
 in the above expression, we obtain
 \[
     L=\left\{\upsilon_{1}
     \Big(\bichar_{14}\hat{b}_{4}
     \Dumultunit[A]_{15}\hat{a}_{5}\Big)
     \right\}^{\text{CLS}}.
 \]
 For all~\(\upsilon\in\Bound(\Hils)_{*}\) and~\(a\in A\subset\Bound(\Hils)\), 
 define~\(\upsilon\cdot a\in\Bound(\Hils)_{*}\) by~\(\upsilon\cdot a(y)\defeq\upsilon(ay)\) 
 for~\(y\in\Bound(\Hils)\). 
 
 Replacing~\(\upsilon\in\Bound(\Hils)_{*}\) by~\(\upsilon\cdot a\) 
 in the last expression gives
    \[
     L=\left\{\upsilon_{1}
     \Big(\bichar_{14}\hat{b}_{4}
     \big(\Dumultunit[A](a\otimes\hat{a}\big)_{15}\Big)
     \right\}^{\text{CLS}}
     =\left\{\upsilon_{1}
     (\bichar_{14}\hat{b}_{4}
    a_{1}\hat{a}_{5})
     \right\}^{\text{CLS}}
   =  \left\{\upsilon_{1}
     (\bichar_{14}\hat{b}_{4}
    \hat{a}_{5})
     \right\}^{\text{CLS}}.
 \]
 Finally, replacing~\(\upsilon\in\Bound(\Hils)_{*}\) by~\(\upsilon\cdot\hat{a}\) 
 for \(\hat{a}\in\hat{A}\) in the last expression, we get
     \[
    L=\left\{\upsilon_{1}\Big(\big(\Bichar(\hat{a}\otimes\hat{b})\big)_{14}\hat{a}_{5}\Big)\right\}^{\text{CLS}}
      =\left\{\upsilon_{1}\Big(\hat{a}_{1}\hat{b}_{4}\hat{a}_{5}\Big)\right\}^{\text{CLS}} 
      =\left\{\hat{b}_{3}\hat{a}_{4}\right\}^{\text{CLS}} 
      =1_{\conj{\Hils[K]}\otimes\conj{\Hils}}\otimes\hat{B}\otimes\hat{A}.\qedhere
     \] 
 \end{proof}
  So far it is not clear that~\(\Comult[\DrinfDoubAlg_{\bichar}]\) 
  is a well defined \(\Cst\)\nb-algebra morphism. For the moment, we assume it exists. 
 \begin{proposition}
   \label{prop:Cancellation_law}
    The comultiplication maps~\(\Comult[\DrinfDoubAlg_{\bichar}]\) 
    and~\(\DuComult[\DrinfDoubAlg_{\bichar}]\) defined by~\eqref{eq:Gen_Drinf_doub_def} 
    and~\eqref{eq:Gen_Co_doub_def} satisfy cancellation laws~\eqref{eq:Podles}. Equivalently, 
    \(\Bialg{\DrinfDoubAlg_{\bichar}}\) and~\((\CodoubAlg_{\bichar},\DuComult[\DrinfDoubAlg_{\bichar}])\) 
    are bisimplifiable \(\Cst\)\nb-bialgebras.
   \end{proposition}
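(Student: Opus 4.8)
The plan is to verify the Podleś-type cancellation conditions~\eqref{eq:Podles} separately for $\DuComult[\DrinfDoubAlg_{\bichar}]$ on $\CodoubAlg_{\bichar}=\hat{B}\otimes\hat{A}$ and for $\Comult[\DrinfDoubAlg_{\bichar}]$ on $\DrinfDoubAlg_{\bichar}=\rho(A)\cdot\theta(B)$. In each case the two cancellation laws are proved by the same method, using respectively the right and the left cancellation laws of the factors, so I will carry out one and obtain the other by the symmetric computation (for $\DrinfDoubAlg_{\bichar}$ this symmetry is the passage from the $\bichar$\nb-Drinfeld pair $(\rho,\theta)$ to the $\Dubichar$\nb-Drinfeld pair $(\theta,\rho)$ of Lemma~\ref{lemm:equiv_cond_V_Drinfeld_pair}). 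Throughout I may use that $\rho,\theta$ are faithful, that $A$ and $B$ (hence $\hat{A}$ and $\hat{B}$) satisfy~\eqref{eq:Podles}, and --- because $\DrinfDoubAlg_{\bichar}$ is a $\Cst$\nb-algebra --- the identity $\rho(A)\cdot\theta(B)=\theta(B)\cdot\rho(A)$. I also use the standing assumption that $\Comult[\DrinfDoubAlg_{\bichar}]$ is a nondegenerate morphism, so that it is multiplicative and continuous and its restrictions to $\rho(A)$ and $\theta(B)$ are $(\rho\otimes\rho)\Comult[A]$ and $(\theta\otimes\theta)\Comult[B]$.

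For the codouble this is essentially formal. Writing $\flip^{\bichar}=\Ad_{\bichar}\circ\flip$ with $\Ad_{\bichar}(x)=\bichar\, x\,\bichar^{*}$, the map $\DuComult[\DrinfDoubAlg_{\bichar}]$ is the tensor comultiplication $\DuComult[B]\otimes\DuComult[A]$ followed by the $\Cst$\nb-isomorphism $\Id_{\hat{B}}\otimes\flip^{\bichar}\otimes\Id_{\hat{A}}$ that regroups the legs of $\hat{B}\otimes\hat{B}\otimes\hat{A}\otimes\hat{A}$ into $(\hat{B}\otimes\hat{A})\otimes(\hat{B}\otimes\hat{A})$. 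First I would record the cancellation laws for $\DuG$ and $\DuG[H]$, namely $\DuComult[A](\hat{A})\cdot(1\otimes\hat{A})=\hat{A}\otimes\hat{A}$ and $\DuComult[B](\hat{B})\cdot(1\otimes\hat{B})=\hat{B}\otimes\hat{B}$. Feeding these into~\eqref{eq:Gen_Co_doub_def}, and using that the unitary $\bichar$ and the isomorphism $\flip^{\bichar}$ are invertible --- so that the twist on the middle legs is absorbed by the free factor $1_{\CodoubAlg_{\bichar}}\otimes\CodoubAlg_{\bichar}$ --- yields $\DuComult[\DrinfDoubAlg_{\bichar}](\CodoubAlg_{\bichar})\cdot(1\otimes\CodoubAlg_{\bichar})=\CodoubAlg_{\bichar}\otimes\CodoubAlg_{\bichar}$, and symmetrically for the left law.

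For the Drinfeld double I would push the cancellation laws of $A$ and $B$ through the faithful representations $\rho,\theta$: applying $\rho\otimes\rho$ to~\eqref{eq:Podles} for $A$ gives $(\rho\otimes\rho)\Comult[A](A)\cdot(1\otimes\rho(A))=\rho(A)\otimes\rho(A)$, and likewise $(\theta\otimes\theta)\Comult[B](B)\cdot(1\otimes\theta(B))=\theta(B)\otimes\theta(B)$. Starting from $\Comult[\DrinfDoubAlg_{\bichar}](\DrinfDoubAlg_{\bichar})\cdot(1\otimes\DrinfDoubAlg_{\bichar})$ and using the morphism property together with $\DrinfDoubAlg_{\bichar}=\rho(A)\theta(B)=\theta(B)\rho(A)$, I would freely reorder the $\rho$\nb- and $\theta$\nb-parts, writing $\Comult[\DrinfDoubAlg_{\bichar}](\DrinfDoubAlg_{\bichar})=(\rho\otimes\rho)\Comult[A](A)\cdot(\theta\otimes\theta)\Comult[B](B)=(\theta\otimes\theta)\Comult[B](B)\cdot(\rho\otimes\rho)\Comult[A](A)$, and collapse one tensor leg by the two cancellation identities just recorded. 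This reduces the whole statement to a single commutation fact inside $\DrinfDoubAlg_{\bichar}\otimes\DrinfDoubAlg_{\bichar}$: that $(\theta\otimes\theta)\Comult[B](B)$ commutes, as a closed linear span, with $\rho(A)\otimes 1$ (equivalently with $1\otimes\rho(A)$ after the reorderings).

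This commutation fact is the main obstacle, and it is exactly where the bicharacter enters. Element-wise it amounts to moving $\rho(a)$ past the first leg $\theta(b_{(1)})$ of $(\theta\otimes\theta)\Comult[B](b)$ while the second leg $\theta(b_{(2)})$ remains entangled with it, so the bare algebra identity $\rho(A)\theta(B)=\theta(B)\rho(A)$ does not suffice: one must know that the braided commutation of $\rho(a)$ and $\theta(b')$ dictated by the $\bichar$\nb-Drinfeld relation~\eqref{eq:V-Drinfeld} is compatible with $\Comult[B]$. I would prove this by slicing~\eqref{eq:V-Drinfeld} and invoking the two character conditions on $\bichar$ from Definition~\ref{def:bicharacter} (which is precisely what renders the Drinfeld commutation coassociative), or equivalently by rewriting it through the left/right homomorphism descriptions in parts~(3)--(4) of Lemma~\ref{lemm:equiv_cond_V_Drinfeld_pair}. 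As a consistency check, for the trivial bicharacter $\rho$ and $\theta$ commute outright (Example~\ref{ex:V-Drinfeld_group_case}) and the fact is immediate. Once it is in hand, the collapse above gives $\Comult[\DrinfDoubAlg_{\bichar}](\DrinfDoubAlg_{\bichar})\cdot(1\otimes\DrinfDoubAlg_{\bichar})=\DrinfDoubAlg_{\bichar}\otimes\DrinfDoubAlg_{\bichar}$; the left law follows from the analogous computation (equivalently, from treating $(\theta,\rho)$ as a $\Dubichar$\nb-Drinfeld pair), and together with the codouble case this shows that both $\Bialg{\DrinfDoubAlg_{\bichar}}$ and $(\CodoubAlg_{\bichar},\DuComult[\DrinfDoubAlg_{\bichar}])$ are bisimplifiable $\Cst$\nb-bialgebras.
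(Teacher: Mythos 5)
Your treatment of the two halves is inverted relative to where the difficulty actually lies. The cancellation laws for \(\Comult[\DrinfDoubAlg_{\bichar}]\) really are routine: writing \(1\otimes\DrinfDoubAlg_{\bichar}=1\otimes\theta(B)\cdot\rho(A)\), one first absorbs \(1\otimes\theta(B)\) into \((\theta\otimes\theta)\Comult[B](B)\) by the cancellation law for \(\Comult[B]\), then commutes \(1\otimes\rho(A)\) leftwards --- in the second leg this is just the identity \(\theta(B)\cdot\rho(A)=\rho(A)\cdot\theta(B)\), valid because \(\DrinfDoubAlg_{\bichar}\) is a \(\Cst\)\nb-algebra --- and absorbs it into \((\rho\otimes\rho)\Comult[A](A)\). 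The ``single commutation fact'' you isolate, that \((\theta\otimes\theta)\Comult[B](B)\) commutes with \(\rho(A)\otimes 1\) as a closed linear span, is an artefact of the order in which you chose to collapse the legs; it is not needed, and proving it would itself require the bicharacter machinery you only gesture at.

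The genuine gap is in the codouble half, which you dismiss as ``essentially formal''. The free factor \(1_{\CodoubAlg_{\bichar}}\otimes\CodoubAlg_{\bichar}=\hat{B}_{3}\hat{A}_{4}\) covers only leg \(3\) of the twist \(\Ad_{\bichar_{23}}\); leg \(2\) (the \(\hat{A}\)\nb-leg of \(\bichar\)) is occupied by the second leg of \(\DuComult[A](\hat{A})_{24}\). After cancelling \(\DuComult[A](\hat{A})_{24}\cdot\hat{A}_{4}=(\hat{A}\otimes\hat{A})_{24}\) you are left with the closed linear span \(\hat{A}_{2}\,\bichar^{*}_{23}\,\hat{B}_{3}\), that is \((\hat{A}\otimes 1)\bichar^{*}(1\otimes\hat{B})\) with the unitary \emph{sandwiched between} the two halves. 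This is not the multiplier identity \((\hat{A}\otimes\hat{B})\bichar^{*}=\hat{A}\otimes\hat{B}\), and it does not follow from unitarity of \(\bichar\): for a general unitary multiplier of \(\Comp(\Hils)\otimes\Comp(\Hils)\) (for instance the flip) the analogous span is not contained in the tensor product, and statements of this shape are precisely the regularity-type conditions this paper is built to avoid. The paper's proof instead uses the character condition of \(\bichar\) in its second leg, in the form \(\Dubichar_{\hat{\eta}2}\Dumultunit[B]_{\eta 3}=\bichar_{23}\Dumultunit[B]_{\eta 3}\Dubichar_{\hat{\eta}2}\) for a \(\G[H]\)\nb-Heisenberg pair, together with the implementation of \(\DuComult[B]\) by \(\Dumultunit[B]\), to trade the conjugation by \(\bichar_{23}\) for a conjugation by \(\Dubichar_{\hat{\eta}2}\) --- a unitary both of whose legs sit over full free factors \(\hat{\eta}(\hat{B})_{1}\) and \(\hat{A}_{2}\) and which can therefore legitimately be absorbed. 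Without some such use of the bicharacter equations, your argument for \(\DuComult[\DrinfDoubAlg_{\bichar}]\) does not close.
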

   \begin{proof}
   A routine computation using coassociativity~\eqref{eq:coassociative} 
   and cancellation law \eqref{eq:Podles} for~\(\Comult[A]\) and~\(\Comult[B]\) 
   shows~\(\Bialg{\DrinfDoubAlg_{\bichar}}\) is a bisimplifiable 
   \(\Cst\)\nb-bialgebra.

   Cancellation law~\eqref{eq:Podles} for~\(\DuComult[A]\) gives
  \begin{align*}
   &   \DuComult[\DrinfDoubAlg_{\bichar}](\CodoubAlg_{\bichar})\cdot (1_{\CodoubAlg_{\bichar}}
         \otimes\CodoubAlg_{\bichar})\\
   &=  \bichar_{23}\big(\DuComult[B](\hat{B})_{13}\DuComult[A](\hat{A})_{24}\big)
          \bichar^{*}_{23}\cdot (1_{\hat{B}\otimes\hat{A}}\otimes
          \hat{B}\otimes\hat{A})\\
   &=  \bichar_{23}\Big(\DuComult[B](\hat{B})_{13}\big(\DuComult[A](\hat{A})\cdot (1_{\hat{A}}\otimes\hat{A})
         \big)_{24}\Big)\bichar^{*}_{23}   
         (1_{\hat{B}\otimes\hat{A}}\otimes\hat{B}\otimes 1_{\hat{A}})\\
  &=   \bichar_{23}\big(\DuComult[B](\hat{B})_{13}(\hat{A}\otimes\hat{A})_{24}\big)\bichar_{23}^{*}
        (1_{\hat{B}\otimes\hat{A}}\otimes\hat{B}\otimes 1_{\hat{A}}). 
 \end{align*}
 The character condition on the second leg~\eqref{eq:bichar_char_in_second_leg} 
 for~\(\bichar\) is equivalent to 
 \begin{equation}
  \label{eq:bichar_char_second_var_aux}
    \Dubichar_{\hat{\eta}'2}\Dumultunit[B]_{\eta'3}
   =\bichar_{23}\Dumultunit[B]_{\eta'3}\Dubichar_{\hat{\eta}'2}
   \qquad\text{in~\(\U(\Comp(\Hils_{\eta'})\otimes\hat{A}\otimes\hat{B})\),}
 \end{equation} 
 where~\((\eta,\hat{\eta}')\) is an~\(\G[H]\)\nb-Heisenberg pair acting on~\(\Hils_{\eta'}\).
 Recall that~\(\DuComult[B]\) is implemented by~\(\Dumultunit[B]\) 
 (see Equation~\eqref{eq:unit_imp_comult}). Therefore, we get 
 \begin{align*}
& \DuComult[\DrinfDoubAlg_{\bichar}](\CodoubAlg_{\bichar})_{\hat{\eta}'234}\cdot 
    (1_{\Hils_{\eta'}}\otimes 1_{\hat{A}}\otimes\CodoubAlg_{\bichar})\\
 &= \bichar_{23}\Dumultunit[B]_{\eta' 3}(\hat{\eta}'(\hat{B})\otimes\hat{A}\otimes  
 1_{\hat{B}}\otimes\hat{A})(\bichar_{23}\Dumultunit[B]_{\eta' 3})^{*}\cdot\hat{B}_{3}\\     
 &= \bichar_{23}\Dumultunit[B]_{\eta 3}(\hat{\eta}(\hat{B})\otimes\hat{A}\otimes 1_{\hat{B}}
 \otimes\hat{A})\Dubichar_{\hat{\eta} 2}(\Dumultunit[B]_{\eta 3})^{*}\Dubichar^{*}
 _{\hat{\eta} 2}\hat{B}_{3}. 
\end{align*} 
 Now~\(\bichar\in\U(\hat{A}\otimes\hat{B})\) gives 
 \((\hat{\eta}'(\hat{B})\otimes\hat{A})\Dubichar^{*}_{\hat{\eta}'2}=
 \hat{\eta}'(\hat{B})\otimes\hat{A}\) and~\(\bichar(\hat{A}\otimes\hat{B})=\hat{A}\otimes
 \hat{B}\). Using the cancellation law~\eqref{eq:Podles} for~\(\DuComult[B]\), 
 we obtain
 \begin{align*}
   \DuComult[\DrinfDoubAlg_{\bichar}](\CodoubAlg_{\bichar})_{\hat{\eta}'234}\cdot 
    (1_{\Hils_{\eta'}}\otimes 1_{\hat{A}}\otimes\CodoubAlg_{\bichar})
 &= \bichar_{23}\Dumultunit[B]_{\eta 3}(\hat{\eta}(\hat{B})\otimes\hat{A}\otimes 1_{\hat{B}}
       \otimes\hat{A})(\Dumultunit[B]_{\eta 3})^{*}\cdot\hat{B}_{3}\Dubichar^{*}_{\hat{\eta} 2}\\
  &=  \bichar_{23}\big((\DuComult[B](\hat{B})\cdot (1\otimes\hat{B})\big)_{\hat{\eta} 3}
         \Dubichar^{*}_{\hat{\eta} 2}\\         
  &= \bichar_{23}(\hat{\eta}(\hat{B})\otimes\hat{A}\otimes\hat{B}\otimes\hat{A}) 
        \Dubichar^{*}_{\hat{\eta} 2}\\
  &= \hat{\eta}(\hat{B})\otimes\hat{A}\otimes\hat{B}\otimes\hat{A}.
 \end{align*} 
 Since~\(\hat{\eta}\) is faithful, we get 
 \(\DuComult[\DrinfDoubAlg_{\bichar}](\CodoubAlg_{\bichar})\cdot 
 (1_{\CodoubAlg_{\bichar}}\otimes\CodoubAlg_{\bichar})=\CodoubAlg_{\bichar}\otimes\CodoubAlg_{\bichar}\). 
 A similar computation yields  \(\DuComult[\DrinfDoubAlg_{\bichar}](\CodoubAlg_{\bichar})\cdot 
 (\CodoubAlg_{\bichar}\otimes 1_{\CodoubAlg_{\bichar}})=\CodoubAlg_{\bichar}\otimes\CodoubAlg_{\bichar}\).
\end{proof}

 \begin{proof}[Proof of Theorem~\textup{\ref{the:Drinf_doub_codoub_dual}}]
  By virtue of Proposition~\ref{prop:codoub_alg} we can write 
  \(\DuMultunit[\mathcal{D}]=\Dumultunit[B]_{\theta2}\Dumultunit[A]_{\rho2}\in
  \U(\DrinfDoubAlg_{\bichar}\otimes\hat{B}\otimes A)\). Equivalently, 
  \(\Multunit[\mathcal{D}]=\multunit[A]_{2\rho}\multunit[B]_{1\theta}
  \in\U(\hat{B}\otimes\hat{A}\otimes\DrinfDoubAlg_{\bichar})\).
  
  The following computation takes place in~\(\U(\DrinfDoubAlg_{\bichar}\otimes\hat{B}\otimes\hat{A}\otimes
  \hat{B}\otimes\hat{A})\):
  \begin{align*}
     (\Id_{\DrinfDoubAlg_{\bichar}}\otimes\DuComult[\DrinfDoubAlg_{\bichar}])
       \DuMultunit[B]_{\theta2}\DuMultunit[A]_{\rho2}
  &=\bichar_{34}\Dumultunit[B]_{\theta2}\Dumultunit[B]_{\theta4}\Dumultunit[A]_{\rho3}
       \Dumultunit[A]_{\rho5}\bichar^{*}_{34}\\
  &= \Dumultunit[B]_{\theta2}\bichar_{34}\Dumultunit[B]_{\theta4}\Dumultunit[A]_{\rho3}          
        \bichar^{*}_{34}\Dumultunit[A]_{\rho5}\\
  &=  \Dumultunit[B]_{\theta2}\Dumultunit[A]_{\rho3}\Dumultunit[B]_{\theta4}\Dumultunit[A]_{\rho5}.  
 \end{align*}
 The first equality uses~\eqref{eq:Gen_Co_doub_def} and the character condition 
 \eqref{eq:aux_W_char_in_first_leg} for~\(\Dumultunit[A]\) and~\(\Dumultunit[B]\), 
 the second equality uses that~\(\bichar\) with \(\Dumultunit[B]_{\theta2}\) and 
 \(\Dumultunit[A]_{\rho5}\), and the last equality uses~\eqref{eq:rho_theta_equiv_Drinfeld_cond}. 
 
 Collapsing the leg numbers we obtain~\eqref{eq:W_char_in_second_leg} for 
 \(\DuComult[\DrinfDoubAlg_{\bichar}]\) and~\(\DuMultunit[\mathcal{D}]\):
 \begin{equation}
  \label{eq:codoub_bichar_1}
       (\Id_{\DrinfDoubAlg_{\bichar}}\otimes\DuComult[\DrinfDoubAlg_{\bichar}])\DuMultunit[\mathcal{D}]
    = \DuMultunit[\mathcal{D}]_{12}\DuMultunit[\mathcal{D}]_{13} 
    \qquad\text{in~\(\U(\DrinfDoubAlg_{\bichar}\otimes\CodoubAlg_{\bichar}\otimes\CodoubAlg_{\bichar})\).}
\end{equation}  
 Combining~\eqref{eq:codoub_bichar_1} with Proposition~\ref{prop:Cancellation_law} gives~\ref{eq:V_codoub}.

  Next we establish~\ref{eq:V_Drinfdoub}. The character condition on the second 
  leg~\eqref{eq:W_char_in_second_leg} for \(\multunit[A]\) and \(\multunit[B]\) yields
  \begin{align*}
   & \big(\Id_{\hat{B}\otimes\hat{A}}\otimes\Comult[\DrinfDoubAlg_{\bichar}]\big)
      \multunit[A]_{2\rho}\multunit[B]_{1\theta}\\
   &= \Big(\big(\Id_{\hat{A}}\otimes(\rho\otimes\rho)\Comult[A])\big)
   \multunit[A]\Big)_{234}\Big(\big(\Id_{\hat{B}}\otimes(\theta\otimes\theta)\Comult[B])
   \big)\Dumultunit[B]\Big)_{134}\\
   &= \multunit[A]_{2\rho_{3}}\multunit[A]_{2\rho_{4}}\multunit[B]_{1\theta_{3}}
        \multunit[B]_{1\theta_{4}}\\
   &=  \multunit[A]_{2\rho_{3}}\multunit[B]_{1\theta_{3}}\multunit[A]_{2\rho_{4}}
        \multunit[B]_{1\theta_{4}}
   \qquad\text{in~\(\U(\hat{B}\otimes\hat{A}\otimes\DrinfDoubAlg_{\bichar}\otimes\DrinfDoubAlg_{\bichar})\).}     
  \end{align*}
 Here we use Notation~\ref{not:leg_numbering} for the representations~\(\rho\) 
 and~\(\theta\).  
 Collapsing the first two legs we obtain~\eqref{eq:W_char_in_second_leg} for 
 \(\Comult[\DrinfDoubAlg_{\bichar}]\) and~\(\Multunit[\mathcal{D}]\):
 \begin{equation}
  \label{eq:codoub_bichar_2}
      (\Id\otimes\Comult[\DrinfDoubAlg_{\bichar}])\Multunit[\mathcal{D}] 
     =  \Multunit[\mathcal{D}]_{12}\Multunit[\mathcal{D}]_{13}
     \qquad\text{in~\(\U(\CodoubAlg_{\bichar}\otimes\DrinfDoubAlg_{\bichar}\otimes\DrinfDoubAlg_{\bichar})\).}       
 \end{equation}
  Hence, Theorem~\ref{the:Cst_quantum_grp_and_mult_unit} ensures the 
  existence and uniqueness of \(\Comult[\DrinfDoubAlg_{\bichar}]\) as an element in 
  \(\Mor(\DrinfDoubAlg_{\bichar},\DrinfDoubAlg_{\bichar}\otimes\DrinfDoubAlg_{\bichar})\) 
  satisfying~\eqref{eq:codoub_bichar_2}, and  
  \(\GenDrinfdouble{\G}{\G[H]}{\bichar}\) is a~\(\Cst\)\nb-quantum group 
 generated by the modular multiplicative unitary~\(\Multunit[\mathcal{D}]\); 
 hence it is dual of \(\GenCodouble{\G}{\G[H]}{\bichar}\).
 
 From~\eqref{eq:codoub_bichar_1} and~\eqref{eq:codoub_bichar_2}, it is 
 clear that~\(\DuMultunit[\mathcal{D}]\defeq
 \Dumultunit[B]_{\theta2}\Dumultunit[A]_{\rho3}\in\U(\DrinfDoubAlg_{\bichar}
 \otimes\CodoubAlg_{\bichar})\) is a bicharacter, and its Hilbert space 
 realisation is a modular multiplicative unitary generating 
 \(\GenCodouble{\G}{\G[H]}{\bichar}\). 
 
 Now the representations~\(\rho\) and~\(\theta\) defined in~\eqref{eq:gen_codob_reps}  
 depends on the \(\G\)\nb-Heisenberg pair~\(\HeisPair{\pi}\) and~\(\G[H]\)\nb-Heisenberg 
 pair~\(\HeisPair{\eta}\). Hence, on one hand, the \(\Cst\)\nb-algebra \(\DrinfDoubAlg_{\bichar}\) 
 depends on the representations~\(\pi\), \(\hat{\pi}\), \(\eta\) and~\(\hat{\eta}\).
 
 On the other hand, \((\CodoubAlg_{\bichar},\DuComult[\DrinfDoubAlg_{\bichar}])\) only 
 depends on the triple~\((\G,\G[H],\bichar)\). Therefore, by virtue of 
 Theorem~\ref{the:ind_multunit}, \(\GenDrinfdouble{\G}{\G[H]}{\bichar}\) does not 
 depend on the choice of~\(\Multunit[\mathcal{D}]\), which in turn, shows that 
 \(\DrinfDoubAlg_{\bichar}\) does not depend on \(\pi\), \(\hat{\pi}\), \(\eta\) 
 and~\(\hat{\eta}\). Hence, \(\DuMultunit[\mathcal{D}]\) is the reduced 
 bicharacter for \((\GenCodouble{\G}{\G[H]}{\bichar},\GenDrinfdouble{\G}{\G[H]}{\bichar})\).
\end{proof}
 \begin{remark} 
  \label{rem:ind_drinf_doub}
  By definition of the generalised quantum codouble 
  \eqref{eq:Gen_Co_doub_def}, the pair 
  \(\Bialg{\CodoubAlg_{\bichar}}\) only depends on the 
  triple~\((\G,\G[H],\bichar)\). Also, Theorem~\ref{the:ind_multunit} 
  ensures that the generalised Drinfeld double 
  \(\GenDrinfdouble{\G}{\G[H]}{\bichar}\) is uniquely determined 
  (up to isomorphism) by its dual \(\Bialg{\CodoubAlg_{\bichar}}\). 
  Hence, the generalised Drinfeld double also depends only on the 
  triple~\((\G,\G[H],\bichar)\).
 \end{remark} 
  \begin{definition} 
    \label{def:can-Drinf_pair} 
     The pair \((\rho,\theta)\) in~\eqref{eq:gen_codob_reps} 
     is called a \emph{canonical \(\bichar\)\nb-Drinfeld pair.}
  \end{definition}  
 Next we gather other structure maps on the generalised quantum 
 codouble.  
 \begin{proposition}
 \label{prop:scaling_grp_unit_anitpode_gen_codoub}
  Let~\(\Bialg{\CodoubAlg_{\bichar}}\) be the generalised quantum codouble for the 
  triple~\((\G,\G[H],\bichar)\). Then
  \begin{enumerate}
    \item\label{eq:coinv_codoub} \(\Coinv_{\CodoubAlg_{\bichar}}(\hat{b}\otimes\hat{a})\defeq\Dubichar(\Coinv_{\hat{B}}(\hat{b})\otimes\Coinv_{\hat{A}}(\hat{a}))\Dubichar^{*}\) 
            is the unitary antipode, 
    \item\label{eq:sc_codoub} \(\tau^{\CodoubAlg_{\bichar}}_{t}(\hat{b}\otimes\hat{a})\defeq\tau^{\hat{B}}_{t}(\hat{b})\otimes\tau^{\hat{A}}_{t}(\hat{a})\) for~\(t\in\R\)
           is the scaling group,          
  \end{enumerate}
      of~\(\GenCodouble{\G}{\G[H]}{\bichar}\), where~\(\hat{a}\in\hat{A}\), \(\hat{b}\in\hat{B}\).        
 \end{proposition}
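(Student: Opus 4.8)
The plan is to read both structure maps off the explicit modular data of the reduced bicharacter $\DuMultunit[\mathcal{D}]\in\U(\DrinfDoubAlg_{\bichar}\otimes\CodoubAlg_{\bichar})$ of $\GenCodouble{\G}{\G[H]}{\bichar}$, whose second leg is $\CodoubAlg_{\bichar}=\hat B\otimes\hat A$ (Theorem~\ref{the:Drinf_doub_codoub_dual}\ref{eq:V-red_bichar}). By Theorem~\ref{the:Cst_quantum_grp_and_mult_unit}\ref{eq:mang_rels} the scaling group and the unitary antipode of a \(\Cst\)\nb-quantum group are expressed directly through the operator \(Q\) and the unitary \(\widetilde{\Multunit}\) of a generating manageable multiplicative unitary, and both were already computed for \(\DuMultunit[\mathcal{D}]\) in Theorem~\ref{the:modularity_multunit_gen_drinfeld_double}: explicitly \(Q_{\mathcal{D}}=1_{\conj{\Hils[K]}}\otimes 1_{\conj{\Hils}}\otimes Q_B\otimes Q_A\) (see~\eqref{eq:self_adj_op_mod_gen_drinfdoub}) and \(\widetilde{\DuMultunit[\mathcal{D}]}=\mathbb{Y}_{15}\mathbb{X}_{14}\). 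I would feed these into the characterisations in Theorem~\ref{the:Cst_quantum_grp_and_mult_unit}\ref{eq:mang_rels}, using in addition the compatibility of \(\bichar\) with the scaling groups and unitary antipodes of \(\G\) and \(\G[H]\), and conclude via the uniqueness of \((\tau,\Coinv)\) in Theorem~\ref{the:Cst_quantum_grp_and_mult_unit}\ref{eq:sc_grp}.

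Part~\ref{eq:sc_codoub} is then immediate. Theorem~\ref{the:Cst_quantum_grp_and_mult_unit}\ref{eq:mang_rels}\ref{eq:sc_grp_Q} gives \(\tau^{\CodoubAlg_{\bichar}}_t(\hat d)=Q_{\mathcal{D}}^{2\mathrm{i}t}\,\hat d\,Q_{\mathcal{D}}^{-2\mathrm{i}t}\) for \(\hat d\in\CodoubAlg_{\bichar}\). Under the identification \(\CodoubAlg_{\bichar}=1_{\conj{\Hils[K]}\otimes\conj{\Hils}}\otimes\hat B\otimes\hat A\) from Proposition~\ref{prop:codoub_alg}\ref{eq:Codoub_alg}, and since the dual of a manageable multiplicative unitary is manageable with the same operator, so that \(\tau^{\hat A}_t\) and \(\tau^{\hat B}_t\) are implemented by \(Q_A^{2\mathrm{i}t}\) and \(Q_B^{2\mathrm{i}t}\), conjugation of \(1\otimes 1\otimes\hat b\otimes\hat a\) by \(Q_{\mathcal{D}}^{2\mathrm{i}t}=1\otimes 1\otimes Q_B^{2\mathrm{i}t}\otimes Q_A^{2\mathrm{i}t}\) yields exactly \(\tau^{\hat B}_t(\hat b)\otimes\tau^{\hat A}_t(\hat a)\).

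For part~\ref{eq:coinv_codoub} I would first record that \(\Coinv_{\CodoubAlg_{\bichar}}=\Ad_{\Dubichar}\circ(\Coinv_{\hat B}\otimes\Coinv_{\hat A})\) is a well defined involutive \(^*\)\nb-anti-automorphism of \(\CodoubAlg_{\bichar}\) commuting with \(\tau^{\CodoubAlg_{\bichar}}_t\): it is the composite of the \(^*\)\nb-anti-automorphism \(\Coinv_{\hat B}\otimes\Coinv_{\hat A}\) with the inner automorphism \(\Ad_{\Dubichar}\) (note \(\Dubichar\in\U(\CodoubAlg_{\bichar})\)), and both its involutivity, through \(\Coinv_{\CodoubAlg_{\bichar}}^2=\Ad_{\Dubichar\Dubichar^*}\circ(\Coinv_{\hat B}\otimes\Coinv_{\hat A})^2=\Id\), and its commutation with \(\tau^{\CodoubAlg_{\bichar}}_t\) reduce to the two identities \((\Coinv_{\hat B}\otimes\Coinv_{\hat A})\Dubichar=\Dubichar\) and \((\tau^{\hat B}_t\otimes\tau^{\hat A}_t)\Dubichar=\Dubichar\). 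The latter follows from~\eqref{eq:bichar_comm_QA_and_QB}, which forces \(\bichar\) to commute with \(Q_A^{\mathrm{i}t}\otimes Q_B^{\mathrm{i}t}\), together with \(\Dubichar=\flip(\bichar^*)\); the former is the antipode\nb-compatibility of bicharacters, the analogue for a general \(\bichar\) of the identity \((\Coinv_{\hat A}\otimes\Coinv_A)\multunit[A]=\multunit[A]\) (see~\cite{Meyer-Roy-Woronowicz:Homomorphisms}), transported through \(\Dubichar=\flip(\bichar^*)\). To identify this map with the genuine unitary antipode I would verify the characterising relation \((\DuMultunit[\mathcal{D}])^{\transpose\otimes\Coinv_{\CodoubAlg_{\bichar}}}=\widetilde{\DuMultunit[\mathcal{D}]}^{*}\) of Theorem~\ref{the:Cst_quantum_grp_and_mult_unit}\ref{eq:mang_rels}\ref{eq:conj_mult} (with \(\transpose\) on the first leg \(\DrinfDoubAlg_{\bichar}\) and \(\Coinv_{\CodoubAlg_{\bichar}}\) on the second leg \(\CodoubAlg_{\bichar}\)); since slicing this relation determines \(\Coinv_{\CodoubAlg_{\bichar}}\) on the dense set of first\nb-leg slices \((\omega\otimes\Id)\DuMultunit[\mathcal{D}]\), a match pins it down. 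Expanding \(\DuMultunit[\mathcal{D}]=\Dumultunit[B]_{\theta 2}\Dumultunit[A]_{\rho 3}\), using that \(\transpose\otimes\Coinv_{\CodoubAlg_{\bichar}}\) is anti-multiplicative to reverse the two factors, and comparing with \(\widetilde{\DuMultunit[\mathcal{D}]}^{*}=\mathbb{X}_{14}^{*}\mathbb{Y}_{15}^{*}\), the factors \(\mathbb{X}\) and \(\mathbb{Y}\) already carry \(\hat\eta\Coinv_{\hat B}\) and \(\hat\pi\Coinv_{\hat A}\), while the \(\bichar\)\nb-twist built into \(\theta=(\bar\beta\otimes\beta)\Comult[B]\) supplies precisely the conjugation by \(\Dubichar\). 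As an independent sanity check one may also confirm the flipped\nb-comultiplication identity of Theorem~\ref{the:Cst_quantum_grp_and_mult_unit}\ref{eq:sc_grp}\ref{eq:bdd_coinv_comul} on the generators, using the explicit \(\DuComult[\DrinfDoubAlg_{\bichar}](\hat b\otimes\hat a)=\bichar_{23}\bigl(\DuComult[B](\hat b)_{13}\DuComult[A](\hat a)_{24}\bigr)\bichar^{*}_{23}\), the relation \(\DuComult[B]\Coinv_{\hat B}=\flip(\Coinv_{\hat B}\otimes\Coinv_{\hat B})\DuComult[B]\) and its \(\hat A\)\nb-analogue, and the same two \(\bichar\)\nb-compatibilities.

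The main obstacle is this last identification of the antipode. Part~\ref{eq:sc_codoub} costs nothing once \(Q_{\mathcal{D}}\) is known, but for \(\Coinv_{\CodoubAlg_{\bichar}}\) the order reversal forced by anti-multiplicativity, the leg permutation hidden in \(\flip\), and above all the twisting by \(\bichar\) must conspire to produce exactly the inner perturbation \(\Ad_{\Dubichar}\) and no other; the comultiplication identity alone does not suffice, as it leaves the antipode undetermined up to a comultiplication\nb-preserving automorphism, which is why the full polar\nb-decomposition datum \(\widetilde{\DuMultunit[\mathcal{D}]}\) is needed. The argument rests entirely on the two compatibilities \((\Coinv_{\hat A}\otimes\Coinv_{\hat B})\bichar=\bichar\) and the scaling\nb-invariance of \(\bichar\), so making those precise—and checking that the \(\bichar\)\nb-twist carried by \(\widetilde{\DuMultunit[\mathcal{D}]}\) is conjugation by \(\Dubichar\) rather than by some cohomologous unitary—is the crux.
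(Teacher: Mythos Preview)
Your proposal is correct and follows essentially the same route as the paper: read off \(\tau^{\CodoubAlg_{\bichar}}_t\) from \(Q_{\mathcal{D}}\) via Theorem~\ref{the:Cst_quantum_grp_and_mult_unit}\ref{eq:mang_rels}\ref{eq:sc_grp_Q}, and identify \(\Coinv_{\CodoubAlg_{\bichar}}\) by verifying \((\DuMultunit[\mathcal{D}])^{\transpose\otimes\Coinv_{\CodoubAlg_{\bichar}}}=\widetilde{\DuMultunit[\mathcal{D}]}^{*}\) using \((\Coinv_{\hat B}\otimes\Coinv_{\hat A})\Dubichar=\Dubichar\). The one place where the paper is crisper than your sketch is the step you flag as the crux: rather than unpacking the \(\bichar\)\nb-twist hidden in \(\theta\), the paper simply applies the anti\nb-multiplicative map \(\transpose\otimes\Coinv_{\hat B}\otimes\Coinv_{\hat A}\) directly to the Drinfeld\nb-pair identity \(\Dubichar_{23}\Dumultunit[A]_{\rho 3}\Dumultunit[B]_{\theta 2}=\Dumultunit[B]_{\theta 2}\Dumultunit[A]_{\rho 3}\Dubichar_{23}\) (equation~\eqref{eq:Exist_Gen_codoub_compatible_corep}), which immediately yields the required swap and shows that the conjugation is by \(\Dubichar\) on the nose.
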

 \begin{proof} 
  To conclude~\ref{eq:coinv_codoub} it is sufficient to show 
  Theorem~\ref{the:Cst_quantum_grp_and_mult_unit}~\ref{eq:mang_rels}~\ref{eq:conj_mult} for 
  \(\Coinv_{\CodoubAlg_{\bichar}}\). Let~\(\HeisPair{\pi}\) and~\(\HeisPair{\eta}\) be~\(\G\) 
  and~\(\G[H]\)\nb-Heisenberg pairs acting on~\(\Hils\) and \(\Hils[K]\), respectively. 
   The proof of Theorem~\ref{the:modularity_multunit_gen_drinfeld_double} shows that 
   \[
     \widetilde{\DuMultunit[\mathcal{D}]}^{*}=(\Dumultunit[B]_{\theta 2})^{\transpose\otimes\hat{\eta}\Coinv_{\hat{B}}} 
   (\Dumultunit[A]_{\rho 3})^{\transpose\otimes\hat{\pi}\Coinv_{\hat{A}}}
   \qquad\text{in~\(\U(\conj{\Hils_{\mathcal{D}}}\otimes\Hils[K]\otimes\Hils)\).}
   \] 
   We rewrite~\eqref{eq:rho_theta_equiv_Drinfeld_cond} in the following way:
   \begin{equation}
    \label{eq:Exist_Gen_codoub_compatible_corep}
         \Dubichar_{23}\Dumultunit[A]_{\rho 3}\Dumultunit[B]_{\theta 2}
     =  \Dumultunit[B]_{\theta 2}\Dumultunit[A]_{\rho 3}\Dubichar_{23}
     \qquad\text{in~\(\U(\Comp(\Hils_{\mathcal{D}})\otimes\hat{B}\otimes\hat{A})\).}
   \end{equation}
  \cite{Meyer-Roy-Woronowicz:Homomorphisms}*{Proposition 3.10} gives 
  \((\Coinv_{\hat{B}}\otimes\Coinv_{\hat{A}})\Dubichar=\Dubichar\). Next, applying the 
  antimultiplicative map~\(\transpose\otimes\Coinv_{\hat{B}}\otimes\Coinv_{\hat{A}}\) 
  to the both sides of~\eqref{eq:Exist_Gen_codoub_compatible_corep}, gives
  \[
      (\Dumultunit[B]_{\theta 2})^{\transpose\otimes\Coinv_{\hat{B}}}(\Dumultunit[A]_{\rho 3})^{\transpose\otimes\Coinv_{\hat{A}}}\Dubichar_{23}
    =\Dubichar_{23}(\Dumultunit[A]_{\rho 3})^{\transpose\otimes\Coinv_{\hat{A}}}(\Dumultunit[B]_{\theta 2})^{\transpose\otimes\Coinv_{\hat{B}}}
    \quad\text{in~\(\U(\Comp(\conj{\Hils_{\mathcal{D}}})\otimes\hat{B}\otimes\hat{A})\).}
  \] 
  Combining the  first and last equations above, we get
  \[
       \widetilde{\DuMultunit[\mathcal{D}]}^{*}
    = \DuBichar_{23}(\Dumultunit[A]_{\rho 3})^{\transpose\otimes\hat{\pi}\Coinv_{\hat{A}}}(\Dumultunit[B]_{\theta 2})^{\transpose\otimes\hat{\eta}\Coinv_{\hat{B}}}
       \DuBichar_{23}^{*}
    = \DuBichar_{23}(\Dumultunit[B]_{\theta 2}\Dumultunit[A]_{\rho 3})^{\transpose\otimes\hat{\eta}\Coinv_{\hat{B}}\otimes\hat{\pi}\Coinv_{\hat{A}}}
       \DuBichar_{23}^{*} 
 \]       
 in~\(\U(\conj{\Hils_{\mathcal{D}}}\otimes\Hils[K]\otimes\Hils)\) and~\(\DuBichar\defeq(\hat{\eta}\otimes\hat{\pi})\Dubichar\in\U(\Hils[K]\otimes\Hils)\); 
 hence \(\widetilde{\DuMultunit[\mathcal{D}]}^{*}=\DuMultunit[\mathcal{D}]{\xspace}^{\transpose\otimes\Coinv_{\CodoubAlg_{\bichar}}}\). 
 
 Recall the positive self\nb-adjoint operator~\(Q_{\mathcal{D}}=1_{\conj{\Hils[K]}\otimes\conj{\Hils}}\otimes Q_{B}\otimes Q_{A}\) 
 from~\eqref{eq:self_adj_op_mod_gen_drinfdoub} on~\(\Hils_{\mathcal{D}}\). 
  Theorem~\ref{the:Cst_quantum_grp_and_mult_unit}~\ref{eq:mang_rels}~\ref{eq:sc_grp_Q} gives 
 \[
     Q_{\mathcal{D}}^{2\mathrm{i}t}(\xi(\hat{a})\zeta(\hat{b}))Q_{\mathcal{D}}^{-2\mathrm{i}t}
  = Q_{\mathcal{D}}^{2\mathrm{i}t}(1_{\conj{\Hils[K]}\otimes\conj{\Hils}}\otimes\hat{\eta}(\hat{b})\otimes\hat{\pi}(\hat{a}))Q_{\mathcal{D}}^{-2\mathrm{i}t}
  = 1_{\conj{\Hils[K]}\otimes\conj{\Hils}}\otimes\hat{\eta}(\tau_{t}^{\hat{B}}(\hat{b}))\otimes\hat{\pi}(\tau_{t}^{\hat{A}}(\hat{a}))
 \]
  for all~\(\hat{a}\in\hat{A}\), \(\hat{b}\in\hat{B}\). 
  Finally, faithfulness of \(\hat{\pi}\) and~\(\hat{\eta}\) gives~\ref{eq:sc_codoub}.
 \end{proof}
 Similarly, we can prove the following result.
   \begin{proposition}
    \label{prop:scaling_grp_unit_anitpode_gen_Drinfdoub}
      Let \(\Bialg{\DrinfDoubAlg_{\bichar}}\) be the generalised Drinfeld 
      double for the triple~\((\G,\G[H],\bichar)\). Then 
     \begin{enumerate}
        \item the map~\(\Coinv_{\DrinfDoubAlg_{\bichar}}(\rho(a)\theta(b))\defeq\rho(\Coinv_{A}(a))\theta(\Coinv_{B}(b))\) 
                 defines the unitary antipode,  
        \item  \(\left\{\tau_{t}^{\DrinfDoubAlg_{\bichar}}(\rho(a)\theta(b))
                    \defeq\rho(\tau^{A}_{t}(a))\theta(\tau^{B}_{t} (b))\right\}_{t\in\R}\) 
                     is the scaling group, 
     \end{enumerate}
      on~\(\GenDrinfdouble{\G}{\G[H]}{\bichar}\) for all~\(a\in A\) and~\(b\in B\).
 \end{proposition}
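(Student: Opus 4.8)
The plan is to identify the scaling group and the unitary antipode of \(\GenDrinfdouble{\G}{\G[H]}{\bichar}\) by verifying, for the generating modular multiplicative unitary \(\Multunit[\mathcal{D}]=\multunit[A]_{2\rho}\multunit[B]_{1\theta}\in\U(\CodoubAlg_{\bichar}\otimes\DrinfDoubAlg_{\bichar})\), the two characterisations collected in Theorem~\ref{the:Cst_quantum_grp_and_mult_unit}~\ref{eq:mang_rels}, exactly mirroring the proof of Proposition~\ref{prop:scaling_grp_unit_anitpode_gen_codoub} but now acting on the \emph{second} leg, which carries \(\DrinfDoubAlg_{\bichar}\). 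The first ingredient is the modular data of \(\Multunit[\mathcal{D}]\): since \(\Multunit[\mathcal{D}]=\Flip(\DuMultunit[\mathcal{D}])^{*}\Flip\) is dual to the unitary of Theorem~\ref{the:modularity_multunit_gen_drinfeld_double}, the modular condition \(\DuMultunit[\mathcal{D}](\hat{Q}_{\mathcal{D}}\otimes Q_{\mathcal{D}})(\DuMultunit[\mathcal{D}])^{*}=\hat{Q}_{\mathcal{D}}\otimes Q_{\mathcal{D}}\) yields \(\Multunit[\mathcal{D}](Q_{\mathcal{D}}\otimes\hat{Q}_{\mathcal{D}})(\Multunit[\mathcal{D}])^{*}=Q_{\mathcal{D}}\otimes\hat{Q}_{\mathcal{D}}\); hence the operator ``\(Q\)'' of Definition~\ref{def:modularity} attached to \(\Multunit[\mathcal{D}]\) is \(\hat{Q}_{\mathcal{D}}=(Q_{B}^{-1})^{\transpose}\otimes(Q_{A}^{-1})^{\transpose}\otimes Q_{B}\otimes Q_{A}\), while \(Q_{\mathcal{D}}\) is the one attached to \(\CodoubAlg_{\bichar}\) already used in Proposition~\ref{prop:scaling_grp_unit_anitpode_gen_codoub}.

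For the scaling group, Theorem~\ref{the:Cst_quantum_grp_and_mult_unit}~\ref{eq:mang_rels}~\ref{eq:sc_grp_Q} gives \(\tau^{\DrinfDoubAlg_{\bichar}}_{t}(x)=\hat{Q}_{\mathcal{D}}^{2\mathrm{i}t}\,x\,\hat{Q}_{\mathcal{D}}^{-2\mathrm{i}t}\), so since this is multiplicative and \(\DrinfDoubAlg_{\bichar}=\rho(A)\cdot\theta(B)\) it suffices to conjugate \(\rho(a)=(\bar\alpha\otimes\alpha)\Comult[A](a)\) and \(\theta(b)=(\bar\beta\otimes\beta)\Comult[B](b)\) separately. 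On the legs \(\Hils[K]\otimes\Hils\) the factor \(Q_{B}^{2\mathrm{i}t}\otimes Q_{A}^{2\mathrm{i}t}\) implements \(\tau^{B}_{t}\), \(\tau^{\hat B}_{t}\) through \(\eta\), \(\hat\eta\) and \(\tau^{A}_{t}\), \(\tau^{\hat A}_{t}\) through \(\pi\), \(\hat\pi\) by Theorem~\ref{the:Cst_quantum_grp_and_mult_unit}~\ref{eq:mang_rels}~\ref{eq:sc_grp_Q}; on the conjugate legs \(\conj{\Hils[K]}\otimes\conj{\Hils}\) the factor \((Q_{B}^{-2\mathrm{i}t})^{\transpose}\otimes(Q_{A}^{-2\mathrm{i}t})^{\transpose}\) produces the same automorphisms after passing the transpose through \(\bar\alpha,\bar\beta\), using that the transpose is an involutive anti-automorphism (so \((Q^{s})^{\transpose}=(Q^{\transpose})^{s}\)) and that \(\Coinv_{A},\Coinv_{B}\) commute with the scaling groups (Theorem~\ref{the:Cst_quantum_grp_and_mult_unit}~\ref{eq:sc_grp}~\ref{eq:bdd_coinv_sc_grp}). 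The only nontrivial input is that \(\hat\Delta_{R}\) occurring in \(\beta=(\eta\otimes\hat\pi)\hat\Delta_{R}\) intertwines the scaling groups, \((\tau^{B}_{t}\otimes\tau^{\hat A}_{t})\hat\Delta_{R}=\hat\Delta_{R}\tau^{B}_{t}\); this follows from the scaling invariance of the bicharacter (\cite{Meyer-Roy-Woronowicz:Homomorphisms}*{Proposition 3.10}, as already invoked in Lemma~\ref{lemm:V_tilde_commute_Q}) together with the defining relation~\eqref{eq:def_V_via_right_homomorphism}. Feeding these in and using \(\Comult[A]\tau^{A}_{t}=(\tau^{A}_{t}\otimes\tau^{A}_{t})\Comult[A]\) and \(\Comult[B]\tau^{B}_{t}=(\tau^{B}_{t}\otimes\tau^{B}_{t})\Comult[B]\) (Theorem~\ref{the:Cst_quantum_grp_and_mult_unit}~\ref{eq:sc_grp}~\ref{eq:comul_sc_grp}) collapses the conjugation to \(\rho(\tau^{A}_{t}(a))\,\theta(\tau^{B}_{t}(b))\), which proves (2).

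For the unitary antipode I mirror the antipode computation in Proposition~\ref{prop:scaling_grp_unit_anitpode_gen_codoub}: it suffices to verify the relation of Theorem~\ref{the:Cst_quantum_grp_and_mult_unit}~\ref{eq:mang_rels}~\ref{eq:conj_mult}, namely \(\widetilde{\Multunit[\mathcal{D}]}^{*}=(\Multunit[\mathcal{D}])^{\transpose\otimes\Coinv_{\DrinfDoubAlg_{\bichar}}}\) with \(\Coinv_{\DrinfDoubAlg_{\bichar}}\) the proposed map, because that relation pins down the unitary antipode uniquely on the generating slices \(\{(\omega\otimes\Id)\Multunit[\mathcal{D}]\}\) (and hence also settles its well-definedness on \(\rho(A)\cdot\theta(B)\)). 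I first record \(\widetilde{\Multunit[\mathcal{D}]}\), obtained by duality from the operator \(\widetilde{\DuMultunit[\mathcal{D}]}=\mathbb{Y}_{15}\mathbb{X}_{14}\) produced in the proof of Theorem~\ref{the:modularity_multunit_gen_drinfeld_double}, equivalently directly from the manageability of the two factors \(\multunit[A]_{2\rho}\) and \(\multunit[B]_{1\theta}\) via Theorem~\ref{the:Cst_quantum_grp_and_mult_unit}~\ref{eq:mang_rels}~\ref{eq:conj_mult} for \(\multunit[A]\) and \(\multunit[B]\). Applying the anti-multiplicative map \(\transpose\otimes\Coinv_{\DrinfDoubAlg_{\bichar}}\) to \(\Multunit[\mathcal{D}]=\multunit[A]_{2\rho}\multunit[B]_{1\theta}\) reverses the two factors and converts \(\rho\mapsto\rho\Coinv_{A}\), \(\theta\mapsto\theta\Coinv_{B}\); rewriting the Drinfeld relation~\eqref{eq:V-Drinfeld} in the form~\eqref{eq:Exist_Gen_codoub_compatible_corep} and using \((\Coinv_{\hat B}\otimes\Coinv_{\hat A})\Dubichar=\Dubichar\) then restores the factor order and matches the expression with \(\widetilde{\Multunit[\mathcal{D}]}^{*}\), proving (1).

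I expect the unitary-antipode step to be the main obstacle: producing the explicit \(\widetilde{\Multunit[\mathcal{D}]}\) and carrying out the transpose/antipode bookkeeping across the \(\bichar\)\nb-twisted product \(\multunit[A]_{2\rho}\multunit[B]_{1\theta}\), whose factors do not commute, so that the Drinfeld relation~\eqref{eq:V-Drinfeld} is used precisely to reconcile the order reversal introduced by the anti-automorphism. By contrast the scaling-group step is routine once the intertwining property of \(\hat\Delta_{R}\) is in hand.
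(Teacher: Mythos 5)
Your proposal is correct and takes essentially the approach the paper intends: the paper's own proof of this proposition is the single sentence ``Similarly, we can prove the following result,'' deferring entirely to the proof of Proposition~\ref{prop:scaling_grp_unit_anitpode_gen_codoub}, and your argument is precisely the dualised version of that proof. In particular, identifying \(\hat{Q}_{\mathcal{D}}\) as the operator implementing \(\tau^{\DrinfDoubAlg_{\bichar}}_{t}\) via Theorem~\ref{the:Cst_quantum_grp_and_mult_unit}~\ref{eq:mang_rels}~\ref{eq:sc_grp_Q}, reducing the antipode claim to Theorem~\ref{the:Cst_quantum_grp_and_mult_unit}~\ref{eq:mang_rels}~\ref{eq:conj_mult} for \(\Multunit[\mathcal{D}]\), and isolating the intertwining \((\tau^{B}_{t}\otimes\tau^{\hat{A}}_{t})\hat{\Delta}_{R}=\hat{\Delta}_{R}\tau^{B}_{t}\) as the one nontrivial extra input are exactly the steps the ``similarly'' is pointing at.
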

 
  \begin{example}
  \label{ex:triv_bichar}
   Let~\(\bichar=1_{\hat{A}}\otimes 1_{\hat{B}}\in\U(\hat{A}\otimes\hat{B})\).
   Then~\(\rho\) and~\(\theta\) in~\eqref{eq:V-Drinfeld} commute. Then we 
   identify \(\Hils_{\mathcal{D}}\) with~\(\Hils[K]\otimes\Hils\), and  
   \(\Multunit[\mathcal{D}]\) with~\(\Multunit[A]_{24}\Multunit[B]_{13}\);  
   hence~\(\GenDrinfdouble{\G}{\G[H]}{\bichar}\) becomes the product 
   of~\(\G\) and~\(\G[H]\), denoted by~\(\G\times\G[H]\). 
   Equivalently, \(\DrinfDoubAlg_{\bichar}=B\otimes A\) and 
   \(\Comult[\DrinfDoubAlg_{\bichar}](\hat{b}\otimes\hat{a})
   =\DuComult[B](\hat{b})_{13}\DuComult[A](\hat{a})_{24}\) for~\(\hat{a}\in\hat{A}\), 
   \(\hat{b}\in\hat{B}\). Similarly, \(\GenCodouble{\G}{\G[H]}{\bichar}\) 
   becomes the product of~\(\DuG\) and~\(\DuG[H]\).
 \end{example}
 
 \begin{example}
  \label{ex:comm_case_Gen_drinf_doub}
   Let~\(\hat{A}=\Contvin(G)\) and~\(\hat{B}=\Contvin(H)\) for 
   locally compact groups~\(G\) and~\(H\), respectively. 
   For any bicharacter~\(\bichar\in\U(\hat{A}\otimes\hat{B})\),  
   the representations~\(\rho\) and~\(\theta\) satisfying~\eqref{eq:V-Drinfeld} 
   commute. By Example~\ref{ex:triv_bichar}, we identify
   \(\Hils_{\mathcal{D}}=L^{2}(H\times G)\) 
   with respect to the right Haar measures on~\(G\) and~\(H\).   
   The the multiplicative unitary \(\Multunit[\mathcal{D}]\defeq\Multunit[A]_{24}\Multunit[B]_{13}\) 
   is defined by \(\Multunit[A]_{24}\Multunit[B]_{13}f(h_{1},g_{1},h_{2},g_{2})\defeq 
   f(h_{1}h_{2},h_{2},g_{1}g_{2},g_{2})\) for 
   \(f\in L^{2}(H\times G\times H\times G)\) and~\(g_{1},g_{2}\in G\), 
   \(h_{1},h_{2}\in H\). Then~\(\CodoubAlg_{\bichar}=\Contvin(H\times G)\) and 
   \(\DrinfDoubAlg_{\bichar}=\Cred(H\times G)\).
 \end{example}
 
 \begin{example}
  \label{ex:Gen_Drinf_doub_qnt_grp}
  In particular, let~\(\hat{B}=A\), \(\DuComult[B]=\Comult[A]\), and~\(\bichar=\multunit[A]\in 
  \U(\hat{A}\otimes A)\). Let~\((\pi,\hat{\pi})\) be a~\(\G\)\nb-Heisenberg pair
  on a Hilbert space~\(\Hils\) and let~\((\bar\pi,\bar{\hat{\pi}})\) be the 
  corresponding~\(\G\)\nb-anti-Heisenberg pair on~\(\conj{\Hils}\).  
  We can simplify~\eqref{eq:gen_codob_reps} as follows: 
  \(\Hils_{\mathcal{D}}=\conj{\Hils}\otimes\Hils\otimes\Hils\), 
  \(\rho(a)\defeq(\bar{\pi}\otimes\pi)\Comult[A](a)_{13}\), \(\theta(\hat{a})\defeq
  ((\bar{\hat{\pi}}\otimes\hat{\pi})\DuComult[A]\otimes\hat{\pi})\DuComult[A](\hat{a})\), 
  \(\xi(\hat{a})\defeq\Id_{\conj{\Hils}\otimes\Hils}\otimes\hat{\pi}(\hat{a})\), 
  \(\zeta(a)\defeq\Id_{\conj{\Hils}}\otimes\pi(a)\otimes\Id_{\Hils}\),
   for~\(a\in A\), \(\hat{a}\in\hat{A}\), 
   respectively. Then the~\(\multunit[A]\)\nb-Drinfeld double 
  is called the~\(\G\)\nb-\emph{Drinfeld double} and denoted by   
  \(\Drinfdouble{\G}=(\DrinfDoubAlg^{A},\Comult[\DrinfDoubAlg^{A}])\). 
  Here~\(\DrinfDoubAlg^{A}\defeq\rho(A)\cdot\theta(\hat{A})\), and 
  \(\Comult[\DrinfDoubAlg^{A}](\rho(a)\cdot\theta(\hat{a})
  \defeq(\rho\otimes\rho)\Comult[A](a)\cdot (\theta\otimes\theta)\DuComult[A](\hat{a})\) 
  for \(a\in A\), \(\hat{a}\in\hat{A}\). Similarly, the dual of~\(\Drinfdouble{\G}\) is called the 
  \(\G\)\nb-\emph{quantum codouble} and denoted by 
  \(\Codouble{\G}=(\CodoubAlg^{A},\Comult[\CodoubAlg^{A}])\). Here 
  \(\CodoubAlg^{A}\defeq A\otimes\hat{A}\), and 
  \(\Comult[\CodoubAlg^{A}](a\otimes\hat{a})\defeq \flip^{\multunit[A]}_{23}\big(\Comult[A](a)\otimes
  \DuComult[A](\hat{a})\big)\) for~\(a\in A\), \(\hat{a}\in\hat{A}\).
 \end{example} 
 \begin{example}
  \label{ex:Drinf_doub_group_case} 
  Let~\(A=\Contvin(G)\) and~\(\hat{A}=\Cred(G)\) for  
  a locally compact group~\(G\). Then 
  \cite{Kahng:Twisting_Qnt_doub}*{Proposition 5.1} shows that 
  the underlying \(\Cst\)\nb-algebra of the Drinfeld double of~\(G\), 
  denoted by~\(\DrinfDoubAlg^{\Contvin(G)}\), is \(\Contvin(G)\rtimes G\) for 
  the conjugation action of~\(G\) on itself. 
 \end{example} 
  
 \section{Properties of generalised Drinfeld doubles}
  \label{sec:prop_Gen_Drinf}
  We start with the noncommutative version of the following 
  classical fact: given two locally compact groups~\(G\) and~\(H\), 
  there are canonical Hopf~\(^*\)\nb-homomorphisms from~\(\Contvin(G)\) 
  and~\(\Contvin(H)\) to~\(\Contvin(G\times H)\). 
  \label{subsec:Prop_gen_drin_doub}
 \begin{lemma}
  \label{lemm:embedding_to_Gen_Drinf}
   The unitaries~\(\multunit[A]_{1\rho}\in\U(\hat{A}\otimes\DrinfDoubAlg_{\bichar})\) 
   and~\(\multunit[B]_{1\theta}\in\U(\hat{B}\otimes\DrinfDoubAlg_{\bichar})\) 
   are bicharacters induced by the Hopf~\(^*\)\nb-homomorphisms~\(\rho\in\Mor(A,
   \DrinfDoubAlg_{\bichar})\) and~\(\theta\in\Mor(B,\DrinfDoubAlg_{\bichar})\), respectively.
 \end{lemma}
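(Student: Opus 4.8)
The plan is to recognise $\rho$ and $\theta$ as Hopf~\(^*\)\nb-homomorphisms into the generalised Drinfeld double $\GenDrinfdouble{\G}{\G[H]}{\bichar}$, and then to identify $\multunit[A]_{1\rho}$ and $\multunit[B]_{1\theta}$ as the bicharacters they induce by checking the two character conditions directly.

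First I would verify that $\rho\in\Mor(A,\DrinfDoubAlg_{\bichar})$ and $\theta\in\Mor(B,\DrinfDoubAlg_{\bichar})$. Since the Drinfeld commutation relation~\eqref{eq:V-Drinfeld} makes $\DrinfDoubAlg_{\bichar}=\rho(A)\cdot\theta(B)$ a $\Cst$-algebra, it is \(^*\)\nb-closed and therefore also equals $\theta(B)\cdot\rho(A)$. Together with nondegeneracy of the representations $\rho$ and $\theta$ this gives $\rho(A)\cdot\DrinfDoubAlg_{\bichar}=\rho(A)\cdot\theta(B)=\DrinfDoubAlg_{\bichar}$ and $\theta(B)\cdot\DrinfDoubAlg_{\bichar}=\theta(B)\cdot\rho(A)=\DrinfDoubAlg_{\bichar}$, so both are genuine morphisms.

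Next I would read off the comultiplication-intertwining property from the defining formula~\eqref{eq:Gen_Drinf_doub_def}: specialising to $b=1_B$ yields $\Comult[\DrinfDoubAlg_{\bichar}]\rho=(\rho\otimes\rho)\Comult[A]$, and specialising to $a=1_A$ yields $\Comult[\DrinfDoubAlg_{\bichar}]\theta=(\theta\otimes\theta)\Comult[B]$. Thus $\rho$ and $\theta$ are Hopf~\(^*\)\nb-homomorphisms from $\G$ and $\G[H]$, respectively, to $\GenDrinfdouble{\G}{\G[H]}{\bichar}$, and the bicharacters they induce are, by definition, $\bichar_\rho=(\Id_{\hat{A}}\otimes\rho)\multunit[A]=\multunit[A]_{1\rho}$ and $\bichar_\theta=(\Id_{\hat{B}}\otimes\theta)\multunit[B]=\multunit[B]_{1\theta}$.

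It then remains to confirm that $\multunit[A]_{1\rho}\in\U(\hat{A}\otimes\DrinfDoubAlg_{\bichar})$ satisfies the bicharacter conditions. The first-leg condition $(\DuComult[A]\otimes\Id_{\DrinfDoubAlg_{\bichar}})\multunit[A]_{1\rho}=(\multunit[A]_{1\rho})_{23}(\multunit[A]_{1\rho})_{13}$ is obtained by applying $\Id_{\hat{A}}\otimes\Id_{\hat{A}}\otimes\rho$ to the character condition~\eqref{eq:W_char_in_first_leg} of $\multunit[A]$. For the second leg I would combine the Hopf~\(^*\)\nb-homomorphism property with the character condition~\eqref{eq:W_char_in_second_leg}, computing $(\Id_{\hat{A}}\otimes\Comult[\DrinfDoubAlg_{\bichar}])\multunit[A]_{1\rho}=(\Id_{\hat{A}}\otimes\rho\otimes\rho)(\Id_{\hat{A}}\otimes\Comult[A])\multunit[A]=(\multunit[A]_{1\rho})_{12}(\multunit[A]_{1\rho})_{13}$. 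The computation for $\multunit[B]_{1\theta}$ is identical with $B$, $\theta$, $\Comult[B]$ replacing $A$, $\rho$, $\Comult[A]$. The only step that is not purely formal is the nondegeneracy of $\rho$ and $\theta$, which rests on $\DrinfDoubAlg_{\bichar}=\rho(A)\cdot\theta(B)$ being a $\Cst$-algebra; everything else follows mechanically from the character conditions for the reduced bicharacters and the definition of $\Comult[\DrinfDoubAlg_{\bichar}]$.
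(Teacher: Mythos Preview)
Your proposal is correct, and it takes a slightly different route from the paper's own argument. The paper establishes the second-leg character condition for \(\multunit[A]_{1\rho}\) \emph{first}, by writing \(\Comult[\DrinfDoubAlg_{\bichar}]\) via its unitary implementation \(\Multunit[\mathcal{D}]\) (equation~\eqref{eq:Explicit_comult_of_Gen_Drinf_doub}) and then appealing to the concrete commutation relations of Lemma~\ref{lemm:commutation_bet_diff_reps} (that \((\rho,\xi)\) is a \(\G\)\nb-Heisenberg pair and that \(\zeta\) commutes with \(\rho\)); only afterwards does it slice to obtain the Hopf~\(^*\)\nb-homomorphism identity \(\Comult[\DrinfDoubAlg_{\bichar}]\rho=(\rho\otimes\rho)\Comult[A]\). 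You reverse the order: once Theorem~\ref{the:Drinf_doub_codoub_dual} is in place, the defining formula~\eqref{eq:Gen_Drinf_doub_def} already encodes \(\Comult[\DrinfDoubAlg_{\bichar}]\rho=(\rho\otimes\rho)\Comult[A]\) (via an approximate-identity/strict-continuity passage to \(b=1_B\)), and the second-leg condition is then a formal consequence of~\eqref{eq:W_char_in_second_leg}. Your approach is more economical since it bypasses Lemma~\ref{lemm:commutation_bet_diff_reps} entirely; the paper's approach has the virtue of showing explicitly how the bicharacter property traces back to the Heisenberg-pair structure of the underlying multiplicative unitary \(\Multunit[\mathcal{D}]\).
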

  \begin{proof}
   The character condition on the first leg~\eqref{eq:bichar_char_in_first_leg},
   for both the unitaries, follows from~\eqref{eq:W_char_in_first_leg}. 
  Now equation~\eqref{eq:unit_imp_comult} for~\(\Comult[\DrinfDoubAlg_{\bichar}]\) 
  and \(\Multunit[\mathcal{D}]\) yields
 \begin{equation}
   \label{eq:Explicit_comult_of_Gen_Drinf_doub}
      \Comult[\DrinfDoubAlg_{\bichar}](\rho(a)\cdot\theta(b))
   =   (\Multunit[\mathcal{D}])(\rho(a)\cdot\theta(b)\otimes 
        1)(\Multunit[\mathcal{D}])^{*} 
    \qquad\text{for all~\(a\in A\), \(b\in B\).}
  \end{equation}        
   Using~\eqref{eq:Explicit_comult_of_Gen_Drinf_doub} we write 
   \[
       (\Id_{\hat{A}}\otimes\Comult[\DrinfDoubAlg_{\bichar}])\multunit[A]_{1\rho}
    = \multunit[A]_{\xi_{2}\rho_{3}}\multunit[B]_{\zeta_{2}\theta_{3}}
        \multunit[A]_{1\rho_{2}}(\multunit[B]_{\zeta_{2}\theta_{3}})^{*}
       (\multunit[A]_{\xi_{2}\rho_{3}})^{*}
   \quad\text{in~\(\U(\hat{A}\otimes\DrinfDoubAlg^{\bichar}\otimes\DrinfDoubAlg^{\bichar})
   \).}
  \]  
  By Lemma~\ref{lemm:commutation_bet_diff_reps}, 
  \(\zeta\) and~\(\rho\) commute and~\((\rho,\xi)\) is a 
  \(\G\)\nb-Heisenberg pair. This yields~\eqref{eq:bichar_char_in_second_leg} 
  for~\(\multunit[A]_{1\rho}\):
  \[
     (\Id_{\hat{A}}\otimes\Comult[\DrinfDoubAlg_{\bichar}])\multunit[A]_{1\rho}
   =   \multunit[A]_{\xi_{2}\rho_{3}}\multunit[A]_{1\rho_{2}}
        (\multunit[A]_{\xi_{2}\rho_{3}})^{*}
  = \multunit[A]_{1\rho_{2}}\multunit[A]_{1\rho_{3}}
     \quad\text{in~\(\U(\hat{A}\otimes\DrinfDoubAlg^{\bichar}\otimes\DrinfDoubAlg^{\bichar})
     \).}
  \]   
  Furthermore, taking slices on the first leg of the last expression by~\(\omega\in\hat{A}'\) 
  and using~\eqref{eq:W_char_in_second_leg} for~\(\multunit[A]\) we get
  \(\Comult[\DrinfDoubAlg_{\bichar}]\rho(a)=(\rho\otimes\rho)\Comult[A](a)\) 
  for~\(a\in A\). Therefore,~\(\rho\) is a Hopf~\(^*\)\nb-homomorphism from 
  \(\G\) to~\(\GenDrinfdouble{\G}{\G[H]}{\bichar}\) and \(\multunit[A]_{1\rho}\) 
  is induced by~\(\rho\).   
  
  Similarly, we can show that~\(\multunit[B]_{1\theta}\) is induced 
  by the Hopf~\(^*\)\nb-homomorphism from~\(\G[H]\) to 
  \(\GenDrinfdouble{\G}{\G[H]}{\bichar}\). 
\end{proof}   
\subsection{Coaction on the twisted tensor product of \texorpdfstring{$\Cst$}{C*}-algebras}
 \label{subsec:Act_on_Drinf_doub} 
 \(\Cst\)\nb-algebras can be turned into a category, which we 
 generically denote by~\(\Cstcat\), using several types of maps:
 \begin{itemize}
\item morphisms (nondegenerate \Star{}homomorphisms \(C_1\to
  \Mult(C_2)\));
\item proper morphisms (nondegenerate \Star{}homomorphisms \(C_1\to
  C_2\));
\item completely positive maps \(C_1\to C_2\);
\item completely positive contractions \(C_1\to C_2\);
\item completely contractive maps \(C_1\to C_2\);
\item completely bounded maps \(C_1\to C_2\).
\end{itemize}
 Let \(\Cstcat(\G)\) generically denote the category with 
 \(\G\)\nb-\(\Cst\)-algebras as objects and \(\G\)\nb-equivariant
 ``maps'' as arrows. 
 
The twisted tensor product construction in~\cite{Meyer-Roy-Woronowicz:Twisted_tensor} 
of a~\(\G\)\nb- and an~\(\G[H]\)\nb-\(\Cst\)\nb-algebra 
with respect to~\(\bichar\), denoted by~\(\boxtimes_{\bichar}\), defines 
a bifunctor from \(\Cstcat(\G)\times\Cstcat(\G[H])\) to~\(\Cstcat\) 
(see \cite{Meyer-Roy-Woronowicz:Twisted_tensor}*{Lemma 5.5}).

 In particular, if \(\bichar=1_{\hat{A}}\otimes 1_{\hat{B}}\), then 
 \(C\boxtimes_{\bichar}D=C\otimes D\), and~\(\GenDrinfdouble{\bichar}{\G}{\G[H]}\) 
 becomes the product of~\(\G\) and~\(\G[H]\) (see Example~\ref{ex:triv_bichar}). 
 Then the map \(c\otimes d\mapsto \gamma(c)_{13}\delta(d)_{24}\) 
 defines the coaction of the product of~\(\G\) and~\(\G[H]\) on \(C\otimes D\). 
 Equivalently, \(\otimes\colon\Cstcat(\G)\times\Cstcat(\G[H])\to\Cstcat(\G\times\G[H])\) 
 is a bifunctor. The following theorem is a noncommutative version of this fact. 
\begin{theorem}
 \label{the:bifunctor}
  \(\boxtimes_{\bichar}\colon\Cstcat(\G)\times\Cstcat(\G[H])\to
   \Cstcat(\GenDrinfdouble{\G}{\G[H]}{\bichar})\) 
  is a bifunctor.  
\end{theorem}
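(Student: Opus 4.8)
The plan is to equip the twisted tensor product $C\boxtimes_{\bichar}D$ with a coaction of $\GenDrinfdouble{\G}{\G[H]}{\bichar}$ and then to check that the twisted tensor product of two equivariant arrows is again equivariant. I would build the coaction out of the two canonical Hopf~\Star homomorphisms $\rho\in\Mor(A,\DrinfDoubAlg_{\bichar})$ and $\theta\in\Mor(B,\DrinfDoubAlg_{\bichar})$ of Lemma~\ref{lemm:embedding_to_Gen_Drinf}. Recall from~\cite{Meyer-Roy-Woronowicz:Twisted_tensor} that $C\boxtimes_{\bichar}D$ carries canonical morphisms $j_{C}\in\Mor(C,C\boxtimes_{\bichar}D)$ and $j_{D}\in\Mor(D,C\boxtimes_{\bichar}D)$ with $C\boxtimes_{\bichar}D=j_{C}(C)\cdot j_{D}(D)$, and that it enjoys a universal property: any pair of morphisms into a target algebra whose images obey the defining $\bichar$\nb-commutation relation extends uniquely to a morphism out of $C\boxtimes_{\bichar}D$. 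Accordingly, I would set $\Phi_{C}\defeq(j_{C}\otimes\rho)\gamma\colon C\to(C\boxtimes_{\bichar}D)\otimes\DrinfDoubAlg_{\bichar}$ and $\Phi_{D}\defeq(j_{D}\otimes\theta)\delta\colon D\to(C\boxtimes_{\bichar}D)\otimes\DrinfDoubAlg_{\bichar}$, and let $\Gamma$ be the unique morphism with $\Gamma j_{C}=\Phi_{C}$ and $\Gamma j_{D}=\Phi_{D}$ supplied by the universal property, once the compatibility below is checked.

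The hard part will be exactly this compatibility: one must verify that $\Phi_{C}(C)$ and $\Phi_{D}(D)$ satisfy the $\bichar$\nb-commutation relation inside $(C\boxtimes_{\bichar}D)\otimes\DrinfDoubAlg_{\bichar}$. In the first tensor leg the images $j_{C}(C)$ and $j_{D}(D)$ already obey this relation by construction of $\boxtimes_{\bichar}$; in the second leg $\rho(A)$ and $\theta(B)$ obey the $\bichar$\nb-Drinfeld relation~\eqref{eq:V-Drinfeld}, since $(\rho,\theta)$ is a $\bichar$\nb-Drinfeld pair. Translating both into the language of the induced bicharacters $\multunit[A]_{1\rho}\in\U(\hat{A}\otimes\DrinfDoubAlg_{\bichar})$ and $\multunit[B]_{1\theta}\in\U(\hat{B}\otimes\DrinfDoubAlg_{\bichar})$ of Lemma~\ref{lemm:embedding_to_Gen_Drinf}, I would show that the two twistings match, so that $\Phi_{C}$ and $\Phi_{D}$ form a $\bichar$\nb-commuting pair. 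This is the crux of the argument and is precisely where the Drinfeld pair condition is indispensable.

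With $\Gamma$ in hand, the coaction axioms of Definition~\ref{def:cont_coaction} reduce to checks on the generators $j_{C}(c)$ and $j_{D}(d)$. The comodule identity~\eqref{eq:right_coaction} follows from the comodule identities for $\gamma$ and $\delta$ together with $\Comult[\DrinfDoubAlg_{\bichar}]\rho=(\rho\otimes\rho)\Comult[A]$ and $\Comult[\DrinfDoubAlg_{\bichar}]\theta=(\theta\otimes\theta)\Comult[B]$ established in the proof of Lemma~\ref{lemm:embedding_to_Gen_Drinf}; since $\Gamma$ is a homomorphism and the generators span a dense subalgebra, this propagates to all of $C\boxtimes_{\bichar}D$. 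The Podleś condition~\eqref{eq:Podles_cond} follows from the Podleś conditions for $\gamma$ and $\delta$, the identity $\DrinfDoubAlg_{\bichar}=\rho(A)\cdot\theta(B)$, and the cancellation laws~\eqref{eq:Podles}. For injectivity I would exhibit $\Gamma$ as implemented by a unitary corepresentation of $\GenDrinfdouble{\G}{\G[H]}{\bichar}$ on the concrete realisation of $C\boxtimes_{\bichar}D$, so that $\Gamma$ is a conjugate of $x\mapsto x\otimes 1$ and hence injective.

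Finally, functoriality on arrows is routine once the coaction is fixed. Given a $\G$\nb-equivariant morphism $f\colon(C,\gamma)\to(C',\gamma')$ and a $\G[H]$\nb-equivariant morphism $g\colon(D,\delta)\to(D',\delta')$, the bifunctoriality of $\boxtimes_{\bichar}$ from~\cite{Meyer-Roy-Woronowicz:Twisted_tensor}*{Lemma 5.5} supplies $f\boxtimes_{\bichar}g\colon C\boxtimes_{\bichar}D\to C'\boxtimes_{\bichar}D'$ with $(f\boxtimes_{\bichar}g)j_{C}=j_{C'}f$ and $(f\boxtimes_{\bichar}g)j_{D}=j_{D'}g$. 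Equivariance $\Gamma'(f\boxtimes_{\bichar}g)=\big((f\boxtimes_{\bichar}g)\otimes\Id_{\DrinfDoubAlg_{\bichar}}\big)\Gamma$ then follows by evaluating both sides on $j_{C}(c)$ and $j_{D}(d)$ and invoking $\gamma'f=(f\otimes\Id_{A})\gamma$ and $\delta'g=(g\otimes\Id_{B})\delta$. Preservation of identities and composition is inherited directly from the corresponding properties of $\boxtimes_{\bichar}$, completing the proof that $\boxtimes_{\bichar}$ is a bifunctor into $\Cstcat(\GenDrinfdouble{\G}{\G[H]}{\bichar})$.
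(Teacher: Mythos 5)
Your strategy is essentially the paper's: the coaction is defined on generators by $(j_C\otimes\rho)\gamma$ and $(j_D\otimes\theta)\delta$ (the paper's $\iota_C$, $\iota_D$ in Lemma~\ref{lemm:Can_action_Drinf_Doub}), the crux is the compatibility of these two maps inside $\Mult\big((C\boxtimes_{\bichar}D)\otimes\DrinfDoubAlg_{\bichar}\big)$, and equivariance of $f\boxtimes_{\bichar}g$ is then checked on generators exactly as you describe. Two remarks on the steps you leave open. First, the compatibility you defer is handled in the paper not via a universal property of $\boxtimes_{\bichar}$ (which is not quite how \cite{Meyer-Roy-Woronowicz:Twisted_tensor}*{Theorem 4.6} is stated) but by observing that $\tilde{\alpha}\defeq(\alpha\otimes\rho)\Comult[A]$ and $\tilde{\beta}\defeq(\beta\otimes\theta)\Comult[B]$ form a new $\bichar$\nb-Heisenberg pair --- a short computation combining~\eqref{eq:W_char_in_second_leg}, the Heisenberg relation~\eqref{eq:V-Heisenberg_pair} for $(\alpha,\beta)$ and the Drinfeld relation~\eqref{eq:V-Drinfeld} for $(\rho,\theta)$ --- after which your $\Phi_C$ and $\Phi_D$ are literally $\gamma(c)_{1\tilde{\alpha}}$ and $\delta(d)_{2\tilde{\beta}}$ and Theorem 4.6 of \cite{Meyer-Roy-Woronowicz:Twisted_tensor} applies verbatim; this is the cleanest way to make your second paragraph rigorous, and the computation does go through as you anticipate. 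Second, this packaging also disposes of injectivity: the coaction becomes the canonical isomorphism between the two realisations of $C\boxtimes_{\bichar}D$ attached to the Heisenberg pairs $(\alpha,\beta)$ and $(\tilde{\alpha},\tilde{\beta})$, hence is injective for free, whereas your proposed route via a single implementing unitary corepresentation would require additional verification (one must check that one unitary conjugates both halves of the coaction simultaneously, which needs the commutation relations of Lemma~\ref{lemm:commutation_bet_diff_reps}). With these two points filled in, your argument is correct and coincides with the paper's.
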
 
Let~\((C,\gamma)\) and \((D,\delta)\) be \(\G\)\nb- and 
\(\G[H]\)\nb-\(\Cst\)\nb-algebras, respectively. 
Let~\((\alpha,\beta)\) be a~\(\bichar\)\nb-Heisenberg pair on a Hilbert space~\(\Hils[L]\).  
Then~\(C\boxtimes_{\bichar}D\), is defined by~\(C\boxtimes_{\bichar}D\defeq
\iota_{C}(C)\cdot\iota_{D}(D)\subset\Mult(C\otimes D\otimes\Comp(\Hils[L]))\)
 (see~\cite{Meyer-Roy-Woronowicz:Twisted_tensor}*{Lemma 3.11}). 
 Here~\(\iota_{C}(c)\defeq\gamma(c)_{1\alpha}\) and 
 \(\iota_{D}\defeq\delta(d)_{2\beta}\) are nondegenerate 
 \Star{}homomorphisms from~\(C\) and~\(D\) to 
 \(\Mult(C\otimes D\otimes\Comp(\Hils[L]))\), respectively. 
 \begin{lemma}
   \label{lemm:Can_action_Drinf_Doub}
   There is a canonical coaction \(\Psi\colon C\boxtimes_{\bichar}D\to C\boxtimes_{\bichar}D\otimes\DrinfDoubAlg_{\bichar}\) 
  of the generalised Drinfeld double~\(\GenDrinfdouble{\G}{\G[H]}{\bichar}\) 
  on~\(C\boxtimes_{\bichar}D\) defined by
  \begin{equation}
   \label{eq:Can_act_def}
    \Psi\iota_{C}(c)=(\iota_{C}\otimes\rho)\gamma(c)
    \quad\text{and}\quad 
    \Psi\iota_{D}(d)=(\iota_{D}\otimes\theta)\delta(d)
  \end{equation}
  for~\(c\in C\), \(d\in D\).
\end{lemma}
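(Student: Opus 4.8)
The plan is to build \(\Psi\) by gluing two morphisms and then to verify the axioms of Definition~\ref{def:cont_coaction}. Put
\[
\Psi_{C}\defeq(\iota_{C}\otimes\rho)\gamma\colon C\to\Mult\bigl((C\boxtimes_{\bichar}D)\otimes\DrinfDoubAlg_{\bichar}\bigr),\qquad \Psi_{D}\defeq(\iota_{D}\otimes\theta)\delta;
\]
each is a morphism, being a composite of the morphisms \(\gamma\), \(\delta\), \(\iota_{C}\otimes\rho\) and \(\iota_{D}\otimes\theta\) (the latter two extend to multipliers because \(\iota_{C}\), \(\iota_{D}\), \(\rho\), \(\theta\) are morphisms). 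As \(C\boxtimes_{\bichar}D=\iota_{C}(C)\cdot\iota_{D}(D)\) by \cite{Meyer-Roy-Woronowicz:Twisted_tensor}*{Lemma 3.11}, there is at most one morphism \(\Psi\) with \(\Psi\iota_{C}=\Psi_{C}\) and \(\Psi\iota_{D}=\Psi_{D}\), and \eqref{eq:Can_act_def} prescribes exactly this. It remains to show that the prescription (a) defines a genuine \Star{}homomorphism and (b) yields an injective coaction.

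Step (a) is the main obstacle. Since \(\boxtimes_{\bichar}\) is a twisted tensor product, \(\iota_{C}(C)\) and \(\iota_{D}(D)\) do not commute: their relative position is controlled by the \(\bichar\)\nb-Heisenberg relation \eqref{eq:V-Heisenberg_pair} for \((\alpha,\beta)\). To extend \(\Psi\) multiplicatively I must check that \(\Psi_{C}(C)\) and \(\Psi_{D}(D)\) obey the \emph{same} twisted commutation inside \(\Mult\bigl((C\boxtimes_{\bichar}D)\otimes\DrinfDoubAlg_{\bichar}\bigr)\). Now \(\Psi_{C}(c)\) and \(\Psi_{D}(d)\) differ from \(\iota_{C}(c)\otimes1\) and \(\iota_{D}(d)\otimes1\) precisely by the extra \(\rho\)\nb- and \(\theta\)\nb-legs, which satisfy the \(\bichar\)\nb-Drinfeld relation \eqref{eq:V-Drinfeld}. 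The crux is that the Drinfeld braiding carried by \((\rho,\theta)\) is tuned to the Heisenberg braiding carried by \((\alpha,\beta)\) so that the two \(\bichar\)\nb-twists cancel, leaving \((\Psi_{C},\Psi_{D})\) commuting exactly as \((\iota_{C},\iota_{D})\) do; this is what makes \(\Psi\) a \Star{}homomorphism. Concretely I would deduce it either from the universal property of the twisted tensor product \cite{Meyer-Roy-Woronowicz:Twisted_tensor} applied to the \(\bichar\)\nb-commuting pair \((\Psi_{C},\Psi_{D})\), or---more efficiently---by realising \(\Psi\) as a conjugation \(x\mapsto Z(x\otimes1)Z^{*}\) by a unitary \(Z\) assembled from the bicharacters \(\multunit[A]_{1\rho}\) and \(\multunit[B]_{1\theta}\) of Lemma~\ref{lemm:embedding_to_Gen_Drinf}; the latter route has the advantage of exhibiting \(\Psi\) as a manifestly injective \Star{}homomorphism in one stroke. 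The delicate bookkeeping that reconciles the Heisenberg and Drinfeld braidings is where the real work sits.

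For the comodule identity \eqref{eq:right_coaction} I would argue on generators. Using that \(\rho\) is a Hopf~\(^*\)\nb-homomorphism (Lemma~\ref{lemm:embedding_to_Gen_Drinf}), so \(\Comult[\DrinfDoubAlg_{\bichar}]\rho=(\rho\otimes\rho)\Comult[A]\) by \eqref{eq:hopf_star_hom}, together with coassociativity \eqref{eq:right_coaction} of \(\gamma\),
\begin{align*}
(\Id\otimes\Comult[\DrinfDoubAlg_{\bichar}])\Psi\iota_{C}(c)
&=\bigl(\iota_{C}\otimes(\rho\otimes\rho)\Comult[A]\bigr)\gamma(c)
=(\iota_{C}\otimes\rho\otimes\rho)(\gamma\otimes\Id_{A})\gamma(c)\\
&=(\Psi\iota_{C}\otimes\rho)\gamma(c)
=(\Psi\otimes\Id)\Psi\iota_{C}(c),
\end{align*}
and symmetrically on \(\iota_{D}(d)\) using \(\theta\) and \(\delta\); since both composites are morphisms agreeing on the generators they agree on \(C\boxtimes_{\bichar}D\). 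The Podleś condition \eqref{eq:Podles_cond} I would obtain from \(\DrinfDoubAlg_{\bichar}=\rho(A)\cdot\theta(B)\) and nondegeneracy of \(\rho\), \(\theta\): the Podleś condition for \(\delta\) gives \(\Psi_{D}(D)\cdot(1\otimes\DrinfDoubAlg_{\bichar})=\iota_{D}(D)\otimes\DrinfDoubAlg_{\bichar}\), and then the Podleś condition for \(\gamma\) gives \(\Psi_{C}(C)\cdot(\iota_{D}(D)\otimes\DrinfDoubAlg_{\bichar})=(C\boxtimes_{\bichar}D)\otimes\DrinfDoubAlg_{\bichar}\), whence \(\Psi(C\boxtimes_{\bichar}D)\cdot(1\otimes\DrinfDoubAlg_{\bichar})=(C\boxtimes_{\bichar}D)\otimes\DrinfDoubAlg_{\bichar}\); the same identities give the nondegeneracy of \(\Psi\) as a morphism. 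Injectivity I read off from the conjugation realisation in (a), a unitary conjugation being injective. Assembling (a) and (b) shows that \(\Psi\) is a coaction of \(\GenDrinfdouble{\G}{\G[H]}{\bichar}\) on \(C\boxtimes_{\bichar}D\).
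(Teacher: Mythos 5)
Your plan is the paper's plan, and your diagnosis of where the real work sits is exactly right --- but that work is the only substantive computation in the lemma, and you leave it undone. The paper discharges it as follows: set \(\tilde\alpha(a)\defeq(\alpha\otimes\rho)\Comult[A](a)\) and \(\tilde\beta(b)\defeq(\beta\otimes\theta)\Comult[B](b)\), and verify in a four\nb-line leg\nb-numbering computation --- using the character condition~\eqref{eq:W_char_in_second_leg}, the Heisenberg relation~\eqref{eq:V-Heisenberg_pair} for \((\alpha,\beta)\), and the Drinfeld relation~\eqref{eq:V-Drinfeld} for \((\rho,\theta)\) --- that \((\tilde\alpha,\tilde\beta)\) is again a \(\bichar\)\nb-Heisenberg pair. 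Then \cite{Meyer-Roy-Woronowicz:Twisted_tensor}*{Theorem 4.6} (independence of \(C\boxtimes_{\bichar}D\) from the choice of Heisenberg pair) provides the injective \Star{}homomorphism \(\Psi\) with \(\Psi\iota_{C}(c)=\gamma(c)_{1\tilde\alpha}\) and \(\Psi\iota_{D}(d)=\delta(d)_{2\tilde\beta}\), and coassociativity of \(\gamma\) and \(\delta\) identifies these with \((\iota_{C}\otimes\rho)\gamma(c)\) and \((\iota_{D}\otimes\theta)\delta(d)\). This is your first suggested route made precise; note only that the two twists do not ``cancel'' to ordinary commutation --- rather, the Drinfeld braiding of \((\rho,\theta)\) preserves the Heisenberg relation, so \((\Psi_{C},\Psi_{D})\) obey the \emph{same} \(\bichar\)\nb-twisted commutation as \((\iota_{C},\iota_{D})\), which is precisely the hypothesis Theorem~4.6 needs. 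Your second route (conjugation by a unitary \(Z\) built from \(\multunit[A]_{1\rho}\) and \(\multunit[B]_{1\theta}\)) is neither carried out nor needed, and you should not rest injectivity on it: injectivity comes for free from Theorem~4.6, since \(\Psi\) is there an isomorphism onto \(\gamma(C)_{1\tilde\alpha}\cdot\delta(D)_{2\tilde\beta}\). Your verifications of the comodule identity and the Podleś condition match the paper's.
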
 
  \begin{proof}
 Define~\(\tilde{\alpha}(a)\defeq(\alpha\otimes\rho)\Comult[A](a)\) 
 and~\(\tilde{\beta}(b)\defeq(\beta\otimes\theta)\Comult[B](b)\) 
 for~\(a\in A\), \(b\in B\). Then~\((\tilde{\alpha},\tilde{\beta})\) is a 
 pair of nondegenerate \Star{}homomorphisms 
 from~\(A\), \(B\) to \(A\otimes B\otimes\Comp(\Hils[L]
 \otimes\Hils[\mathcal{D}]))\).  
 Using~\eqref{eq:W_char_in_second_leg} we get
 \[
      \multunit[A]_{1\tilde{\alpha}}\multunit[B]_{2\tilde{\beta}}
   = \multunit[A]_{1\alpha}\multunit[A]_{1\rho}\multunit[B]_{2\beta}  
      \multunit[B]_{2\theta}
   =  \multunit[A]_{1\alpha}\multunit[B]_{2\beta}\multunit[A]_{1\rho}  
      \multunit[B]_{2\theta}      
   \quad\text{in~\(\U(\hat{A}\otimes\hat{B}\otimes
   \Comp(\Hils[L]\otimes\Hils[\mathcal{D}]))\).}
 \]    
 Now~\((\alpha,\beta)\) satisfies~\eqref{eq:V-Heisenberg_pair} and   
 \((\rho,\theta)\) satisfies~\eqref{eq:V-Drinfeld}; hence we get 
 \[
        \multunit[A]_{1\tilde{\alpha}}\multunit[B]_{2\tilde{\beta}}
  =  \multunit[B]_{2\beta}\multunit[A]_{1\alpha}\bichar_{12}
       \multunit[A]_{1\rho}\multunit[B]_{2\theta}
 =   \multunit[B]_{2\beta}\multunit[B]_{2\theta} 
      \multunit[A]_{1\alpha}\multunit[A]_{1\rho}\bichar_{12}\\
 =\multunit[B]_{2\tilde{\beta}}\multunit[A]_{1\tilde{\alpha}}\bichar_{12}.     
 \]  
 Thus~\((\tilde{\alpha},\tilde{\beta})\) is a~\(\bichar\)\nb-Heisenberg pair. 
 By~\cite{Meyer-Roy-Woronowicz:Twisted_tensor}*{Theorem 4.6}, 
 there is an isomorphism~\(\Psi\) between~\(C\boxtimes_{\bichar}D\) 
 and \(\gamma(C)_{1\tilde{\alpha}}\cdot\delta(D)_{2\tilde{\beta}} 
 \subset\Mult(C\otimes D\otimes\Comp(\Hils[L]\otimes\Hils_{\mathcal{D}}))\) 
 such that~\(\psi\iota_{C}(c)=\gamma(c)_{1\tilde{\alpha}}\) and 
 \(\psi\iota_{D}(d)=\delta(d)_{2\tilde{\beta}}\) for~\(c\in C\) 
 and~\(d\in D\). We compute 
 \[
    \gamma(c)_{1\tilde{\alpha}}
 =\big((\Id_{C}\otimes(\alpha\otimes\rho)\Comult[A])\gamma(c)\big)_{134}
 =\big((\Id_{C}\otimes\alpha)\gamma\otimes\rho)\gamma(c)\big)_{134}
 =(\iota_{C}\otimes\rho)\gamma(c)
 \] 
 for~\(c\in C\), where the second equality uses~\eqref{eq:right_coaction} 
 for~\(\gamma\). A similar computation for~\(\delta\) gives
 \eqref{eq:Can_act_def}.   
 
 A routine computation using Lemma~\ref{lemm:embedding_to_Gen_Drinf} 
 and~\eqref{eq:right_coaction} for~\(\gamma\) and~\(\delta\) 
 yields \((\Id_{C\boxtimes_{\bichar}D}\otimes
 \Comult[\DrinfDoubAlg_{\bichar}])\Psi
 =(\psi\otimes\Id_{\DrinfDoubAlg_{\bichar}})\Psi\).  
  The Podle\'s condition~\eqref{eq:Podles_cond} for~\(\gamma\) gives 
  \[
      (\iota_{C}\otimes\rho)\gamma(C)(1\otimes\rho(A)\cdot\theta(B))
   = (\iota_{C}\otimes\rho)\big((\gamma(C)\cdot (1\otimes A)\big) 
      \theta(B)_{2}
  = \iota_{C}(C)\otimes\rho(A)\cdot\theta(B).    
  \]
 A similar compution shows \((\iota_{D}\otimes\rho)\delta(D)(1\otimes\rho(A)\cdot\theta(B))=\iota_{D}(D)\otimes\rho(A)\cdot\theta(B)\), 
 hence we get the Podle\'s condition~\eqref{eq:Podles_cond} for~\(\Psi\). 
\end{proof} 
\begin{example}
 \label{ex:Non_commtutative_torous}
  Let~\(\G=\G[H]\) be the compact quantum group~\(A=\Cont(\T^{n})\).  
  Then any coaction of~\(\G\) on~\(A\) is the action of the group \(\T^{n}\) 
  by translation. A bicharacter~\(\bichar\in\U(\hat{A}\otimes\hat{A})\) is 
  a map~\(\bichar\colon\Z^{n}\times\Z^{n}\to\T\) which is multiplicative in
  each variable: \(\bichar((a_{n}),(b_{n}))\defeq\Pi_{i,j=1}^{n}
  \lambda_{ij}^{a_{i}\cdot b_{j}}\) for some \((\lambda_{i,j})_{1\leq i,j\leq n}\in\T\).
  The associated~\(\bichar\)\nb-Heisenberg pair~\(((U_{n}),(V_{n}))\) is a pair of \(n\)\nb-tuples  
  of unitaries with the following commutation relations: 
  \(U_{i}U_{j}=U_{j}U_{i}\), \(V_{i}V_{j}=V_{j}V_{i}\), and 
  \(V_{i}U_{j}=U_{j}V_{i}\lambda_{ij}\) for~\(i,j\in\{1,\cdots n\}\). The resulting 
  twisted tensor product~\(A\boxtimes_{\bichar}A\) is the noncommuative \(2n\)\nb-torus. 
  Example~\ref{ex:comm_case_Gen_drinf_doub} shows that the \(\bichar\)\nb-Drinfeld double is 
  \(\Cont(\T^{n}\times\T^{n})\). Thus we get the standard product action of 
  \(\T^{n}\times\T^{n}\) on the noncommutative \(2n\)\nb-torus.
\end{example}
 The coaction \(\Psi\) in Lemma~\ref{lemm:Can_action_Drinf_Doub} 
 generalises the product action of groups. Therefore \(\Psi\) is called the 
 \emph{generalised product of coactions} and denoted by 
 \(\gamma\boxtimes_{\bichar}\delta\).
\begin{proof}[Proof of Theorem~\textup{\ref{the:bifunctor}}]
 By virtue of \cite{Meyer-Roy-Woronowicz:Twisted_tensor}*{Lemma 5.5}, we already 
 know that \(\boxtimes_{\bichar}\) is a bifunctor from \(\Cstcat(\G)\times\Cstcat(\G[H])\) 
 to \(\Cstcat\). More precisely,  given a \(\G\)\nb-equivariant ``map'' \(f\colon (C,\gamma)\to (C_{1},\gamma_{1})\) 
 and a \(\G[H]\)\nb-equivariant ``map'' \(g\colon (D,\delta)\to (D_{1},\delta_{1})\), 
 there is a unique ``map'' \(f\boxtimes_{\bichar}g\colon C\boxtimes_{\bichar}D\to
 C_{1}\boxtimes_{\bichar}D_{1}\) defined by 
    \begin{equation}
   \label{eq:tens_morph}
       (f\boxtimes_{\bichar}g) (\iota_{C}(c))
    = \iota_{C_{1}}(f(c)), 
    \qquad
       (f\boxtimes_{\bichar}g) (\iota_{D}(d))
   = \iota_{D_{1}}(g(d))
 \end{equation}
 for all~\(c\in C\), \(d\in D\). Therefore, we only need to show that 
 the ``map'' \(f\boxtimes_{\bichar}g \colon (C\boxtimes_{\bichar}D,\gamma\boxtimes_{\bichar}\delta)\to 
   (C_{1}\boxtimes_{\bichar}D_{1},\gamma_{1}\boxtimes_{\bichar}\delta_{1})\) 
  is \(\GenDrinfdouble{\G}{\G[H]}{\bichar}\)\nb-equivariant.

  Using~\eqref{eq:tens_morph}, we get
  \begin{equation}
    \label{eq:aux1_equiv}
     (\gamma_{1}\boxtimes_{\bichar}\delta_{1})(f\boxtimes_{\bichar}g)(\iota_{C}(c)\iota_{D}(d))
   = (\gamma_{1}\boxtimes_{\bichar}\delta_{1})(\iota_{C_{1}}(f(c))\iota_{D_{1}}(g(d))).
  \end{equation}
  Now~\eqref{eq:Can_act_def} and the equivariance condition for~\(f\) give
  \[ 
      (\gamma_{1}\boxtimes_{\bichar}\delta_{1})\iota_{C_{1}}(f(c))
   = (\iota_{C_{1}}\otimes\rho)\gamma_{1}(f(c))
   = (\iota_{C_{1}} f\otimes\rho)\gamma(c).
  \]   
  Similarly, we have~\((\gamma_{1}\boxtimes_{\bichar}\delta_{1})\iota_{D_{1}}(g(d))
  =(\iota_{D_{1}} g\otimes\theta)\delta(d)\). Combining the last 
  two equations with \eqref{eq:aux1_equiv} completes the proof:
  \begin{align*}
         (\gamma'\boxtimes_{\bichar}\delta')(f\boxtimes_{\bichar}g)(\iota_{C}(c)\iota_{D}(d))
   &=  (\iota_{C'} f\otimes\rho)\gamma(c) (\iota_{D'} g\otimes\theta)\delta(d)\\
   &= (f\boxtimes_{\bichar}g\otimes\Id_{\DrinfDoubAlg_{\bichar}})
         (\gamma\boxtimes_{\bichar}\delta)(\iota_{C}(c)\iota_{D}(d)).\qedhere
 \end{align*}  
 \end{proof} 
     
\subsection{\texorpdfstring{$\textup{R}$}{R}-matrix on Drinfeld doubles}
 \label{subsec:R_matrix}
    \begin{definition}
     \label{def:R_matrix}
      A bicharacter~\(R\in\U(A\otimes A)\) is called an~\emph{\(\textup{R}\)\nb-matrix} 
     on a quantum group~\(\Qgrp{G}{A}\) if 
    \begin{equation}
     \label{eq:R-matrix}
       R(\flip\Comult[A](a))R^{*}=\Comult[A](a) 
      \qquad\text{for all~\(a\in A\),}
    \end{equation}
    where~\(\flip\) is the standard flip on~\(A\otimes A\).
  \end{definition}     
 Let~\(\hat{B}=A\), \(\DuComult[B]=\Comult[A]\), and 
 \(\bichar=\multunit[A]\in\U(\hat{A}\otimes A)\), and recall the 
 \(\G\)\nb-Drinfeld double~\(\Drinfdouble{\G}\) from 
 Example~\ref{ex:Gen_Drinf_doub_qnt_grp}.
\begin{lemma}
  The unitary~\(R\defeq(\theta\otimes\rho)\multunit[A]\in\U(\DrinfDoubAlg^{A}\otimes
  \DrinfDoubAlg^{A})\) is an~\textup{R}\nb-matrix on the~\(\G\)\nb-Drinfeld double 
  \(\Drinfdouble{\G}\).
\end{lemma}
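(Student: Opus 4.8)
The plan is to verify the two defining properties of an \(\textup R\)\nb-matrix separately: first that \(R\) is a bicharacter on \(\Drinfdouble{\G}\), i.e.\ that it satisfies the two character conditions with respect to \(\Comult[\DrinfDoubAlg^{A}]\), and second that it implements the flip of \(\Comult[\DrinfDoubAlg^{A}]\) as in \eqref{eq:R-matrix}. Throughout I would use the explicit comultiplication of the \(\G\)\nb-Drinfeld double recorded in Example~\ref{ex:Gen_Drinf_doub_qnt_grp}, namely \(\Comult[\DrinfDoubAlg^{A}]\rho(a)=(\rho\otimes\rho)\Comult[A](a)\) and \(\Comult[\DrinfDoubAlg^{A}]\theta(\hat a)=(\theta\otimes\theta)\DuComult[A](\hat a)\), together with the fact that \(R\defeq(\theta\otimes\rho)\multunit[A]\) lies in \(\U(\DrinfDoubAlg^{A}\otimes\DrinfDoubAlg^{A})\) because \(\multunit[A]\in\U(\hat A\otimes A)\) and \(\rho(A),\theta(\hat A)\subseteq\DrinfDoubAlg^{A}\).

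For the bicharacter property I would apply \(\Comult[\DrinfDoubAlg^{A}]\) to each leg of \(R\) and rewrite using the character conditions for \(\multunit[A]\). Applying it to the first leg turns \((\theta\otimes\rho)\multunit[A]\) into \(((\theta\otimes\theta)\DuComult[A]\otimes\rho)\multunit[A]\), and the character condition on the first leg \eqref{eq:W_char_in_first_leg} gives \((\Comult[\DrinfDoubAlg^{A}]\otimes\Id)R=R_{23}R_{13}\); dually, applying it to the second leg and using \eqref{eq:W_char_in_second_leg} gives \((\Id\otimes\Comult[\DrinfDoubAlg^{A}])R=R_{12}R_{13}\). These are exactly the two character conditions, so \(R\) is a bicharacter.

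For the quasi\nb-cocommutativity \eqref{eq:R-matrix} I would observe that both \(x\mapsto R\,\flip(\Comult[\DrinfDoubAlg^{A}](x))\,R^{*}\) and \(x\mapsto\Comult[\DrinfDoubAlg^{A}](x)\) are \(^*\)\nb-homomorphisms, so it suffices to check equality on the generators \(\rho(a)\) and \(\theta(\hat a)\) of \(\DrinfDoubAlg^{A}=\rho(A)\cdot\theta(\hat A)\). The crucial input is the \(\G\)\nb-Drinfeld commutation relation \eqref{eq:G-Drinfeld} for \((\rho,\theta)\). Applying \(\rho\) to the third (``\(A\)'') leg of \eqref{eq:G-Drinfeld} sends \(\multunit[A]_{\theta3}\) to \(R_{23}\) and \(\multunit[A]_{13}\) to \(\multunit[A]_{1\rho_{3}}\), yielding \(R_{23}\multunit[A]_{1\rho_{3}}\multunit[A]_{1\rho_{2}}R_{23}^{*}=\multunit[A]_{1\rho_{2}}\multunit[A]_{1\rho_{3}}\) in \(\U(\hat A\otimes\DrinfDoubAlg^{A}\otimes\DrinfDoubAlg^{A})\); slicing the first leg against \(\omega\in\hat A'\) and writing \(a=(\omega\otimes\Id)\multunit[A]\), the two sides become \(R\,\flip((\rho\otimes\rho)\Comult[A](a))\,R^{*}\) and \((\rho\otimes\rho)\Comult[A](a)\) by \eqref{eq:W_char_in_second_leg}, which is \eqref{eq:R-matrix} on \(\rho(a)\). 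Symmetrically, applying \(\theta\) to the first (``\(\hat A\)'') leg of \eqref{eq:G-Drinfeld} sends \(\multunit[A]_{1\rho}\) to \(R_{12}\) and \(\multunit[A]_{13}\) to \(\multunit[A]_{\theta_{1}3}\), giving \(R_{12}\multunit[A]_{\theta_{1}3}\multunit[A]_{\theta_{2}3}=\multunit[A]_{\theta_{2}3}\multunit[A]_{\theta_{1}3}R_{12}\); slicing the third leg against \(\psi\in A'\) and using \eqref{eq:W_char_in_first_leg} to recognise \(\multunit[A]_{\theta_{2}3}\multunit[A]_{\theta_{1}3}\) as \(((\theta\otimes\theta)\DuComult[A]\otimes\Id)\multunit[A]\) and \(\multunit[A]_{\theta_{1}3}\multunit[A]_{\theta_{2}3}\) as its flip, this becomes \eqref{eq:R-matrix} on \(\theta(\hat a)\).

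I expect no conceptual obstacle: the entire content of \eqref{eq:R-matrix} is already encoded in the single relation \eqref{eq:G-Drinfeld}, and the two generator computations are mirror images obtained by feeding that relation through \(\rho\) and through \(\theta\). The only points demanding care are bookkeeping ones — matching the leg numberings so that \(\multunit[A]_{\theta3}\) and \(\multunit[A]_{1\rho}\) collapse to \(R\) after applying \(\rho\), resp.\ \(\theta\), and confirming that the slicing functionals \(\omega\in\hat A'\) and \(\psi\in A'\) produce dense sets of elements \(a\in A\) and \(\hat a\in\hat A\), so that the two generator identities extend by continuity and multiplicativity to all of \(\DrinfDoubAlg^{A}\).
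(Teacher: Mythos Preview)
Your proposal is correct and follows essentially the same approach as the paper. The only cosmetic difference is that for the quasi\nb-cocommutativity~\eqref{eq:R-matrix} the paper invokes the reformulations in Lemma~\ref{lemm:equiv_cond_V_Drinfeld_pair}~(3) and~(4) (specialised to \(\hat B=A\), \(\bichar=\multunit[A]\), so that \(\Delta_L=\Delta_R=\Comult[A]\) and \(\hat\Delta_L=\hat\Delta_R=\DuComult[A]\)), whereas you work directly with the \(\G\)\nb-Drinfeld relation~\eqref{eq:G-Drinfeld} and slice; these two routes are equivalent, since the proof of Lemma~\ref{lemm:equiv_cond_V_Drinfeld_pair} is precisely the slicing argument you outline.
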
 
 \begin{proof}
  The bicharacter conditions~\eqref{eq:bichar_char_in_first_leg} and~\eqref{eq:bichar_char_in_second_leg} 
  for \(R\) follow from Lemma~\ref{lemm:embedding_to_Gen_Drinf} 
  and~\eqref{eq:Explicit_comult_of_Gen_Drinf_doub}. The comultiplication~\(\Comult[A]\) is the 
  left and right quantum group homomorphism associated 
  to~\(\multunit[A]\in\U(\hat{A}\otimes A)\). Therefore, identifying 
  \(B=\hat{A}\) and \(\Comult[B]=\DuComult[A]\), we 
 rewite~(3) of  Lemma~\ref{lemm:equiv_cond_V_Drinfeld_pair} as 
 \begin{equation}
   \label{eq:R_mat_aux_1}
    (\rho\otimes\rho)\Comult[A](a)=(\multunit[A]_{\theta\rho})(\flip\Comult[A](a))
    (\multunit[A]_{\theta\rho})^{*}\quad\text{for all~\(a\in A\).}
  \end{equation}  
 Similarly, identifying~\(\hat{\Delta}_{L}=\hat{\Delta}_{R}=\DuComult[A]\) and~\(B=\hat{A}\) 
 in~(4) of Lemma~\ref{lemm:equiv_cond_V_Drinfeld_pair} gives
 \begin{equation}
   \label{eq:R_mat_aux_2}
      (\theta\otimes\theta)\DuComult[A](\hat{a})
   = (\multunit[A]_{\theta\rho})(\flip\DuComult[A](\hat{a}))
    (\Dumultunit[A]_{\theta\rho})^{*}\quad\text{for all~\(\hat{a}\in\hat{A}\).}
  \end{equation}
  Combining~\eqref{eq:R_mat_aux_1}, \eqref{eq:R_mat_aux_2}, and
  using~\eqref{eq:Explicit_comult_of_Gen_Drinf_doub}  we obtain~\eqref{def:R_matrix} 
  for~\(R=(\theta\otimes\rho)\multunit[A]\).
\end{proof}  

\section{Properties of generalised quantum codoubles}
 \label{sub:Prop_Gen_qnt_codoub}  
 The definition of a closed quantum subgroup \emph{in the sense of Woronowicz} 
 (see~\cite{Daws-Kasprzak-Skalski-Soltan:Closed_qnt_subgrps}*{Definition 3.2}) 
 uses the notion of a~\(\Cst\)\nb-algebra generated by a quantum family of 
 multipliers. Equivalently, a~\(\Cst\)\nb-quantum group~\(\G[H]_{1}=\Bialg{B_{1}}\) 
 is a closed quantum subgroup of a~\(\Cst\)\nb-quantum group 
 \(\G_{1}=\Bialg{A_{1}}\) if there is a bicharacter  \(\bichar_{1}\in\U(\hat{A_{1}}\otimes B_{1})\) 
 such that the norm closure of \(\{(\omega\otimes\Id_{B_{1}})\bichar' :\omega\in\hat{A_{1}}'\}\) 
 is~\(B_{1}\) (see~\cite{Daws-Kasprzak-Skalski-Soltan:Closed_qnt_subgrps}*{Theorem 3.6 (2)}).
 
\begin{proposition}
 \protect\label{lemm:GenCodouble_closed_sub_grps}
  \(\DuG\) and~\(\DuG[H]\) are closed quantum subgroups of 
  \(\GenCodouble{\G}{\G[H]}{\bichar}\) in the sense of Woronowicz.
 \end{proposition}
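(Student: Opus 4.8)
The plan is to verify the criterion for a closed quantum subgroup in the sense of Woronowicz recalled just above: for each of $\DuG$ and $\DuG[H]$ I must produce a bicharacter in $\U(\hat{A_1}\otimes B_1)$ whose first-leg slices are norm-dense in $B_1$, where here $\G_1=\GenCodouble{\G}{\G[H]}{\bichar}$, so that $A_1=\CodoubAlg_{\bichar}$ and, by Theorem~\ref{the:Drinf_doub_codoub_dual}, $\hat{A_1}=\DrinfDoubAlg_{\bichar}$. For the putative subgroup $\DuG$ we have $B_1=\hat{A}$, so the target space is $\U(\DrinfDoubAlg_{\bichar}\otimes\hat{A})$; for $\DuG[H]$ it is $\U(\DrinfDoubAlg_{\bichar}\otimes\hat{B})$.

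First I would invoke Lemma~\ref{lemm:embedding_to_Gen_Drinf}, which supplies the bicharacters $\multunit[A]_{1\rho}\in\U(\hat{A}\otimes\DrinfDoubAlg_{\bichar})$ and $\multunit[B]_{1\theta}\in\U(\hat{B}\otimes\DrinfDoubAlg_{\bichar})$ induced by the Hopf $^*$-homomorphisms $\rho$ and $\theta$. Viewing these, via the fact that $\DrinfDoubAlg_{\bichar}$ is dual to $\CodoubAlg_{\bichar}$ (Theorem~\ref{the:Drinf_doub_codoub_dual}), as bicharacters from $\G$ and $\G[H]$ to $\GenDrinfdouble{\G}{\G[H]}{\bichar}$, I pass to the dual bicharacters of~\eqref{eq:dual_bicharacter}; since $\Dumultunit[A]=\flip((\multunit[A])^*)$, these are $\flip((\multunit[A]_{1\rho})^*)=\Dumultunit[A]_{\rho 2}\in\U(\DrinfDoubAlg_{\bichar}\otimes\hat{A})$ and $\flip((\multunit[B]_{1\theta})^*)=\Dumultunit[B]_{\theta 2}\in\U(\DrinfDoubAlg_{\bichar}\otimes\hat{B})$, which are, up to leg-numbering, the two factors of the reduced bicharacter $\DuMultunit[\mathcal{D}]=\Dumultunit[B]_{\theta 2}\Dumultunit[A]_{\rho 3}$ of Theorem~\ref{the:Drinf_doub_codoub_dual}. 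By Theorem~\ref{the:equivalent_notion_of_homomorphisms} the dual of a bicharacter is again a bicharacter, now from $\GenCodouble{\G}{\G[H]}{\bichar}$ to $\DuG$ and to $\DuG[H]$ respectively, so these lie in the required spaces $\U(\hat{A_1}\otimes B_1)$.

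The crux is the density condition; I treat $\DuG$, the case of $\DuG[H]$ being identical. I must show $\{(\omega\otimes\Id_{\hat{A}})\Dumultunit[A]_{\rho 2}:\omega\in\DrinfDoubAlg_{\bichar}'\}^{\CLS}=\hat{A}$. Since $\Dumultunit[A]_{\rho 2}=(\rho\otimes\Id_{\hat{A}})\Dumultunit[A]$, a first-leg slice equals $((\omega\circ\rho)\otimes\Id_{\hat{A}})\Dumultunit[A]$. The representation $\rho=(\bar\alpha\otimes\alpha)\Comult[A]$ is faithful ($\alpha$ and $\bar\alpha$ are faithful and $\Comult[A]$ is injective), so $\rho(A)$ is a $\Cst$-subalgebra of $\DrinfDoubAlg_{\bichar}$ isometrically isomorphic to $A$; by Hahn--Banach every functional on it extends, whence $\{\omega\circ\rho:\omega\in\DrinfDoubAlg_{\bichar}'\}$ exhausts $A'$. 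As the first-leg slices of $\Dumultunit[A]$ over $A'$ are linearly dense in $\hat{A}$ (Theorem~\ref{the:Cst_quantum_grp_and_mult_unit}), the density condition follows. I expect the main obstacle to be exactly this step: it is the faithfulness of $\rho$ (and of $\theta$) that forces the pulled-back functionals to fill all of $A'$ (resp.\ $B'$), and hence the slices to recover the whole of $\hat{A}$ (resp.\ $\hat{B}$) rather than a proper subalgebra; the surrounding bookkeeping of which algebra is the dual of which---so that $\Dumultunit[A]_{\rho 2}$ genuinely lands in $\U(\hat{A_1}\otimes B_1)$ with $\hat{A_1}=\DrinfDoubAlg_{\bichar}$---must also be tracked with care.
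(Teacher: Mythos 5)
Your proposal is correct and follows essentially the same route as the paper: the paper's (terser) proof likewise exhibits \(\Dumultunit[A]_{\rho 2}\in\U(\DrinfDoubAlg_{\bichar}\otimes\hat{A})\) and \(\Dumultunit[B]_{\theta 2}\in\U(\DrinfDoubAlg_{\bichar}\otimes\hat{B})\) as the bicharacters and asserts density of the first-leg slices, which is exactly what your faithfulness-of-\(\rho\) (resp.\ \(\theta\)) argument justifies. The only point to tidy is that \(\rho(A)\) sits in \(\Mult(\DrinfDoubAlg_{\bichar})\) rather than in \(\DrinfDoubAlg_{\bichar}\) itself, so the Hahn--Banach step should be phrased via the standard fact that the closed linear span of slices depends only on the weak-\(^*\) closed span of the functionals used, which for a faithful nondegenerate \(\rho\) recovers all of \(A'\).
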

 \begin{proof}
 The bicharacter \(\Dumultunit[A]_{\rho2}\in\U(\DrinfDoubAlg_{\bichar}\otimes\hat{A})\)
 corresponds to a quantum group homomorphism from \(\GenCodouble{\G}{\G[H]}{\bichar}\) 
 to~\(\DuG\). Furthermore, the slices~\((\omega\otimes\Id_{\hat{A}})\Dumultunit[A]_{\rho 2}\)  
 for~\(\omega\in(\DrinfDoubAlg^{\bichar})'\) are dense in~\(\hat{A}\). Hence~\(\DuG\) 
 is a closed quantum subgroup of~\(\GenCodouble{\G}{\G[H]}{\bichar}\) in the sense of   
 Woronowicz. Also, \(\Dumultunit[B]_{\theta2}\in\U(\DrinfDoubAlg_{\bichar}\otimes\hat{B})\)
  yields a  similar conclusion for \(\DuG[H]\).
\end{proof}
 \subsection{Coactions and corepresentations} 
  \label{subsec:coact_genodoub}
  \begin{definition}
   \label{def:Gen_Yetter_Drinfeld}
   A~\(\Cst\)\nb-algebra \(C\) along with the coactions~\(\gamma\colon C\to C\otimes\hat{A}\) 
   and~\(\delta\colon C\to C\otimes\hat{B}\) of~\(\DuG\) and~\(\DuG[H]\) 
   is called a \emph{\textup{(}right, right\textup{)} 
  \(\bichar\)\nb-Yetter-Drinfeld} \(\Cst\)\nb-algebra if the following diagram commutes:
       \begin{equation}
      \label{eq:Gen_Yetter_Drinfeld}
        \begin{tikzpicture}[baseline=(current bounding box.west)]
         \matrix(m)[cd,column sep=6em]{
           C              & C\otimes \hat{B}             & C\otimes\hat{A}\otimes\hat{B}\\
           C\otimes\hat{A}& C\otimes\hat{B}\otimes\hat{A}& C\otimes\hat{A}\otimes\hat{B}\\
           };
         \draw[cdar] (m-1-1) -- node {\(\delta\)}                  (m-1-2);
         \draw[cdar] (m-1-2) -- node {\(\gamma\otimes\Id_{\hat{B}}\)} (m-1-3);
         \draw[cdar] (m-2-3) -- node[swap] {\(\Id_{C}\otimes \textup{Ad}_{\bichar}\)}                (m-1-3);
         \draw[cdar] (m-1-1) -- node[swap] {\(\gamma\)} (m-2-1);
         \draw[cdar] (m-2-1) -- node {\(\delta\otimes\Id_{\hat{A}}\)} (m-2-2);
         \draw[cdar] (m-2-2) -- node {\(\Id_{C}\otimes\flip\)} (m-2-3);
       \end{tikzpicture}
       \quad .
    \end{equation}
 Let~\(\mathcal{YD}(\DuG,\DuG[H],\bichar)\) be the category with 
 \(\bichar\)\nb-Yetter-Drinfeld \(\Cst\)\nb-algebras as objects and 
 \(\DuG\)\nb- and~\(\DuG[H]\)\nb-equivariant morphism as arrows.    
 \end{definition} 
 \begin{example}
  Consider~\(\hat{A}=\Contvin(G)\) and~\(\hat{B}=\Contvin(H)\) for locally compact
  groups~\(G\) and~\(H\). Then any \(G\)\nb-\(\Cst\)\nb-algebra with trivial~\(H\)\nb-coaction 
  makes it Yetter-Drinfeld in this generalised sense.
 \end{example}
 \begin{example}  
  In particular, let~\(\hat{B}=A\), \(\DuComult[B]=\Comult[A]\),  
  and~\(\bichar=\multunit[A]\in\U(\hat{A}\otimes A)\). 
  Then \(\multunit[A]\)\nb-Yetter-Drinfeld \(\Cst\)\nb-algebras 
  are the same as \(\G\)\nb-Yetter-Drinfeld \(\Cst\)\nb-algebras defined 
  by Nest and Voigt in~\cite{Nest-Voigt:Poincare}. The category of 
  \(\G\)\nb-Yetter-Drinfeld \(\Cst\)\nb-algebras is denoted by 
  \(\mathcal{YD}(\G)\).
\end{example} 
 Proposition~\(3.2\) from~\cite{Nest-Voigt:Poincare} shows 
 that the categories \(\mathcal{YD}(G)\) and~\(\Cstcat(\Codouble{\G})\) are 
 equivalent for a regular~\(\Cst\)\nb-quantum group \(\G\) 
 with Haar weights (because it uses the \(\Cst\)\nb-algebraic 
 picture from~\cite{Baaj-Vaes:Double_cros_prod}). We generalise 
 this fact in the next proposition:
 \begin{proposition}
  \label{prop:V-Yetter-Drinfeld_Gen_cod}
   Every~\(\GenCodouble{\G}{\G[H]}{\bichar}\)\nb-\(\Cst\)\nb-algebra 
   is a \(\bichar\)\nb-Yetter-Drinfeld \(\Cst\)\nb-algebra, and vice versa.
 \end{proposition}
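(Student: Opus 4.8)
The plan is to produce a canonical bijection between $\GenCodouble{\G}{\G[H]}{\bichar}$\nb-\(\Cst\)\nb-algebras and $\bichar$\nb-Yetter--Drinfeld \(\Cst\)\nb-algebras, built from the two closed quantum subgroup inclusions of Proposition~\ref{lemm:GenCodouble_closed_sub_grps}. First I would identify the right quantum group homomorphisms attached to these inclusions. By Theorem~\ref{the:equivalent_notion_of_homomorphisms}, the bicharacters $\Dumultunit[A]_{\rho 2}\in\U(\DrinfDoubAlg_{\bichar}\otimes\hat{A})$ and $\Dumultunit[B]_{\theta 2}\in\U(\DrinfDoubAlg_{\bichar}\otimes\hat{B})$ from the proof of Proposition~\ref{lemm:GenCodouble_closed_sub_grps} correspond to right quantum group homomorphisms $\Delta_R^{A}\colon\CodoubAlg_{\bichar}\to\CodoubAlg_{\bichar}\otimes\hat{A}$ and $\Delta_R^{B}\colon\CodoubAlg_{\bichar}\to\CodoubAlg_{\bichar}\otimes\hat{B}$. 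Feeding the explicit reduced bicharacter $\DuMultunit[\mathcal{D}]=\Dumultunit[B]_{\theta 2}\Dumultunit[A]_{\rho 3}$ of Theorem~\ref{the:Drinf_doub_codoub_dual} into the defining relation~\eqref{eq:def_V_via_right_homomorphism}, and simplifying with the character condition~\eqref{eq:aux_W_char_in_first_leg} together with the twisted commutation~\eqref{eq:rho_theta_equiv_Drinfeld_cond}, I expect to obtain $\Delta_R^{A}=\Id_{\hat{B}}\otimes\DuComult[A]$ and a $\Delta_R^{B}$ equal to $\DuComult[B]$ on the $\hat{B}$\nb-leg conjugated by $\bichar$. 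The point of this computation is that the twist showing up in $\Delta_R^{B}$ is exactly the $\Id_{C}\otimes\Ad_{\bichar}$ and $\flip$ arrows appearing in the Yetter--Drinfeld square~\eqref{eq:Gen_Yetter_Drinfeld}, which is in turn the twist $\flip^{\bichar}$ built into $\DuComult[\DrinfDoubAlg_{\bichar}]$ through~\eqref{eq:Gen_Co_doub_def}.

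For the forward direction, given a $\GenCodouble{\G}{\G[H]}{\bichar}$\nb-\(\Cst\)\nb-algebra $(C,\Gamma)$ with $\Gamma\colon C\to C\otimes\CodoubAlg_{\bichar}=C\otimes\hat{B}\otimes\hat{A}$, I would set $\gamma\colon C\to C\otimes\hat{A}$ and $\delta\colon C\to C\otimes\hat{B}$ to be the images of $\Gamma$ under the functors $\Cstcat(\GenCodouble{\G}{\G[H]}{\bichar})\to\Cstcat(\DuG)$ and $\to\Cstcat(\DuG[H])$ of Theorem~\ref{the:equivalent_notion_of_homomorphisms}(5) associated to $\Delta_R^{A}$ and $\Delta_R^{B}$; concretely $\gamma$ is the unique $\DuG$\nb-coaction with $(\Id_{C}\otimes\Delta_R^{A})\Gamma=(\Gamma\otimes\Id_{\hat{A}})\gamma$, and $\delta$ the analogous $\DuG[H]$\nb-coaction. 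These are genuine coactions by that theorem, so it remains to verify the Yetter--Drinfeld compatibility. I would do this by expanding the coassociativity $(\Id_{C}\otimes\DuComult[\DrinfDoubAlg_{\bichar}])\Gamma=(\Gamma\otimes\Id_{\CodoubAlg_{\bichar}})\Gamma$ with the explicit comultiplication~\eqref{eq:Gen_Co_doub_def}: the factor $\Id_{\hat{B}}\otimes\flip^{\bichar}\otimes\Id_{\hat{A}}$ reproduces the $\Id_{C}\otimes\Ad_{\bichar}$ and $\Id_{C}\otimes\flip$ arrows of~\eqref{eq:Gen_Yetter_Drinfeld}, while the two diagonal comultiplications $\DuComult[B]$ and $\DuComult[A]$ produce $\delta$ and $\gamma$; slicing the auxiliary legs then yields the commuting square.

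Conversely, starting from a $\bichar$\nb-Yetter--Drinfeld \(\Cst\)\nb-algebra $(C,\gamma,\delta)$, I would define $\Gamma\defeq(\delta\otimes\Id_{\hat{A}})\gamma\colon C\to C\otimes\hat{B}\otimes\hat{A}=C\otimes\CodoubAlg_{\bichar}$. It is a morphism, injective because $\gamma$ and $\delta$ are, and it satisfies the Podle\'s condition~\eqref{eq:Podles_cond} because $\gamma$ and $\delta$ do and $\CodoubAlg_{\bichar}=\hat{B}\otimes\hat{A}$ as a \(\Cst\)\nb-algebra. The one substantial check is coassociativity of $\Gamma$ with respect to $\DuComult[\DrinfDoubAlg_{\bichar}]$: unwinding $(\Gamma\otimes\Id_{\CodoubAlg_{\bichar}})\Gamma$ and $(\Id_{C}\otimes\DuComult[\DrinfDoubAlg_{\bichar}])\Gamma$ with~\eqref{eq:right_coaction} for $\gamma,\delta$ and with~\eqref{eq:Gen_Co_doub_def}, the two sides differ precisely by the two ways of traversing the Yetter--Drinfeld square~\eqref{eq:Gen_Yetter_Drinfeld}, so the hypothesis makes them agree.

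Finally I would check that the two assignments are mutually inverse: the recombination $(\delta\otimes\Id_{\hat{A}})\gamma$ returns $\gamma$ and $\delta$ under the functors (for $\gamma$ this is immediate from coassociativity of $\gamma$, and for $\delta$ it uses the Yetter--Drinfeld relation), and conversely the functorial $\gamma,\delta$ attached to $\Gamma$ recombine to $\Gamma$ by coassociativity of $\Gamma$ together with the Podle\'s density of $\gamma$. Since a morphism $C\to D$ intertwines the two codouble coactions if and only if it is simultaneously $\DuG$\nb- and $\DuG[H]$\nb-equivariant for the associated pairs, this upgrades to an equivalence of categories $\Cstcat(\GenCodouble{\G}{\G[H]}{\bichar})\simeq\mathcal{YD}(\DuG,\DuG[H],\bichar)$. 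The main obstacle is the bookkeeping in the two coassociativity computations: one must match, leg by leg, the single twist $\flip^{\bichar}$ inside $\DuComult[\DrinfDoubAlg_{\bichar}]$ with the $\Ad_{\bichar}$ and $\flip$ on the bottom edge of~\eqref{eq:Gen_Yetter_Drinfeld}, and confirm that coassociativity of $\Gamma$ is literally equivalent to that square commuting.
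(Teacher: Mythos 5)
Your proposal is correct, and your converse direction (defining \(\tilde{\gamma}=(\delta\otimes\Id_{\hat{A}})\gamma\), checking the Podle\'s condition from those of \(\gamma\) and \(\delta\), and reading coassociativity off the Yetter--Drinfeld square together with~\eqref{eq:Gen_Co_doub_def}) is essentially identical to the paper's. The forward direction is where you genuinely diverge. The paper invokes the embedding lemma \cite{Meyer-Roy-Woronowicz:Twisted_tensor}*{Lemma 2.9} to realise any \(\GenCodouble{\G}{\G[H]}{\bichar}\)\nb-\(\Cst\)\nb-algebra \(C\) inside \(\Mult(C'\otimes\CodoubAlg_{\bichar})\) with the coaction concentrated on the \(\CodoubAlg_{\bichar}\) factor, and then transports the Yetter--Drinfeld structure of \(\CodoubAlg_{\bichar}\) itself (Lemma~\ref{prop:Gen_YD_exist}) to \(C\); you instead pull \(\gamma\) and \(\delta\) back functorially along the two right quantum group homomorphisms \(\Delta_R=\Id_{\hat{B}}\otimes\DuComult[A]\) and \(\Delta'_R\) of~\eqref{eq:proj_1}--\eqref{eq:proj_2}, and deduce the square~\eqref{eq:Gen_Yetter_Drinfeld} from \((\Id_C\otimes\Delta_R)\Gamma=(\Gamma\otimes\Id_{\hat{A}})\gamma\), its \(\Delta'_R\)\nb-analogue, the identity \(\DuComult[\DrinfDoubAlg_{\bichar}]=(\Delta'_R\otimes\Id_{\hat{A}})\Delta_R\), and injectivity of \(\Gamma\). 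Both routes rest on the same key computation --- the twisted compatibility of \(\Delta_R\) and \(\Delta'_R\), i.e.\ Lemma~\ref{prop:Gen_YD_exist} --- so neither is shorter; yours is more explicit and self-contained (no appeal to the external embedding lemma) and comes packaged with the mutual-inverse and equivariance checks needed to upgrade the statement to the categorical equivalence \(\Cstcat(\GenCodouble{\G}{\G[H]}{\bichar})\simeq\mathcal{YD}(\DuG,\DuG[H],\bichar)\), which the paper only alludes to. The one place where your sketch is thinner than it should be is the phrase ``slicing the auxiliary legs then yields the commuting square'': to cancel \(\Gamma\) from \((\Gamma\otimes\Id_{\hat{B}\otimes\hat{A}})(\delta\otimes\Id_{\hat{A}})\gamma=(\Gamma\otimes\Id_{\hat{A}\otimes\hat{B}})(\text{twist})(\gamma\otimes\Id_{\hat{B}})\delta\) you should say explicitly that you are using injectivity of the coaction \(\Gamma\) (Definition~\ref{def:cont_coaction}(1)) rather than slicing; with that said, the argument closes.
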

 
  Define~\(\Delta_R\colon\CodoubAlg_{\bichar}\to\CodoubAlg_{\bichar}\otimes\hat{A}\) 
  by~\(\Delta_R\defeq(\Id_{\hat{B}}\otimes\DuComult[A])\). 
  Equation~\eqref{eq:def_V_via_right_homomorphism} and~\eqref{eq:W_char_in_second_leg} 
  for~\(\Dumultunit[A]\) give
 \begin{equation*}
    (\Id_{\DrinfDoubAlg_{\bichar}}\otimes\Delta_R)\DuMultunit[\mathcal{D}]
   = (\Id_{\DrinfDoubAlg_{\bichar}}\otimes\Delta_R)\big(\Dumultunit[B]_{\theta2}
      \Dumultunit[A]_{\rho3}\big)
  =\Dumultunit[B]_{\theta2}\Dumultunit[A]_{\rho3}\Dumultunit[A]_{\rho4}
 \end{equation*} 
 in~\(\U(\DrinfDoubAlg_{\bichar}\otimes\hat{B}\otimes\hat{A}\otimes\hat{A})\). 
 Collapsing the second and third leg in the last computation we obtain 
 \begin{equation}
    \label{eq:proj_1}
      (\Id_{\DrinfDoubAlg_{\bichar}}\otimes\Delta_R)\DuMultunit[\mathcal{D}]
   = \DuMultunit[\mathcal{D}]_{12}\Dumultunit[A]_{\rho 3}
   \qquad\text{in~\(\U(\DrinfDoubAlg_{\bichar}\otimes\CodoubAlg_{\bichar}\otimes\hat{A})\)}.
 \end{equation}
 Comparing the last expression with~\eqref{eq:def_V_via_right_homomorphism}, 
 we conclude  that~\(\Delta_R\) is the right quantum group homomorphism 
 corresponding to the bicharacter 
 \(\Dumultunit[A]_{\rho2}\in\U(\DrinfDoubAlg_{\bichar}\otimes\hat{A})\). 
 
 Similarly, using~\eqref{eq:Exist_Gen_codoub_compatible_corep} we can show that 
 \(\Delta'_R\colon\CodoubAlg_{\bichar}\to\CodoubAlg_{\bichar} 
  \otimes\hat{B}\) defined by~\(\Delta'_R(\hat{b}\otimes\hat{a})
  \defeq\flip_{23}^{\bichar}(\DuComult[B](\hat{b})\otimes\hat{a})\) 
  satisfies 
  \begin{equation}
   \label{eq:proj_2}
   (\Id_{\DrinfDoubAlg_{\bichar}}\otimes\Delta'_R)\DuMultunit[\mathcal{D}]
    = \DuMultunit[\mathcal{D}]_{12}\Dumultunit[B]_{\theta 3} 
   \qquad\text{in~\(\U(\DrinfDoubAlg_{\bichar}\otimes\CodoubAlg_{\bichar}\otimes\hat{B})\).}
   \end{equation} 
  Hence \(\Delta'_{R}\) is the right quantum group homomorphism associated to the bicharacter 
  \(\Dumultunit[B]_{\theta2}\in\U(\DrinfDoubAlg_{\bichar}\otimes\hat{B})\).
\begin{lemma}
  \label{prop:Gen_YD_exist}
   \(\CodoubAlg_{\bichar}\) is a~\(\bichar\)\nb-Yetter-Drinfeld algebra.
\end{lemma}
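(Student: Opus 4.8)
The plan is to verify the commuting diagram~\eqref{eq:Gen_Yetter_Drinfeld} directly at the level of the reduced bicharacter, taking for the two coactions the maps already singled out above, namely $\gamma\defeq\Delta_R=\Id_{\hat{B}}\otimes\DuComult[A]$ (a $\DuG$\nb-coaction) and $\delta\defeq\Delta'_R$ (a $\DuG[H]$\nb-coaction), whose implementations by slicing are recorded in~\eqref{eq:proj_1} and~\eqref{eq:proj_2}. Spelling out the diagram, the Yetter--Drinfeld condition is the single operator identity
\[
(\Id_{\CodoubAlg_{\bichar}}\otimes\Ad_{\bichar})(\Id_{\CodoubAlg_{\bichar}}\otimes\flip)(\delta\otimes\Id_{\hat{A}})\gamma
=(\gamma\otimes\Id_{\hat{B}})\delta
\]
of maps $\CodoubAlg_{\bichar}\to\CodoubAlg_{\bichar}\otimes\hat{A}\otimes\hat{B}$. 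By Proposition~\ref{prop:codoub_alg}~\ref{eq:Codoub_alg} the slices $(\omega\otimes\Id)\DuMultunit[\mathcal{D}]$, for $\omega\in\Bound(\Hils_{\mathcal{D}})_{*}$, are linearly dense in $\CodoubAlg_{\bichar}$, so it suffices to apply both composites to the second (the $\CodoubAlg_{\bichar}$) leg of $\DuMultunit[\mathcal{D}]$ and to compare the two resulting elements of $\U(\DrinfDoubAlg_{\bichar}\otimes\CodoubAlg_{\bichar}\otimes\hat{A}\otimes\hat{B})$; their equality gives equality of all slices, hence of the two composites, by continuity.

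First I would evaluate $(\gamma\otimes\Id_{\hat{B}})\delta$ on the codouble leg of $\DuMultunit[\mathcal{D}]$. Applying $\Id\otimes\delta$ gives $\DuMultunit[\mathcal{D}]_{12}\Dumultunit[B]_{\theta3}$ by~\eqref{eq:proj_2}; a subsequent application of $\gamma$ to the codouble leg, now via~\eqref{eq:proj_1}, turns $\DuMultunit[\mathcal{D}]_{12}$ into $\DuMultunit[\mathcal{D}]_{12}\Dumultunit[A]_{\rho3}$ and relabels the inert $\hat{B}$\nb-leg to the fourth slot, producing
\[
\bigl(\Id\otimes(\gamma\otimes\Id_{\hat{B}})\delta\bigr)\DuMultunit[\mathcal{D}]
=\DuMultunit[\mathcal{D}]_{12}\Dumultunit[A]_{\rho3}\Dumultunit[B]_{\theta4}.
\]

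Next I would run the same two slicing identities in the opposite order for the other composite: $\Id\otimes\gamma$ yields $\DuMultunit[\mathcal{D}]_{12}\Dumultunit[A]_{\rho3}$, and then $\delta\otimes\Id_{\hat{A}}$ yields $\DuMultunit[\mathcal{D}]_{12}\Dumultunit[B]_{\theta3}\Dumultunit[A]_{\rho4}$, the $\hat{A}$\nb-leg having been pushed to the fourth slot. Transporting the last two legs by $\Id\otimes\flip$ interchanges the $\hat{A}$\nb- and $\hat{B}$\nb-legs, giving $\DuMultunit[\mathcal{D}]_{12}\Dumultunit[B]_{\theta4}\Dumultunit[A]_{\rho3}$; conjugating these two legs by $\Ad_{\bichar}$ and using that $\DuMultunit[\mathcal{D}]_{12}$ commutes with $\bichar_{34}$ (disjoint legs) produces
\[
\DuMultunit[\mathcal{D}]_{12}\bichar_{34}\Dumultunit[B]_{\theta4}\Dumultunit[A]_{\rho3}\bichar_{34}^{*}.
\]

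Equating the two composites and cancelling the unitary $\DuMultunit[\mathcal{D}]_{12}$ from the left then reduces the Yetter--Drinfeld identity to
\[
\Dumultunit[A]_{\rho3}\Dumultunit[B]_{\theta4}\bichar_{34}
=\bichar_{34}\Dumultunit[B]_{\theta4}\Dumultunit[A]_{\rho3},
\]
which is exactly the Drinfeld commutation relation~\eqref{eq:rho_theta_equiv_Drinfeld_cond} for $(\rho,\theta)$ read in the legs $(\hat{A},\hat{B})$, with the $\DrinfDoubAlg_{\bichar}$\nb-leg as the common first leg and the $\CodoubAlg_{\bichar}$\nb-leg as a silent spectator. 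This establishes the diagram on the dense set of slices, hence on all of $\CodoubAlg_{\bichar}$. I expect the only delicate point to be the leg bookkeeping in the second composite: one must track precisely how $\Id\otimes\flip$ swaps the $\hat{A}$\nb- and $\hat{B}$\nb-legs and how the ensuing $\Ad_{\bichar}$ inserts exactly the factor $\bichar_{34}$ that matches the $\bichar$ already present in~\eqref{eq:rho_theta_equiv_Drinfeld_cond}; everything else is a mechanical, repeated use of~\eqref{eq:proj_1} and~\eqref{eq:proj_2}.
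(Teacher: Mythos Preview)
Your argument is correct and follows essentially the same route as the paper: both verify the Yetter--Drinfeld diagram by applying the two composites to the second leg of the reduced bicharacter $\DuMultunit[\mathcal{D}]$, invoke the slicing identities~\eqref{eq:proj_1} and~\eqref{eq:proj_2}, and reduce everything to the Drinfeld commutation relation for $(\rho,\theta)$. The only cosmetic difference is that the paper works in expanded leg notation (writing $\DuMultunit[\mathcal{D}]=\Dumultunit[B]_{\theta2}\Dumultunit[A]_{\rho3}$ and using five legs) and cites the equivalent form~\eqref{eq:Exist_Gen_codoub_compatible_corep}, whereas you keep $\CodoubAlg_{\bichar}$ as a single leg and cite~\eqref{eq:rho_theta_equiv_Drinfeld_cond} directly.
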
   
 \begin{proof}
 Using~\eqref{eq:Exist_Gen_codoub_compatible_corep} we compute
  \begin{align*}
    \flip^{\bichar}_{34}
    \Big(\big(\Id_{\DrinfDoubAlg_{\bichar}}\otimes((\Delta'_{R}\otimes\Id_{\hat{A}})\Delta_{R})\big)\Dumultunit[B]_{\theta2}
    \Dumultunit[A]_{\rho3}\Big)
   &= \Dumultunit[B]_{\theta2}\flip^{\bichar}_{34}\big(\Dumultunit[A]_{\rho3}\Dumultunit[B]_{\theta4}\big)\Dumultunit[A]_{\rho5}\\
   &= \Dumultunit[B]_{\theta2}\Dumultunit[B]_{\theta3}\Dumultunit[A]_{\rho4}\Dumultunit[A]_{\rho5}\\
   &= \big(\Id_{\DrinfDoubAlg_{\bichar}}\otimes((\Delta_{R}\otimes\Id_{\hat{A}})\Delta'_{R})\big)
       \Dumultunit[B]_{\theta2}\Dumultunit[A]_{\rho3}.
  \end{align*}
  Taking slices on the first leg by functionals on~\(\DrinfDoubAlg_{\bichar}\) shows that 
  \(\CodoubAlg_{\bichar}\) is a~\(\bichar\)\nb-Yetter-Drinfeld \(\Cst\)\nb-algebra 
  with respect to the coactions \(\Delta_{R}\) and \(\Delta'_{R}\) of~\(\DuG\) and~\(\DuG[H]\) 
  on~\(\CodoubAlg_{\bichar}\).
\end{proof} 

  \begin{proof}[Proof of Proposition~\textup{\ref{prop:V-Yetter-Drinfeld_Gen_cod}}]  
   Let~\(C\) be a~\(\GenCodouble{\G}{\G[H]}{\bichar}\)\nb-\(\Cst\)\nb-algebra.
   Now \cite{Meyer-Roy-Woronowicz:Twisted_tensor}*{Lemma 2.9} identifies 
   \(C\) with a subalgebra of~\(\Mult(C'\otimes\CodoubAlg_{\bichar})\) 
   for some \(\Cst\)\nb-algebra~\(C'\) with the coaction of \(\GenCodouble{\G}{\G[H]}{\bichar}\)
   only on~\(\CodoubAlg_{\bichar}\). By 
   Proposition~\ref{prop:Gen_YD_exist}, \(\CodoubAlg_{\bichar}\) 
   is a~\(\bichar\)\nb-Yetter-Drinfeld \(\Cst\)\nb-algebra, hence so is \(C\).
   
 Conversely, let~\(\gamma\colon C\to C\otimes\hat{A}\) and 
 \(\delta\colon C \to C\otimes\hat{B}\) satisfy~\eqref{eq:Gen_Yetter_Drinfeld}. Define a nondegenerate, 
  injective \(^{*}\)\nb-homomorphism~\(\tilde{\gamma}\colon C\to C\otimes\CodoubAlg^{\bichar}\) by 
  \(\tilde{\gamma}\defeq (\delta\otimes\Id_{\hat{A}})\gamma\). 
  
  The Podle\'s condition~\eqref{eq:Podles_cond} for~\(\tilde{\gamma}\) is induced 
  from those for~\(\gamma\) and \(\delta\) in the following way: 
  \begin{align*}
   \tilde{\gamma}(C)\cdot (1_{C}\otimes\CodoubAlg^{\bichar})
   &=\big((\delta\otimes\Id_{\hat{A}})(\gamma(C)\cdot (1_{C}\otimes\hat{A})\big)
         \cdot(1_{C}\otimes\hat{B}\otimes 1_{\hat{A}})\\
   &= \big(\delta(C)\cdot (1_{C}\otimes\hat{B})\big)\otimes\hat{A}
   =C\otimes\CodoubAlg_{\bichar}.
 \end{align*}

 The following computation yields~\eqref{eq:right_coaction} for~\(\tilde{\gamma}\):
 \begin{align*}
     (\tilde{\gamma}\otimes\Id_{\hat{B}\otimes\hat{A}})\tilde{\gamma}  
   &= (\delta\otimes\Id_{\hat{A}\otimes\hat{B}\otimes\hat{A}})
     ((\gamma\otimes\Id_{\hat{B}})\delta\otimes\Id_{\hat{A}})\gamma\\
  &= \flip_{34}^{\bichar}\big((\delta\otimes\Id_{\hat{B}})\delta\otimes\Id_{\hat{A}\otimes\hat{A}})
     (\gamma\otimes\Id_{\hat{A}})\gamma\big)\\
  &= \flip_{34}^{\bichar}\big((\Id_{C}\otimes\DuComult[B]\otimes\DuComult[A])(\delta\otimes\Id_{\hat{A}})\gamma
     \big)\\
  &= (\Id_{C}\otimes\DuComult[\DrinfDoubAlg_{\bichar}])\tilde{\gamma}.
 \end{align*}
 The first equality is trivial, the second equality uses~\eqref{eq:Gen_Yetter_Drinfeld}, 
 the third equality uses~\eqref{eq:corep_cond}, and the last equality 
 uses~\eqref{eq:Gen_Co_doub_def}.
\end{proof}

 Let \(\corep{U}^{\DuG}\in\U(\Comp(\Hils[K])\otimes\hat{A})\) and 
 \(\corep{U}^{\DuG[H]}\in\U(\Comp(\Hils[K])\otimes\hat{B})\) be 
 corepresentations of \(\DuG\) and~\(\DuG[H]\) on~\(\Hils[K]\).
 \begin{definition} 
    \label{def:codouble_compatible_corep}
    A pair~\((\corep{U}^{\DuG},\corep{U}^{\DuG[H]})\) is 
    called~\emph{\(\GenCodouble{\G}{\G[H]}{\bichar}\)\nb-compatible} 
   if \(\corep{U}^{\DuG}\) and~\(\corep{U}^{\DuG[H]}\) commute in the following way:
  \begin{equation}
   \label{eq:comm_reln_compatible_corep}
     \flip_{23}^{\bichar}(\corep{U}^{\DuG[H]}_{12}\corep{U}^{\DuG}_{13})
   = \corep{U}^{\DuG[H]}_{13}\corep{U}^{\DuG}_{12}
   \qquad\text{in~\(\U(\Comp(\Hils[K])\otimes\hat{A}\otimes\hat{B})\).}
  \end{equation}
\end{definition}
\begin{example} 
  Equation~\eqref{eq:rho_theta_equiv_Drinfeld_cond} shows that 
  the pair \((\Dumultunit[A]_{\rho 2},\Dumultunit[B]_{\theta 2})\) of 
  corepresentations of~\(\DuG\) and~\(\DuG[H]\) on~\(\Hils_{\mathcal{D}}\) 
 is \(\GenCodouble{\G}{\G[H]}{\bichar}\)\nb-compatible. 
 This is the corepresentation version of Lemma~\ref{prop:Gen_YD_exist}.
\end{example} 
 \begin{proposition}
  \label{prop:corep_codoub_classify}
   Corepresentations of~\(\GenCodouble{\G}{\G[H]}{\bichar}\) are 
   in one\nb-to\nb-one correspondence with 
   \(\GenCodouble{\G}{\G[H]}{\bichar}\)\nb-compatible 
   pairs of corepresentations.
  \end{proposition}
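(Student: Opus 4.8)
The plan is to exhibit explicit maps in both directions and prove they are mutually inverse. Throughout I identify $\CodoubAlg_{\bichar}=\hat{B}\otimes\hat{A}$ and recall from Theorem~\ref{the:Drinf_doub_codoub_dual} that the reduced bicharacter factorises as $\DuMultunit[\mathcal{D}]=\Dumultunit[B]_{\theta2}\Dumultunit[A]_{\rho3}$. To a $\GenCodouble{\G}{\G[H]}{\bichar}$\nb-compatible pair $(\corep{U}^{\DuG},\corep{U}^{\DuG[H]})$ on $\Hils[K]$ I would associate the unitary
\[
  \corep{U}\defeq\corep{U}^{\DuG[H]}_{12}\,\corep{U}^{\DuG}_{13}
  \in\U(\Comp(\Hils[K])\otimes\hat{B}\otimes\hat{A})
   =\U(\Comp(\Hils[K])\otimes\CodoubAlg_{\bichar}),
\]
which is the Hilbert-space analogue of the factorisation of $\DuMultunit[\mathcal{D}]$ recorded in the Example following Definition~\ref{def:codouble_compatible_corep}.

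First I would verify that $\corep{U}$ is a corepresentation of $\GenCodouble{\G}{\G[H]}{\bichar}$, that is, $(\Id\otimes\DuComult[\DrinfDoubAlg_{\bichar}])\corep{U}=\corep{U}_{12}\corep{U}_{13}$. This is the core of the argument, and it runs exactly parallel to the computation establishing~\eqref{eq:codoub_bichar_1}: one expands $\DuComult[\DrinfDoubAlg_{\bichar}]$ by its definition~\eqref{eq:Gen_Co_doub_def}, applies the corepresentation (character) conditions~\eqref{eq:aux_W_char_in_first_leg} for $\corep{U}^{\DuG}$ with respect to $\DuComult[A]$ and for $\corep{U}^{\DuG[H]}$ with respect to $\DuComult[B]$, uses that $\bichar$ commutes with the legs it does not meet, and finally reorders the two middle factors by means of the compatibility relation~\eqref{eq:comm_reln_compatible_corep}. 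The only change from~\eqref{eq:codoub_bichar_1} is that the specific corepresentations $\Dumultunit[A]_{\rho}$ and $\Dumultunit[B]_{\theta}$ are replaced by the abstract $\corep{U}^{\DuG}$, $\corep{U}^{\DuG[H]}$, and that~\eqref{eq:rho_theta_equiv_Drinfeld_cond} is replaced by~\eqref{eq:comm_reln_compatible_corep}.

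For the inverse map I would use the standard bijection between corepresentations of a $\Cst$\nb-quantum group and representations of its dual algebra. Since, by Theorem~\ref{the:Drinf_doub_codoub_dual} and Proposition~\ref{prop:codoub_alg}, the dual of $\GenCodouble{\G}{\G[H]}{\bichar}$ is $\GenDrinfdouble{\G}{\G[H]}{\bichar}=\Bialg{\DrinfDoubAlg_{\bichar}}$ with $\DrinfDoubAlg_{\bichar}=\rho(A)\cdot\theta(B)$ and reduced bicharacter $\DuMultunit[\mathcal{D}]$, every corepresentation $\corep{U}$ on $\Hils[K]$ has the form $\corep{U}=(\Phi\otimes\Id_{\CodoubAlg_{\bichar}})\DuMultunit[\mathcal{D}]$ for a unique representation $\Phi\colon\DrinfDoubAlg_{\bichar}\to\Bound(\Hils[K])$. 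Composing with the Hopf~\Star{}homomorphisms $\rho\in\Mor(A,\DrinfDoubAlg_{\bichar})$ and $\theta\in\Mor(B,\DrinfDoubAlg_{\bichar})$ from Lemma~\ref{lemm:embedding_to_Gen_Drinf} gives representations $\phi_A\defeq\Phi\rho$ of $A$ and $\phi_B\defeq\Phi\theta$ of $B$; applying $\Phi$ to~\eqref{eq:V-Drinfeld} shows $(\phi_A,\phi_B)$ is a $\bichar$\nb-Drinfeld pair. Setting $\corep{U}^{\DuG}\defeq(\phi_A\otimes\Id_{\hat{A}})\Dumultunit[A]$ and $\corep{U}^{\DuG[H]}\defeq(\phi_B\otimes\Id_{\hat{B}})\Dumultunit[B]$ yields corepresentations of $\DuG$ and $\DuG[H]$, and the Drinfeld relation for $(\phi_A,\phi_B)$ translates into the compatibility relation~\eqref{eq:comm_reln_compatible_corep} exactly as in the Example; this defines the backward map. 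Conceptually this is the restriction of $\corep{U}$ along the inclusions of $\DuG$ and $\DuG[H]$ as closed quantum subgroups (Proposition~\ref{lemm:GenCodouble_closed_sub_grps}).

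Finally I would check that the two maps are mutually inverse. Starting from a corepresentation $\corep{U}=(\Phi\otimes\Id)\DuMultunit[\mathcal{D}]$, the identities $(\Phi\otimes\Id)\Dumultunit[B]_{\theta2}=(\Phi\theta\otimes\Id)\Dumultunit[B]=\corep{U}^{\DuG[H]}_{12}$ and $(\Phi\otimes\Id)\Dumultunit[A]_{\rho3}=\corep{U}^{\DuG}_{13}$, together with the factorisation $\DuMultunit[\mathcal{D}]=\Dumultunit[B]_{\theta2}\Dumultunit[A]_{\rho3}$, show that applying the backward and then the forward map returns $\corep{U}$. Starting instead from a compatible pair, comparing $(\Phi\otimes\Id)\DuMultunit[\mathcal{D}]=\corep{U}^{\DuG[H]}_{12}\corep{U}^{\DuG}_{13}$ leg by leg and using density of the slices of $\Dumultunit[A]$ and $\Dumultunit[B]$ recovers $\Phi\rho=\phi_A$ and $\Phi\theta=\phi_B$, hence the original corepresentations. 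The step I expect to be the main obstacle is the well-definedness of the backward map at the \emph{reduced} level: a general $\bichar$\nb-Drinfeld pair need not factor through $\DrinfDoubAlg_{\bichar}=\rho(A)\cdot\theta(B)$, so one cannot pass directly from a compatible pair to a representation of $\DrinfDoubAlg_{\bichar}$. This is precisely why I would construct the corepresentation $\corep{U}$ first: once $\corep{U}$ is known to be a corepresentation of $\GenCodouble{\G}{\G[H]}{\bichar}$, the representation $\Phi$ of $\DrinfDoubAlg_{\bichar}$ exists automatically, and the desired factorisation of the Drinfeld pair comes for free.
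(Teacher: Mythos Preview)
Your forward construction \(\corep{U}\defeq\corep{U}^{\DuG[H]}_{12}\corep{U}^{\DuG}_{13}\) and its verification agree with the paper. The gap is in your backward map. You assert that every corepresentation \(\corep{U}\) of \(\GenCodouble{\G}{\G[H]}{\bichar}\) has the form \((\Phi\otimes\Id)\DuMultunit[\mathcal{D}]\) for a representation \(\Phi\) of the \emph{reduced} algebra \(\DrinfDoubAlg_{\bichar}\). This is false in general: the ``standard bijection'' you invoke is between corepresentations of a \(\Cst\)\nb-quantum group and representations of its \emph{universal} dual \(\DrinfDoubAlg_{\bichar}^{\univ}\), not of \(\DrinfDoubAlg_{\bichar}\). (Take \(\G[H]\) trivial; then the codouble is \(\DuG\), and corepresentations of \(\DuG\) correspond to representations of \(A^{\univ}\), which differs from \(A\) whenever \(\G\) is not coamenable.) You correctly sense a reduced\nb-versus\nb-universal issue in your final paragraph, but you locate it on the wrong side: the problematic step is not ``compatible pair \(\to\Phi\)'' but your claim that \(\Phi\) exists on \(\DrinfDoubAlg_{\bichar}\) for an \emph{arbitrary} corepresentation.

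The paper avoids this entirely by working with the right quantum group homomorphisms \(\Delta_R=\Id_{\hat{B}}\otimes\DuComult[A]\) and \(\Delta'_R\) attached to the bicharacters \(\Dumultunit[A]_{\rho2}\) and \(\Dumultunit[B]_{\theta2}\) (see~\eqref{eq:proj_1},~\eqref{eq:proj_2}). By \cite{Meyer-Roy-Woronowicz:Homomorphisms}*{Proposition 6.5}, a right quantum group homomorphism induces a functor on corepresentations at the reduced level, yielding \(\corep{U}^{\DuG}\) and \(\corep{U}^{\DuG[H]}\) characterised by \((\Id\otimes\Delta_R)\corep{X}=\corep{X}_{12}\corep{U}^{\DuG}_{13}\) and \((\Id\otimes\Delta'_R)\corep{X}=\corep{X}_{12}\corep{U}^{\DuG[H]}_{13}\); compatibility then follows from Lemma~\ref{prop:Gen_YD_exist}. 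Your approach can be repaired by passing to universal duals and lifting \(\rho,\theta\) to \(A^{\univ}\to\DrinfDoubAlg_{\bichar}^{\univ}\), \(B^{\univ}\to\DrinfDoubAlg_{\bichar}^{\univ}\), but this machinery is not set up in the paper and is heavier than the paper's route via \(\Delta_R,\Delta'_R\).
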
 
  \begin{proof}
   A routine computation shows that any 
   \(\GenCodouble{\G}{\G[H]}{\bichar}\)\nb-compatible 
   pair of corepresentations \((\corep{U}^{\DuG},\corep{U}^{\DuG[H]})\)
   on~\(\Hils[K]\) gives a corepresentation 
   \(\corep{X}\in\U(\Comp(\Hils[K])\otimes\CodoubAlg_{\bichar})\) 
   of~\(\GenCodouble{\G}{\G[H]}{\bichar}\) by  
   \begin{equation}
    \label{eq:codob_corep_frm_corep_qgrp_and_dual}
     \corep{X}\defeq\corep{U}^{\DuG[H]}_{12}\corep{U}^{\DuG}_{13} 
     \qquad\text{ in~\(\U(\Comp(\Hils[K])\otimes\hat{B}\otimes\hat{A})\).}
   \end{equation}
  Conversely, let \(\corep{X}\in\U(\Comp(\Hils[K])\otimes\CodoubAlg_{\bichar})\) 
   be a corepresentation of~\(\GenCodouble{\G}{\G[H]}{\bichar}\) on~\(\Hils[K]\).
   By~\cite{Meyer-Roy-Woronowicz:Homomorphisms}*{Proposition 6.5} or 
   \cite{Roy:Qgrp_with_proj}*{Proposition 3.31} the right quantum group 
   homomorphism~\(\Delta_{R}\) in~\eqref{eq:proj_1} induces a 
   corepresentation~\(\corep{U}\in\U(\Comp(\Hils[K])\otimes\hat{A})\) 
   of~\(\DuG\) on~\(\Hils[K]\) such that 
   \[
     (\Id_{\Hils[K]}\otimes\Delta_{R})\corep{X}=\corep{X}_{12}\corep{U}^{\DuG}_{13} 
     \qquad\text{in~\(\U(\Comp(\Hils[K])\otimes\CodoubAlg_{\bichar}\otimes\hat{A})\).}
   \]     
   Similarly, the right quantum group homomorphism~\(\Delta'_{R}\) in 
   \eqref{eq:proj_2} gives a corepresentation \(\corep{U}^{\DuG[H]}\in\U(\Comp(\Hils[K])\otimes 
   \hat{B})\) of~\(\DuG[H]\) satisfying 
   \[
     (\Id_{\Hils[K]}\otimes\Delta'_{R})\corep{X}=\corep{X}_{12}\corep{U}^{\DuG[H]}_{13} 
     \qquad\text{in~\(\U(\Comp(\Hils[K])\otimes\CodoubAlg_{\bichar}\otimes\hat{B})\).}
   \]      
   Lemma~\ref{prop:Gen_YD_exist} gives
   \begin{align*}
       \corep{X}_{12}\corep{U}^{\DuG}_{13}\corep{U}^{\DuG[H]}_{14} 
     = \big(\Id_{\Hils[K]}\otimes(\Delta_{R}\otimes\Id_{\hat{A}})\Delta'_{R}\big)\corep{X}
     &= \flip_{34}^{\bichar}\big(\Id_{\Hils[K]}\otimes(\Delta'_{R}\otimes\Id_{\hat{A}})\Delta_{R}\big)\corep{X}\\
     &=\flip_{34}^{\bichar}(\corep{X}_{12}\corep{U}^{\DuG[H]}_{13}\corep{U}^{\DuG}_{14})
       =\corep{X}_{12}\flip^{\bichar}_{34}(\corep{U}^{\DuG[H]}_{13}\corep{U}^{\DuG}_{14}).\qedhere
  \end{align*} 
 \end{proof}

\section*{Acknowledgements}
  This work was supported by the 
  Deutsche Forschungsgemeinschaft (DFG) through the Research
  Training Group 1493 and the Institutional Strategy of the 
  University of G\"ottingen, during the doctoral studies of the author. 
  Parts of the manuscript had been revised while the author had been 
  supported by Fields--Ontario postdoctoral fellowship, NSERC and ERA at the 
  Fields Institute, and University of Ottawa. The author gratefully thanks Professors 
  Ralf Meyer and Stanis\l{}aw Lech Woronowicz for their helpful 
  discussions and comments. In addition author would 
  like to thank Karen Strung and anonymous referee 
  for an exceptionally careful reading of the article and 
  valuable comments that led to substantial improvements 
  of its content.
\begin{bibdiv}
  \begin{biblist}
\bib{Baaj:Regular-Rep-Woronowicz-shift}{article}{
  author={Baaj, Saad},
  title={Repr\'esentation r\'eguli\`ere du groupe quantique des d\'eplacements de {W}oronowicz},
  journal={Ast\'erisque},
  number={232},
  date={1995},
  pages={11--48},
  issn={0303-1179},
  review={\MRref {1372523}{97a:46107}},
}

\bib{Baaj-Skandalis:Unitaires}{article}{
  author={Baaj, Saad},
  author={Skandalis, Georges},
  title={Unitaires multiplicatifs et dualit\'e pour les produits crois\'es de $C^*$\nobreakdash -alg\`ebres},
  journal={Ann. Sci. \'Ecole Norm. Sup. (4)},
  volume={26},
  date={1993},
  number={4},
  pages={425--488},
  issn={0012-9593},
  review={\MRref {1235438}{94e:46127}},
  eprint={http://www.numdam.org/item?id=ASENS_1993_4_26_4_425_0},
}

\bib{Baaj-Skandalis-Vaes:Non-semi-regular}{article}{
  author={Baaj, Saad},
  author={Skandalis, Georges},
  author={Vaes, Stefaan},
  title={Non-semi-regular quantum groups coming from number theory},
  journal={Comm. Math. Phys.},
  volume={235},
  date={2003},
  number={1},
  pages={139--167},
  issn={0010-3616},
  review={\MRref {1969723}{2004g:46083}},
  doi={10.1007/s00220-002-0780-6},
}

\bib{Baaj-Vaes:Double_cros_prod}{article}{
  author={Baaj, Saad},
  author={Vaes, Stefaan},
  title={Double crossed products of locally compact quantum groups},
  journal={J. Inst. Math. Jussieu},
  volume={4},
  date={2005},
  number={1},
  pages={135--173},
  issn={1474-7480},
  review={\MRref {2115071}{2006h:46071}},
  doi={10.1017/S1474748005000034},
}

\bib{Daws-Kasprzak-Skalski-Soltan:Closed_qnt_subgrps}{article}{
  author={Daws, Matthew},
  author={Kasprzak, Pawe\l },
  author={Skalski, Adam},
  author={So\l tan, Piotr M.},
  title={Closed quantum subgroups of locally compact quantum groups},
  journal={Adv. Math.},
  volume={231},
  date={2012},
  number={6},
  pages={3473--3501},
  issn={0001-8708},
  review={\MRref {2980506}{}},
  doi={10.1016/j.aim.2012.09.002},
}

\bib{Delvaux-vanDaele:Drinf_doub_mult_hopf}{article}{
  author={Delvaux, Lydia},
  author={Van Daele, Alfons},
  title={The Drinfel'd double of multiplier Hopf algebras},
  journal={J. Algebra},
  volume={272},
  date={2004},
  number={1},
  pages={273--291},
  issn={0021-8693},
  review={\MRref {2029034}{2004i:16051}},
  doi={10.1016/j.jalgebra.2003.03.003},
}

\bib{Dixmier:Cstar-algebras}{book}{
  author={Dixmier, Jacques},
  title={\(C^*\)\nobreakdash -Algebras},
  note={Translated from the French by Francis Jellett; North-Holland Mathematical Library, Vol. 15},
  publisher={North-Holland Publishing Co.},
  place={Amsterdam},
  date={1977},
  pages={xiii+492},
  isbn={0-7204-0762-1},
  review={\MRref {0458185}{56\,\#16388}},
}

\bib{Drinfeld:Quantum_groups}{article}{
  author={Drinfel'd, Vladimir G.},
  title={Quantum groups},
  conference={ title={International Congress of Mathematicians}, place={Berkeley, Calif.}, date={1986}, },
  pages={798--820},
  publisher={Amer. Math. Soc.},
  place={Providence, RI},
  date={1987},
  review={\MRref {934283}{89f:17017}},
}

\bib{Kahng:Twisting_Qnt_doub}{article}{
  author={Kahng, Byung-Jay},
  title={Twisting of the Quantum double and the Weyl algebra},
  note={\arxiv {0809.0098}},
  status={eprint},
  date={2008},
}

\bib{Kustermans-Vaes:LCQG}{article}{
  author={Kustermans, Johan},
  author={Vaes, Stefaan},
  title={Locally compact quantum groups},
  journal={Ann. Sci. \'Ecole Norm. Sup. (4)},
  volume={33},
  date={2000},
  number={6},
  pages={837--934},
  issn={0012-9593},
  review={\MRref {1832993}{2002f:46108}},
  doi={10.1016/S0012-9593(00)01055-7},
}

\bib{Kustermans-Vaes:LCQGvN}{article}{
  author={Kustermans, Johan},
  author={Vaes, Stefaan},
  title={Locally compact quantum groups in the von Neumann algebraic setting},
  journal={Math. Scand.},
  volume={92},
  date={2003},
  number={1},
  pages={68--92},
  issn={0025-5521},
  review={\MRref {1951446}{2003k:46081}},
  eprint={http://www.mscand.dk/article.php?id=198},
}

\bib{Majid:More_ex_bicross}{article}{
  author={Majid, Shahn},
  title={More examples of bicrossproduct and double cross product Hopf algebras},
  journal={Israel J. Math.},
  volume={72},
  date={1990},
  number={1-2},
  pages={133--148},
  issn={0021-2172},
  review={\MRref {1098985}{92b:16081}},
  doi={10.1007/BF02764616},
}

\bib{Masuda-Nakagami:quasi-Woronowicz-alg}{article}{
  author={Masuda, Tetsuya},
  author={Nakagami, Yoshiomi},
  title={A von {N}eumann algebra framework for the duality of the quantum groups},
  journal={Publ. Res. Inst. Math. Sci.},
  volume={30},
  date={1994},
  number={5},
  pages={799--850},
  issn={0034-5318},
  review={\MRref {1311393}{96b:46103}},
  doi={10.2977/prims/1195165585},
}

\bib{Masuda-Nakagami-Woronowicz:C_star_alg_qgrp}{article}{
  author={Masuda, Tetsuya},
  author={Nakagami, Y.},
  author={Woronowicz, Stanis\l aw Lech},
  title={A $C^*$\nobreakdash -algebraic framework for quantum groups},
  journal={Internat. J. Math},
  volume={14},
  date={2003},
  number={9},
  pages={903--1001},
  issn={0129-167X},
  review={\MRref {2020804}{2004j:46100}},
  doi={10.1142/S0129167X03002071},
}

\bib{Meyer-Roy-Woronowicz:Homomorphisms}{article}{
  author={Meyer, Ralf},
  author={Roy, Sutanu},
  author={Woronowicz, Stanis\l aw Lech},
  title={Homomorphisms of quantum groups},
  journal={M\"unster J. Math.},
  volume={5},
  date={2012},
  pages={1--24},
  issn={1867-5778},
  review={\MRref {3047623}{}},
  eprint={http://nbn-resolving.de/urn:nbn:de:hbz:6-88399662599},
}

\bib{Meyer-Roy-Woronowicz:Twisted_tensor}{article}{
  author={Meyer, Ralf},
  author={Roy, Sutanu},
  author={Woronowicz, Stanis\l aw Lech},
  title={Quantum group-twisted tensor products of \(\textup C^*\)\nobreakdash -algebras},
  journal={Internat. J. Math.},
  volume={25},
  date={2014},
  number={2},
  pages={1450019, 37},
  issn={0129-167X},
  review={\MRref {3189775}{}},
  doi={10.1142/S0129167X14500190},
}

\bib{Nest-Voigt:Poincare}{article}{
  author={Nest, Ryszard},
  author={Voigt, {Ch}ristian},
  title={Equivariant Poincar\'e duality for quantum group actions},
  journal={J. Funct. Anal.},
  volume={258},
  date={2010},
  number={5},
  pages={1466--1503},
  issn={0022-1236},
  review={\MRref {2566309}{2011d:46143}},
  doi={10.1016/j.jfa.2009.10.015},
}

\bib{Podles-Woronowicz:Quantum_deform_Lorentz}{article}{
  author={Podle\'s, Piotr},
  author={Woronowicz, Stanis\l aw Lech},
  title={Quantum deformation of Lorentz group},
  journal={Comm. Math. Phys.},
  volume={130},
  date={1990},
  number={2},
  pages={381--431},
  issn={0010-3616},
  review={\MRref {1059324}{91f:46100}},
  eprint={http://projecteuclid.org/euclid.cmp/1104200517},
}

\bib{Roy:Qgrp_with_proj}{thesis}{
  author={Roy, Sutanu},
  title={\(\textup C^*\)\nobreakdash -Quantum groups with projection},
  date={2013},
  type={phdthesis},
  institution={Georg-August Universit\"at G\"ottingen},
  eprint={http://hdl.handle.net/11858/00-1735-0000-0022-5EF9-0},
}

\bib{Soltan-Woronowicz:Remark_manageable}{article}{
  author={So\l tan, Piotr M.},
  author={Woronowicz, Stanis\l aw Lech},
  title={A remark on manageable multiplicative unitaries},
  journal={Lett. Math. Phys.},
  volume={57},
  date={2001},
  number={3},
  pages={239--252},
  issn={0377-9017},
  review={\MRref {1862455}{2002i:46072}},
  doi={10.1023/A:1012230629865},
}

\bib{Soltan-Woronowicz:Multiplicative_unitaries}{article}{
  author={So\l tan, Piotr M.},
  author={Woronowicz, Stanis\l aw Lech},
  title={From multiplicative unitaries to quantum groups. II},
  journal={J. Funct. Anal.},
  volume={252},
  date={2007},
  number={1},
  pages={42--67},
  issn={0022-1236},
  review={\MRref {2357350}{2008k:46170}},
  doi={10.1016/j.jfa.2007.07.006},
}

\bib{Woronowicz:Twisted_SU2}{article}{
  author={Woronowicz, Stanis\l aw Lech},
  title={Twisted $\mathrm {SU}(2)$ group. An example of a noncommutative differential calculus},
  journal={Publ. Res. Inst. Math. Sci.},
  volume={23},
  date={1987},
  number={1},
  pages={117--181},
  issn={0034-5318},
  review={\MRref {890482}{88h:46130}},
  doi={10.2977/prims/1195176848},
}

\bib{Woronowicz:Compact_pseudogroups}{article}{
  author={Woronowicz, Stanis\l aw Lech},
  title={Compact matrix pseudogroups},
  journal={Comm. Math. Phys.},
  volume={111},
  date={1987},
  number={4},
  pages={613--665},
  issn={0010-3616},
  eprint={http://projecteuclid.org/euclid.cmp/1104159726},
  review={\MRref {901157}{88m:46079}},
}

\bib{Woronowicz:Multiplicative_Unitaries_to_Quantum_grp}{article}{
  author={Woronowicz, Stanis\l aw Lech},
  title={From multiplicative unitaries to quantum groups},
  journal={Internat. J. Math.},
  volume={7},
  date={1996},
  number={1},
  pages={127--149},
  issn={0129-167X},
  review={\MRref {1369908}{96k:46136}},
  doi={10.1142/S0129167X96000086},
}

\bib{Woronowicz-Zakrzewski:Quantum_Lorentz_Gauss}{article}{
  author={Woronowicz, Stanis\l aw Lech},
  author={Zakrzewski, Stanis\l aw},
  title={Quantum Lorentz group having Gauss decomposition property},
  journal={Publ. Res. Inst. Math. Sci.},
  volume={28},
  date={1992},
  number={5},
  pages={809--824},
  issn={0034-5318},
  review={\MRref {1196000}{94e:46129}},
  doi={10.2977/prims/1195167937},
}

\bib{Yamanouchi:Double_grp_const_vN}{article}{
  author={Yamanouchi, Takehiko},
  title={Double group construction of quantum groups in the von Neumann algebra framework},
  journal={J. Math. Soc. Japan},
  volume={52},
  date={2000},
  number={4},
  pages={807--834},
  issn={0025-5645},
  review={\MRref {1774630}{2002h:46116}},
  doi={10.2969/jmsj/05240807},
}
 \end{biblist}
\end{bibdiv}

\end{document}